\documentclass[a4paper,12pt]{article}

\usepackage{amsmath}
\usepackage{amsthm}
\usepackage{amssymb}
\usepackage{latexsym}
\usepackage{graphicx}
\usepackage{stmaryrd}
\usepackage{mathrsfs}
\usepackage{enumerate}
\usepackage{cases}
\usepackage{color}

\newcommand{\BLACK}{\color{black}}

\definecolor{dGREEN}{rgb}{0.0,0.5,0.5}

\newcommand{\glalign}[2]{\lower.6ex\vbox{
\baselineskip\lineskip\ialign{$#1\hfil##\hfil$\crcr#2\crcr=\crcr}}}

\newcommand{\del}{\partial}

\newcommand{\Om}{\Omega}

\newcommand{\domega}{\,{\rm d}\omega}

\newcommand{\dtau}{\,{\rm d}\tau}

\newcommand{\ds}{\,{\rm d}s}
\newcommand{\dt}{\,{\rm d}t}

\newcommand{\dx}{\,{\rm d}x}
\newcommand{\dy}{\,{\rm d}y}
\newcommand{\dz}{\,{\rm d}z}

\renewcommand{\div}{\mbox{\rm div}\,}

\newcommand{\supp}{\mbox{\rm supp}\,}

\newcommand{\IR}{\mathbb{R}}
\newcommand{\IC}{\mathbb{C}}
\newcommand{\IN}{\mathbb{N}}
\newcommand{\IZ}{\mathbb{Z}}

\newcommand{\D}{\mathcal{D}}
\newcommand{\HH}{\mathcal{H}}

\renewcommand{\L}{\mathrm{L}}

\DeclareMathOperator{\id}{id}

\newcommand{\dv}{{\rm div}\,}

\newcommand{\BB}{{\mathbb B}}
\newcommand{\BR}{{\mathbb R}}

\newcommand{\CA}{{\mathcal A}}
\newcommand{\CB}{{\mathcal B}}

\newcommand{\CR}{{\mathcal R}}

\newcommand{\CU}{{\mathcal U}}
\newcommand{\CQ}{{\mathcal Q}}

\newcommand{\CY}{{\mathcal Y}}
\newcommand{\CZ}{{\mathcal Z}}

\newcommand{\bff}{{\mathbf f}}

\def\eqn#1$$#2$${\begin{equation}\label#1#2\end{equation}}

\numberwithin{equation}{section}

\newtheorem{defi}{Definition}[section]
\newtheorem{thm}[defi]{Theorem}
\newtheorem{cor}[defi]{Corollary}
\newtheorem{prop}[defi]{Proposition}
\newtheorem{lem}[defi]{Lemma}
\newtheorem{rem}[defi]{Remark}
\newtheorem{assumption}[defi]{Assumption}

\def\eqn#1$$#2$${\begin{equation}\label#1#2\end{equation}}

\numberwithin{equation}{section}
\numberwithin{equation}{section}
\allowdisplaybreaks[4]

\begin{document}

\title{
\bf \large
Time periodic problem of the Navier-Stokes equations\\ in an exterior domain with  periodically moving boundary}
\author{{\normalsize
Reinhard Farwig\footnote{
Fachbereich Mathematik, Technische Universit\"at Darmstadt, Schlossgartenstr. 7, 64289 Darmstadt, \quad Germany, \texttt{farwig@mathematik.tu-darmstadt.de}} \;
and \;
Kazuyuki Tsuda\footnote{
Kyushu Sangyo University, 3-1 Matsukadai 2-chome,
Higashi-ku, Fukuoka,
813-8503 Japan, \texttt{k-tsuda@ip.}\texttt{kyusan-u.ac.jp}}   }\\[2ex]
}
\date{}
\maketitle
\vspace*{-7mm}
{\it To our academic teacher and colleague Hermann Sohr, an outstanding mathematician}\\

\begin{abstract}
\noindent
In this paper we consider the Navier-Stokes equations in exterior domains of $\IR^n$, $n\geq 3$, with a periodically in time moving boundary $\partial\Omega(t)$ and external force $f(t)$. For this case we prove the existence of a locally unique mild time periodic solution in weighted function spaces with radially symmetric Muckenhoupt weights. The solutions split into a stationary part controlled by potential theoretic estimates and a purely oscillatory part constructed as mild solution via analytic semigroup theory. To deal with perturbation terms of even second order - coming from a coordinate transform and the moving boundary - in weighted, homogeneous Sobolev spaces 
a maximal $L^1$ type regularity estimate  will be used in weighted Lorentz spaces.  
 To control the convective term an $\mathcal H^\infty$-calculus in weighted spaces of the Stokes operator, its $BIP$ property and embedding estimates of fractional powers are exploited, see a recent paper by the authors: {\em The Stokes operator on exterior domains in homogeneous weighted function spaces: From weak theory to $\mathscr H^\infty$-calculus to fractional domains} (2025).
\end{abstract}

\noindent {\bf Key Words and Phrases.} 
Navier-Stokes equations; Muckenhoupt weights; exterior domain with moving boundary; semigroup decay rates; time periodic solutions
\\

\noindent {\bf 2010 Mathematics Subject Classification Numbers.} 35Q30; 35B10; 76D05\\[1ex]

\section{Introduction}

For $t\in \IR$ let $\Omega(t)\subset \IR^n$, $n \geq 3$, be a family of exterior domains considered as perturbation of an exterior reference domain $\Omega_0 \subset\IR^n$ with $\partial \Omega(t)\in C^3$, and set $Q:= \bigcup_{t\in\IR} \Omega(t)\times\{t\}$. We assume that $\Omega(t)$ is time periodic with period $T>0$, {\em i.e.} $\Omega(t+T)=\Omega(t)$, and that for simplicity $\Omega(0) = \Omega_0$. 
Suppose that $\overline{\Omega(t)^c}$ is contained in an open ball $B_R$ of radius $R>0$ for all times $t$.  \BLACK

 The Navier-Stokes equations on the non-cylindrical space-time domain $Q$ are described by
\begin{align}\label{equ:ns}
	\begin{aligned}
		v_t - \nu\Delta v + v\cdot \nabla v + \nabla p & = f \quad\! \text{in }\;\, Q,\\
		 \div v & = 0\quad   \text{in }\;\, Q,\\
		v& = 0  \quad  \text{on }\, \bigcup_{t\in\IR} \partial \Omega(t)\times \{t\}.
	\end{aligned}
\end{align}
Here $v$ denotes the unknown velocity of an incompressible, viscous  fluid and $p$ denotes the pressure, respectively, at time $t\in \IR$ and position $x\in\Omega(t)$. Moreover, let $f=f(x,t)$ be a given external force with the same period in time, {\em i.e.}  $f(x,t+T)=f(x,t)$. For simplicity,  assume that $v=0$ on $\partial\Omega(t)$; the correct kinematic boundary condition will lead by classical trace arguments to a Navier-Stokes system with perturbation terms which are of order zero and one in $v$. For simplicity, the coefficient of viscosity $\nu$ equals $1$.

\begin{assumption}\label{ass}
	{\rm
There exists a family of unbounded domains $\Omega(t)\subset \IR^n$ with
	$\partial \Omega(t)\in C^3$, $t\in  J=\IR/(T\mathbb Z)$, 
    \BLACK and a map
	$$
		\psi \colon \overline{\Omega_0\times J \BLACK}\to \overline{Q},
		\; (\xi,t)\mapsto \psi(\xi,t) = (\phi(\xi,t),t),
	$$
where $\Omega_0\subset \IR^n$ is an unbounded reference domain with $\partial \Omega_0\in C^3$.
	Furthermore, the function $\phi$ has the following properties:
\begin{enumerate}
		\item[(i)] 
		For $t\in  J \BLACK$ the map
		$\phi(\cdot,t) \colon \overline{\Omega_0}\to \overline{\Omega(t)}$ is a
		$C^3$-diffeomorphism with $\phi(\cdot,0)=\id$.
		Let $\phi(\cdot,t)^{-1}$ denote its inverse for fixed $t$.
		\item[(ii)] 
		Concerning the function $\phi$ as a map on $\Omega_0\times \mathbb{R}$ we assume that
		\[\quad
			\phi\in \widehat C^{3,1}_b:=\big\{f\in C^0(\Omega_0\times J) :
			\partial^k_t\partial^{\alpha}_{\xi}f\in C^{0}_b,\, 1\leq 2k+|\alpha|\leq 3,\,k\in \IN_0,\,
			\alpha\in \IN_0^n\big\}.
		\]
		\item[(iii)] 
		The map $\phi$ is volume preserving, that is,
		$\det \nabla_{\xi}\phi(\cdot,t)=1$ for all $t$.
        

\item[(iv)] 
The map $\phi$ is $\mu$-H\"{o}lder continuous in time in the norm of $C^{3.1}$, {\em i.e.,} there exist constants $c_\phi>0$
and $\mu\in (0,1]$ such that
\begin{align}\label{phi-mu}
|\phi(t)-\phi(s)|_{C^{3,1}} \leq c_\phi |t-s|^{\mu}\quad\textrm{ for all }\; t, s\in J,
\end{align}
where $c_\phi$ is sufficiently small. 
In addition, 
\begin{align}\label{assumpt-phi}
\begin{aligned}
\|(1+ & |\xi|)^{n-1} \partial^k_t\partial^{\alpha}_{\xi} (\phi(\xi,t)-\phi(\xi,s))\|_{L^\infty} + \|(1+|\xi|)^{n} \partial^k_t\partial^{\alpha}_{\xi} (\phi(\xi,t)-\phi(\xi,s))\|_{L^{q_1,1}}\\
& \leq c_\phi |t-s|^{\mu}  
\end{aligned}
\end{align}
for $0\leq 2k+|\alpha|\leq 3$, where 
$\frac{1}{q_1}+\frac{1}{q}< \frac{1}{n}$ and $q>n$ is given in the solution spaces $X_s$ and $X_\perp$, see Subsect. \ref{S2.1} below. \BLACK
 
	\end{enumerate}
	}
\end{assumption}
By this assumption we may change the variable $x\in\Omega(t)$ to the variable $\xi$ on the reference domain $\Omega_0$ explicitly by the mapping $\phi$ and will consider the velocity on $\Omega_0$. 
Such a coordinate transform is similar to the {\em Arbitrary Lagrangian–Eulerian} (ALE) method in numerical simulation where the problem is transferred from a moving domain to a fixed or an even moving reference domain, see {\em e.g.}~Duarte, Gormaz and Natesan \cite{DGN-ALE}, Benselama and Monnier \cite{BM-ALE} for the Navier-Stokes equations. 
For the ALE method, a domain velocity  of the computational mesh \BLACK is used. On the other hand, we define a velocity $u$ on $\Omega_0$ by \eqref{equ:trafo} below to preserve the divergence free condition in mathematical analysis. 
Using the coordinate transform $\phi(t)$ the Navier-Stokes system \eqref{equ:ns} will be rewritten as a modified Navier-Stokes system on the fixed exterior reference domain $\Omega_0$ and has the form 
\begin{align}\label{equ:ns-new}
\begin{aligned}
	u_t - \Delta u - \sum_{|\alpha|\leq 2 } a_{\alpha} \partial^{\alpha} u + \sum_{|\beta|\leq 1 } b_{\beta}\partial^{\beta} u + \nabla^{\phi(t)} \tilde p + u\cdot \nabla^{\phi(t)} u & =  \Phi f\BLACK,\\
	\div u = 0,\quad u|_{\partial\Omega_0} & =0, 
\end{aligned} 
\end{align}
where the perturbation terms $a_\alpha, b_\beta$, the modified gradients $\nabla^{\phi(t)}$ and $\Phi$ will be explained in Subsect. \ref{S2.3}.

To solve \eqref{equ:ns-new} we split $(u, p) $ into a stationary part $(u_s,p_s)$ defined by $u_s=\frac{1}{T} \int_0^T u\dtau$ and a purely oscillatory unsteady part $u_\perp = u-u_s$; similarly, we decompose $p$ into $p_s$ and $p_\perp$. In view of this splitting the solution space consists of two parts, $X_s$ for the stationary part of the solution and $X_\perp$ for its instationary part:
\begin{align*}
X_s = & \Big\{(u,p)  \in L^\infty_{n-2}(\Omega_0) \cap \widehat H^{1,\infty}_{n-1}(\Omega_0) \cap H^{1,q}(\Omega_0) \cap \widehat{H}^{2;q,\infty}(\Omega_0) \times \widehat{H}^{1;q,\infty}(\Omega_0) : \\
& \quad \div u=0 \textrm{ in } \Omega_0,\; u|_{\partial\Omega_0}=0\Big\}, \\
X_\perp = & \Big\{ (u,p) \in L^\infty_{per}(\mathbb{R}; H^{1,q}_{n-1}\BLACK (\Omega_0))\times L^\infty_{per} (\mathbb{R};\widehat{H}^{1;q,\infty}(\Omega_0)):\\ 
& \quad
 \nabla^2 u \in  L^\infty_{per} (\mathbb{R};L^{q,\infty}_{n-1\BLACK}(\Omega_0)),\, \div u=0 \textrm{ in } \Omega_0,\, u=0\textrm{ on } \partial\Omega_0\Big\}
\end{align*} 
 Both spaces are defined by weighted Lebesgue spaces $L^q$, weighted Lorentz spaces $L^{q,\infty}$ and nonhomogeneous as well as homogeneous Sobolev spaces $H^{1,q}, \widehat H^{1;q,\infty}$; the subscript $n-1$ indicates the weight $(1+|x|^2)^{\frac{n-1}{2}}$, {\em etc.} For details we refer to Subsect. \ref{S2.1}.\\

Then the main result on existence of $T$-periodic solutions to \eqref{equ:ns-new} reads as follows:

\begin{thm}\label{main}
Let $n<q<\infty$, $\Omega_0\subset\IR^n$ be an exterior reference domain of class $C^3$ and $\phi$ be a $T$-periodic coordinate transform as in Assumption \ref{ass}. Moreover, let $f=f_s+f_\perp$ be a $T$-periodic external force such that
\begin{align}\label{f-ass}
\begin{aligned}
& f_s\in L^q_n(\Omega_0),\\
& f_\perp \in L^\infty_{per}(\IR;L^q_{n-1+\delta}(\Omega_0)) 
\BLACK ,
\end{aligned}
\end{align}
where $0<\delta< 1-\frac{n}{q}$.  \BLACK Then there exists a closed ball $\mathcal B \subset X_s\times X_\perp$ and  a positive constant $\epsilon_f$ such that  under the assumption 
\begin{equation}\label{f-small}
\|f_s\|_{L^q_n(\Omega_0)}+\|f_\perp\|_{L^\infty_{per}(\IR;L^q_{n-1+\delta}(\Omega_0))} \leq \epsilon_f
\end{equation}
\BLACK 
the modified Navier-Stokes system \eqref{equ:ns-new} possesses a unique $T$-periodic solution $(u,p) \cong (u_s, p_s, u_\perp, p_\perp) \in \mathcal B$ such that $(u_s,p_s)\in X_s$ and $(u_\perp,p_\perp)\in X_\perp$.
\end{thm}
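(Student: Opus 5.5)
The proof is a contraction argument on a small closed ball $\mathcal B\subset X_s\times X_\perp$, applied to the system obtained by averaging \eqref{equ:ns-new} over one period.

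\textbf{Step 1: splitting.} Write $u=u_s+u_\perp$ and apply the average $\frac1T\int_0^T\cdot\,\dtau$ to \eqref{equ:ns-new}. This yields a stationary Stokes system for $(u_s,p_s)$,
\begin{align*}
-\Delta u_s+\nabla p_s = f_s+F_s(u_s,u_\perp),\qquad \div u_s=0,\quad u_s|_{\partial\Omega_0}=0 .
\end{align*}
Subtracting it from \eqref{equ:ns-new} gives a purely oscillatory system for $(u_\perp,p_\perp)$,
\begin{align*}
\partial_t u_\perp-\Delta u_\perp+\nabla p_\perp = f_\perp+F_\perp(u_s,u_\perp),\qquad \div u_\perp=0,\quad u_\perp|_{\partial\Omega_0}=0 .
\end{align*}
Here $F_s$ and $F_\perp$ are the mean part and the mean-free part of
\begin{align*}
\sum_{|\alpha|\le2}a_\alpha\partial^\alpha u-\sum_{|\beta|\le1}b_\beta\partial^\beta u-(\nabla^{\phi(t)}-\nabla)p-u\cdot\nabla^{\phi(t)}u+(\Phi-\id)f .
\end{align*}
Given $(\tilde u_s,\tilde u_\perp)\in\mathcal B$, define $\mathcal S(\tilde u_s,\tilde u_\perp)$ as the pair of solutions of these two linear problems with right-hand sides evaluated at $(\tilde u_s,\tilde u_\perp)$.

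\textbf{Step 2: stationary part.} Solve the stationary problem with the potential-theoretic estimates for the Stokes system in exterior domains (fundamental solution plus cut-off). For data in $L^q_n$ these give
\begin{align*}
u_s\in L^\infty_{n-2}\cap\widehat H^{1,\infty}_{n-1}\cap H^{1,q}\cap\widehat H^{2;q,\infty},\qquad p_s\in\widehat H^{1;q,\infty}.
\end{align*}
Because the data are only controlled by Lorentz-type norms of the quadratic terms, I would work in weak-type spaces $L^{q,\infty}$.

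\textbf{Step 3: oscillatory part.} Represent the $T$-periodic mild solution as
\begin{align*}
u_\perp(t)=\int_{-\infty}^t e^{-(t-\tau)A}P\,F(\tau)\dtau ,
\end{align*}
where $A$ is the weighted Stokes operator. This is well defined for mean-free periodic data: by $T$-periodicity, $u_\perp(t)=(I-e^{-TA})^{-1}\int_{t-T}^t e^{-(t-\tau)A}PF\dtau$. The invertibility of $I-e^{-TA}$ on mean-free functions, and the decay of $e^{-tA}$, would come from the analytic semigroup theory in weighted spaces and from the $\mathcal H^\infty$-calculus and $BIP$ results of the authors' earlier paper. The convective term is then handled by embedding estimates for the fractional powers $A^{\theta}$.

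\textbf{Step 4: second-order perturbations (main obstacle).} The terms $a_\alpha\partial^\alpha u$ with $|\alpha|=2$ and the pressure correction $(\nabla^{\phi(t)}-\nabla)p$ cost two derivatives. They cannot be absorbed by plain semigroup smoothing. For these I would invoke a maximal $L^1$-type regularity estimate in weighted Lorentz spaces, in the form
\begin{align*}
\|\nabla^2 u_\perp\|_{L^\infty_t(L^{q,\infty}_{n-1})}\le C\,\|F\|_{\cdots}.
\end{align*}
The smallness of these terms would come from $c_\phi$ via \eqref{phi-mu} and \eqref{assumpt-phi}: the coefficients are of the form $\phi(t)-\phi(s)$ with $s$ frozen, so they are small and decay like $(1+|\xi|)^{-(n-1)}$ or $(1+|\xi|)^{-n}$. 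The Hölder continuity in time with exponent $\mu$ supplies the time regularity needed for the maximal regularity estimate. The condition $\frac1{q_1}+\frac1q<\frac1n$ makes the weighted Hölder products land in the right spaces.

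\textbf{Step 5: contraction.} Estimate the quadratic terms by products such as
\begin{align*}
L^\infty_{n-2}\cdot\widehat H^{1;q,\infty}\subset L^{q}_n\ \text{(weak)} ,
\end{align*}
and use $\delta<1-\frac nq$ for the oscillatory weight. Together with Steps 2–4 this gives
\begin{align*}
\|\mathcal S(w)\|\le C(\epsilon_f+c_\phi\|w\|+\|w\|^2),\qquad \|\mathcal S(w)-\mathcal S(w')\|\le C(c_\phi+\|w\|+\|w'\|)\,\|w-w'\| .
\end{align*}
Hence, for $c_\phi$ small, a ball of radius of order $\epsilon_f$, and $\epsilon_f$ small, $\mathcal S$ is a contraction on $\mathcal B$. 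Banach's fixed point theorem then gives existence and uniqueness in $\mathcal B$. The pressures $p_s$ and $p_\perp$ are recovered from the corresponding Stokes problems.
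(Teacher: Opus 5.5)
Your proposal follows essentially the same route as the paper: the time-mean splitting, potential-theoretic estimates for $(u_s,p_s)$, the Kozono--Nakao representation for $u_\perp$, the Yamazaki-type maximal $L^1$ estimate in weighted Lorentz spaces (its loss compensated by the H\"older continuity of $\phi$ and $a_\alpha(\cdot,0)=0$), fractional powers $A^\theta$ for the convective term, and a contraction on a small ball in $X_s\times X_\perp$; since $(\nabla^{\phi(t)}-\nabla)p$ sits in the data, the pressure must be part of the iterated unknown rather than recovered afterwards. The one step to state more carefully is Step 3: on an exterior domain $0\in\sigma(A)$, so $I-e^{-TA}$ is not boundedly invertible, and the paper instead uses the vanishing time mean of $\mathbb P N$ to replace $e^{-\tau A}$ by $e^{-\tau A}-I=-A\int_0^\tau e^{-sA}\,{\rm d}s$ and then sums $\sum_{k\ge1}Ae^{-kTA}$, whose decay $(kT)^{-1-\delta/2}$ from $L^q_{n-1+\delta}$ to $L^q_{n-1}$ is exactly why $f_\perp$ carries the extra weight $\delta$.
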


We refer to Remark \ref{rem-norms-f} (ii) below for other admissible conditions on $f$.

\BLACK

 \vspace{1ex}
Time periodic flow is one of the basic phenomena in fluid mechanics and 
has been discussed extensively. 
Serrin \cite{Serrin} started to study the time periodic problem on fixed bounded domains. Kaniel and Shinbrot \cite{Kaniel-Shinbrot} continued the work  by emphasizing the so-called reproductive property. \BLACK
On a fixed unbounded domain including exterior domains, Kozono and Nakao \cite{Kozono-Nakao} solved the time periodic problem under small time periodic external forces for dimension $n \geq 4$ by a semigroup approach. 
Then, working in weak $L^n$ spaces, Yamazaki \cite{Yamazaki} included also the three dimensional case. Okabe and Tsutsui \cite{OkabeTsutsui} removed the divergence form condition of the external force assumed in \cite{Kozono-Nakao} on the whole space case. 


For the time periodic problem with periodically moving boundary
Morimoto \cite{Morimoto} as well as Miyakawa and Teramoto \cite{Miyakawa-Teramoto} show existence of weak time periodic $L^2$ solutions. 
Then Teramoto \cite{Teramoto} proves existence of strong periodic $L^2$ solutions in bounded domains with the help of regularity theory of weak solutions provided that the periodic motion of the domain is sufficiently small.
The authors of the present paper, jointly with Kozono and Wegmann \cite{FKTW}, obtained  for bounded domains \BLACK existence of mild periodic $L^q$ solutions under suitable smallness conditions on the external forces and the periodic motion of the boundary. 
Furthermore, the authors \cite{FT-initial-value-prob} show that the periodic solutions are strong ones in a critical scale invariant space.  
Finally, the authors \cite{FT-lin-half, FT-lin-half2} prove existence of time periodic mild solutions in critical scale invariant spaces on half spaces with moving boundary. 

On the other hand, the case of exterior domains with moving boundary is more involved. In the three dimensional case the first author, Kozono and Wegmann \cite{Farwig-Kozono-Wegmann} and Eiter and Shibata \cite{Eiter-Shibata} \BLACK 
assume smallness of the motion of the boundary to get global-in-time  solutions; 
in addition, they considered the locally interacting case, {\em i.e.} $\nabla \phi-I\BLACK$ has compact support.  \BLACK
The authors of \cite{Farwig-Kozono-Wegmann} show global-in-time maximal regularity for the initial value problem and existence of strong solutions. Eiter, Kyed and Shibata \cite{Eiter-Kyed-Shibata21} study the
time periodic problem  with a periodic free boundary condition, using a partial Lagrangian coordinate,  {\em i.e.,} due to a  cut-off function 
the coordinate transform leads to genuine Lagrangian coordinates only on some ball.
%
A fundamental idea is time periodic maximal regularity established by the transference theorem of de Leeuw adapted to operator valued multipliers.  
Eiter and Shibata \cite{Eiter-Shibata} solve the time periodic moving boundary problem \eqref{equ:ns},
including the Oseen case in which a non-vanishing velocity $u_\infty$ at space infinity is allowed; then they exploit a partial Lagrangian coordinate similar as above and the transference theorem. 

{\bf New approach:} In the present paper, we solve the time periodic moving boundary problem  without any localizing effect in the coordinate transform, \BLACK {\em i.e.,} $\phi(\xi,t) \neq \xi $ for all $\xi\in\Omega_0$ is allowed.  
Using the semigroup approach we exploit a completely new method, {\em viz.} the weighted theory approach  with radially symmetric weights of type $(1+|x|^2)^{s/2}$, $s\in\IR$. \BLACK
This approach makes the moving boundary problem easier to handle because it is based on  well known potential theoretical estimates and Stokes semigroup theory.  
Moreover, we believe that this approach will be applicable to other unbounded domains,  especially with non-compact boundary like the half space. Indeed, in \cite{FT-lin-half,FT-lin-half2}, we assumed that the boundary is moving only on a compact subset of $\partial\IR^n_+$  
in order to establish weighted estimates of the Stokes semigroup on half spaces, {\em cf.} Kobayashi and Kubo \cite{Kobayashi-Kubo-half} who announced such estimates for a half space case. 
It is revealed that the weighted theory is effective also for perturbed systems of the Navier-Stokes equations; for example, we mention the moving boundary problem in this article and the stability problem of stationary solutions in \cite{Farwig-Tsuda-stab}.  
In the proof, we make use of
the $\mathscr H^\infty$-calculus of the Stokes operator in weighted spaces on exterior domains, see \cite{Farwig-Tsuda-Hinfty} and Subsect. \ref{S2.2}; for this result in spaces 
without weights we refer to  Noll and Saal \cite{NollSaal}. The $\mathcal H^\infty$-calculus not only derives the characterization of domains of fractional powers in weighted norms, but is also   useful for perturbation theory of the Stokes operator as considered in our paper \cite{FT-unif} for families of perturbations and applied in \cite{FT-lin-half}.  \BLACK

\BLACK

{\bf Difficulties:} The  non-localized impact of  the coordinate transform seems to be an essential problem to get global-in-time solutions. 
In the moving boundary problem, handling a quasi-linear term arising from the boundary condition is most difficult, {\em i.e.,}~$g \Delta u$, where $g= a_\alpha$ (the $a_\alpha$'s are given functions determined by $\phi$, see \eqref{equ:trafodgl}) in our case and $g = \int_{0}^t \nabla u(\tau) \dtau $ in the free boundary case.  
Shibata \cite[Sect. 8]{Shibata-lecturer} shows existence of global solutions to the initial value problem in an exterior domain with free boundary, using a partial Lagrangian coordinate to control this term. 
In view of this point, in the higher dimensional case {\em i.e.,}~$4\leq n$,  Oishi and Shibata \cite{Oishi-Shibata} study to remove such a localized condition and get  global-in-time solutions. 
Saal \cite{Saal, Saal6thMiss} shows existence of local-in-time solutions to the initial value problem in a exterior domain with a given moving boundary. The result is extended globally-in-time by \cite{Farwig-Kozono-Wegmann} in which such a localized condition to control perturbation terms including $g \Delta u$ is used. 
Concerning the free boundary problem globally-in-time in the half space case, a Besov space approach has been applied which does not need such localized effects, {\em cf.} Ogawa and Shimizu  \cite{OgawaShimizu},  \BLACK  Danchin, Hieber, Mucha and Tolksdorf \cite{DHMT}. 
We note that this approach can be applied to both the whole space and half space, but not  directly to our exterior domain case. In addition, we consider the time periodic problem, not the initial value problem. 
\BLACK

The authors solved the time periodic problem for the periodically moving half space in \cite{FT-lin-half, FT-lin-half2}, 
assuming that coefficients in the perturbation terms have compact support.  
The crucial point is to construct the evolution operators  of a modified $t$-dependent Stokes operator and of its adjoint, 
\BLACK  and derive global-in-time estimates of both operators. However, the evolution operator approach seems to pose new difficulties in the exterior domain case.  
Indeed, we need to use the reasonable estimate 
$$
\|\nabla^2 u\|_{L^p} \leq C\|A u\|_{L^p},
$$ 
to get uniform resolvent estimates which are required for the construction of the evolution operator.  
Thus we have to take the strong restriction $p < \frac{n}{2}$ in the exterior domain case.  This restriction makes further estimates more  difficult. 
Indeed, to control the $L^\infty$ norm of solutions, we need $L^q$ norms with $q>n$. 
Moreover, the time $L^q$-$L^p$ decay estimates of the evolution operator would be slower by the restriction $p< \frac{n}{2}$. 
This implies that in our case we can apply neither the method of Kozono and Nakao  \cite{Kozono-Nakao} nor Yamazaki type estimates \cite{Yamazaki}, which are based on suitable decay estimates, directly to the evolution operator.

{\bf Details of the strategy:} 
We solve the time periodic moving boundary problem by a classical semigroup approach.  
However, the Stokes semigroup is considered in weighted $L^q$ spaces with radial weights in the Muckenhoupt weight class.  
We adopt the following strategy: \BLACK

\vspace{1ex}

(1) Formulation 

Benefiting from recent studies about time periodic problems as in \cite{Celik-K, Eiter-Kyed-Shibata21}, we decompose solutions into a stationary part, $u_s$, and a purely oscillating part, $u_\perp$. 
The advantage of this decomposition is the choice of adequate function spaces for each part. 
In the stationary part, we  use weighted $L^\infty$ norms proposed by potential theoretical estimates, while we exploit weighted $L^q$ spaces for the oscillating part by applying $L^q$-$L^p$ estimates of the Stokes semigroup. 
We note that in our case the decomposition  technique works well in combining the potential theoretical estimates with the classical formulation used in  Kozono and Nakao \cite{Kozono-Nakao} to solve the time periodic problem. 
In \cite{Eiter-Kyed-Shibata21} it is used for the maximal time periodic regularity based on the transference theorem.  

We consider the pressure term in the formulation directly. In previous studies, for example \cite{Saal, Saal6thMiss}, Saal uses the non-local $t$-dependent family of projections $\mathbb P(t)$ to eliminate the pressure term.
However, it requires some restriction on weights which does not match the potential theoretical estimates. 
Moreover, $\mathbb P(t)$ and the projection $P_\perp$ do not commute, where $P_\perp$ is related to the decomposition method and defined in Sect. 2 below. 
Finally, we rewrite the oscillating part according to Kozono-Nakao to solve the time periodic problem by weighted $L^q$-$L^p$ decay estimates of the Stokes semigroup.  

\vspace{1ex}

(2) Analysis of the stationary part 

On the stationary part, we establish new potential theoretical estimates on exterior domains to handle the non-local terms. 
We extend the potential theoretical estimate used in \cite{Eiter-Kyed-Shibata21} to the one containing non-local terms and including the weak $L^q$ norm. 
The estimate is applicable to terms which do not have any divergence structure. 
Therefore, we can estimate nonlinear terms of the moving boundary problem in weighted $L^\infty$, $L^q$ and weak $L^q$ spaces.   

\vspace{1ex}

(3) Analysis of the pure oscillation part

Concerning the purely oscillating part, we apply $L^q$-$L^p$ decay estimates of the Stokes semigroup in weighted classes. 
This estimate is derived by uniform resolvent estimates of the Stokes operator on the Muckenhoupt weight class. 
To estimate quasi-linear terms which appear from the boundary condition, we consider the weak Lorentz class for second order derivatives of solutions with weights, and we apply a Yamazaki type estimate, in other words, a maximal $L^1$ type estimate in  Lorentz spaces ``partially'' with radial weights and negative powers, {\em i.e.,} 
\begin{equation}\label{loss}\
\int_0^\infty \tau^{\frac{n}{2}(\frac{1}{p}-\frac{1}{q})} \| A  e^{-\tau A}\BLACK u\|_{L^{q,1}_{-(n-1)}} \dtau  \leq C\|u\|_{L^{p,1}_{-(n-1)}}
\end{equation}
for $p<q$.
We apply this estimate only for the crucial quasi-linear terms in the duality argument to get closed estimates.  
The reason to exploit the duality argument is to transform $\nabla^2$ to the Stokes operator $A$ which commutes with the Stokes semigroup and allows for negative weight exponents $s$ in the uniform resolvent estimate.    
Recall that duality in Muckenhoupt classes with power $s>0$ leads to negative powers. Due to the duality argument, the weak $L^q$ norm is used in the class of solution. 
We also note that negative powers $s$ can be handled by the property of bounded purely imaginary powers ($BIP$) in weighted spaces established by the $\mathcal H^\infty$-calculus in \cite {Farwig-Tsuda-Hinfty}, see also Subsect.~\ref{S2.2}. 
On the other hand, negative powers cannot be handled in the usual $L^p$-$L^q$ decay estimate of the Stokes semigroup 
because they rely on local decay estimate or the uniform resolvent estimate which require positive exponents, see Remark \ref{why-we-need-BIP} below. 

\vspace{1ex}

 (4) Loss of regularity \BLACK

We note that there is some loss $ \tau^{\frac{n}{2}(\frac{1}{p}-\frac{1}{q})}$ for $p<q$  in \eqref{loss}. \BLACK 
Hence Yamazaki did not use this maximal $L^1$ type estimate to solve the time periodic problem in \cite{Yamazaki}. 
On the other hand, in our case this loss is compensated  by the H\"{o}lder condition in time of $\phi$,  see \eqref{phi-mu}, \eqref{assumpt-phi}, which allows the estimate to work for terms of type
$a_2 \nabla^2 u$. 
Due to this loss, we have to apply another method to estimate the convective term. 
Indeed, the $\mathcal H^\infty$-calculus plays an essential role. 
The $BIP$ property based on the $\mathcal H^\infty$-calculus  helps to \BLACK weaken the singularity of the $\nabla^2$ estimate in the time integral by regarding $\nabla^2 \sim A$ in weighted norms 
and splitting $A$ into the product of $A^{1-\theta}$ with $A^{\theta}$; here $A$ is the Stokes operator and $\theta$ is a small positive number so that $A^\theta$ can be applied to vector fields not vanishing on $\partial\Omega$; \BLACK see the proof of Proposition \ref{result-oscillatory-part-2nd-derivative} below. 
Other terms are estimated directly by the $L^q$-$L^p$ decay estimates of the Stokes semigroup. 
As for the pressure term, we apply the regularity structure of $\nabla^{\phi(t)}-\nabla^{\phi(0)}$ as shown in \cite{FT-lin-half, FT-lin-half2}.   

\vspace{1ex}

The paper is organized as follows. In Sect.~2 we introduce several function spaces, the Muckenhoupt class of $L^q$ weights, properties of the Helmholtz projection and the Stokes operator in weighted spaces, describe the coordinate transform applied to the Navier-Stokes system and reformulate the problem of time periodic solutions by an integral equation. 
Moreover, in Subsect.~\ref{S2.2}  we discuss properties of the Stokes semigroup on weighted spaces and cite results on the $\mathscr H^\infty$-calculus of the Stokes operator in weighted spaces, the property of bounded imaginary powers and further consequences, derived in \cite{Farwig-Tsuda-Hinfty}.  Further, Subsect.~\ref{S2.3} deals with the analysis of the domain of $A^\theta$ for small $\theta>0$ to prove Proposition \ref{result-oscillatory-part-2nd-derivative}. Finally Subsect.~\ref{S2.4}, \ref{S2.5} introduce the modified Navier-Stokes system and formulate a fixed point setting for the time-periodic solution. \BLACK  Sect.~3 is dedicated to the analysis of the stationary part $u_s$ in weighted $L^q$ and weak $L^q$ spaces. 
Next Sect.~4 concerns the analysis of the purely oscillatory part $u_\perp$ via semigroup theory and estimates of Yamazaki type in weighted Lorentz spaces. Based on  these tools, we estimate all nonlinear terms in Sect.~5.

\section{Preliminaries}

\subsection{Notations}\label{S2.1} 

Let $\Omega \subset \IR^n$ be either a bounded or an unbounded standard domain, {\em i.e.} either a whole or (perturbed) half space, or an exterior domain. Then 
$L^q(\Omega)$, $1\leq q \leq \infty$, denotes the usual Lebesgue space with norm $\|\cdot\|_{L^q(\Omega)} = \|\cdot\|_{L^q}$.
 Note that we frequently omit the symbol $\Omega$ in $L^q$, $\|\cdot\|_{L^q}$, {\em etc.}, of the underlying domain when it is known from the context. \BLACK 
In addition, $W^{k,q}(\Omega)$, $k\in\IN_0$, is the standard Sobolev space with norm $\|\cdot\|_{W^{k,q}(\Omega)}$.
We denote the spaces of test functions and of solenoidal test functions by 
$C^{\infty}_0(\Omega)$  and $C^{\infty}_{0,\sigma}(\Omega):=\{\varphi \in C^{\infty}_{0}(\Omega): \div \varphi=0\}$, respectively. Moreover,
$L^q_{\sigma}(\Omega):= \overline{C^{\infty}_{0,\sigma}(\Omega)}^{\|\cdot\|_{L^q(\Omega)}}$, $1<q<\infty$, is the $L^q$ space of weakly solenoidal vector fields, $u$, with vanishing normal component, $u\cdot \textsl{n}$, on $\partial\Om$.
The same notation $L^q(\Omega)$ {\em etc.} is used for both functions, vector fields and matrix-valued functions.

For a standard domain $\Omega\subset\IR^n$  with boundary of class $C^1$ we recall the Helmholtz projection, {\em i.e.,} the projection $\mathbb{P}_q\colon L^q(\Omega)\to L^q_{\sigma}(\Omega) \subset L^q(\Omega)$, $1<q<\infty$, such that the kernel of $\mathbb{P}_q$ equals the space $G_q(\Omega)$ of all weak gradient fields in $L^q(\Omega)$. We omit the index $q$ in the projection if no confusion will occur; actually, $\mathbb{P}_q u = \mathbb{P}_ru$ for all vector fields $u\in L^q(\Omega)\cap L^r(\Omega)$, $1<q,r<\infty$. Moreover, the adjoint of $\mathbb P_q$ coincides with the Helmholtz projection $\mathbb{P}_{q'}: L^{q'}(\Omega) \to L^{q'}(\Omega)$ where $\frac1q+\frac1{q'}=1$. 


\vspace*{1ex} 

Next we define the Muckenhoupt class as follows.

\begin{defi}
Let $1<q <\infty$. A weight function $0 \leq w \in L^1_{\rm loc}(\mathbb{R}^n)$ belongs to the Muckenhoupt class $\mathscr{A}_q(\IR^n) $ if $w$ satisfies 
$$
\sup_Q \bigg(\frac{1}{|Q|}\displaystyle\int_Q w \dx\bigg)\bigg(\frac{1}{|Q|}\displaystyle\int_Q w^{-1/(q-1)}\dx\bigg)^{q-1} \leq C <\infty
$$
for all cubes $Q \subset \mathbb{R}^n$, where $|Q|$ denotes the Lebesgue measure of $Q$.  
\end{defi}



Then the weighted $L^q$ space with Muckenhoupt weight $w \in \mathscr{A}_q$, $1<q < \infty$, is defined by  
\begin{equation}\label{mathbb-Lqw}
\mathbb{L}^q_w (\Omega) = \Bigg\{ u \in L^1_{{\rm loc}} (\bar{\Omega}): \|u\|_{\mathbb{L}^q_w (\Omega)}= \Bigg(\displaystyle\int_{\Omega} |u|^q w \dx\Bigg)^{1/q}<\infty\Bigg\}.  
\end{equation}
Furthermore, we denote weighted non-homogeneous and homogeneous Sobolev spaces by 
\begin{align*}
& \mathbb{W}^{k,q}_{w}(\Omega) = \{u \in \mathbb{L}^q_{w}(\Omega): \nabla^\alpha u\in \mathbb{L}^q_{w}(\Omega),\, 
|\alpha | \leq k\}, \\
& \widehat{\mathbb{W}}^{k,q}_{w}(\Omega) = \{u \in W^{k,1}_{\rm loc}(\Omega): \nabla^\alpha u\in \mathbb{L}^q_{w}(\Omega),\, 
|\alpha | = k\}
\end{align*}
with the norms 
\begin{align*}
\|u\|_{\mathbb{W}^{k,q}_{w}(\Omega)}= \Bigg(\sum_{|\alpha|\leq k} \|\nabla^\alpha u\|_{\mathbb{L}^q_{w}(\Omega)}^q\Bigg)^{1/q}, \\
\|u\|_{\widehat{\mathbb{W}}^{k,q}_{w}(\Omega)}= \Bigg(\sum_{|\alpha|= k} \|\nabla^\alpha u\|_{\mathbb{L}^q_{w}(\Omega)}^q\Bigg)^{1/q} 
\end{align*}
for $1<q < \infty$, $k \in \mathbb{N}_0$ and $w\in \mathscr{A}_q$, respectively; here $\alpha\in \IN_0^n$ is a multi-index, and $|\alpha|$ is the length of $\alpha$. 
Moreover, let $\mathbb{W}^{1,q}_{0,w}(\Omega)= \{u\in \mathbb{W}^{1,q}_{w}(\Omega): u|_{\partial\Omega}=0\}$; by standard arguments it can be proved that $C^\infty_0(\Omega)$ is dense in $\mathbb{W}^{1,q}_{0,w}(\Omega)$. \BLACK


\BLACK For weight functions of radially symmetric type 
$$ w(x) = \langle x\rangle^{\ell q} = \big(1+|x|^2\big)^{\ell q/2},\quad\ell\in\IR,$$ 
we introduce the weighted space 
$$
L^q_\ell (\Omega) := \bigg\{ u \in L^1_{\rm  loc} (\bar{\Omega}): \|u\|_{L^q_\ell (\Omega)}= \Big(\displaystyle\int_{\Omega} |u|^q \langle x\rangle^{\ell q} \dx\Big)^{1/q}<\infty\bigg\}  
$$
for $1<q < \infty$,  {\em i.e.,} 
we replace the index $w$ by $\ell$ and use the weight $\langle x\rangle^{\ell q}$ rather than $\langle x\rangle^{\ell}$ in the above integrand. 
Similarly, weighted Sobolev spaces $W^{k,p}_{\ell}$, $W^{k,p}_{0,\ell}$ are defined. 
Concerning function spaces of solenoidal vector fields we denote by $L^q_{\sigma,\ell}(\Omega)$  the closure of $C^\infty_{0,\sigma}(\Omega)$ with respect to the norm of $L^q_\ell(\Omega)$. 
Recall that $w =\langle x\rangle^{\ell q}\BLACK \in \mathscr A_q(\IR^n)$ if and only if 
$-\frac{n}{q} < \ell < n\big(1-\frac{1}{q}\big) = \frac{n}{q'}$;  see {\em e.g.} \cite[Lemma 2.3]{FS}.

Weighted $L^p$ spaces are immediately extended to weighted Lorentz spaces on $\Omega$. Let $1 < p < \infty $ and $1 \leq q\leq \infty$, and let $\ell\in\IR$ be an arbitrary weight exponent. Then  
$$ {L}^{p,q}_\ell(\Omega) = \{u \textrm{ measurable on }\Omega:  \langle x\rangle^\ell u \in L^{p.q}\} $$
is equipped with the norm $\|u\|_{{L}^{p,q}_\ell(\Omega)} = \|\langle x\rangle^\ell u\|_{L^{p,q}(\Omega)}$. 
For $1< q_0,q_1 <  \infty$ and $1 \leq r \leq \infty$ there holds the
interpolation identity 
\begin{equation}\label{Lps-interp}
\big(L^{q_0}_{\ell}(\Omega), L^{q_1}_{\ell}(\Omega) \big)_{\theta,r} = L^{q,r}_\ell(\Omega),\quad \frac1q = \frac{1-\theta}{q_0} + \frac{\theta}{q_1}, \;\;0<\theta<1,
\end{equation}
see \cite[Theorem 2]{Freitag}. 
Then the reiteration theorem of real interpolation \cite[Theorem 1.10.2]{Triebel} implies for any $1<p_0<p_1<\infty$ and $1\leq r_0,r_1\leq \infty$ that
\begin{equation}\label{Lps-reiterate}
\big(L^{p_0,r_0}_\ell(\Omega), L^{p_1,r_1}_\ell(\Omega)\big)_{\theta,q} = L^{p,q}_\ell(\Omega), \quad \frac{1}{p} = \frac{1-\theta}{p_0} + \frac{\theta}{p_1},\;\; 0<\theta<1.\end{equation}

A crucial tool for inequalities in radially  \BLACK weighted Sobolev spaces are the general Gagliardo-Nirenberg estimates as follows.

\vspace*{1ex}

\begin{prop}\label{DS} {\em (Duarte and Silva \cite[Theorem 1.7]{Du-Si})} 
Let $1 < p, q, r <  \infty$, $0\leq k_0 <k$, $\frac{k_0}{k}\leq \theta\leq 1$, and $\alpha\in (-\frac{n}{p}, \frac{n}{p'})$, $\beta\in (-\frac{n}{q}, \frac{n}{q'})$, $\gamma >-\frac{n}{r}$, 
satisfy  
\begin{equation}\label{theta-pqr-alpha-beta-N}
\theta\Big(\frac{1}{p}-\frac{k}{n}\Big) + (1-\theta)  \frac{1}{q} +\frac{\gamma}{n} \leq \frac{1}{r} -\frac{k_0-\gamma}{n} \leq  \theta\Big(\frac{1}{p}-\frac{k-\alpha}{n}\Big) + (1-\theta)\Big(\frac{1}{q}+\frac{\beta}{n}\Big)
\end{equation} 
and 
$$ \gamma \leq \theta \alpha + (1-\theta)\beta $$
with 
$$
\frac{1}{r} \leq \frac{\theta}{p}+\frac{1-\theta}{q}.
$$
Then there holds for any Schwartz function $f \in \mathcal S(\mathbb{R}^n)$ 
\begin{align}\label{GNI-weighted-N}  
\|(-\Delta)^{k_0/2} f \|_{L^r_\gamma} 
\leq C\|(-\Delta)^{k/2} f\|_{L^p_\alpha}^\theta \, \|f\|_{L^q_\beta}^{1-\theta}.
\end{align}   
\end{prop}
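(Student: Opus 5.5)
The natural route is to reduce \eqref{GNI-weighted-N} to two ingredients: the weighted Hardy--Littlewood--Sobolev inequality of Stein--Weiss type for Riesz potentials with power weights, and a multiplicative interpolation (Hölder/Stein-interpolation) argument that combines the two endpoint weights $\langle x\rangle^{\alpha}$ and $\langle x\rangle^{\beta}$.

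\textbf{Step 1: the endpoint $\theta=1$.} Since $\theta=1$ forces no dependence on $f$ itself, one writes $(-\Delta)^{k_0/2}f = I_{k-k_0}\,(-\Delta)^{k/2}f$ with the Riesz potential $I_s$, $s=k-k_0>0$. For homogeneous weights $|x|^{\gamma}$, $|x|^{\alpha}$ the Stein--Weiss inequality
$$\big\||x|^{\gamma} I_s g\big\|_{L^r}\le C\big\||x|^{\alpha} g\big\|_{L^p}$$
holds when $\gamma\le\alpha$, $\alpha<\frac{n}{p'}$, $\gamma>-\frac nr$, $\frac1r=\frac1p-\frac{s-\alpha+\gamma}{n}$, $r\ge p$. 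For the inhomogeneous weights $\langle x\rangle^{\ell}$ I would split $\IR^n$ into $B_2$ and its complement. Near the origin the weights are comparable to $1$, and the unweighted Hardy--Littlewood--Sobolev inequality together with Hölder on a bounded set gives the whole range $\frac1p-\frac sn\le\frac1r$. Far away the weights behave like $|x|^{\ell}$, and a lower power of $\langle x\rangle$ on the left is allowed. This mechanism explains why \eqref{theta-pqr-alpha-beta-N} consists of two inequalities rather than a single scaling identity: the left one is the local (unweighted Sobolev) condition and the right one is the condition at infinity.

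\textbf{Step 2: general $\frac{k_0}{k}\le\theta\le1$.}

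1. I would treat the family $z\mapsto (-\Delta)^{(k_0 - k z)/2}$ (suitably normalized by $e^{z^2}$) as an analytic family acting on $(-\Delta)^{k/2}f$. By Step 1 and the boundedness of imaginary powers of $-\Delta$ on $\mathbb A_p$-weighted spaces (valid since $\alpha\in(-\frac np,\frac n{p'})$ and $\beta\in(-\frac nq,\frac n{q'})$ are exactly the Muckenhoupt ranges), it is bounded on the line $\mathrm{Re}\,z=0$ into $L^{r_1}_{\gamma_1}$ from $L^p_\alpha$.
2. On the line $\mathrm{Re}\,z=1$ the operator acting on $(-\Delta)^{k/2}f$ is essentially $(-\Delta)^{k_0/2}f$ measured by $f$ itself, which is controlled in $L^{q}_{\beta}$ via the same $BIP$ property (the case $k_0=0$ being trivial).
3. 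Stein's interpolation theorem for weighted $L^p$ spaces (Stein--Weiss: the interpolated weight is $w_0^{1-\theta}w_1^{\theta}$) then yields a bound in $L^{\tilde r}_{\tilde\gamma}$ with $\frac1{\tilde r}=\frac{\theta}{p}+\frac{1-\theta}{q}$ and $\tilde\gamma=\theta\alpha+(1-\theta)\beta$.
4. The multiplicative form $\|\cdot\|^{\theta}\|\cdot\|^{1-\theta}$ follows by the usual rescaling $f\mapsto f(\lambda\,\cdot)$ near the origin plus optimization in $\lambda$, or alternatively by the three-lines lemma applied directly to the functional.

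\textbf{Step 3: passing to the stated $(r,\gamma)$.} The hypotheses $\frac1r\le\frac\theta p+\frac{1-\theta}q$ and $\gamma\le\theta\alpha+(1-\theta)\beta$ allow one to pass from $L^{\tilde r}_{\tilde\gamma}$ to $L^{r}_{\gamma}$ by Hölder's inequality. Outside a ball one trades the surplus weight $\langle x\rangle^{\tilde\gamma-\gamma}$ for integrability, which is exactly encoded in \eqref{theta-pqr-alpha-beta-N}. Locally, $r\ge\tilde r$ must be recovered from the Sobolev gain in Step 1, and $\gamma>-\frac nr$ guarantees local integrability of the target weight.

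\textbf{Main obstacle.} The hard part is Step 2: interpolating simultaneously in the exponent and in two different inhomogeneous weights while keeping the full range of \eqref{theta-pqr-alpha-beta-N}, not only its scaling-critical line. My first attempt would be the near/far decomposition with a smooth cutoff $\chi(|x|)$. This treats the region $|x|\le 2$ by the classical unweighted Gagliardo--Nirenberg inequality, and the region $|x|\ge1$ by the homogeneous Caffarelli--Kohn--Nirenberg/Stein--Weiss version. The commutator terms produced by the cutoff contain lower-order derivatives on an annulus and would be absorbed by the unweighted estimate there.
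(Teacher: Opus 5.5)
The paper gives no proof of this proposition; it is quoted from Duarte--Silva. So your sketch has to stand on its own. Step 1 is a reasonable plan, but Steps 2 and 3 contain genuine errors and do not fit together.

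\textbf{Step 2.} The bound you claim on $\mathrm{Re}\,z=1$ is false. The $BIP$ property of $-\Delta$ on $\mathscr A_q$-weighted spaces only bounds the imaginary powers $(-\Delta)^{it}$. It can never control $(-\Delta)^{k_0/2}f$ by $\|f\|_{L^q_\beta}$ when $k_0>0$. (As written, your family applied to $(-\Delta)^{k/2}f$ even produces $(-\Delta)^{(k+k_0)/2}f$ at $z=0$.)

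\textbf{Step 3.} You try to pass from $L^{\tilde r}_{\tilde\gamma}$ to $L^r_\gamma$ with $r\ge\tilde r$ by H\"older's inequality. This cannot work: H\"older (trading surplus weight for integrability) only lowers the integrability exponent on $\IR^n$. Raising $\tilde r$ to $r$ requires smoothing, i.e.\ a derivative to spend. In your scheme the interpolated quantity is already $(-\Delta)^{k_0/2}f$, so no Sobolev gain is left. A symptom of this is that the hypothesis $\theta\ge k_0/k$ is never used in your argument.

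\textbf{Other problems.} The rescaling $f\mapsto f(\lambda\,\cdot)$ in item 4 is unavailable, because $\langle x\rangle^\ell$ is not dilation invariant. The cut-off strategy in your last paragraph produces additive lower-order terms, and for non-integer orders nonlocal commutators. These destroy the purely multiplicative form and cannot be scaled away, for the same reason.

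\textbf{Missing idea: interpolate at derivative order $\theta k$, not $k_0$.} Apply the three-lines lemma to $z\mapsto e^{z^2}\int(-\Delta)^{zk/2}f\,g_z\dx$, with the usual analytic family $g_z$ for the dual weighted spaces. On $\mathrm{Re}\,z=0$ use $BIP$ on $L^q_\beta$; on $\mathrm{Re}\,z=1$ use $BIP$ on $L^p_\alpha$ applied to $(-\Delta)^{k/2}f$. The change-of-measure rule then gives the multiplicative bound directly:
\[
\|(-\Delta)^{\theta k/2}f\|_{L^{\tilde r}_{\tilde\gamma}}\le C\|(-\Delta)^{k/2}f\|_{L^p_\alpha}^{\theta}\|f\|_{L^q_\beta}^{1-\theta},\qquad \frac1{\tilde r}=\frac\theta p+\frac{1-\theta}q,\quad \tilde\gamma=\theta\alpha+(1-\theta)\beta .
\]
Next write $(-\Delta)^{k_0/2}f=I_{s}(-\Delta)^{\theta k/2}f$ with $s=\theta k-k_0\ge 0$; this is exactly where $\theta\ge k_0/k$ enters. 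Then apply your Step 1 with this $s$, from $(\tilde r,\tilde\gamma)$ to $(r,\gamma)$. The hypotheses of the proposition are precisely the conditions Step 1 needs:
\begin{itemize}
\item the left inequality in \eqref{theta-pqr-alpha-beta-N} reads $\frac1{\tilde r}-\frac sn\le\frac1r$;
\item the right inequality reads $\frac1r\le\frac1{\tilde r}-\frac sn+\frac{\tilde\gamma-\gamma}{n}$;
\item $\gamma\le\tilde\gamma$ and $r\ge\tilde r$ are the remaining two assumptions;
\item $-\frac n{\tilde r}<\tilde\gamma<\frac n{\tilde r'}$ follows by convexity from the Muckenhoupt ranges of $\alpha,\beta$, and $s<n$ follows from these.
\end{itemize}
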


\vspace{1ex}

To apply Proposition \ref{DS} also to exterior domains we need special extension operators that take into account estimates in homogeneous function spaces. 
Chua \cite{Chua} proved the extension property for homogeneous weighted Sobolev spaces for general $(\varepsilon,\infty)$ domains, including smooth exterior domains as follows. Note that in  Lemma \ref{extention} below $w$ denotes an arbitrary Muckenhoupt weight in $\mathscr{A}_q$. \BLACK

\vspace{1ex}

\begin{lem}\label{extention} {\rm(\cite[Theorem 1.5]{Chua}, \cite[Theorem 2.2]{FroehII})}
Let $1<q <\infty$, $w\in \mathscr{A}_q$, and $k_1<\ldots<k_N\in\mathbb N_0$. Further let $\Omega\subset\IR^n$ be an exterior Lipschitz domain. \BLACK Then there exists a linear extension operator $E: \bigcap_{i=1}^N \widehat{\mathbb{W}}^{k_i,q}_{w}(\Omega)\rightarrow  \bigcap_{i=1}^N \widehat{\mathbb{W}}^{k_i,q}_{w}(\mathbb{R}^n)$ such that 
$$
\|\nabla^{k_i} Eu \|_{\mathbb{L}^q_w(\mathbb{R}^n)} \leq C_i \|\nabla^{k_i} u\|_{\mathbb{L}^q_w(\Omega)}
$$
for all $i=1, \ldots,N$ and $u \in \bigcap_{i=1}^N \widehat{\mathbb{W}}^{k_i,q}_{w}(\Omega)$. 
\end{lem}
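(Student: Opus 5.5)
The extension Lemma \ref{extention} is cited from the literature, so I would prove it by a Stein/Jones-type construction adapted to homogeneous norms. Take $R>0$ with $\IR^n\setminus\Omega\subset B_R$, and split $u$ near the boundary from $u$ near infinity. The key issue is that $u$ lies only in homogeneous spaces: lower derivatives need not be weighted-$L^q$ integrable, so we cannot simply truncate. We only need to extend across the bounded complement $\IR^n\setminus\Omega$, however, and on the bounded annulus $\Omega_{R+1}=\Omega\cap B_{R+1}$ all weights $w\in\mathscr A_q$ are comparable in the relevant sense. Concretely, $w^{-1/(q-1)}\in L^1_{\rm loc}$, so $\mathbb L^q_w(\Omega_{R+1})\hookrightarrow L^1(\Omega_{R+1})$.

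\textbf{Step 1: normalization by a polynomial.} For the smallest order $k_1$, let $\pi$ be the polynomial of degree $<k_N$ obtained by averaged Taylor polynomials over a ball $B\subset\Omega_{R+1}$. Choose $\pi$ so that $v=u-\pi$ has vanishing means of all derivatives of order $<k_N$ on $B$. A weighted Poincaré inequality for $\mathscr A_q$ weights on the bounded Lipschitz (John) domain $\Omega_{R+1}$ then gives
\[
\|\nabla^j v\|_{\mathbb L^q_w(\Omega_{R+1})}\le C\|\nabla^{k_i}u\|_{\mathbb L^q_w(\Omega_{R+1})},\qquad j\le k_i,
\]
separately for each $i$. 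To get this for each $i$ simultaneously, I would instead use a single degree-$(k_1-1)$ Taylor normalization and iterate Poincaré from order $k_i$ down to $k_1$. Where this fails, I would cite the Chua-type inequality.

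\textbf{Step 2: extension across the obstacle.} Apply a Stein universal extension operator $E_0$ for Lipschitz domains to $\chi v$, where $\chi$ is a cutoff equal to $1$ on $B_{R+1/2}$. The operator $E_0$ is bounded on $\mathbb W^{k,q}_w$ for $\mathscr A_q$ weights, since it is built from averaged reflections and Whitney decompositions that commute with the Hardy–Littlewood maximal function, which is bounded on $\mathbb L^q_w$. Then set
\[
Eu=\pi+E_0(\chi v)+(1-\chi)v
\]
on $\IR^n$. This agrees with $u$ on $\Omega$, and $\nabla^{k_i}\pi=0$ whenever $k_i\ge\deg\pi+1$. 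The Leibniz terms from $\chi$ involve $\nabla^j v$ on the compact annulus, which Step 1 controls. For $k_i$ below $\deg\pi$ the polynomial part must be handled by degree choices, so I would take $\deg\pi<k_1$.

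\textbf{Main obstacle.} The hardest step is the weighted boundedness of $E_0$, together with a Poincaré inequality giving all $k_i$ simultaneously with one normalization. For both, I would rely on the $\mathscr A_q$ maximal-function theory, following Chua's treatment of $(\varepsilon,\infty)$ domains.
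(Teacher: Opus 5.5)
There is a genuine gap in the step you finally settle on, namely $\deg\pi<k_1$ together with ``iterating Poincar\'e from order $k_i$ down to $k_1$''.

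A Poincar\'e inequality bounding $\nabla^j v$ by $\nabla^{k_i}u$ on $\Omega_{R+1}$ requires subtracting a polynomial of degree $k_i-1$. With $\deg\pi<k_1$ you have $\nabla^{k_1}v=\nabla^{k_1}u$, and the bound $\|\nabla^{k_1}u\|_{\mathbb L^q_w(\Omega_{R+1})}\le C\|\nabla^{k_i}u\|_{\mathbb L^q_w(\Omega)}$ is false. For a counterexample take $w\equiv1$, $q>n$, $k_2=k_1+1$, $P(x)=x_1^{k_1}$, and $u=\eta_L P$. Here $\eta_L=\eta(\cdot/L)$ with $\eta=1$ on $B_1$ and $\supp\eta\subset B_2$. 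Then $\nabla^{k_2}u$ lives on $\{L\le|x|\le 2L\}$ with $|\nabla^{k_2}u|\le CL^{-1}$, so $\|\nabla^{k_2}u\|_{L^q(\Omega)}\le CL^{n/q-1}\to0$. Yet $\nabla^{k_1}u=\nabla^{k_1}P\neq0$ on $\Omega_{R+1}$ for every $L>R+1$.

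The operator itself fails too, not just your estimate. For $L>R+1$ neither $\pi$ nor $\chi v$ depends on $L$, so $Eu=\pi+E_0(\chi v)$ on $\IR^n\setminus\Omega$ is independent of $L$. The bound for $i=2$ therefore forces $\nabla^{k_1+1}E_0(\chi v)=0$ there. In other words, $E_0$ would have to extend the degree-$k_1$ polynomial $P-\pi$ by itself; for $k_1=0$ this means $E_0(\chi)\equiv1$ on the obstacle. A Stein operator glued from local reflections by a partition of unity does not reproduce polynomials in general, and no ``Chua-type inequality'' can repair an estimate that is false.

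The normalization you discarded is the right one, provided you use Sobolev's averaged Taylor polynomial $\pi=Q^{k_N-1}u$ with a smooth weight $\psi\in C_0^\infty(B)$. (With plain means on $B$ it would indeed work only separately for each $i$.) Two facts make it work for all $i$:
\begin{itemize}
\item $\nabla^{k_i}Q^{k_N-1}u=Q^{k_N-1-k_i}(\nabla^{k_i}u)$.
\item Every term of $Q^{k_N-1}u-Q^{k_i-1}u$ contains $\partial^\alpha u$ with $|\alpha|\ge k_i$, and can be integrated by parts onto $\psi$ until only $\nabla^{k_i}u$ remains.
\end{itemize}
Hence both $\nabla^{k_i}\pi$ and $\pi-Q^{k_i-1}u$ are bounded in $C^{k_N}(\overline{B_{R+1}})$ by $C\|\nabla^{k_i}u\|_{L^1(B)}\le C\|\nabla^{k_i}u\|_{\mathbb L^q_w(B)}$. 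Combined with the weighted Poincar\'e inequality for $u-Q^{k_i-1}u$ on $\Omega_{R+1}$, this yields
\[
\|\nabla^{k_i}\pi\|_{\mathbb L^q_w(B_R)}+\sum_{j\le k_i}\|\nabla^j v\|_{\mathbb L^q_w(\Omega_{R+1})}\le C\|\nabla^{k_i}u\|_{\mathbb L^q_w(\Omega)}
\]
for all $i$ at once. That $\nabla^{k_i}\pi\neq0$ is harmless: $Eu=u$ on $\Omega$, so $\pi$ only matters on the bounded set $\IR^n\setminus\Omega$. Your Step 2 then goes through, provided you take the $\mathbb W^{k,q}_w$-boundedness of the local extension operator for $w\in\mathscr A_q$ from Chua, not from the maximal-function heuristic you give.

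The paper gives no proof here and simply quotes Chua and Fr\"ohlich. Their Jones-type operator uses polynomial approximations of degree $k_N-1$ on reflected Whitney cubes. Local polynomial reproduction, which is exactly what your construction lacked, is therefore built in, and the argument works on general $(\varepsilon,\infty)$ domains. The repaired version of your route instead uses the boundedness of the obstacle to get away with a single global normalization.
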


\begin{lem}\label{Helmh}{\rm \cite[Theorem 1.3, Corollary 4.4]{FS}}
For an exterior domain $\Omega_0 \subset\IR^n$ with boundary of class $C^1$, all $1<q<\infty$ and weight exponents $-\frac{n}{q}<\ell<\frac{n}{q'}$ there exists the bounded Helmholtz projection 
$$ \mathbb P = \mathbb P_{q,\ell,\Omega_0}: L^q_\ell(\Omega_0) \to L^q_{\sigma,\ell}(\Omega_0) $$
with null space $\nabla \widehat H^{1,q}_{\ell}(\Omega_0)$ and range $L^q_{\sigma,\ell}(\Omega_0)$. The adjoint of $\mathbb P_{q,\ell}$ equals $\mathbb P_{q',-\ell}$ on $L^{q'}_{-\ell}(\Omega_0)$.

Similar results hold for the case of the whole space $\IR^n$. 
\end{lem}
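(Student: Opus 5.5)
The plan is to reduce the lemma to the solvability of the weak Neumann problem in weighted spaces. For $f\in L^q_\ell(\Omega_0)$ we look for $p\in\widehat H^{1,q}_\ell(\Omega_0)$, unique up to constants, such that
\begin{equation*}
(\nabla p,\nabla\varphi)=(f,\nabla\varphi)\quad\text{for all }\varphi\in \widehat H^{1,q'}_{-\ell}(\Omega_0),\qquad \|\nabla p\|_{L^q_\ell}\le C\|f\|_{L^q_\ell}.
\end{equation*}
Once this is available we set $\mathbb P f:=f-\nabla p$. Then $\mathbb P$ is a bounded projection whose kernel is $\nabla\widehat H^{1,q}_\ell(\Omega_0)$. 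Its range consists of those $u\in L^q_\ell$ that annihilate every gradient in $L^{q'}_{-\ell}$.

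Throughout we use the duality $(L^q_\ell)'=L^{q'}_{-\ell}$. We also use that $\langle x\rangle^{\ell q}\in\mathscr A_q$ if and only if $\langle x\rangle^{-\ell q'}\in\mathscr A_{q'}$, which is exactly the condition $-\frac nq<\ell<\frac n{q'}$.

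\textbf{Whole space.} On $\IR^n$ we take $\nabla p=\nabla\Delta^{-1}\div f=-R\otimes R\, f$, with $R$ the Riesz transforms. These are Calder\'on--Zygmund operators and hence bounded on $L^q_w$ for every $w\in\mathscr A_q$. This gives existence and the estimate. For uniqueness, a harmonic $p$ with $\nabla p\in L^q_\ell(\IR^n)$ has tempered gradient, so $\nabla p$ is a polynomial. The admissible range of $\ell$ excludes nonzero constants and higher-degree polynomials from $L^q_\ell$, so $\nabla p=0$. The same argument settles the case of $\IR^n$ mentioned at the end of the lemma.

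\textbf{Exterior domain.} We use a cut-off $\chi$ equal to $1$ near $B_R$ and split $f$ into $\chi f$ and $(1-\chi)f$.
\begin{enumerate}
\item The far part $(1-\chi)f$ is treated by the whole-space projection.
\item The near part lives in a bounded domain $\Omega_0\cap B_{2R}$. There the weight is bounded above and below, so the classical unweighted Neumann theory for $C^1$ domains applies.
\item Gluing the two local solutions produces compactly supported error terms. The solution operator therefore becomes identity plus a compact operator (by Rellich on bounded sets, via Lemma \ref{extention} to move between $\Omega_0$ and $\IR^n$). Fredholm theory then gives existence once uniqueness is known.
\end{enumerate}

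\textbf{Uniqueness in the exterior domain.} Suppose $\nabla p\in L^q_\ell$ and $(\nabla p,\nabla\varphi)=0$ for all admissible $\varphi$. Then $p$ is harmonic with a homogeneous Neumann condition, and we must show $\nabla p=0$. We would try first the standard route:
\begin{enumerate}
\item Use regularity to place $\nabla p$ also in an unweighted $L^r$ space. The weight range $\ell>-n/q$ allows an embedding $L^q_\ell\subset L^r$ away from the origin for suitable $r$; compare Proposition \ref{DS} applied to the extension from Lemma \ref{extention}.
\item Apply the known unweighted uniqueness results for the Neumann problem in exterior domains.
\end{enumerate}
I expect this uniqueness step, and the matching dense-range step in $L^{q'}_{-\ell}$, to be the main obstacle. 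The reason is that the admissible $\ell$ may be negative, so $\nabla p$ need not decay, and the argument must rely on the Muckenhoupt bounds near infinity instead. Once uniqueness holds for both $(q,\ell)$ and $(q',-\ell)$, Fredholm theory gives existence.

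\textbf{Range and adjoint.} The range equals $L^q_{\sigma,\ell}$, the closure of $C^\infty_{0,\sigma}$. To see this, note that any $g\in L^{q'}_{-\ell}$ annihilating $C^\infty_{0,\sigma}$ is a gradient (de Rham), and is therefore annihilated by $\mathbb P f$; Hahn--Banach then gives density. For the adjoint, the identity
\begin{equation*}
(\mathbb P_{q,\ell}f,g)=(\mathbb P_{q,\ell}f,\mathbb P_{q',-\ell}g)=(f,\mathbb P_{q',-\ell}g)
\end{equation*}
follows from the mutual orthogonality of solenoidal fields and gradients in the dual pairing. Hence $\mathbb P_{q,\ell}^*=\mathbb P_{q',-\ell}$.
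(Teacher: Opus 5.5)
\textbf{Verdict: there is a genuine gap, in the uniqueness step.}

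Your overall architecture is the standard one behind \cite{FS}, which the paper simply quotes without proof:
\begin{itemize}
\item reduce to the weak Neumann problem;
\item solve it on the whole space by $-R\otimes R$, which is bounded on $\mathscr A_q$-weighted spaces;
\item localize with a compact remainder;
\item get existence from an a priori estimate together with uniqueness for both $(q,\ell)$ and $(q',-\ell)$;
\item obtain the range by de Rham and Hahn--Banach, and the adjoint by orthogonality.
\end{itemize}
The whole-space, range and adjoint parts are fine.

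The gap is the step you flag yourself: uniqueness in the exterior domain. Everything else rests on it: existence via the Fredholm argument, the identity $\mathbb P u=u$ for $u\in C^\infty_{0,\sigma}$, and hence the description of the range. The route you suggest does not work. For $\ell\ge 0$ the trivial embedding $L^q_\ell\subset L^q$ suffices. For $\ell<0$, however, there is no embedding of $L^q_\ell$ into any unweighted $L^r$ near infinity. Take unit bumps centred at points $x_k$ with $|x_k|=2^k$: their sum lies in $L^q_\ell$ for every $\ell<0$, but in no $L^r$ with $r<\infty$. Proposition \ref{DS} does not help either, since it needs control of $p$ or of higher derivatives and again produces weighted norms. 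No argument using only function spaces can put $\nabla p$ into an unweighted space.

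\textbf{The missing idea: harmonicity gives pointwise decay.} Testing with $\varphi\in C^\infty_0(\Omega_0)$ shows that $p$ is harmonic. For $|x|$ large, apply the mean value property on $B=B(x,|x|/2)$, where $\langle y\rangle\sim|x|$:
\[
|\nabla p(x)|\le |B|^{-1/q}\,\|\nabla p\|_{L^q(B)}\le C\,|x|^{-\frac nq-\ell}\,\|\nabla p\|_{L^q_\ell(B)}\longrightarrow 0\quad(|x|\to\infty).
\]
This decays precisely because $\ell>-\frac nq$.

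Then proceed as follows.
\begin{enumerate}
\item Near infinity, split $p$ into an entire harmonic function plus a part whose gradient is $O(|x|^{1-n})$. Since $\nabla p\to 0$, Liouville's theorem makes the entire part constant.
\item Test the weak Neumann identity with a radial cut-off $\chi$ equal to $1$ near $\partial\Omega_0$. This is admissible because $\nabla\chi$ has compact support. It shows that the flux $\int_{|x|=r}\partial_r p\,{\rm d}\omega$ vanishes, so the $|x|^{2-n}$ term drops out and $\nabla p=O(|x|^{-n})$.
\item Hence $\nabla p\in L^r(\Omega_0)$ for every $1<r\le q$, since the weight is harmless on bounded sets. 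The unweighted uniqueness result you wanted to invoke then gives $\nabla p=0$.
\item Since $-\ell>-\frac{n}{q'}$, the same argument gives uniqueness for $(q',-\ell)$, which you need for surjectivity.
\end{enumerate}
Alternatively, compare $(1-\chi)p$ with the Newtonian potential of its compactly supported Laplacian, and use your whole-space Liouville argument on the difference.

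Without this step the Fredholm argument does not close.
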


In view of the unboundedness of domains  weighted \BLACK homogeneous function spaces are introduced with their density  and interpolation properties. For the whole space $\IR^n$ homogeneous Riesz potential spaces (Sobolev spaces) $\widehat{\mathbb{H}}^{\kappa,q}_w(\IR^n)$ are defined as distributions in the space $Z'(\IR^n) = \mathcal S'(\IR^n)/\Pi(\IR^n)$ where $\mathcal S'(\IR^n)$ denotes the space of Schwartz' tempered distributions and $\Pi(\IR^n)$ the set of all polynomials over $\IR^n$. Then  for any $\kappa\in\IR$, $1<q<\infty$ and $w\in\mathscr A_q$
$$
\widehat{\mathbb{H}}^{\kappa,q}_w(\IR^n) = \Big\{f\in Z'(\IR^n): \|f\|_{\widehat H^{\kappa,q}_w} = \Big(\int_{\IR^n} \big|\mathcal F^{-1}(|\xi|^\kappa \hat f(\xi))(x)\big|^q \,w(x) \dx \Big)^{1/q} <\infty\Big\}. 
$$
See \cite[Chapter 5]{Triebel2010} for details, where these spaces are introduced  - without weights - as the class $\dot F^{\kappa;q,2}(\IR^n)$. 
If $\kappa=k\in\IN$,  $\widehat{\mathbb{H}}^{k,q}_w(\IR^n)$ coincides with the classical homogeneous Sobolev space of functions $f$ such that - with equivalent norms - $\sum_{|\alpha|=k} \| \nabla^\alpha f\|_{q,w}<\infty$. 
The drawback of the spaces 
$\widehat{\mathbb{H}}^{\kappa,q}_w(\IR^n)$  
is the fact that elements are uniquely determined only up to polynomials. 
In view of interpolation of spaces $\mathbb L^r_v(\IR^n)$ and $\widehat{\mathbb{H}}^{\kappa,q}_w(\IR^n)$ we identify $\mathbb L^r_v(\IR^n)$, $1\leq r<\infty$, with $(\mathbb L^r_v(\IR^n) + \Pi(\IR^n))/\Pi(\IR^n)$.

For weighted \BLACK homogeneous spaces defined on an  exterior domain $\Omega \subset\IR^n$ similar results are stated.  Recall the definition
$$ \widehat{\mathbb{H}}^{k,q}_w(\Omega) := \big\{u: u=v\big|_{\Omega},\, v\in \widehat{\mathbb{H}}^{k,q}_w(\IR^n)\big\}, \quad 
\|u\|_{\widehat{\mathbb{H}}^{k,q}_w(\Omega)}: = \inf_{v,\, u=v|_{\Omega}} \|v\|_{\widehat{\mathbb{H}}^{k,q}_w(\IR^n)}, $$
for $k\in\IN$ and, in case of vanishing boundary values,
$$  \widehat{\mathbb{H}}^{k,q}_{w,0}(\Omega) := \overline{ C^\infty_0(\Omega)}^{\widehat{\mathbb H}^{k,q}_{w}(\Omega)}. $$
Moreover,  for $\kappa=k+\theta$ where $k\in \IN_0$ and $0<\theta<1$ let 
\begin{equation*}
\widehat{\mathbb H}^{\theta,q}_{w,0}(\Omega) := \big[\widehat{\mathbb H}^{k,q}_{w,0}(\Omega), \widehat{\mathbb H}^{k+1,q}_{w,0}(\Omega)\big]_\theta.
\end{equation*}  
Finally, for $k \in \mathbb{N}$, we define weighted \BLACK homogeneous $L^{q,\infty}$-Sobolev spaces by 
$$ \widehat{\mathbb{H}}^{k,q;\infty}_{w}(\Omega) = \{u \in H^{k,1}_{\rm loc}(\Omega): \nabla^\alpha u\in \mathbb{L}^{q,\infty}_{w}(\Omega),\, 
|\alpha | = k\} $$
%
%

Concerning spaces of solenoidal vector fields  $\mathbb L^q_{\sigma,w}(\Omega)$ and $L^q_{\sigma,\ell}(\Omega)$ are defined as the closure of $C^\infty_{0,\sigma}(\Omega)$ with respect to the norm of $\mathbb L^q_{w}(\Omega)$ and $L^q_\ell(\Omega)$, respectively.
The subspace of solenoidal vector fields in $\widehat{\mathbb{H}}^{1,q}_{w,0}(\Omega)$ is similarly introduced by
\begin{align}\label{spaces}
 \widehat{\mathbb{H}}^{1,q}_{\sigma,w,0}(\Omega) & :=  \overline{C^\infty_{0,\sigma}(\Omega)}^{\|\nabla \cdot\|_{L^q_w(\Omega)}},
\end{align} 
 and, by complex interpolation, for $0<\theta<1$, 
\begin{equation}\label{spaces2}
\widehat{\mathbb{H}}^{\theta,q}_{\sigma,w,0}(\Omega) := [\mathbb L^q_{\sigma,w}(\Omega), \widehat{\mathbb{H}}^{1,q}_{\sigma,w,0}(\Omega)]_\theta.
\end{equation}
Obviously there holds the identity
$$
\mathbb L^q_{\sigma,w}(\Omega) \cap \widehat{\mathbb H}^{1,q}_{\sigma,w,0}(\Omega)  = {\mathbb H}^{1,q}_{\sigma,w,0}(\Omega).$$
%
Homogeneous spaces with weights $\langle\cdot\rangle^{\ell  q}$ are defined analogously. For example 
$$\widehat H^{1,q}_{\sigma,\ell,0}(\Omega) := \overline{C^\infty_{0,\sigma}(\Omega)}^{\|\nabla\cdot\|_{L^q_\ell(\Omega)}}. $$

Finally, we define  weighted \BLACK inhomogeneous fractional Sobolev spaces of solenoidal vector fields for $0< \theta \leq 1$ by 
\begin{align}\label{inhomogeneous space}
H^{\theta,q}_{\sigma,\ell}(\Omega) := H^{\theta,q}_{\ell}(\Omega) \cap L^q_{\sigma,\ell}(\Omega).
\end{align}
ignoring the usual boundary condition $u=0$, but keeping $u\cdot \textsl{n} =0$ on $\partial\Omega$. Obviously, the standard Sobolev spaces of solenoidal vector fields vanishing on the boundary satisfy \BLACK $
H^{\theta,q}_{\sigma,\ell,0}(\Omega) = H^{\theta,q}_{\ell,0}(\Omega) \cap L^q_{\sigma,\ell}(\Omega)$.

\begin{prop}{\rm \cite[Proposition 2.4]{Farwig-Tsuda-Hinfty}}\label{dense Htheta}
Let $\Omega\subset \IR^n$ be an exterior domain as above. Then, for each $1<q<\infty$, $0<\theta<1$ and $w\in\mathscr A(\IR^n)$,  the set $C^\infty_{0,\sigma}(\Omega)$ is dense in the interpolation space $\big[\mathbb L^q_{\sigma,w}(\Omega), \widehat{\mathbb{H}}^{1,q}_{\sigma,w,0}(\Omega)\big]_\theta$, {\em i.e.} by definition in $\widehat{\mathbb{H}}^{\theta,q}_{\sigma,w,0}(\Omega)$. 

By analogy, $C^\infty_{0}(\Omega)$ is dense in $\big[\mathbb L^q_{w}(\Omega), \widehat{\mathbb{H}}^{1,q}_{w,0}(\Omega)\big]_\theta = \widehat{\mathbb{H}}^{\theta,q}_{w,0}(\Omega)$.  
\end{prop}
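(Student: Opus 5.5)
The plan is to reduce the density claim on the interpolation space to density in the endpoint spaces, together with a standard interpolation argument. First, recall the general fact that for a compatible couple $(X_0,X_1)$ the intersection $X_0\cap X_1$ is dense in the complex interpolation space $[X_0,X_1]_\theta$ for $0<\theta<1$ (Bergh--L\"ofstr\"om, Thm.~4.2.2; Triebel, 1.9.3). With $X_0=\mathbb L^q_{\sigma,w}(\Omega)$ and $X_1=\widehat{\mathbb H}^{1,q}_{\sigma,w,0}(\Omega)$, it therefore suffices to show that every $u\in X_0\cap X_1$ can be approximated by elements of $C^\infty_{0,\sigma}(\Omega)$ in the norm $\|u\|_{X_0\cap X_1}=\max\{\|u\|_{X_0},\|u\|_{X_1}\}$. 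This norm dominates the $[X_0,X_1]_\theta$-norm, since $X_0\cap X_1\hookrightarrow[X_0,X_1]_\theta$ continuously.

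By the identity stated just before \eqref{inhomogeneous space}, $X_0\cap X_1=\mathbb H^{1,q}_{\sigma,w,0}(\Omega)$ with equivalent norms. The key step is to prove that $C^\infty_{0,\sigma}(\Omega)$ is dense in $\mathbb H^{1,q}_{\sigma,w,0}(\Omega)$ in the full norm $\|u\|_{\mathbb L^q_w}+\|\nabla u\|_{\mathbb L^q_w}$. I would argue in two steps.

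\emph{Cut-off at infinity.} Take $\chi_R(x)=\chi(x/R)$ and set $u_R=\chi_R u-B_R(u\cdot\nabla\chi_R)$, where $B_R$ is a Bogovski\u{\i} operator on the annulus $A_R=\{R<|x|<2R\}$. The correction term is admissible because $\int_{A_R}u\cdot\nabla\chi_R=0$, which follows from $\div u=0$ and $u\cdot \textsl{n}=0$ on $\partial\Omega$. On $A_R$ the weight $w$ is comparable to its average, since $\mathscr A_q$ weights are doubling and the Bogovski\u{\i} operator is bounded on $\mathbb L^q_w$ of a fixed annulus. By scaling, the correction satisfies $\|\nabla B_R(u\cdot\nabla\chi_R)\|_{\mathbb L^q_w}\le C R^{-1}\|u\|_{\mathbb L^q_w(A_R)}\to0$, and also $\|B_R(\cdot)\|_{\mathbb L^q_w}\le C\|u\|_{\mathbb L^q_w(A_R)}\to0$.

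\emph{Bounded domains.} The functions $u_R$ are supported in the bounded domain $\Omega\cap B_{2R}$, and on bounded sets $w$ and $w^{-1/(q-1)}$ are integrable. There I would use the known density of $C^\infty_{0,\sigma}$ in weighted $H^{1,q}_{0}\cap L^q_\sigma$ on bounded domains, as in Fr\"ohlich's work on weighted Stokes theory. Alternatively, one can localize with a partition of unity near $\partial\Omega$, again using Bogovski\u{\i} corrections, then push the support into the interior and mollify. Mollification converges in $\mathbb L^q_w$ by Muckenhoupt maximal function bounds.

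The \emph{main obstacle} is the Bogovski\u{\i} step with $\mathscr A_q$ weights, and specifically the uniformity in $R$ of the constant on dilated annuli. I would handle this by noting that $\mathscr A_q$ constants are invariant under dilation and translation, so $w(R\cdot)$ has the same $\mathscr A_q$ constant as $w$. The weighted Bogovski\u{\i} bound on the fixed annulus $A_1$, which holds because the Bogovski\u{\i} operator is a Calder\'on--Zygmund operator, then transfers to $A_R$ with a constant independent of $R$.

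For the second statement, concerning $C^\infty_0(\Omega)$ in $\widehat{\mathbb H}^{\theta,q}_{w,0}(\Omega)$, the same route applies with only the cut-off and mollification steps, since no solenoidal correction is needed. The intersection $\mathbb L^q_w\cap\widehat{\mathbb H}^{1,q}_{w,0}$ is dense by the general interpolation fact, and $C^\infty_0(\Omega)$ is dense in that intersection by the definition of $\widehat{\mathbb H}^{1,q}_{w,0}$ combined with truncation and mollification in $\mathbb L^q_w$.
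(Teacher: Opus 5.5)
Your proposal is correct. It matches the route the paper implicitly relies on: the paper gives no proof and only cites \cite[Proposition~2.4]{Farwig-Tsuda-Hinfty}, but it states the identity $\mathbb L^q_{\sigma,w}(\Omega)\cap\widehat{\mathbb H}^{1,q}_{\sigma,w,0}(\Omega)=\mathbb H^{1,q}_{\sigma,w,0}(\Omega)$ as obvious just before the proposition, and that identity combined with the density of $X_0\cap X_1$ in $[X_0,X_1]_\theta$ is exactly your reduction. Your cut-off/Bogovski\u{\i} argument supplies the justification of that identity which the paper omits.

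Two points need tightening.

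\textbf{Weight behaviour.} A general $\mathscr A_q$ weight is not comparable to its average on annuli. Translations and dilations are also not bounded on $\mathbb L^q_w$ for general $\mathscr A_q$ weights. So you should rely only on the Calder\'on--Zygmund structure of the Bogovski\u{\i} operator together with the dilation invariance of the $\mathscr A_q$ constant, which you do invoke later.

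\textbf{The bounded piece.} On $\Omega\cap B_{2R}$, avoid pushing supports inward, which is exactly the delicate step for these weights. Instead take the defining sequence $\varphi_k\in C^\infty_{0,\sigma}(\Omega)$ of $u\in X_1$. A weighted Poincar\'e inequality for functions vanishing on $\partial\Omega$ gives $\varphi_k\to u$ in $\mathbb L^q_w(\Omega\cap B_{2R})$. Then $\chi_R\varphi_k-B_R(\varphi_k\cdot\nabla\chi_R)\in C^\infty_{0,\sigma}(\Omega)$ converges to $u_R$ in $\mathbb W^{1,q}_w(\Omega)$. The same device, without the Bogovski\u{\i} correction, gives the non-solenoidal statement.
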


%
\begin{lem}{\rm \cite[Lemma 2.5]{Farwig-Tsuda-Hinfty}}\label{hatH0-dense}
Let $1<q<\infty$, $w\in\mathscr  A_q(\IR^n)$.

(i) The set $C^\infty_{0}(\IR^n)$ is dense in $\widehat{\mathbb{H}}^{1,q}_{w}(\IR^n)$.

(ii) The set $\Delta C_0^\infty(\IR^n)$ is dense in $\mathbb L^q_w(\IR^n)$.

(iii) The set $C^\infty_{0}(\IR^n)$ is dense in $\widehat{\mathbb{H}}^{\kappa,q}_{w}(\IR^n)$ for each $0\leq\kappa\leq 2$.  
\end{lem}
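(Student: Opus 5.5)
We treat the three assertions separately: a cut-off argument for (i), duality and a Liouville argument for (ii), and a reduction to the same duality argument for (iii). Throughout we use three standard facts about $w\in\mathscr A_q$: the dual of $\mathbb L^q_w(\IR^n)$ is $\mathbb L^{q'}_{w'}(\IR^n)$ with $w'=w^{-1/(q-1)}\in\mathscr A_{q'}$; the Hardy–Littlewood maximal operator is bounded on $\mathbb L^q_w$, so mollification $u\mapsto u*\rho_\varepsilon$ is uniformly bounded and strongly convergent on $\mathbb L^q_w$; and $w$ is doubling and reverse doubling.

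\emph{Step (i).} Let $u\in\widehat{\mathbb H}^{1,q}_w(\IR^n)$, a function determined up to constants with $\nabla u\in\mathbb L^q_w$. Take a cut-off $\chi_R=\chi(\cdot/R)$ equal to $1$ on $B_R$ and supported in $B_{2R}$. Let $A_R=B_{2R}\setminus B_R$ and let $c_R$ be the mean of $u$ on $A_R$. Set $u_R=\chi_R(u-c_R)$. Then
$$\nabla(u-u_R)=(1-\chi_R)\nabla u-(u-c_R)\nabla\chi_R .$$
The first term tends to $0$ in $\mathbb L^q_w$ by dominated convergence. For the second term we invoke the weighted Poincaré inequality on annuli for $\mathscr A_q$ weights (Fabes–Kenig–Serapioni), which is scale invariant after rescaling $x\mapsto Rx$. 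It gives
$$R^{-1}\|u-c_R\|_{\mathbb L^q_w(A_R)}\le C\|\nabla u\|_{\mathbb L^q_w(A_R)}\to0 .$$
Hence $u_R\to u$ in $\widehat{\mathbb H}^{1,q}_w$, and $u_R$ has compact support. Finally we mollify $u_R$, using the boundedness of the maximal operator on $\mathbb L^q_w$ to get $\nabla(u_R*\rho_\varepsilon)\to\nabla u_R$ in $\mathbb L^q_w$.

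\emph{Step (ii).} By Hahn–Banach it suffices to show that any $g\in\mathbb L^{q'}_{w'}$ with $\int g\,\Delta\varphi\dx=0$ for all $\varphi\in C_0^\infty$ vanishes. Such a $g$ is harmonic by Weyl's lemma. By the mean value property and Hölder's inequality with the $\mathscr A_{q'}$ condition on balls $B(x,1+|x|)$, $g$ grows at most polynomially, so by Liouville $g$ is a polynomial. Reverse doubling gives $w'(B_{2R})\ge(1+c)\,w'(B_R)$, so $w'(\IR^n)=\infty$. A nonzero polynomial is bounded below in modulus on a set whose $w'$-measure is infinite (outside a neighbourhood of its zero set, again by doubling). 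Hence $g\notin\mathbb L^{q'}_{w'}$ unless $g=0$.

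\emph{Step (iii).} The cases are handled as follows.
\begin{itemize}
\item $\kappa=0$ is the standard density of $C_0^\infty$ in $\mathbb L^q_w$.
\item $\kappa=1$ is (i), since the Riesz transforms are bounded on $\mathbb L^q_w$.
\item $\kappa=2$: by the weighted boundedness of $R_jR_k$, the map $(-\Delta)\colon\widehat{\mathbb H}^{2,q}_w\to\mathbb L^q_w$ is an isomorphism modulo polynomials. Approximating $-\Delta u$ by $\Delta\varphi_k$ with $\varphi_k\in C_0^\infty$, using (ii), gives $\varphi_k\to u$ in $\widehat{\mathbb H}^{2,q}_w$.
\end{itemize}
For general $0<\kappa<2$ we show that $(-\Delta)^{\kappa/2}C_0^\infty$ is dense in $\mathbb L^q_w$, again by duality. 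If $g\in\mathbb L^{q'}_{w'}$ annihilates $(-\Delta)^{\kappa/2}\varphi$ for all $\varphi$ in the Lizorkin space $\mathcal S_0=\{\varphi\in\mathcal S:\hat\varphi=0 \text{ near } 0\}$, then $|\xi|^\kappa\hat g$ vanishes away from the origin. Thus $\hat g$ is supported in $\{0\}$, so $g$ is a polynomial, and $g=0$ as in (ii). This shows that $\mathcal S_0$ is dense in $\widehat{\mathbb H}^{\kappa,q}_w$.

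\emph{Main obstacle.} It remains to approximate $\varphi\in\mathcal S_0$ by $C_0^\infty$ functions in $\widehat{\mathbb H}^{\kappa,q}_w$, since truncation does not commute with the nonlocal operator $(-\Delta)^{\kappa/2}$. We would attempt this by interpolation. Since $\varphi\in\widehat{\mathbb H}^{0,q}_w\cap\widehat{\mathbb H}^{2,q}_w$, take $\varphi_R=\chi_R\varphi$. Then $\varphi_R\to\varphi$ in both $\mathbb L^q_w$ and $\widehat{\mathbb H}^{2,q}_w$, by rapid decay of $\varphi$ and because $w$ grows at most polynomially. The moment inequality $\|f\|_{\widehat H^{\kappa,q}_w}\le C\|f\|_{\mathbb L^q_w}^{1-\kappa/2}\|f\|_{\widehat H^{2,q}_w}^{\kappa/2}$ then yields convergence in $\widehat{\mathbb H}^{\kappa,q}_w$. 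This inequality follows from complex interpolation $[\mathbb L^q_w,\widehat{\mathbb H}^{2,q}_w]_{\kappa/2}=\widehat{\mathbb H}^{\kappa,q}_w$, which we expect via weighted bounded imaginary powers of $-\Delta$. Justifying this interpolation identity, or directly the convergence $(-\Delta)^{\kappa/2}\varphi_R\to(-\Delta)^{\kappa/2}\varphi$ via kernel tail estimates, is the step needing the most care.
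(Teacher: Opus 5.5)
The paper gives no proof of this lemma; it is quoted from \cite[Lemma 2.5]{Farwig-Tsuda-Hinfty}. So there is no in-paper argument to compare your route with, and your proposal has to stand on its own. Most of it does:
\begin{itemize}
\item the cut-off with the annular weighted Poincar\'e inequality in (i);
\item the Hahn--Banach/Weyl/Liouville argument in (ii);
\item the cases $\kappa=0,1,2$ of (iii);
\item the duality argument showing that $\mathcal S_0$ is dense in $\widehat{\mathbb H}^{\kappa,q}_w(\IR^n)$.
\end{itemize}
Two small points need tightening. First, in (ii), ``bounded below on a set of infinite $w'$-measure, by doubling'' needs an argument. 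For instance, by Markov's inequality for polynomials of degree $d$, the set $\{|P|\ge\frac12\sup_{B_R}|P|\}\cap B_{2R}$ contains a ball of radius $c_{n,d}R$. On that ball $|P|\ge\frac12\sup_{B_1}|P|>0$, and its $w'$-measure is $\ge c\,w'(B_{2R})\to\infty$ by doubling and reverse doubling. Second, ``$w$ grows at most polynomially'' is false pointwise for general $\mathscr A_q$ weights. What the $\mathscr A_q$ condition gives, and what your tail estimates actually use, is the averaged bound $w(B_R)\le CR^{nq}w(B_1)$.

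The genuine gap is the step you flag yourself. For $\kappa\in(0,2)\setminus\{1\}$ everything rests on the inequality $\|f\|_{\widehat{\mathbb H}^{\kappa,q}_w}\le C\|f\|_{\mathbb L^q_w}^{1-\kappa/2}\|\Delta f\|_{\mathbb L^q_w}^{\kappa/2}$ for $f=(1-\chi_R)\varphi$, which you only ``expect''; without it, (iii) is unproved.

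You do not need the full identity $[\mathbb L^q_w,\widehat{\mathbb H}^{2,q}_w]_{\kappa/2}=\widehat{\mathbb H}^{\kappa,q}_w$, with its quotient-by-polynomials subtleties. For Schwartz $f$ the inequality follows in a few lines. The symbol $|\xi|^{2it}$ satisfies the Mikhlin conditions with constants $\le C(1+|t|)^{n}$, so the weighted multiplier theorem \cite{KurtzWheeden} gives $\|(-\Delta)^{it}f\|_{\mathbb L^q_w}\le C(1+|t|)^{n}\|f\|_{\mathbb L^q_w}$. For $f,h\in\mathcal S$ put
\[
G(z)=e^{z^2}\int_{\IR^n}\big((-\Delta)^{z}f\big)\,\overline{h}\dx=c_n\,e^{z^2}\int_{\IR^n}|\xi|^{2z}\hat f(\xi)\,\overline{\hat h(\xi)}\,{\rm d}\xi,\qquad 0\le {\rm Re}\,z\le 1 .
\]
Then $G$ is bounded and continuous on the closed strip and analytic inside. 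On the left edge, $|G(it)|\le C\|f\|_{\mathbb L^q_w}\|h\|_{\mathbb L^{q'}_{w'}}$. On the right edge, since $(-\Delta)^{1+it}f=(-\Delta)^{it}(-\Delta f)$, also $|G(1+it)|\le C\|\Delta f\|_{\mathbb L^q_w}\|h\|_{\mathbb L^{q'}_{w'}}$.

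Apply the three lines lemma at $z=\kappa/2$ and take the supremum over $h\in\mathcal S$ with $\|h\|_{\mathbb L^{q'}_{w'}}\le 1$. This yields the moment inequality. The same duality bound shows $(-\Delta)^{\kappa/2}(\chi_R\varphi)\in\mathbb L^q_w$, i.e.\ $\chi_R\varphi\in\widehat{\mathbb H}^{\kappa,q}_w$, which you also use tacitly. Together with your observation that $\chi_R\varphi\to\varphi$ in $\mathbb L^q_w$ and in $\widehat{\mathbb H}^{2,q}_w$, this completes (iii).
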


\vspace*{1ex}

Finally, we define weighted Sobolev-Lorentz spaces $\widehat{\mathbb{H}}^{\kappa;p,q}_w(\Omega)$, $\kappa>0$, via real interpolation of homogeneous spaces $\widehat{\mathbb{H}}^{k,p}_w(\Omega)$. 
\vspace*{2ex}

For a time-periodic function, $u$, with period $T>0$ we introduce the splitting 
$$ u =  u_s+u_\perp $$
where 
\begin{equation}\label{u_s(t)} 
u_s = \frac{1}{T} \int_0^T  u(\tau)\dtau
\end{equation}
is the {\em steady-state part} so that for the {\em purely oscillatory part} $u_\perp = u-u_s$ the integral mean
$\frac{1}{T} \int_0^T u_\perp(\tau)\dtau $ vanishes. 
We define projections, $P_s$ and $P_\perp$, as 
\begin{equation}\label{PsPperp}
P_s u := u_s = \int_0^T \hspace{-7mm} -\hspace{2mm} u(\tau)\dtau=\frac{1}{T} \int_0^T  u(\tau)\dtau,  \ \  P_\perp u := u_\perp = (I-P_s)u, 
\end{equation}
{\em cf.} Celik and Kyed \cite{Celik-K}. Note that  
$\div u_s=0$, if $\div u=0$, 
and thus $\div u_\perp= \div u-\div u_s=0$. 

\vspace{1ex}

Given $q>n$ we consider the function spaces
\begin{align*}
X_s = & \Big\{(u,p)=(P_s u,P_s p) \in W^{1,\infty}_{\rm loc} \cap H^{1,q} \cap \widehat{H}^{2;q,\infty} \times \widehat{H}^{1;q,\infty} :  \div u=0 \textrm{ in } \Omega_0,\; u|_{\partial\Omega_0}=0,\\
& \quad \|(u,p)\|_{X_s}:=\|u\|_{  L^\infty_{n-2} } + \| \nabla u\|_{L^\infty_{n-1}}+ \|u\|_{H^{1,q}} +\|\nabla^2 u\|_{L^{q,\infty}}
+\|\nabla p\|_{L^{q,\infty}}< \infty  \Big\}
\end{align*}
and 
\begin{align*}
X_\perp = & \Big\{ (u,p)=(P_\perp u, P_\perp p) \in L^\infty_{per}(\mathbb{R};  W^{1,q}_{n-1}\BLACK (\Omega_0)) \times L^\infty_{per} (\mathbb{R};\widehat{H}^{1;q,\infty}(\Omega_0)): \\ 
& \quad 
\nabla^2 u \in  L^\infty_{per} (\mathbb{R};L^{q,\infty}_{n-1\BLACK}(\Omega_0)),\; \div u=0,\, u=0\textrm{ on } \partial\Omega_0,\BLACK\\
& \quad \|(u,p)\|_{X_\perp}:= \|u\|_{ L^\infty_{per} (\mathbb{R}; W^{1,q}_{n-1} \BLACK )} 
+ \|\nabla^2 u\|_{L^\infty_{per}(\mathbb{R}; L^{q,\infty}_{ n-1})} + \|\nabla p\|_{L^\infty_{per} (\mathbb{R}; L^{q,\infty})}\BLACK  \Big\}.
\end{align*} 

\subsection{Properties of the Stokes semigroup on weighted spaces}\label{S2.2}

Given the Helmholtz projection $\mathbb P_{q,w}$, the Stokes operator $A_{q,w}$ is defined by 
$$
A_{q,w}=-\mathbb P_{q,w}\Delta_{\Omega}\colon \D(A_{q,w}) = \mathbb{W}^{2,q}_w(\Omega)\cap \mathbb{W}^{1,q}_{0,w}(\Omega)\cap \mathbb{L}^q_{\sigma,w}(\Omega)\subset \mathbb{L}^q_{\sigma,w}(\Omega)\to \mathbb{L}^q_{\sigma,w}(\Omega).
$$
Since $A_{q,w}u = A_{q,\tilde w}u$ for $u\in \D(A_{q,w}) \cap \D(A_{q,\tilde w})$, a dense subset of $\D(A_q)$, we will also write $A$ for $A_{q,w};$ here $w,\tilde w$ are any weights in $\mathscr{A}_q$. 
Further, we introduce the sector $ \Sigma_\omega = \{\lambda\in\IC: |\arg \lambda| < \omega, \lambda\neq 0\}$ for any $0<\omega<\pi$.

\begin{thm}\label{res-weighted} {\em \cite[Theorems 1.5 and 5.5]{FS}}
Let $n \geq 3$, $\Omega \subset \mathbb{R}^n$ be an exterior domain with boundary of class $C^2$, let $q \in (1, \infty)$ and $w\in \mathscr{A}_q$. 
\begin{enumerate}
\item[{\rm (i)}]
For $\lambda\in\Sigma_\omega$, $0<\omega<\pi$, satisfying $|\lambda|\geq\delta>0$ the Stokes resolvent problem
$\lambda u + A_{w,q} u =f$ 
has a unique solution $u\in {\D}(A_{w,q})$ satisfying the resolvent estimate
\begin{align}\label{equ:rse-w}
	\|\lambda u\|_{\mathbb{L}^q_w(\Omega)}  + \|A_{q,w} u\|_{\mathbb{L}^q_w(\Omega)}  \leq c_{\omega,\delta} \|f\|_{\mathbb{L}^q_w(\Omega)}.
\end{align}
\item[{\rm (ii)}]
If $w(x) = \langle x\rangle^\alpha$  
with $-n < \alpha < n(q-1)$ and $n\geq 3$ 
\BLACK, {\em i.e.} $w\in \mathscr{A}_q$, the constant $c_{\omega,\delta}$ can be replaced by $c_{\omega}$, a constant independent of $\delta$. 
\item[{\rm (iii)}]
Let 
$ w^{s/q} \langle x\rangle^{-\gamma s} \in  \mathscr{A}_s,$ 
where 
$\gamma=n\big(\frac{2}{n}+\frac{1}{s}-\frac{1}{q}\big) \geq 0 $, $n\geq 3$ 
\BLACK and $s \geq q$. Then 
the resolvent estimate 
\begin{align}\label{equ:rse-w2}
	\|\lambda u\|_{\mathbb{L}^q_w(\Omega)}  +  |\lambda|^\frac12 \|\nabla u\|_{\mathbb{L}^q_w(\Omega)}  +\|\nabla^2 u\|_{\mathbb{L}^q_w(\Omega)}  \leq c \|(\lambda+A_{q,w})u\|_{\mathbb{L}^q_w(\Omega)}
\end{align}
holds for $u\in\mathcal D(A_{w,q})$ and all $\lambda\in \Sigma_\omega$ uniformly.  
In particular, if  $w=\langle x\rangle^\alpha$ \BLACK and
$$
n\geq 3,  \ \ 2q-n < \alpha < n(q-1), $$
then \eqref{equ:rse-w2} is satisfied. 
\end{enumerate}
\end{thm}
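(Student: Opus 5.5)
The plan is a cut-off (localization) argument combining the whole space with a bounded domain near $\partial\Omega$. Fix $R$ with $\IR^n\setminus\Omega\subset B_R$ and a cut-off $\chi\in C^\infty_0(B_{R+2})$ with $\chi=1$ on $B_{R+1}$.

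\emph{Whole space.} On $\IR^n$ the Stokes resolvent is an explicit Fourier multiplier: $u=(\lambda-\Delta)^{-1}\mathbb P_{\IR^n}f$, with pressure given by Riesz transforms. The symbols $\lambda(\lambda+|\xi|^2)^{-1}$ and $\xi_j\xi_k(\lambda+|\xi|^2)^{-1}$ satisfy Mikhlin conditions uniformly in $\lambda\in\Sigma_\omega$. The weighted Mikhlin multiplier theorem for $w\in\mathscr A_q$ then yields \eqref{equ:rse-w2} on $\IR^n$ for all $\lambda\in\Sigma_\omega$, with constants independent of $\lambda$.

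\emph{Bounded domain.} On $\Omega_{R+3}=\Omega\cap B_{R+3}$ the weight satisfies $w\sim 1$ for the power weights; for a general $\mathscr A_q$ weight one uses the weighted Stokes theory on bounded $C^2$ domains, obtained by localization with bent half-space multipliers. In either case this gives the unweighted (or weighted) resolvent estimate there for $|\lambda|\ge\delta$.

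\emph{Gluing and proof of (i).} Write a parametrix $u=\chi u_{B}+(1-\chi)u_{\IR^n}$ plus a Bogovski\u{\i} correction, which restores $\div u=0$ on the annulus $B_{R+2}\setminus B_{R+1}$. Applying $\lambda+A$ produces $f+Kf$, where $K$ consists of commutator terms $\nabla\chi\cdot\nabla u$, $(\Delta\chi)u$, $(\nabla\chi)p$ and the Bogovski\u{\i} terms. All of these are supported in a fixed compact annulus and are of lower order. Hence $K$ is compact on $\mathbb L^q_{\sigma,w}(\Omega)$, and $\|K\|\to0$ as $|\lambda|\to\infty$. A Neumann series handles large $|\lambda|$. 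For $|\lambda|\ge\delta$ the Fredholm alternative reduces existence to uniqueness. Uniqueness for $\lambda\in\Sigma_\omega$ follows by a duality/regularity argument: a solution of the homogeneous problem in the weighted space lies in an unweighted $L^r$ space (cut-off again plus weighted embeddings). The usual energy argument then gives $u=0$. A compactness argument on $\{|\lambda|\ge\delta\}\cap\Sigma_\omega$ yields $c_{\omega,\delta}$ and proves (i).

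\emph{Main obstacle: uniformity as $\lambda\to0$.} For (ii) and (iii) the key difficulty is obtaining constants independent of $\lambda$ as $\lambda\to0$. Here $K(\lambda)$ no longer tends to zero. I would argue by contradiction. Take $\lambda_k\to0$ and $\|f_k\|=1$ with $\|\lambda_k u_k\|+\|Au_k\|\to\infty$, and normalize. Then use the local (compact-support) term together with weighted estimates to pass to a limit solving the stationary Stokes problem with zero data in a homogeneous space. The limit vanishes by uniqueness of the stationary problem, which holds for $n\ge3$. This uniqueness is exactly where the restriction $-n<\alpha<n(q-1)$ and $n\ge3$ enters. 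The pressure needs a separate weighted Poincar\'e/Sobolev bound on the annulus.

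\emph{Part (iii).} For (iii) the extra $\nabla^2 u$ and $|\lambda|^{1/2}\nabla u$ terms require controlling the commutator terms $\nabla\chi\nabla u$ and $u\Delta\chi$ uniformly. I would bound them via a weighted Sobolev embedding, from $\widehat{\mathbb W}^{2,q}_w$ into $\mathbb L^s_{\tilde w}$, using Proposition~\ref{DS} combined with the extension of Lemma~\ref{extention}. The shift $\gamma=2+\frac ns-\frac nq$ in the hypothesis $w^{s/q}\langle x\rangle^{-\gamma s}\in\mathscr A_s$ is precisely the scaling of this embedding. This step is the hardest. For power weights it reduces to checking that $\alpha>2q-n$, which I would verify by the Muckenhoupt criterion for $\langle x\rangle^{\beta}$.
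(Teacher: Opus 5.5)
The paper gives no proof of this theorem; it only cites \cite{FS}. Your architecture is the standard localization strategy behind such results: weighted multipliers on $\IR^n$, a bounded-domain problem, cut-off with a Bogovski\u{\i} correction, Fredholm alternative plus uniqueness, and stationary uniqueness for $n\ge3$. However, two steps fail as stated.

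\textbf{Large $|\lambda|$.} The claim $\|K(\lambda)\|\to0$ as $|\lambda|\to\infty$ is false for your parametrix. So neither the Neumann series nor a compactness argument on the non-compact set $\{|\lambda|\ge\delta\}\cap\Sigma_\omega$ is available.
\begin{itemize}
\item The commutators involving $u_{\IR^n}-u_B$ and its gradient are indeed $O(|\lambda|^{-1/2}\|f\|)$.
\item The pressure term is not small. In $\IR^n$ one has $\nabla p_{\IR^n}=(I-\mathbb P_{\IR^n})f_0$ for every $\lambda$, while $\nabla p_B=(I-\mathbb P_B)f+(I-\mathbb P_B)\Delta u_B$, whose last part becomes negligible in the interior. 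Hence $(\nabla\chi)(p_{\IR^n}-p_B)$ converges to $\nabla\chi$ times the $\lambda$-independent harmonic difference of the two Helmholtz potentials.
\item The Bogovski\u{\i} term is not small either. Since $\lambda(u_{\IR^n}-u_B)\to\mathbb P_{\IR^n}f_0-\mathbb P_Bf$, one gets $\lambda\,\mathbb B[\nabla\chi\cdot(u_{\IR^n}-u_B)]\to\mathbb B[\nabla\chi\cdot(\mathbb P_{\IR^n}f_0-\mathbb P_Bf)]$, which is nonzero in general.
\end{itemize}
These terms give compactness, not smallness. The standard repair is to cut off the solution itself and prove, for $|\lambda|\ge\delta$, the a priori estimate $\|\lambda u\|+\|\nabla^2u\|+\|\nabla p\|\le C(\|f\|+\|u\|_{W^{1,q}(\Omega_R)}+\|p\|_{L^q(\Omega_R)})$. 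Handle the divergence defect $g=\nabla\chi\cdot u$ by a generalized resolvent estimate containing $|\lambda|\,\|g\|_{\widehat W^{-1,q}}$, and rewrite $\lambda u=f+\Delta u-\nabla p$ under the test function. The dangerous terms then become $C\|f\|$ plus local lower-order terms, which are absorbed for $|\lambda|\ge\lambda_0$. The compactness argument is then only needed on $\delta\le|\lambda|\le\lambda_0$.

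\textbf{The limit $\lambda\to0$ in (ii).} Your contradiction argument cannot give (ii) on the whole $\mathscr A_q$-range. The stationary exterior problem is not uniquely solvable in a homogeneous class $\{\nabla^2u\in\mathbb L^q_w\}$. For $n\ge3$ there is a nontrivial Stokes flow with $u=0$ on $\partial\Omega$ and $u\to u_\infty\neq0$. Its second derivatives satisfy $\nabla^2u=O(|x|^{-n})$, which lies in $\mathbb L^q_w$ for every $\alpha<n(q-1)$. So the limit needs decay information that survives the limit.
\begin{itemize}
\item Your embedding $\widehat{\mathbb W}^{2,q}_w\hookrightarrow\mathbb L^s_v$, with $v=w^{s/q}\langle x\rangle^{-\gamma s}$, supplies it exactly under the hypothesis of (iii), i.e.\ $\alpha>2q-n$ for power weights.
\item For $\alpha\le2q-n$ (e.g.\ $\alpha=0$, $q\ge n/2$) there is no uniform $\nabla^2$-bound. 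The part of $u_k$ generated by the far-field data is controlled only modulo affine functions, so nothing excludes a limit of the above constant-at-infinity type.
\item The condition $-n<\alpha<n(q-1)$ is just $w\in\mathscr A_q$ and plays no special role in uniqueness.
\end{itemize}
One way to close this: prove (iii) first, then get (ii) by duality, $(A_{q,w})^*=A_{q',w'}$ with $w'=w^{-1/(q-1)}$. This covers $-n<\alpha<n(q-1)-2q$. Stein--Weiss interpolation of $\lambda(\lambda+A)^{-1}\mathbb P$ in the variables $(\frac1q,\frac{\alpha}{q})$ then fills the remaining strip, which is nonempty only for $n=3,4$.

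Finally, Proposition~\ref{DS} concerns power weights only. It therefore cannot yield the embedding needed in (iii) for a general $w$ with $w^{s/q}\langle x\rangle^{-\gamma s}\in\mathscr A_s$; that requires a weighted fractional-integration estimate for Muckenhoupt weights.
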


\vspace{1ex}




We recall weighted $L^p$-$L^q$ estimates on a smooth exterior domain $\Omega$ from  
\cite[Proposition 2.11, Theorem 2.10]{Farwig-Tsuda-stab}: 

\begin{thm}\label{weightedLpLq^decay-estimates}
Let $\Omega \subset\IR^n$, $n \geq 3$, be an exterior domain with boundary of class $C^2$, and let $1< p \leq q <\infty$.

(i) Assume $2-\frac{n}{p} < \ell < \frac{n}{p'}$. 
Then for $u_0 \in L^p_{\sigma,\ell}(\Omega)$ and any multi-index $\alpha\in\IN_0$ with $|\alpha|=0,1$  such that $\frac{n}{2}\big(\frac{1}{p}-\frac{1}{q}\big)+\frac{|\alpha|}{2}<1$ it holds for $0<t<\infty$ that 
\begin{equation}\label{etA<22}
\big\|\nabla^{\alpha} e^{-tA} u_0 \big\|_{L^q_{\ell}(\Omega) } \leq C t^{-\frac{n}{2}\big(\frac{1}{p}-\frac{1}{q}\big)-\frac{|\alpha|}{2}}\|u_0\|_{L^p_\ell(\Omega)}.
\end{equation}

(ii) Assume $-\frac{n}{q} \leq \ell_0 \leq \ell < \frac{n}{p'}$ and $\ell\geq 0$.   
Then for $u_0 \in L^p_{\sigma,\ell}(\Omega)$ the following estimates hold: For $t>1$, 
\begin{equation}\label{etA>1}
\big\|e^{-t A} u_0\big\|_{L^q_{\ell_0}(\Omega) } \leq C t^{-\frac{n}{2}\big(\frac{1}{p}-\frac{1}{q}\big) - \frac{\ell-\ell_0}{2} } \|u_0\|_{L^p_\ell(\Omega) }. \end{equation}
If in addition 
$0\leq \ell_0$, we have that 
\begin{equation}\label{etA>1nabla}
\big\|\nabla e^{-t A} u_0\big\|_{L^q_{\ell_0}(\Omega) } \leq C \big(t^{-\frac{n}{2}\big(\frac{1}{p}-\frac{1}{q}\big)-\frac{1}{2} - \frac{\ell-\ell_0}{2} } +t^{-\frac{n}{2p}}t^{-\frac{\ell}{2}}\big) \|u_0\|_{L^p_\ell(\Omega) }. 
\end{equation}
For $0<t<2$ and any $\alpha\in\IN_0$, $|\alpha|=0,1$  such that $\frac{n}{2}\big(\frac{1}{p}-\frac{1}{q}\big)+\frac{|\alpha|}{2}<1$ it holds that 
\begin{equation}\label{etA<}
\big\|\nabla^{\alpha} e^{-tA} u_0 \big\|_{L^q_{\ell_0}(\Omega) } \leq C t^{-\frac{n}{2}\big(\frac{1}{p}-\frac{1}{q}\big)-\frac{|\alpha|}{2}}\|u_0\|_{L^p_\ell(\Omega)}.   
\end{equation} 
\end{thm}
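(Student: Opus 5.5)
The plan is to prove both parts from the uniform weighted resolvent estimates of Theorem \ref{res-weighted}, combined with a cut-off argument that glues whole-space weighted heat/Stokes estimates to a local energy decay near the obstacle.

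\textbf{Part (i).} For $w=\langle x\rangle^{\ell p}$ with $2-\frac np<\ell<\frac n{p'}$ we have $w\in\mathscr A_p$. Rewriting this as $2p-n<\ell p<n(p-1)$ shows that Theorem \ref{res-weighted}(iii) applies. It gives, uniformly for $\lambda\in\Sigma_\omega$,
\[
\|\lambda u\|_{L^p_\ell}+|\lambda|^{1/2}\|\nabla u\|_{L^p_\ell}+\|\nabla^2u\|_{L^p_\ell}\le c\|(\lambda+A)u\|_{L^p_\ell}.
\]
\begin{itemize}
\item Represent $e^{-tA}$ by the Dunford integral. The uniform (in $|\lambda|$) constant yields $\|e^{-tA}u_0\|_{L^p_\ell}+t\|Ae^{-tA}u_0\|_{L^p_\ell}\le C\|u_0\|_{L^p_\ell}$ for all $t>0$, with no exponential loss.
\item It also yields $\|\nabla e^{-tA}u_0\|_{L^p_\ell}\le Ct^{-1/2}\|u_0\|_{L^p_\ell}$ and $\|\nabla^2e^{-tA}u_0\|_{L^p_\ell}\le Ct^{-1}\|u_0\|_{L^p_\ell}$.
\item Extend $e^{-tA}u_0$ by Lemma \ref{extention}, which preserves the homogeneous seminorms. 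The weighted Gagliardo--Nirenberg inequality of Proposition \ref{DS}, with $k=2$, $k_0=|\alpha|$, $\alpha=\beta=\gamma=\ell$ and $\theta=\frac n2(\frac1p-\frac1q)+\frac{|\alpha|}2<1$, interpolates between $\|e^{-tA}u_0\|_{L^p_\ell}$ and $\|\nabla^2e^{-tA}u_0\|_{L^p_\ell}$.
\item This gives \eqref{etA<22} with exactly the scaling exponent. The main bookkeeping is to check the admissibility conditions \eqref{theta-pqr-alpha-beta-N}, which hold with equality on the scaling line.
\end{itemize}

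\textbf{Part (ii), short times.} The small-time bound \eqref{etA<} follows from (i) with weight $\ell_0$. Indeed $\langle x\rangle^{\ell_0}\le\langle x\rangle^{\ell}$, and on a finite interval $(0,2)$ one may use the resolvent estimate for $|\lambda|\ge\delta$ from Theorem \ref{res-weighted}(i), valid for every Muckenhoupt weight. This removes the lower restriction $\ell>2-\frac np$ at the cost of a constant depending on the time horizon.

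\textbf{Part (ii), large times, and the main obstacle.} The large-time estimate with the extra gain $t^{-(\ell-\ell_0)/2}$ is the key difficulty. I would follow the classical Iwashita/Kobayashi--Kubo scheme in three steps.
\begin{enumerate}
\item \emph{Whole space.} Prove \eqref{etA>1} for the heat (equivalently Stokes) semigroup on $\IR^n$. Split $u_0$ into the parts on $|y|\le \sqrt t$ and $|y|>\sqrt t$ and use the Gaussian kernel explicitly. Moving weight from $x$ to $y$ trades $\langle x\rangle^{\ell_0-\ell}$ against $t^{(\ell_0-\ell)/2}$ because $|x-y|\lesssim\sqrt t$ on the bulk of the kernel. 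Here $\ell<\frac n{p'}$ ensures $\langle y\rangle^{-\ell}\in L^{p'}_{\rm loc}$-type integrability, and $\ell_0\ge-\frac nq$ ensures integrability of the output weight.
\item \emph{Local energy decay.} Prove a weighted local energy decay near the boundary,
\[
\|e^{-tA}u_0\|_{W^{1,q}(\Omega\cap B_R)}\le Ct^{-\frac n{2p}-\frac\ell2}\|u_0\|_{L^p_\ell}.
\]
The route is via the low-frequency expansion of the resolvent $(\lambda+A)^{-1}$ near $\lambda=0$ in $n\ge3$, obtained from the weighted resolvent bounds of Theorem \ref{res-weighted}(ii) and the decay of the whole-space part.
\item \emph{Cut-off and Duhamel.} Write $e^{-tA}u_0=(1-\varphi)\,E(t)u_0+\varphi\,(\cdots)+$ a Bogovski\u{\i} correction, where $E(t)$ is the whole-space semigroup and $\varphi$ a cut-off near the obstacle. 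Estimate the Duhamel remainder by combining the whole-space bound with the local decay.
\end{enumerate}
The term $t^{-\frac n{2p}}t^{-\frac\ell2}$ in \eqref{etA>1nabla} is exactly the contribution of the localized remainder. For the gradient, the extra $t^{-1/2}$ is not available from the compactly supported part, which is why that term appears without it.

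I expect the local decay rate $t^{-\frac n{2p}-\frac\ell2}$ to be the hardest step, that is, getting the full weight gain up to $\ell<\frac n{p'}$ rather than merely $\ell\le$ some smaller threshold. There I would first try duality: estimate $\langle e^{-tA}u_0,\psi\rangle$ for $\psi$ supported in $B_R$, moving the semigroup to the test function and using the $\mathscr A$-weight dual space $L^{p'}_{-\ell}$ given by Lemma \ref{Helmh}.
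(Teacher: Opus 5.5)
Your plan follows the route the paper indicates. The paper does not reprove this theorem but quotes it from \cite{Farwig-Tsuda-stab}: part (i) rests on the uniform weighted resolvent estimate of Theorem~\ref{res-weighted}(iii), which is exactly where the range $2-\frac{n}{p}<\ell<\frac{n}{p'}$ comes from, followed by weighted Gagliardo--Nirenberg interpolation with $\theta=\frac{n}{2}\big(\frac1p-\frac1q\big)+\frac{|\alpha|}{2}<1$; part (ii) glues whole-space bounds to a local decay estimate near the obstacle, which is where $\ell\ge 0$ enters (cf.\ Remark~\ref{why-we-need-BIP}). One correction to your Step~2: the weighted local decay rate $t^{-\frac{n}{2p}-\frac{\ell}{2}}$ cannot come from the uniform bounds of Theorem~\ref{res-weighted}(ii), which only give boundedness of the semigroup; it follows instead from Iwashita's local energy decay $t^{-n/2}$ for compactly supported data \cite{Iwashita} combined with your cut-off/Duhamel step, and the time convolution closes because $\frac{n}{2}>1$ and $\frac{n}{2p}+\frac{\ell}{2}<\frac{n}{2}$.
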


\vspace{1ex}

\begin{rem}\label{constant}
{\rm Fixing $t_0>0$, we can replace 
$t>1$ by $t>t_0$ and 
$0<t<2$ by $t<t_0$ in \eqref{etA>1}, \eqref{etA>1nabla} 
and \eqref{etA<}, respectively, 
with constants $C_{t_0}$. 
}
\end{rem}


\begin{rem}\label{why-we-need-BIP}
{\rm 
It is difficult to ignore the restriction $0\leq \ell$ in (ii) because it comes from the local decay estimate, 
\begin{align*}
  \|  e^{-tA} u_0\BLACK \|_{L^{p}_{\ell_0}(\Omega_{R})} \leq C \| e^{-tA} u_0\BLACK\|_{L^{p}(\Omega_{R})} 
\leq C t^{-\frac{n}{2p}-\frac{s}{2}} \|u_0\|_{ L^p(\Omega) } \leq Ct^{-\frac{n}{2p}-\frac{s}{2}} \|u_0\|_{ L^p_\ell(\Omega_0)}
\end{align*}
on a bounded domain $\Omega_{R}=\Omega \cap B_R$, see \cite[Proposition 3.5]{Farwig-Tsuda-exterior}. 
\ On the other hand, in (ii) the condition $2-\frac{n}{p} < \ell$ from (i) is relaxed to $-\frac{n}{q}<\ell_0$.  
}
\end{rem}

\vspace{1ex}

To handle radial weights with negative exponents in $L^p$-$L^q$ decay estimate of the Stokes semigroup, the following estimate plays an important role. 

\vspace{1ex}

\begin{prop}\label{Lp-Lq At-negative-weight} 
Let $1<p \leq q<\infty$ with $3 \leq n$ and $- \frac{n}{q}
< \ell < \frac{n}{p'}-2$ with $\ell \leq 0$. Further assume that
 $\frac{1}{q} = \frac{1}{p} - \frac{2\theta}{n}$ where $0\leq\theta\leq 1$. 
Then for $u\in L^p_{\sigma,\ell}(\Omega)$ there holds the $L^p$-$L^q$-estimate 
$$ \|e^{- tA}u\|_{L^q_{\ell }(\Omega)} \leq C t^{-\frac{n}{2}\big(\frac{1}{p}-\frac{1}{q}\big)} \|u\|_{L^p_{\ell }(\Omega)}$$
with a constant $C>0$ independent of $t>0$.
\end{prop}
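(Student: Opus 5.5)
The plan is to reduce to estimates in \emph{unweighted} or \emph{positively weighted} spaces via Sobolev-type embeddings of fractional powers of $A$, and to use the bounded imaginary powers of $A$ in the negatively weighted space $L^p_{\sigma,\ell}$ (available from the $\mathscr H^\infty$-calculus of \cite{Farwig-Tsuda-Hinfty}, since $\langle x\rangle^{\ell p}\in\mathscr A_p$ for $-\frac np<\ell<\frac n{p'}$). The case $\theta=0$ is simply boundedness of the analytic semigroup on $L^p_{\sigma,\ell}$, which follows from the resolvent estimate in Theorem \ref{res-weighted}(ii), uniform in $\lambda\in\Sigma_\omega$ for radial weights. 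So I assume $0<\theta\le1$.

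The key step is the embedding
\[ \|v\|_{L^q_\ell(\Omega)}\le C\|A^{\theta}v\|_{L^p_\ell(\Omega)},\qquad \tfrac1q=\tfrac1p-\tfrac{2\theta}{n}, \]
for $v\in\mathcal D(A^\theta)$. I would first prove it at $\theta=1$. Theorem \ref{res-weighted}(iii), applied with weight $\langle x\rangle^{\ell p}$, gives $\|\nabla^2 v\|_{L^p_\ell}\le C\|Av\|_{L^p_\ell}$. The hypothesis $\ell<\frac n{p'}-2$ is exactly $\ell p>2p-n$, the admissible range there. I then extend $v$ to $\IR^n$ by Lemma \ref{extention}, controlling only the homogeneous second-order norm, and apply the weighted Gagliardo–Nirenberg inequality Proposition \ref{DS} with $k_0=0$, $k=2$, $\theta=1$ and $\gamma=\alpha=\ell$. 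The scaling condition $\frac1q=\frac1p-\frac2n$ is precisely \eqref{theta-pqr-alpha-beta-N}, and $\gamma>-\frac nq$ holds by assumption. The case $\theta=0$ is trivial. For intermediate $\theta$, BIP of $A$ on $L^p_{\sigma,\ell}$ identifies $\mathcal D(A^\theta)$ with the complex interpolation space $[L^p_{\sigma,\ell},\mathcal D(A)]_\theta$. On the target side, $[L^p_\ell,L^{q_1}_\ell]_\theta=L^q_\ell$ with $\frac1{q_1}=\frac1p-\frac2n$ (Stein–Weiss interpolation with a fixed weight). Interpolating the two endpoint embeddings $L^p_{\sigma,\ell}\to L^p_\ell$ and $\mathcal D(A)\to L^{q_1}_\ell$ then gives the claim. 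If $q_1$ would fall outside $(1,\infty)$ (when $p\ge\frac n2$), I would instead interpolate between $0$ and a smaller exponent $\theta_1<1$, obtained from Proposition \ref{DS} with $k=2$ and $\theta$ there equal to $\theta_1$, combined with the $L^p_\ell$ bound on $v$.

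With the embedding in hand, the estimate follows from analyticity:
\[ \|e^{-tA}u\|_{L^q_\ell}\le C\|A^\theta e^{-tA}u\|_{L^p_\ell}\le C t^{-\theta}\|u\|_{L^p_\ell}, \]
using the bounded analytic semigroup estimate $\|A^\theta e^{-tA}\|\le Ct^{-\theta}$ on $L^p_{\sigma,\ell}$, uniform in $t>0$ by the $\delta$-independent resolvent bound of Theorem \ref{res-weighted}(ii). Since $\theta=\frac n2(\frac1p-\frac1q)$, this is the assertion. The density of $C^\infty_{0,\sigma}$ lets me work with smooth data throughout.

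I expect the main obstacle to be the fractional embedding, in particular two points. First, the interpolation identification of $\mathcal D(A^\theta)$ in the negatively weighted space requires BIP there, which is exactly why the $\mathscr H^\infty$-calculus is needed (cf. Remark \ref{why-we-need-BIP}). Second, the second-order estimate from Theorem \ref{res-weighted}(iii) must be compatible with $\ell\le0$, which is where the condition $\ell<\frac n{p'}-2$ enters.
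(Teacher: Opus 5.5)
There is a genuine gap, at the second point you flagged. Your claim that ``$\ell<\frac n{p'}-2$ is exactly $\ell p>2p-n$'' is false.

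\begin{itemize}
\item $\ell<\frac n{p'}-2$ is an \emph{upper} bound. Theorem \ref{res-weighted}(iii) with $w=\langle x\rangle^{\ell p}$ needs the \emph{lower} bound $\ell>2-\frac np$, and this is not among the hypotheses. In fact $\ell<\frac n{p'}-2$ means $(-\ell)p'>2p'-n$, which is the admissibility condition of Theorem \ref{res-weighted}(iii) for the dual weight $\langle x\rangle^{-\ell p'}$ on $L^{p'}$.
\item Your use of Proposition \ref{DS} at the endpoint needs the same lower bound. There $r=q_1$, so the condition reads $\ell>-\frac n{q_1}=2-\frac np$, not $\ell>-\frac nq$.
\item For $\theta=1$ this coincides with the hypothesis $\ell>-\frac nq$. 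For $0<\theta<1$ the hypotheses only give $\ell>2\theta-\frac np$, but your fixed-weight interpolation needs the endpoint for the same pair $(p,\ell)$.
\end{itemize}

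This is not a technicality. If $p<\frac n2$ and $\ell<-\frac n{q_1}$, the step $\|v\|_{L^{q_1}_\ell}\le C\|\nabla^2 v\|_{L^p_\ell}$ is false already on $\IR^n$. Test it with $v=\chi(\cdot/R)$, where $\chi\in C^\infty_0(\IR^n)$ and $\chi=1$ on $B_1$. As $R\to\infty$ the left side stays bounded below, while the right side is of order $R^{\ell-2+\frac np}\to0$.

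If $p\ge\frac n2$, then $2-\frac np\ge0\ge\ell$. Your fallback still rests on $\|\nabla^2v\|_{L^p_\ell}\le C\|Av\|_{L^p_\ell}$, so it is never available in that range. For example, $n=3$, $p=2$, $q=3$, $\ell\in(-1,-\frac12)$ satisfies all hypotheses of the proposition.

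A secondary issue: Theorem \ref{theorem-H-infty-all}(iii) is an inhomogeneous statement. Your interpolation therefore only yields $\|v\|_{L^q_\ell}\le C(\|v\|_{L^p_\ell}+\|A^\theta v\|_{L^p_\ell})$. That gives a bound $C(1+t^{-\theta})$, with no decay for large $t$. You would need a homogeneous version, e.g.\ Stein interpolation of $z\mapsto A^{-z}$ using BIP.

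The paper avoids all this by duality, and that is where the hypotheses come from.
\begin{itemize}
\item For $\varphi\in C^\infty_{0,\sigma}(\Omega)$ one has $|(e^{-tA}u,\varphi)|=|(u,e^{-tA}\varphi)|\le\|u\|_{L^p_\ell}\|e^{-tA}\varphi\|_{L^{p'}_{-\ell}}$.
\item Theorem \ref{weightedLpLq^decay-estimates}(i) is then applied from $L^{q'}_{-\ell}$ to $L^{p'}_{-\ell}$, with the nonnegative exponent $-\ell$.
\item Its condition $2-\frac n{q'}<-\ell<\frac nq$ is equivalent to $-\frac nq<\ell<\frac n{q'}-2$. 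This follows from the hypotheses because $q'\le p'$.
\end{itemize}
So the negative weight is handled on the positively weighted dual side, where the second-order and local-decay tools work.

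To keep a direct argument, you would need interpolation with varying weights, taking the endpoint with some $\ell_1>2-\frac np$. Alternatively, invoke Corollary \ref{Lp-Lq At-all} with $\ell'=\ell$ and $\kappa=\frac np-\frac nq$. That gives the estimate for $p<q$ under $-\frac nq<\ell<\frac n{p'}$.
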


\vspace{1ex}

\begin{proof} 
We make use of a duality argument and take $\varphi \in C^{\infty}_{0,\sigma}(\Omega)$. By Theorem \ref{weightedLpLq^decay-estimates} (i), applied to $\|e^{-tA} \varphi\|_{L^{p'}_{-\ell}}$ there holds the estimate
\begin{align*}
\big|(e^{-tA} u_0, \varphi)\big| & = \big|( u_0, e^{-tA} \varphi)\big| \leq C\|u_0 \|_{L^{p}_\ell} \|e^{-tA} \varphi\|_{L^{p'}_{-\ell}} \\
&\leq C \|u_0 \|_{L^{p}_\ell}\; t^{-\delta} \,\|\varphi\|_{L^{q'}_{-\ell}},  
\end{align*}
provided that $-\ell\geq 0$ and $2-\frac{n}{q'} < -\ell < \frac{n}{(q')'} = \frac{n}{q}$. These conditions are equivalent to $\ell\leq 0$ and 
$-\frac{n}{q} < \ell < \frac{n}{q'} -2$.
\end{proof}



\vspace{2ex}

Then, by Proposition \ref{Lp-Lq At-negative-weight}, 
we have the following $L^p$-$L^q$ decay estimate of the Stokes semigroup in weighted spaces with negative exponent. 

\vspace{1ex}

\begin{prop}
Let $1<p< \infty$ with $3 \leq n$ and $-\frac{n}{q} < \ell < \frac{n}{p'}-2$ with $\ell \leq 0$.
Let $1<p \leq q<\infty$, and assume $\delta := \frac{n}{2}\big(\frac{1}{p}-\frac{1}{q}\big) \in[0,1]$. 
Then it holds that 
\begin{equation}\label{A-1-delta-0}
\|A e^{-tA}u\|_{L^{q}_\ell(\Omega)} \leq Ct^{-1-\delta} \|u\|_{L^{p}_\ell(\Omega)} 
\end{equation}
In addition, for $t>0$, 
\begin{equation}\label{A-1-delta}
\|A e^{-tA}u\|_{L^{q,1}_\ell(\Omega)} \leq Ct^{-1-\delta} \|u\|_{L^{p,1}_\ell(\Omega)}. 
\end{equation}
\end{prop}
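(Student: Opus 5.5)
The natural route is to split the semigroup at the midpoint, $Ae^{-tA} = Ae^{-\frac t2 A}\, e^{-\frac t2 A}$, so that one factor carries the $L^p$–$L^q$ smoothing and the other the analytic-semigroup factor $t^{-1}$. For the second factor, Proposition \ref{Lp-Lq At-negative-weight} gives
$$\|e^{-\frac t2 A}u\|_{L^q_\ell} \le C t^{-\delta}\|u\|_{L^p_\ell}$$
under exactly the stated hypotheses on $p,q,\ell$. For the first factor I need the analyticity estimate $\|Ae^{-sA}v\|_{L^q_\ell}\le C s^{-1}\|v\|_{L^q_\ell}$ with $C$ independent of $s>0$. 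Since $w=\langle x\rangle^{\ell q}\in\mathscr A_q$ (the condition $-\frac nq<\ell<\frac n{q'}$ follows from the hypotheses, once we use $\frac{n}{p'}-2\le \frac{n}{q'}$, which holds because $\delta\le 1$), Theorem \ref{res-weighted}\,(ii) yields the uniform resolvent estimate $\|\lambda(\lambda+A)^{-1}\|\le c_\omega$ on all of $\Sigma_\omega$ for $\omega$ close to $\pi$. The standard Dunford integral representation of $Ae^{-sA}$ over a contour in the resolvent set then gives the $s^{-1}$ bound uniformly in $s>0$. (Alternatively, one could obtain this by duality exactly as in the proof of Proposition \ref{Lp-Lq At-negative-weight}, since $A$ is self-adjoint with respect to the pairing between $L^q_\ell$ and $L^{q'}_{-\ell}$.) Composing the two estimates gives \eqref{A-1-delta-0}, with $(t/2)^{-1}(t/2)^{-\delta}\le Ct^{-1-\delta}$.

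For the Lorentz version \eqref{A-1-delta} I would use real interpolation. Fix $t$ and choose two pairs $(p_0,q_0)$, $(p_1,q_1)$ near $(p,q)$, with $\frac1{p_j}-\frac1{q_j}=\frac1p-\frac1q$ so that $\delta$ is unchanged, each pair still satisfying the hypotheses on $\ell$, and with $\frac1p=\frac{1-\theta}{p_0}+\frac\theta{p_1}$ (and likewise for $q$). This is possible because all conditions are open, apart from the side constraints $\delta\in[0,1]$ and $\ell\le0$, which are unaffected. Then \eqref{A-1-delta-0} holds for both pairs with bound $Ct^{-1-\delta}$, and interpolating by the $(\theta,1)$ method using \eqref{Lps-interp} gives the bound $L^{p,1}_\ell\to L^{q,1}_\ell$. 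The spaces involved should be the solenoidal ones. I would handle this either by noting that $\mathbb P$ is bounded on both endpoint spaces, so the solenoidal subspaces form a complemented couple and interpolate correctly, or by applying the estimate to $Ae^{-tA}\mathbb P u$.

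The main obstacle is the endpoint cases, namely $\ell$ near the limits $-\frac nq$ and $\frac n{p'}-2$, or $\delta=1$ exactly, where the perturbed pairs must stay admissible. If the condition $\frac{n}{p'}-2$ is strict it is open, so small perturbations work. For $\delta=1$, one can keep $\delta$ fixed along the line $\frac1{p}-\frac1q=\frac2n$, and the interpolation still goes through.
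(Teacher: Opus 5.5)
Your proposal is correct and follows the paper's own argument. The paper also uses the semigroup property to combine the $t^{-1}$ bound, obtained from the uniform resolvent estimate via Dunford's integral, with the $t^{-\delta}$ decay of Proposition~\ref{Lp-Lq At-negative-weight}, and then uses real interpolation for the Lorentz version; you carry out that last step in more detail, keeping $\delta$ fixed and handling the solenoidal subspaces. One small slip: $\frac{n}{p'}-2\le\frac{n}{q'}$ holds simply because $p\le q$, not because $\delta\le 1$, which only gives the reverse comparison $\frac{n}{q'}\le\frac{n}{p'}+2$.
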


\vspace{1ex}

\BLACK 

\begin{proof}
We recall from \eqref{equ:rse-w} for $u=(\lambda+A)^{-1}f$, $f\in L^q_{\sigma,\ell}(\Omega)$, $\lambda\in \Sigma_\omega$, the uniform resolvent estimate
$$
|\lambda|\|u\|_{L^{q}_\ell(\Omega)} + \|A u\|_{L^q_\ell } \leq C\|f\|_{L^q_\ell }
$$
for $1<q<\infty$ with $-\frac{n}{q}< \ell < \frac{n}{q'}$. Hence 
\begin{align}\label{Ae-est}
\|A e^{-tA}u\|_{L^q_\ell } \leq Ct^{-1}\|u\|_{L^q_\ell },
\end{align}
which easily follows from the representation of the Stokes  semigroup via Dunford's calculus and the Stokes resolvent.  

In addition, the $L^p_\ell$-$L^q_\ell$ decay estimate of the semigroup, Proposition \ref{Lp-Lq At-negative-weight}, holds for $1< p \leq q <\infty $ with $-\frac{n}{q} < \ell < \frac{n}{p'}-2$,  $\ell\leq 0$. 
Therefore, the semigroup property implies \eqref{A-1-delta-0}.
This together with real interpolation derives \eqref{A-1-delta} as well.
\end{proof}

 For more sophisticated estimates of the Stokes operator, its embeddings  and decay results of its semigroup we need that $A_{q,\ell}$ has a bounded $\mathscr H^\infty$-calculus and hence possesses the property of bounded imaginary powers $BIP$. We refer to \cite{Farwig-Tsuda-Hinfty} for precise details and proofs; recall that $\HH_0(\Sigma_\phi)$ denotes a space of bounded holomorphic functions on the sector $\Sigma_\phi$ with a concrete control of functions as $\lambda \to0$ and $|\lambda|\to\infty$. \BLACK Note that similar results hold for $A_{q,\ell}$ on the whole space; in that case, some proofs are much easier. 
 The focus of \cite{Farwig-Tsuda-Hinfty} is put on homogeneous spaces. Actually, the proofs for nonhomogenous spaces, {\em e.g.,} of Theorem \ref{theorem-H-infty-all} (iii)  which is not mentioned in \cite{Farwig-Tsuda-Hinfty}, \BLACK are much shorter than in the homogeneous case.

\begin{thm}\label{theorem-H-infty-all}
{\rm \cite[Theorem 1.1]{Farwig-Tsuda-Hinfty}}
Let $\Omega\subset\IR^n$, $n\geq 3$, be an exterior domain with boundary of class $C^3$. 
Further let $1<q< \infty$, and $A=A_{q,\ell} = -\mathbb P \Delta$ be the Stokes operator on $L^q_{\sigma, \ell }(\Omega)$ for $-\frac{n}{q} < \ell < \frac{n}{q'}$.

(i) There exists a constant $C=C(q,\ell,\phi)>0$ such that for any $\phi\in (0,\frac{\pi}{2})$, $f \in L^q_{\sigma, \ell }(\Omega)$ and all $h\in \HH_0(\Sigma_\phi)$
 \begin{align}\label{H-infty-all}
\|h(A)\|_{\mathcal{L}(L^q_{\sigma,\ell}(\Omega))} \leq C |h|_{\infty,\phi\BLACK}.    
\end{align}
In particular, $A$ possesses a bounded $\mathscr H^\infty$-calculus on $L^q_{\sigma.\ell}(\Omega)$ with $\mathscr H^\infty$-angle $\Phi_ {A}^\infty=0$. 

(ii) The Stokes operator has the property $BIP$ with power angle $\Theta_A=0$. Thus for any $\theta>0$ there exists $C=C(\theta)$ suoch that 
$$ \|A^{it}\|_{\mathcal L(L^q_{\sigma,\ell}(\Omega))} \leq C e^{\theta|t|},\quad t\in\IR. $$

(iii)
There holds for $0<\theta<1$ the identity 
$$ \D(A^{\theta}) = [L^q_{\sigma,\ell}(\Omega), \D(A)]_\theta. $$
\end{thm}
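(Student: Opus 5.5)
The statement is cited from \cite[Theorem 1.1]{Farwig-Tsuda-Hinfty}. My plan is to reproduce its proof by transferring the known unweighted or whole-space $\mathscr H^\infty$-calculus to the weighted exterior setting through a cut-off and perturbation argument, with the weight handled by Muckenhoupt theory.

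\textbf{Whole space.} On $\IR^n$ the Stokes operator on $\mathbb L^q_{\sigma,w}$ coincides with $-\Delta$ restricted to solenoidal fields, because $\mathbb P$ commutes with $\Delta$. For $h\in\HH_0(\Sigma_\phi)$, the operator $h(-\Delta)$ is a Fourier multiplier $h(|\xi|^2)$. This symbol satisfies the Mikhlin condition with bound $C|h|_{\infty,\phi}$, via Cauchy estimates on the sector. Weighted Mikhlin multiplier theorems for $w\in\mathscr A_q$ then give the bound \eqref{H-infty-all} on $\IR^n$ for every $\phi>0$. A similar argument handles the bounded-domain Stokes operator near the boundary, using localized weights, which are comparable to constants on bounded sets.

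\textbf{Exterior domain.} I will use Dunford's integral
\[
h(A)f=\frac{1}{2\pi i}\int_{\partial\Sigma_{\phi'}} h(\lambda)(\lambda-A)^{-1}f\,d\lambda .
\]
The resolvent is written via cut-off functions $\varphi$, $1-\varphi$ and Bogovskii corrections as
\[
(\lambda-A)^{-1}=R_{\IR^n}(\lambda)+R_{B}(\lambda)+\text{remainder}.
\]
Here $R_{\IR^n}$ and $R_B$ are the whole-space and bounded-domain resolvents. The first two pieces yield operators $h(-\Delta)$ and $h(A_B)$ composed with bounded cut-off operators, and these are bounded by the previous step. The remainder consists of commutator terms of lower order, supported in an annulus near $\partial\Omega$. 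For these I aim for the bound $\|\text{remainder}\|\le C|\lambda|^{-1}\min(|\lambda|^{\varepsilon},|\lambda|^{-\varepsilon})$, so that the contour integral converges absolutely with norm $\le C|h|_\infty$.

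\textbf{Main obstacle.} The large-$|\lambda|$ decay of the remainder follows from the resolvent estimates in Theorem \ref{res-weighted}. The small-$|\lambda|$ behaviour is the hard part, since the weights are homogeneous and $\lambda\to0$ is critical. Here I would use the uniform resolvent estimate of Theorem \ref{res-weighted}(ii) together with local decay estimates on $\Omega_R$ to gain $|\lambda|^{\varepsilon}$. For negative $\ell$, where local decay fails, I would argue by duality: the adjoint of $A_{q,\ell}$ is $A_{q',-\ell}$ and $h^*(\lambda)=\overline{h(\bar\lambda)}$, which reduces to the case of positive exponents. Once (i) holds for all $\phi>0$, (ii) follows by taking $h(\lambda)=\lambda^{it}$, which satisfies $|h|_{\infty,\phi}\le e^{\phi|t|}$, after the standard approximation of $\lambda^{it}$ by functions in $\HH_0$. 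Finally, (iii) follows from BIP and the complex interpolation theorem for operators with bounded imaginary powers (Triebel, Sect.~1.15.3).
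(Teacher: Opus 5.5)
\paragraph{Context.} The paper does not prove this theorem. It quotes it from \cite[Theorem 1.1]{Farwig-Tsuda-Hinfty} and only remarks that (iii) is short in the nonhomogeneous setting, so your sketch has to stand on its own. Its outer frame is reasonable:
\begin{itemize}
\item the whole-space calculus via the weighted Mikhlin theorem \cite{KurtzWheeden};
\item the unweighted bounded-domain calculus of \cite{NollSaal}, since weights are trivial on bounded sets;
\item a cut-off/Bogovski\u{\i} parametrix inside the Dunford integral;
\item duality $A_{q,\ell}^*=A_{q',-\ell}$;
\item (ii) from (i) via $\lambda^{it}$.
\end{itemize}
For (iii), note that the theorem in \cite[Sect.~1.15.3]{Triebel} is stated for positive operators, i.e.\ $0\in\rho(A)$, and this fails on an exterior domain. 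Apply it to $1+A$ instead and use $\D((1+A)^\theta)=\D(A^\theta)$ with equivalent norms.

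\paragraph{The gap.} It sits exactly in the step you flag, and the tools you name do not close it. Let $\Phi(\lambda)$ be the parametrix, so that $(\lambda-A)\Phi(\lambda)=I+T(\lambda)$ (pressure understood), where the commutator $T(\lambda)$ takes values in $L^q(K)$ and $K$ is the annulus. The remainder in the resolvent is then $(\lambda-A)^{-1}-\Phi(\lambda)=-(\lambda-A)^{-1}T(\lambda)$. In left-parametrix form it is a model resolvent applied to commutators of $(\lambda-A)^{-1}f$ and its pressure on $K$. Either way it is global and contains the unknown exterior resolvent, so it is not a lower-order term supported in the annulus.

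Near $\lambda=0$, $T(\lambda)$ does not become small. On $L^q(K)$ it converges to a compact operator $T(0)\neq0$. As a map $L^q_\ell\to L^q(K)$ it can even grow like $|\lambda|^{\gamma-1}$, with $\gamma=\frac12(\frac nq+\ell)$. Hence the uniform bound of Theorem~\ref{res-weighted}(ii) gives at best $C|\lambda|^{-1}$ for the remainder. Since $\int_0^1 r^{-1}\,dr=\infty$, no bound of the form $C|h|_{\infty,\phi}$ follows.

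Local decay on $\Omega_R$ improves only the global-to-local factor. The local-to-global factor must also gain, and the two gains must add up to more than $1$. Even then, unweighted local decay (gain $\frac{n}{2q}$) is not enough. Combined with the best local-to-global gain it yields $\frac{n-\ell}{2}$, which is $\le 1$ for $n-2\le\ell<\frac{n}{q'}$; this range is nonempty when $q>\frac n2$.

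\paragraph{What is missing.} One way to close the argument needs two ingredients.
\begin{itemize}
\item[(a)] Uniform invertibility of $I+T(\lambda)$ on $L^q(K)$ for small $|\lambda|$. No Neumann series is available there. You need compactness, continuity of $T(\lambda)$ up to $\lambda=0$ (this is where $n\ge3$ enters), and injectivity of $I+T(0)$. The last comes from uniqueness for the exterior stationary Stokes problem, exactly as in Lemma~\ref{R1estimate}.
\item[(b)] The factorization $-\Phi(\lambda)\big(I+T(\lambda)\big)^{-1}T(\lambda)$ of the remainder, together with bounds obtained from the explicit whole-space Stokes resolvent:
\begin{itemize}
\item $\|T(\lambda)\|_{L^q_\ell\to L^q(K)}\le C|\lambda|^{\gamma-1}$;
\item $\|\Phi(\lambda)\|_{L^q(K)\to L^q_\ell}\le C|\lambda|^{\beta-1}$, with $\beta=\frac12(\frac{n}{q'}-\ell)$.
\end{itemize}
Each factor is bounded, up to a logarithm, when the corresponding exponent is $\ge 1$.
\end{itemize}
These give $O(|\lambda|^{\gamma+\beta-2})=O(|\lambda|^{\frac n2-2})$ near $0$, which is integrable precisely because $n\ge3$. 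Since $\gamma>0$ and $\beta>0$ are exactly the Muckenhoupt conditions $-\frac nq<\ell<\frac n{q'}$, this route needs neither weighted local decay nor your duality step.

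Because the bounds come from explicit model resolvents, they hold uniformly for $\phi'\le|\arg\lambda|\le\pi$ with any $\phi'>0$. That is what $\mathscr H^\infty$-angle $0$ requires. By contrast, local decay of $e^{-tA}$ along real $t$ directly gives resolvent information, via the Laplace transform, only in a half-plane.
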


\begin{thm}\label{A1/2-nabla_A1/2-all}
{\rm \cite[Theorem 1.2]{Farwig-Tsuda-Hinfty}}
(i) For $1<q<\infty$ and $-\frac{n}{q}<\ell<\frac{n}{q'}$ there holds
\begin{equation}\label{A1/2-nabla-all}
    \big\|A_{q,\ell}^{1/2}\big\|_{L^q_\ell(\Omega)} \leq C\|\nabla u \|_{L^q_\ell(\Omega)}, \quad u\in \widehat H^{1,q}_{\sigma,\ell}(\Omega),
\end{equation}
with a constant $C>0$ independent of $u$.

(ii) Let $1<q<\infty$ and $1-\frac{n}{q} <\ell< \frac{n}{q'}$. Then there exists $C>0$ independent of $u$ such that
\begin{equation}\label{nabla-A1/2-all}
    \|\nabla u \|_{L^q_\ell(\Omega)} \leq C\big\|A_{q,\ell}^{1/2} u\big\|_{L^q_\ell(\Omega)}, \quad u\in \widehat\D(A_{q,\ell}^{1/2}), 
    \end{equation}
 where 
$\widehat{\D}(A^{1/2}_{q,\ell}$ denotes the completion of $\D(A^{1/2}_{q,\ell})$  with respect to $\|A^{1/2}_{q,\ell}\,\cdot\|_{L^q_\ell(\Omega)}.$
\end{thm}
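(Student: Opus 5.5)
The plan is to derive both inequalities from the bounded $\mathscr H^\infty$-calculus and the $BIP$ property of Theorem~\ref{theorem-H-infty-all}, combined with a localization that reduces the exterior-domain problem to weighted whole-space estimates. Throughout I work on the dense class $C^\infty_{0,\sigma}(\Omega)$, which is contained in $\D(A_{q,\ell})\cap\widehat H^{1,q}_{\sigma,\ell,0}(\Omega)$, and then extend by completion, using Proposition~\ref{dense Htheta} and the definition of $\widehat\D(A^{1/2}_{q,\ell})$.

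\textbf{Part (ii).} I treat this first, because it is the ``Riesz transform'' direction and the place where the restriction $1-\frac nq<\ell$ enters. I write
\[
u = A^{-1/2}A^{1/2}u = c\int_0^\infty t^{-1/2}e^{-tA}A^{1/2}u\dt .
\]
Then I choose a cut-off $\chi\in C^\infty_0(B_{R+1})$ with $\chi=1$ on $B_R$ and split $u=\chi u+(1-\chi)u$.

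For the far part I use the Bogovski\u{\i} operator to restore solenoidality and extend by zero to $\IR^n$. There the weighted Riesz transform $\nabla(-\Delta)^{-1/2}$ is bounded on $L^q_\ell(\IR^n)$, since $\langle x\rangle^{\ell q}\in\mathscr A_q$. On the whole space, $(-\Delta)^{1/2}$ coincides with the Stokes square root. The commutator between the exterior and the whole-space Stokes square roots, as well as the Bogovski\u{\i} correction, are lower-order terms. I will bound them by $\|u\|_{L^q_{\ell-1}(\Omega\cap B_{R+1})}$ plus a compactly supported term.

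The key observation is that the lower-order term can be controlled. The condition $1-\frac nq<\ell$ means exactly that $\langle x\rangle^{(\ell-1)q}\in\mathscr A_q$. This gives the weighted Hardy inequality $\|u\|_{L^q_{\ell-1}}\le C\|\nabla u\|_{L^q_\ell}$, and also the weighted Sobolev-type embedding
\[
\widehat\D(A^{1/2}_{q,\ell})\hookrightarrow L^q_{\ell-1},
\]
which I will prove via Theorem~\ref{theorem-H-infty-all}(iii) and complex interpolation between $L^q_{\sigma,\ell}$ and $\D(A)$, using the resolvent bounds of Theorem~\ref{res-weighted}. A compactness (Ne\v{c}as/contradiction) argument then absorbs the remaining local term. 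Uniqueness in $\widehat\D(A^{1/2})$ follows because the kernel of $A^{1/2}$ is trivial; this uses $n\ge3$ and the admissible weight range.

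\textbf{Part (i).} I use duality. For $\varphi\in C^\infty_{0,\sigma}(\Omega)$ and the adjoint operator $A^*=A_{q',-\ell}$, I write
\[
(A^{1/2}u,\varphi)=(\nabla u,\nabla (A^*)^{-1/2}\varphi),
\]
so it suffices to show $\|\nabla (A^*)^{-1/2}\varphi\|_{L^{q'}_{-\ell}}\le C\|\varphi\|_{L^{q'}_{-\ell}}$. This is Part (ii) for the dual weight, which Part (ii) covers only when $1-\frac n{q'}<-\ell$.

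For the remaining range of $\ell$, I avoid the Riesz transform on the dual side. Instead I use the quadratic (square-function) estimates that follow from the bounded $\mathscr H^\infty$-calculus (Theorem~\ref{theorem-H-infty-all}(i)), in the representation
\[
A^{1/2}u=\frac1\pi\int_0^\infty\lambda^{-1/2}A(\lambda+A)^{-1}u\,{\rm d}\lambda .
\]
In the weak form, $(A(\lambda+A)^{-1}u,\varphi)=(\nabla u,\nabla(\lambda+A^*)^{-1}\varphi)$, and only the bounds $\lambda^{1/2}\|\nabla(\lambda+A^*)^{-1}\|\lesssim 1$ for the dual weight are needed. These gradient resolvent bounds are available from Theorem~\ref{res-weighted} together with interpolation, with no Hardy-type restriction.

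\textbf{Main obstacle.} I expect the hardest step to be handling the low-frequency region $\lambda\to0$ in the weighted homogeneous setting: making the $\lambda$-integral converge with only $\nabla u\in L^q_\ell$ and no control of $u$ itself. My first attempt will be to combine the square-function estimate with the $BIP$ bound of Theorem~\ref{theorem-H-infty-all}(ii) via Stein interpolation between ${\rm Re}\,z=0$ and ${\rm Re}\,z=1$ of the analytic family $A^{z}$ composed with $\nabla^{-1}$.
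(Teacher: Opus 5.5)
The paper gives no proof of this statement. It is quoted from \cite[Theorem 1.2]{Farwig-Tsuda-Hinfty}, where (cf.\ the opening of Subsection~\ref{S2.3}) it amounts to identifying the homogeneous fractional domain $\widehat{\D}(A^{1/2}_{q,\ell})$ with $\widehat H^{1,q}_{\sigma,\ell,0}(\Omega)$. Your proposal therefore has to stand on its own, and in both parts the decisive step is missing or incorrect.

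In part (i) the duality reduction is fine, but it needs (ii) for $(q',-\ell)$ and so covers only $-\frac nq<\ell<\frac{n}{q'}-1$. For the remaining range $\frac{n}{q'}-1\le\ell<\frac{n}{q'}$ you invoke uniform bounds $\lambda^{1/2}\|\nabla(\lambda+A^*)^{-1}\varphi\|_{L^{q'}_{-\ell}}\le C\|\varphi\|_{L^{q'}_{-\ell}}$ ``with no Hardy-type restriction''. These are not available:
\begin{itemize}
\item Theorem~\ref{res-weighted}(iii) with the weight $\langle x\rangle^{-\ell q'}$ on $L^{q'}$ requires $-\ell q'>2q'-n$, i.e.\ $\ell<\frac{n}{q'}-2$. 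That lies inside the range you have already handled.
\item Parts (i), (ii) of that theorem control only $\lambda\|w\|+\|A^*w\|$. Interpolating gives $\lambda^{1/2}\|(A^*)^{1/2}w\|$, and passing from $(A^*)^{1/2}w$ to $\nabla w$ is exactly (ii) for $(q',-\ell)$, which is unavailable and in general false there.
\item For $\ell=0$, $q<n'$, you would be asserting a uniform gradient resolvent estimate in $L^{q'}$ with $q'>n$ on an exterior domain. That is false: rescaling a force to the scale $L=\lambda^{-1/2}$, the Stokeslet-type boundary correction makes the ratio grow like $L^{1-n/q'}$.
\end{itemize}
Even granting such bounds, the Balakrishnan integrand is only $O(\lambda^{-1})\|\nabla u\|_{L^q_\ell}\|\varphi\|_{L^{q'}_{-\ell}}$, which diverges logarithmically at both ends. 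The square-function estimate you would need is a vertical one for $\lambda^{1/2}\nabla(\lambda+A^*)^{-1}$. That is again a Riesz-transform statement about $\nabla$, and the $\mathscr H^\infty$-calculus of $A$, which controls functions of $A$ only, does not provide it. Your closing Stein-interpolation remark is a placeholder, not an argument, so (i) remains unproved for $\ell\ge\frac{n}{q'}-1$.

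In part (ii) the whole difficulty sits in one sentence: that the commutator between the exterior and whole-space square roots, together with the Bogovski\u{\i} correction, is lower order, bounded by $\|u\|_{L^q_{\ell-1}(\Omega\cap B_{R+1})}$ plus a compact term. Nothing is proved there, and it is not routine, because $A^{1/2}$ is nonlocal. Comparing the two square roots through their resolvent representations produces terms such as $\nabla\chi\cdot\nabla(\lambda+A)^{-1}f$ and $(\nabla\chi)\,p_\lambda$, with $p_\lambda$ the resolvent pressure, integrated against $\lambda^{-1/2}\,d\lambda$ down to $\lambda=0$. Whether this converges depends on how fast the exterior resolvent decays near $\partial\Omega$ as $\lambda\to0$, and your sketch contains no such local decay estimate.

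The auxiliary embedding $\widehat{\D}(A^{1/2}_{q,\ell})\hookrightarrow L^q_{\ell-1}$ does not follow as you indicate. Theorem~\ref{theorem-H-infty-all}(iii) concerns the inhomogeneous graph norm and yields only $\|u\|_{L^q_{\ell-1}}\lesssim\|u\|_{L^q_\ell}+\|A^{1/2}u\|_{L^q_\ell}$. A homogeneous endpoint $\|u\|_{L^q_{\ell-2}}\lesssim\|Au\|_{L^q_\ell}$ is available, via Theorem~\ref{res-weighted}(iii) plus Hardy, only for $\ell>2-\frac nq$, which leaves $1-\frac nq<\ell\le2-\frac nq$ open.

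Finally, the Ne\v{c}as-type compactness step presupposes that unproved localized a~priori estimate. It also does not conclude via injectivity of $A^{1/2}$, because the weak limit is not a priori an element of the abstract completion $\widehat{\D}(A^{1/2}_{q,\ell})$. What you actually need is uniqueness for the homogeneous weak Stokes problem in $\widehat H^{1,q}_{\sigma,\ell,0}(\Omega)$. That holds only for $\ell>1-\frac nq$: for $\ell\le1-\frac nq$ this space contains exterior Stokes flows tending to a nonzero constant at infinity.
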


 As applications we mention the $L^q_\ell$-$L^r_{\ell'}$ decay of the Stokes semigroup $e^{- tA_{q,\ell}}$ and the $L^p$-maximal regularity of $A_{q,\ell}$.

\begin{cor}\label{Lp-Lq At-all} 
{\rm \cite[Corollary 1.4]{Farwig-Tsuda-Hinfty} }
Let $1< q\leq r<\infty$, let $\kappa-\frac{n}{q} < \ell < \frac{n}{q'}$  with  $\kappa\in(0,n)$, and assume that
$$ \ell-\ell' = \kappa + \frac{n}{r} -\frac{n}{q}\geq 0. $$
Then there holds the weighted $L^q$-$L^r$-estimate
$$ \|e^{- tA_{q,\ell}}u_0\|_{L^r_{\ell'}(\Omega)} \leq C t^{-\frac{n}{2}\big(\frac{1}{q}-\frac{1}{r}\big) -\frac{\ell-\ell'}{2}} \|u_0\|_{L^q_{\ell}(\Omega)}, \quad u_0\in L^q_{\sigma,\ell }(\Omega),$$
with a constant $C>0$ independent of $t>0$. 
\end{cor}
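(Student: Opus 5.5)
The statement to be proved is the last one displayed, Corollary \ref{Lp-Lq At-all}: the weighted $L^q_\ell$-$L^r_{\ell'}$ decay of $e^{-tA_{q,\ell}}$ with gain $t^{-\frac{\ell-\ell'}{2}}$. My plan is to factor the semigroup through a fractional power. For $\theta\in[0,1)$,
$$e^{-tA}u_0 = A^{-\theta}\,\big(A^{\theta}e^{-tA}u_0\big),$$
and I will combine two ingredients. The first is the analytic-semigroup bound
$$\|A^{\theta}e^{-tA}u_0\|_{L^q_\ell}\le Ct^{-\theta}\|u_0\|_{L^q_\ell},$$
which follows from the bounded $\mathscr H^\infty$-calculus of Theorem \ref{theorem-H-infty-all}(i), or simply from analyticity. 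The second is a weighted Sobolev-type embedding for $\D(A^{\theta}_{q,\ell})$ in its homogeneous version, namely
$$\|v\|_{L^r_{\ell'}}\le C\|A^{\theta}v\|_{L^q_\ell}.$$
Scaling forces $2\theta=\kappa=\ell-\ell'+\frac nq-\frac nr$. Combining the two bounds gives the decay $t^{-\theta}=t^{-\frac n2(\frac1q-\frac1r)-\frac{\ell-\ell'}{2}}$, which is exactly the claimed rate.

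The embedding is where the real work lies, and I would prove it in two steps.

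\textbf{Step 1: identify $\D(A^\theta)$ with a homogeneous Sobolev space.} By Theorem \ref{theorem-H-infty-all}(ii) the operator $A$ has BIP. Hence the homogeneous domains of fractional powers interpolate by the complex method: $\widehat\D(A^{\theta})=[L^q_{\sigma,\ell},\widehat\D(A)]_{\theta}$, and likewise between $\widehat\D(A^{1/2})$ and $\widehat\D(A)$. Theorem \ref{A1/2-nabla_A1/2-all} identifies $\widehat\D(A^{1/2})$ with $\widehat H^{1,q}_{\sigma,\ell,0}$. The resolvent estimate \eqref{equ:rse-w2} identifies $\widehat\D(A)$ with homogeneous $\widehat H^{2,q}_\ell$ whenever $\ell$ lies in the admissible range. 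This yields $\|v\|_{\widehat H^{2\theta,q}_\ell}\le C\|A^\theta v\|_{L^q_\ell}$.

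\textbf{Step 2: pass to the whole space.} Extend $v$ by the extension operator of Lemma \ref{extention}; interpolating its bounds gives the corresponding bound for the fractional case. Then apply the weighted Gagliardo-Nirenberg / Stein-Weiss inequality, Proposition \ref{DS}, with $\theta=1$, $k_0=0$, $k=2\theta$, $\alpha=\ell$, $\gamma=\ell'$. Condition \eqref{theta-pqr-alpha-beta-N} then reads
$$\frac1r+\frac{\ell'}{n}=\frac1q-\frac{\kappa-\ell}{n},$$
which is exactly the scaling relation assumed. The requirements $\gamma\le\alpha$, i.e.\ $\ell'\le\ell$, and $\frac1r\le\frac1q$ are also among the hypotheses. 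The hypothesis $\kappa-\frac nq<\ell$ guarantees $\gamma=\ell'>-\frac nr$, and $\kappa<n$ keeps the Riesz potential meaningful.

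I expect the main obstacle to be Step 1 for the full weight range. Theorem \ref{A1/2-nabla_A1/2-all}(ii) needs $\ell>1-\frac nq$, and the second-order estimate needs an even stronger lower bound, whereas the corollary allows $\ell>\kappa-\frac nq$. My approach would be to use only as much derivative as needed. If $\kappa\le1$, I would interpolate between $L^q_\ell$ and $\widehat\D(A^{1/2})$, using only part (ii), whose requirement $\ell>1-\frac nq$ is weakened by interpolation to roughly $\ell>\kappa-\frac nq$. If $\kappa>1$, I would write $A^{\theta}=A^{1/2}A^{\theta-1/2}$ and apply Theorem \ref{A1/2-nabla_A1/2-all} to $A^{\theta-1/2}v$.

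Once the embedding holds for a dense set, for example $\D(A)$, the estimate extends to all of $L^q_{\sigma,\ell}$ by density, completing the proof.
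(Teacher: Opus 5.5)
The paper does not prove this corollary; it quotes it from \cite{Farwig-Tsuda-Hinfty}. Your architecture is the natural one: write $e^{-tA}u_0=A^{-\kappa/2}A^{\kappa/2}e^{-tA}u_0$, use $\|A^{\kappa/2}e^{-tA}\|\le Ct^{-\kappa/2}$, and finish with a Stein--Weiss embedding whose hypotheses are exactly those of the corollary. There are, however, two genuine gaps.

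\textbf{First gap: the weight range.} The embedding $\|v\|_{L^r_{\ell'}}\le C\|A^{\kappa/2}v\|_{L^q_\ell}$ is not established on the whole range $\kappa-\frac nq<\ell<\frac n{q'}$. Your remedy for $\kappa-\frac nq<\ell\le 1-\frac nq$ fails. Complex interpolation between $L^q_{\sigma,\ell}$ and $\widehat\D(A^{1/2}_{q,\ell})$ at a \emph{fixed} weight only transports what already holds at the endpoints. For $\ell\le 1-\frac nq$ the endpoint bound \eqref{nabla-A1/2-all} is not available, so no lower bound on $\ell$ is ``weakened''.

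You need an extra idea, and the simplest is duality. Since $(e^{-tA_{q,\ell}})^*=e^{-tA_{q',-\ell}}$ and $\mathbb P$ is bounded on $L^{r'}_{-\ell'}$ (note $-\frac nr<\ell'<\frac n{r'}-\kappa$), the claim is equivalent to the same estimate with $(q,r,\ell,\ell')$ replaced by $(r',q',-\ell',-\ell)$ and the same $\kappa$. One checks that the hypotheses are invariant under this substitution. For $0<\kappa\le 1$ your fixed-weight argument (BIP interpolation, extension, Stein--Weiss) then works directly for $1-\frac nq<\ell<\frac n{q'}$. Applied to the dual problem, it works for $\kappa-\frac nq<\ell<\frac n{q'}-(1-\kappa)$. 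Since $n\ge 3$, these intervals overlap and exhaust $(\kappa-\frac nq,\frac n{q'})$. Alternatively, Stein's interpolation of an analytic family with weights $\langle x\rangle^{\ell(z)}$ varying in $z$, using BIP on $\mathrm{Re}\,z=0$, gives the same range.

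\textbf{Second gap: large $\kappa$.} The statement allows $\kappa\in(0,n)$, so $\kappa/2\ge 1$ does occur, but your restriction $\theta\in[0,1)$ excludes it. For $1<\kappa<2$, the factorization $A^{\theta}=A^{1/2}A^{\theta-1/2}$ only gives $\nabla A^{\theta-1/2}v\in L^q_\ell$. It gives no bound on $v$ in $\widehat H^{\kappa,q}_\ell$, because $\nabla$ does not commute with $A^{\theta-1/2}$ on $\Omega$. Going through $\widehat\D(A)$ instead would need \eqref{equ:rse-w2}, i.e.\ $\ell>2-\frac nq$. For $\kappa\ge 2$ no characterization of $\widehat\D(A^{\kappa/2})$ is available at all.

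Avoid higher-order Sobolev norms and iterate instead. Write $e^{-tA}=(e^{-tA/m})^m$ with $\kappa_j=\kappa/m\le 1$. Let $(\frac1{q_j},\ell_j)$, $j=0,\dots,m$, run linearly from $(\frac1q,\ell)$ to $(\frac1r,\ell')$. Then $q_{j-1}\le q_j$ and $\ell_{j-1}-\ell_j=\kappa_j+\frac n{q_j}-\frac n{q_{j-1}}\ge 0$.

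The step hypotheses are $\kappa_j-\frac n{q_{j-1}}<\ell_{j-1}<\frac n{q_{j-1}'}$, and the lower one is equivalent to $\ell_j>-\frac n{q_j}$. They hold because $-\frac n{q_j}<\ell_j<\frac n{q_j'}$ is true at $j=0$ and at $j=m$ (use $\ell'>-\frac nr$ and $\ell'<\frac n{r'}-\kappa$), and these strict inequalities are preserved under convex combination.

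By the small-$\kappa$ case and the consistency of the semigroups on the spaces $L^{q_j}_{\sigma,\ell_j}$, each factor contributes $t^{-\kappa_j/2}$. The product is $t^{-\kappa/2}=t^{-\frac n2(\frac1q-\frac1r)-\frac{\ell-\ell'}{2}}$, as required.
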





\subsection{The Stokes operator $A^\alpha$ for small $\alpha\geq 0$ \label{S2.3}}
In this subsection we will prove that in contrast to the identity $\widehat\D(A^{1/2}_{q,\ell}) = \hat H^{1,q}_{\ell,0}(\Omega)$, the closure of $C^\infty_{0,\sigma}(\Omega)$, see Theorem \ref{A1/2-nabla_A1/2-all}, and a similar result for nonhomogeneous spaces, the domain $\D(A_{q,\ell}^{\alpha})$ for $0<\alpha<\frac{1}{2q}$ does not require any boundary condition except for $u\cdot \textsl{n}=0$ on $\partial\Omega$.  The main idea is an analogous result for $H^{\alpha,q}_{\ell}$ and a retraction-coretraction argument for $\D(A)$  in the nonhomogeneous setting. \BLACK

\BLACK 

\begin{prop}\label{H-alpha-Omega-IRn} 
Let $\Omega$ be an exterior domain and $\omega\subset\IR^n$ denote any bounded domain of boundary class $C^1$ or $C^2$, respectively. Finally, let $1<q<\infty$, $0<\alpha<1$ and $-\frac{n}{q} <\ell < \frac{n}{q'}$. 

(i) 
There hold the complex interpolations $[L^q_\ell(\Omega), H^{k,q}_\ell(\Omega)]_\alpha = H^{k\alpha,q}_{\ell}(\Omega)$, $k=1,2$.
\begin{align*}
[L^q_\ell(\Omega), H^{1,q}_\ell(\Omega)]_\alpha = H^{\alpha,q}_{\ell}(\Omega),\quad &  
 [L^q_\ell(\Omega), H^{2,q}_\ell(\Omega)]_\alpha  = H^{2\alpha,q}_{\ell}(\Omega),\\
 [L^q_\ell(\omega), H^{1,q}_\ell(\omega)]_\alpha  = H^{\alpha,q}_{\ell}(\omega),\quad & 
 [L^q_\ell(\omega), H^{2,q}_\ell(\omega)]_\alpha = H^{2\alpha,q}_{\ell}(\omega),
\end{align*}
respectively.

(ii)
The family of spaces $H^{\alpha,q}_{\ell}(\omega)$, $0\leq \alpha\leq 1$, is independent of the weight exponent $\ell$ and has - uniformly with respect to $\ell$ - equivalent norms $\|\cdot\|_{H^{\alpha,q}_{\ell}(\omega)}$.
\end{prop}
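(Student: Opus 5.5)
The plan is to reduce everything to the whole space $\IR^n$, where the weighted Bessel potential spaces $H^{s,q}_\ell(\IR^n)$ with $w=\langle x\rangle^{\ell q}\in\mathscr A_q$ form a complex interpolation scale. Indeed, $w\in\mathscr A_q$ makes the Riesz transforms and the multipliers $(1+|\xi|^2)^{is/2}$ (Mikhlin multipliers) bounded on $L^q_\ell(\IR^n)$. Hence $(1-\Delta)^{s/2}$ is an isomorphism $H^{s+t,q}_\ell(\IR^n)\to H^{t,q}_\ell(\IR^n)$, and $H^{k,q}_\ell(\IR^n)$ coincides with the classical weighted Sobolev space for $k\in\IN$. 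Stein's interpolation with the analytic family $(1-\Delta)^{-z k/2}$, using the bounded imaginary powers, gives $[L^q_\ell(\IR^n),H^{k,q}_\ell(\IR^n)]_\alpha=H^{k\alpha,q}_\ell(\IR^n)$. For a domain $D\in\{\Omega,\omega\}$ we define $H^{s,q}_\ell(D)$ as the space of restrictions with the quotient norm.

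For (i), I will use the retraction–coretraction principle. The restriction $R:H^{s,q}_\ell(\IR^n)\to H^{s,q}_\ell(D)$ is a retraction for all $s\in[0,2]$ by definition. I need a common coretraction $E$, i.e. a single linear extension operator bounded $L^q_\ell(D)\to L^q_\ell(\IR^n)$ and $H^{k,q}_\ell(D)\to H^{k,q}_\ell(\IR^n)$ for $k=1,2$ simultaneously, with $RE=\id$. For boundaries of class $C^2$ (needed when $k=2$) I take Stein's universal extension operator, or a finite-order reflection operator built with a partition of unity near the compact boundary $\partial\Omega$ or $\partial\omega$. For $k=1$ and $C^1$ boundaries, a first-order reflection suffices. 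Its boundedness in weighted norms is local: the operator only acts in a bounded neighbourhood of the boundary, where $\langle x\rangle^{\ell q}$ is bounded above and below. Alternatively one can invoke Lemma \ref{extention} (Chua), combined with the boundedness of $E$ on $L^q_w$, although for nonhomogeneous spaces the explicit reflection is cleaner. Once $E$ exists, the standard theorem (Triebel, Thm.~1.2.4) gives
\[
[L^q_\ell(D),H^{k,q}_\ell(D)]_\alpha=R\big[L^q_\ell(\IR^n),H^{k,q}_\ell(\IR^n)\big]_\alpha=R\,H^{k\alpha,q}_\ell(\IR^n)=H^{k\alpha,q}_\ell(D),
\]
which is all four identities.

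For (ii), $\omega$ is bounded, so $\langle x\rangle^{\ell q}$ is comparable to $1$ on $\omega$. The comparability constants depend only on $\sup_\omega|x|$ and on $\ell$, and are uniform for $\ell$ in the bounded admissible interval $(-\frac nq,\frac n{q'})$. Hence $L^q_\ell(\omega)=L^q(\omega)$ and $H^{k,q}_\ell(\omega)=H^{k,q}(\omega)$ for $k=1,2$, with uniformly equivalent norms. By (i) and the exactness of complex interpolation of equivalent couples (the interpolation constant is $1$), the spaces $H^{\alpha,q}_\ell(\omega)=[L^q_\ell(\omega),H^{1,q}_\ell(\omega)]_\alpha$ coincide with $H^{\alpha,q}(\omega)$, with constants uniform in $\ell$.

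The main obstacle is the first step. We must make sure that the weighted whole-space scale really interpolates and that $H^{k,q}_\ell(\IR^n)$ is the Bessel potential space. This requires weighted Mikhlin multiplier theorems and the boundedness of $\langle x\rangle^{\ell}$-commutators. I would handle the latter by noting that $\nabla\langle x\rangle^{\ell}\le C\langle x\rangle^{\ell}$, so multiplication by smooth cutoffs preserves these spaces. If that proves delicate, I would instead use that $1-\Delta$ has BIP on $L^q_\ell(\IR^n)$, which follows from the $\mathscr A_q$ multiplier theorem, and then identify $\D((1-\Delta)^{\alpha})$ via the analogue of Theorem \ref{theorem-H-infty-all}(iii).
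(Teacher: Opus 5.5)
Your proposal is correct and follows essentially the paper's route. For (i), restriction serves as the retraction, and a local extension operator serves as the common coretraction: partition of unity, flattening of the compact boundary, reflection-type extension. On that operator the weight $\langle x\rangle^{\ell q}$ is harmless, and for (ii) the weight is comparable to a constant near the bounded set $\omega$, followed by complex interpolation. The one addition is your justification of the whole-space identity $[L^q_\ell(\IR^n),H^{k,q}_\ell(\IR^n)]_\alpha=H^{k\alpha,q}_\ell(\IR^n)$ via weighted Mikhlin multipliers and bounded imaginary powers of $1-\Delta$, which the paper uses only implicitly; in (ii) the identification of the quotient norm on $H^{k,q}_\ell(\omega)$ with an unweighted norm needs extensions supported in a fixed neighbourhood $\omega^\epsilon$ (as the paper states), and your local coretraction supplies exactly that.
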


\begin{proof}
(i) 
 We construct \BLACK retractions $R: H^{k,q}_\ell(\IR^n) \to H^{k,q}_\ell(\Omega)$ and $R: H^{k,q}_\ell(\IR^n) \to H^{k,q}_\ell(\omega)$, $k=1,2$. Indeed, $R$ is defined by the trivial restriction for functions on $\IR^n$ to $\Omega$ and $\omega$, respectively. For the construction of corresponding coretractions $S$ we follow \cite[Subsect. \!4.2.2]{Triebel} and will consider only the case of a bounded domain $\omega$ when $k=1$. 
By a partition of unity \BLACK the problem is decomposed into a problem on a thin boundary layer covered by finitely many balls and further finitely many balls covering the remaining complement in $\omega$. 
For a typical open ball covering a part of $\partial\omega$ we flatten the boundary part by a coordinate transform and transfer the problem into a problem on the half space $\IR^n_+$. 
Given any function $f\in H^{1,q}_\ell(\IR^n_+)$ we use a standard extension method for $f$ to a function on $\IR^n$ vanishing for $x_n\leq -\epsilon$ for a sufficiently small $\epsilon>0$, together with the backward coordinate transform. 
The composition of these operators defines a local coretraction. Following the partition of unity and adding all local coretractions \BLACK we get a coretraction from $H^{1,q}_\ell(\omega)$ to $H^{1,q}_\ell(\IR^n)$ related to $R$. 

Now the method of retraction-coretraction proves (i).

(ii) A standard argument, see \cite[Subsect.\! 4.2.2, Step 2 of Lemma 4.2.2]{Triebel}, shows that the norm of $u\in H^{1,q}_\ell(\omega)$ is equivalent to the norm 
 defined by \BLACK the infimum of norms of extensions $U \in H^{1,q}_\ell(\IR^n)$ having support in $\omega^\epsilon = \omega\cup\{x: {\rm dist}(x,\partial\omega) < \epsilon\}$ where $\epsilon>0$ is chosen sufficiently small. 
Indeed, it suffices to consider a multiplication operator on $H^{1,q}_\ell(\IR^n)$ defined by an adequate cut off function supported in $\omega^\epsilon$. By complex interpolation and (i) this property holds for $H^{\alpha,q}_\ell(\omega)$ as well. 
Since the weight function $w(x)=\langle x\rangle^{q\ell}$ is almost constant on $\omega^\epsilon$, the weight plays no role to characterize the space $H^{1,q}_\ell(\omega)$ and its norm.
\end{proof}

\begin{lem}\label{commutator estimate} 
Let  $u \in H^{\alpha, q}_{\ell}(\Omega)$ with $\alpha \in (0,1)$ and $-\frac{n}{q} < \ell < \frac{n}{q'}$. Further let $\eta $ be a smooth cut off function such that 
$\eta(x)=1$ for $x\in B_1$ and $\supp \eta\subset B_{2}$, and define
$\eta_R(x) = \eta(x/R)$ when $R>1$. 
Then it holds that 
$$
\eta_R u \rightarrow u \mbox{ in }H^{\alpha, q}_\ell(\Omega) \quad \textrm{ as } R\rightarrow \infty.
$$ 
\end{lem}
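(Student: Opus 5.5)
The claim is a density statement in the fractional weighted space, so I will prove it in three steps: a uniform bound for multiplication by $\eta_R$, convergence on a dense subset, and an $\varepsilon/3$ argument. The uniform bound comes from complex interpolation between $\alpha=0$ and $\alpha=1$, using Proposition \ref{H-alpha-Omega-IRn} (i), i.e. $H^{\alpha,q}_\ell(\Omega)=[L^q_\ell(\Omega),H^{1,q}_\ell(\Omega)]_\alpha$. The density of smooth compactly supported functions is justified below.

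First I show that the multiplication operators $M_R u:=\eta_R u$ are bounded on $H^{\alpha,q}_\ell(\Omega)$ uniformly in $R>1$. On $L^q_\ell(\Omega)$ this is immediate, since $|\eta_R|\le \|\eta\|_\infty$. On $H^{1,q}_\ell(\Omega)$ we have $\nabla(\eta_R u)=\eta_R\nabla u + R^{-1}(\nabla\eta)(\cdot/R)\,u$. Because $R>1$, the second term is bounded by $\|\nabla\eta\|_\infty|u|$, so
\[
\|\eta_R u\|_{H^{1,q}_\ell}\le C\|u\|_{H^{1,q}_\ell}
\]
with $C$ independent of $R$. The weight plays no role here, because it appears identically on both sides. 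Complex interpolation (the Riesz--Thorin/Calder\'on principle for linear operators bounded on both endpoint spaces) then gives $\sup_{R>1}\|M_R\|_{\mathcal L(H^{\alpha,q}_\ell(\Omega))}\le C$.

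Second, I show that $\eta_R v\to v$ for every $v$ in a dense subset, namely $H^{1,q}_\ell(\Omega)$, which is dense in the interpolation space by the general theory (since $L^q_\ell\cap H^{1,q}_\ell = H^{1,q}_\ell$ is dense in $[L^q_\ell,H^{1,q}_\ell]_\alpha$). For $v\in H^{1,q}_\ell(\Omega)$:
\begin{itemize}
\item Dominated convergence gives $\|(1-\eta_R)v\|_{L^q_\ell}\to0$.
\item Likewise $\|(1-\eta_R)\nabla v\|_{L^q_\ell}\to 0$.
\item The remaining term satisfies $\|R^{-1}(\nabla\eta)(\cdot/R)v\|_{L^q_\ell}\le CR^{-1}\|v\|_{L^q_\ell}\to0$.
\end{itemize}
Hence $\eta_Rv\to v$ in $H^{1,q}_\ell(\Omega)$. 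Then the interpolation inequality
\[
\|w\|_{H^{\alpha,q}_\ell}\le C\|w\|_{L^q_\ell}^{1-\alpha}\|w\|_{H^{1,q}_\ell}^\alpha
\]
applied to $w=(1-\eta_R)v$ gives convergence in $H^{\alpha,q}_\ell(\Omega)$ as well.

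Finally, the $\varepsilon/3$ argument. Given $u\in H^{\alpha,q}_\ell(\Omega)$ and $\varepsilon>0$, choose $v\in H^{1,q}_\ell(\Omega)$ with $\|u-v\|_{H^{\alpha,q}_\ell}<\varepsilon$. Then
\[
\|u-\eta_Ru\|\le \|u-v\|+\|v-\eta_Rv\|+\|M_R(v-u)\|\le (1+C)\varepsilon+\|v-\eta_Rv\|_{H^{\alpha,q}_\ell},
\]
and the last term tends to zero as $R\to\infty$.

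I expect the main obstacle to be justifying the density of $H^{1,q}_\ell(\Omega)$ in the complex interpolation space. This is standard: $X_0\cap X_1$ is dense in $[X_0,X_1]_\theta$ for $0<\theta<1$. So once Proposition \ref{H-alpha-Omega-IRn} (i) identifies $H^{\alpha,q}_\ell(\Omega)$ with that interpolation space, the density follows.
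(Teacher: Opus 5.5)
Your proposal is correct and follows essentially the same route as the paper's proof. You approximate $u$ by some $u_1\in H^{1,q}_\ell(\Omega)$, using density of the intersection in the complex interpolation space. You then show $\eta_R u_1\to u_1$ in $H^{1,q}_\ell(\Omega)$ with the norm $\|\cdot\|_{L^q_\ell}+\|\nabla\cdot\|_{L^q_\ell}$, and control $\eta_R(u_1-u)$ by the uniform-in-$R$ bound of multiplication by $\eta_R$ obtained via complex interpolation. The only cosmetic difference is that you pass from $H^{1,q}_\ell$-convergence to $H^{\alpha,q}_\ell$-convergence via the interpolation inequality, where the paper uses the embedding $H^{1,q}_\ell\hookrightarrow H^{\alpha,q}_\ell$.
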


\begin{proof} 
Given $u\in H^{\alpha,q}_{\ell}(\Omega)$ and $\epsilon>0$ we find by \cite[Theorem 1.9.3 (c)]{Triebel} and Proposition \ref{H-alpha-Omega-IRn} (i) an element
$u_1\in H^{1,q}_{\ell}(\Omega)$ such that 
$$ \|u-u_1\|_{H^{\alpha,q}_{\ell}(\Omega)} < \frac{\epsilon}{3}. $$
Working in $H^{1,q}_{\ell}(\Omega)$ with the equivalent norm $\|u_1\|_{L^q_\ell(\Omega)} + \|\nabla u_1\|_{L^q_\ell(\Omega)}$, see \eqref{equivnorm} below, it is immediate to see that there exists $R>0$ such that 
$$ \|\eta_R u_1 - u_1\|_{H^{1,q}_{\ell}(\Omega)} <\frac{\epsilon}{3}. $$
Thus
$$ \|u-\eta_R u\|_{H^{\alpha,q}_{\ell}(\Omega)} \leq \|u-u_1\|_{H^{\alpha,q}_{\ell}(\Omega)} + \|u_1-\eta_R u_1\|_{H^{\alpha,q}_{\ell}(\Omega)} + \|\eta_R(u_1-u)\|_{H^{\alpha,q}_{\ell}(\Omega)}. $$
Since the multiplication by $\eta_R$ is a bounded operator on both $L^q_\ell$ and $H^{1,q}_\ell$ and hence, by complex interpolation, also on $H^{\alpha,q}_{\ell}(\Omega)$, uniformly with respect to $R$, the last term in the above inequality can be estimated 
by a constant $C(\eta)$ times $\|u_1-u\|_{H^{\alpha,q}_{\ell}(\Omega)}<\frac{\epsilon}{3}$.
\end{proof}
\BLACK


\begin{prop}\label{H-alpha-0-0} 
Let $\Omega$ be a smooth exterior domain with 
$\Omega^c \subset B_{R}$. Then for $1<q<\infty$, $ -\frac{n}{q} < \ell<\frac{n}{q'}$ and $0<\alpha < \frac{1}{2q}$ the space $C^\infty_0(\Omega)$ is dense in $H^{\alpha, q}_{\ell}(\Omega)$, {\em i.e.,}
$$
H^{\alpha,q}_{\ell, 0}(\Omega) = H^{\alpha, q}_{\ell}(\Omega). 
$$
\end{prop}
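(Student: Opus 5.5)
We reduce the statement to a bounded-domain problem via Lemma~\ref{commutator estimate} and a cut-off near the boundary. Given $u\in H^{\alpha,q}_\ell(\Omega)$ and $\epsilon>0$, Lemma~\ref{commutator estimate} yields $R'>R$ with $\|u-\eta_{R'}u\|_{H^{\alpha,q}_\ell(\Omega)}<\epsilon/3$. The function $\eta_{R'}u$ has bounded support in $\overline{\Omega}\cap B_{2R'}$. Next choose a smooth cut-off $\zeta$ with $\zeta=1$ near $\partial\Omega$ and $\supp\zeta\subset B_{R'}$, and split
\[
\eta_{R'}u=\zeta u+(\eta_{R'}-\zeta)u .
\]
The second term has compact support in $\Omega$, away from $\partial\Omega$. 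It can therefore be approximated by $C^\infty_0(\Omega)$ functions through standard mollification. In doing so we use that multiplication by smooth bounded functions and mollification are bounded on $L^q_\ell$ and $H^{1,q}_\ell$, hence on $H^{\alpha,q}_\ell$ by Proposition~\ref{H-alpha-Omega-IRn}(i).

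The first term $\zeta u$ lives in the bounded domain $\omega=\Omega\cap B_{R'}$, and we work in $H^{\alpha,q}_\ell(\omega)$. By Proposition~\ref{H-alpha-Omega-IRn}(ii) this space coincides, with equivalent norms, with the unweighted $H^{\alpha,q}(\omega)$, since the weight is comparable to a constant there. We then invoke the classical result for bounded smooth domains, \cite[Theorem 4.7.1 / 4.3.2]{Triebel}: $C^\infty_0(\omega)$ is dense in $H^{s,q}(\omega)=H^{s,q}_0(\omega)$ whenever $0\le s<\frac1q$. Our range $0<\alpha<\frac1{2q}$ lies inside this interval, so $\zeta u$ is approximated in $H^{\alpha,q}(\omega)$ by $\varphi\in C^\infty_0(\omega)$. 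Multiplying by a cut-off equal to $1$ on $\supp\zeta$ and vanishing near $\partial B_{R'}$ keeps the support away from $\partial B_{R'}$ and costs only a constant factor. Extending $\varphi$ by zero to $\Omega$ then gives a function in $C^\infty_0(\Omega)$, and the approximation transfers back to $H^{\alpha,q}_\ell(\Omega)$.

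The main obstacle is this last transfer. The norm on $\Omega$ is the restriction (quotient) norm from $\IR^n$ by construction in Proposition~\ref{H-alpha-Omega-IRn}(i), whereas the norm on $\omega$ is a restriction norm from $\omega$. Extension by zero is not bounded for general $s$; it is bounded precisely in the range $s<\frac1q$, which is the Hardy-type inequality behind the density result. To handle this, we first use the coretraction of Proposition~\ref{H-alpha-Omega-IRn} to express $\|\zeta u-\varphi\|_{H^{\alpha,q}_\ell(\Omega)}$ through a localized norm. Since $\zeta u-\varphi$ vanishes near $\partial B_{R'}$, the localized norm on $\Omega$ is controlled by $C\|\zeta u-\varphi\|_{H^{\alpha,q}(\omega)}$, with $C$ independent of the approximant. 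If the citation is not deemed directly applicable, we instead argue by local flattening to $\IR^n_+$. There the zero extension $E_0$ is bounded on $L^q$ and also bounded $H^{1,q}_0(\IR^n_+)\to H^{1,q}(\IR^n)$. Complex interpolation, combined with the identification $[L^q,H^{1,q}_0]_\alpha=H^{\alpha,q}$ for $\alpha<\frac1q$, then gives the required boundedness. The stricter bound $\alpha<\frac1{2q}$ leaves room if this step is carried out via $H^{2,q}$ interpolation at parameter $\alpha/2$, as hinted by the retraction argument for $\D(A)$.
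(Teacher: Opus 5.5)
Your proposal is correct and follows essentially the paper's route: truncation at infinity via Lemma~\ref{commutator estimate}, reduction to the bounded domain $\Omega\cap B_{R'}$ where the weight is irrelevant by Proposition~\ref{H-alpha-Omega-IRn}(ii), Triebel's density $H^{s,q}_0=H^{s,q}$ for $s<\frac1q$, and transfer back to $\Omega$. Your cut-off keeping all supports uniformly away from $\partial B_{R'}$ already settles the transfer, and is in fact more careful than the paper's norm identity \eqref{Halpha-R-ell}; the interior mollification split and the fallback discussion (zero extension across $\partial\Omega$, interpolation at parameter $\alpha/2$) are therefore superfluous.
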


\begin{proof}
Let $\eta_R $ be a smooth cut off function as in Lemma \ref{commutator estimate} satisfying $\eta_R(x)=1$ for $x\in B_R$,  ${\rm supp}\,\eta_R \subset B_{2R}$. 
Given $u \in H^{\alpha, q}_{\ell}(\Omega)$ we set  $u_R:= \eta_R u \in H^{\alpha, q}_{\ell}(\Omega)$. 
Then 
$$
\|u_R-u\|_{H^{\alpha, q}_{\ell}(\Omega)} = \|\eta_R u- u \BLACK\|_{H^{\alpha, q}_\ell(\Omega)} \rightarrow 0 
$$
as $R \rightarrow \infty$ by Lemma \ref{commutator estimate}.

For the next step we need that for the domains $\omega=\Omega_{2R}:=\Omega \cap B_{2R}$ and $\omega=\Omega$ 
the spaces $H^{1,q}_\ell(\omega)$, $ -\frac{n}{q} < \ell < \frac{n}{q'}$, can be equipped with the equivalent norms 
\begin{equation}\label{equivnorm}
\|u\|_{H^{1,q}_\ell(\omega)} \sim \|u\|_{L^q_\ell(\omega)} + \|\nabla u\|_{L^q_\ell(\omega)}. 
\end{equation}
Indeed, the ideas to prove \eqref{equivnorm} by \cite[Subsect. 4.2.3, Theorem 4.2.4]{Triebel} for $\ell=0$  can easily be extended to the case $\ell\neq 0$. In particular, by Proposition \ref{H-alpha-Omega-IRn} (ii)  
$H^{\alpha,q}(\Omega_{2R}) = H^{\alpha,q}_\ell(\Omega_{2R})$ with equivalent norms for $0<\alpha<1$.
   
Moreover, by the same idea, for any function $v_R\in H^{1,q}_\ell(\Omega)$ with compact support in $\overline{\Omega}_{2R}$ there holds $\|v_R\|_{L^q_\ell(\Omega)} = \|v_R\|_{L^q_\ell(\Omega_{2R})}$, $\|v_R\|_{H^{1,q}_\ell(\Omega)} = \|v_R\|_{H^{1,q}_\ell(\Omega_{2R})}$ 
and hence by the  exactness of \BLACK complex interpolation 
\begin{equation}\label{Halpha-R-ell}
\|v_R\|_{H^{\alpha,q}_\ell(\Omega)}   =  \|v_R\|_{H^{\alpha,q}_\ell(\Omega_{2R})},\quad \supp v_R\subset \overline{\Omega}_{2R}.
\end{equation}
\BLACK  

By virtue of \cite[Theorem 4.3.2]{Triebel} we see that 
$$
H^{\alpha,q}_{0}(\Omega_{2R})=H^{\alpha,q}(\Omega_{2R}) 
$$
and $C^\infty_0 (\Omega_{2R})$ is dense in $H^{\alpha,q}(\Omega_{2R}) = H^{\alpha,q}_\ell(\Omega_{2R})$. Hence for any sufficiently large $R>0$, there exists $\varphi_{R,k} \in C^\infty_0 (\Omega_{2R}) \subset C^\infty_0 (\Omega)$ such that 
\begin{equation}\label{Halpha-R-ell2}
\|\varphi_{R,k}- u_R \|_{H^{\alpha,q}_\ell(\Omega_{2R})} \sim \|\BLACK\varphi_{R,k}- u_R \|_{H^{\alpha,q}(\Omega_{2R})}  \rightarrow 0 \;\; {\rm as }\; k\to\infty. 
\end{equation}
In the estimate
$$
\|u-\varphi_{R,k}\|_{H^{\alpha,q}_{\ell}(\Omega)} \leq 
\|u-u_R\|_{H^{\alpha,q}_{\ell}(\Omega)} + \|\varphi_{R,k}-u_R\|_{H^{\alpha,q}_{\ell}(\Omega)} 
$$
the first term on the right hand side goes to $0$ as $R\rightarrow \infty$ due to Lemma \ref{commutator estimate}. 
Moreover, fixing large $R$, \eqref{Halpha-R-ell}, \eqref{Halpha-R-ell2} show that 
$\|\varphi_{R,k}-u_R\|_{H^{\alpha,q}_{\ell}(\Omega)} = \|\varphi_{R,k}-u_R\|_{H^{\alpha,q}_{\ell}(\Omega_{2R})} $ 
tends to $0$ as $k \rightarrow \infty$. 
Therefore, for each $\epsilon>0$ there exists $\varphi_\epsilon \in C^\infty_0(\Omega)$ such that 
$$
\|u-\varphi_\epsilon\|_{H^{\alpha,q}_{\ell}(\Omega)} <\epsilon. 
$$
Consequently $C^\infty_0(\Omega)$ is dense in $H^{\alpha,q}_{\ell}(\Omega)$.
\end{proof}

\vspace{2ex}
Extending the results of \cite[Subsect. 4.3.1, 4.3.2]{Triebel} to exterior domains $\Omega$ with weights $\langle x\rangle^{q\ell}$ we also introduce the intermediate spaces 
\begin{align}\label{def.tildeH}
\widetilde H^{s,q}_\ell(\omega) = \{u\in H^{s,q}_\ell(\IR^n): \supp u\subset\overline\omega\},\quad s\in\IR,
\end{align}
{\em cf.} \cite[Definition 4.3.2]{Triebel}),  where elements $u\in \widetilde H^{s,q}_\ell(\omega)$ are considered as functions (or functionals) on $\IR^n$. The space is equipped with the norm 
$$ \|u\|_{\widetilde H^{s,q}_\ell(\omega)} := \|u\|_{H^{s,q}_\ell(\IR^n)}.$$

In the following, this definition will be used only for $s\in[0,2]$. Then we get for $0<\alpha <1$ and $s_0<s<s_1$, $s=(1-\alpha)s_0 +\alpha s_1$, that 
\begin{equation}\label{Htilde-interpol}
[\widetilde H^{s_0,q}_\ell, \widetilde H^{s_1,q}_\ell]_\alpha = \widetilde H^{s,q}_\ell
\end{equation}
as well as $[H^{s_0,q}_{\ell,0}, H^{s_1,q}_{\ell,0}]_\alpha = H^{s,q}_{\ell,0}$.  The interpolation result \eqref{Htilde-interpol} is proved via the method of local coordinates described in the proof of Proposition \ref{H-alpha-Omega-IRn} and the retraction-coretraction principle, {\em cf.} \cite[Proof of Theorem 4.3.2/2]{Triebel}. \BLACK
\vspace{2ex}

\begin{lem}\label{2.23}
For any smooth bounded or unbounded domain $\omega\subset\IR^n$ of class $C^2$ the space
$C^\infty_0(\omega)$ is dense in $\widetilde H^{s,q}_\ell(\omega)$, $s\in[0,2]$, and hence
\begin{equation}\label{H_0-tildeH}
 \widetilde H^{s,q}_\ell(\omega) \subset \overline{C_0^\infty(\omega)}^{\|\cdot\|_{H^{s,q}_{\ell}(\IR^n)}} \subset \overline{C_0^\infty(\omega)}^{\|\cdot\|_{H^{s,q}_{\ell}(\omega)}} = H^{s,q}_{\ell,0}(\omega). 
\end{equation} 
\end{lem}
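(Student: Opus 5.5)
The plan follows the unweighted argument of \cite[Theorem 4.3.2/1]{Triebel}: localize with a partition of unity, reduce to a model situation (a half space or the whole space), and approximate there by translation and mollification. The weight $\langle x\rangle^{q\ell}$ is harmless on compact sets and is a Muckenhoupt weight globally. Once $C^\infty_0(\omega)$ is dense in $\widetilde H^{s,q}_\ell(\omega)$, the chain \eqref{H_0-tildeH} follows quickly. The first inclusion is the density itself. For the second, restriction to $\omega$ maps $H^{s,q}_\ell(\IR^n)$ continuously into $H^{s,q}_\ell(\omega)$, whose norm is the quotient (infimum over extensions) norm. The final identity is the definition of $H^{s,q}_{\ell,0}(\omega)$.

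For the density I would argue as follows. Let $u\in\widetilde H^{s,q}_\ell(\omega)$ with $s\in[0,2]$.

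\emph{Step 1 (cut-off at infinity).} For unbounded $\omega$, multiply by $\eta_R$ as in Lemma \ref{commutator estimate}. Multiplication by $\eta_R$ is bounded on $H^{k,q}_\ell(\IR^n)$ for $k=0,1,2$, uniformly in $R>1$, since derivatives of $\eta_R$ are $O(R^{-1})$. By complex interpolation it is therefore uniformly bounded on $H^{s,q}_\ell(\IR^n)$. The argument of Lemma \ref{commutator estimate}, carried out on $\IR^n$, then gives $\eta_R u\to u$. Approximate $u$ by an $H^{2,q}_\ell$ function, where convergence is elementary, and use the uniform bound for the error. Since $\operatorname{supp}(\eta_R u)\subset\overline\omega$, we may assume $u$ has compact support.

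\emph{Step 2 (localization).} Cover $\operatorname{supp}u$ by finitely many balls: interior balls, and boundary balls in which $\partial\omega$ is the graph of a $C^2$ function with $\omega$ lying on one side. Take a subordinate smooth partition of unity $\{\chi_j\}$, so $u=\sum_j\chi_j u$. Each $\chi_j u$ again lies in $\widetilde H^{s,q}_\ell(\omega)$, because multiplication by smooth compactly supported functions is bounded on $H^{s,q}_\ell(\IR^n)$ by interpolation. On compact sets the weight is bounded above and below, so there $\|\cdot\|_{H^{s,q}_\ell(\IR^n)}\sim\|\cdot\|_{H^{s,q}(\IR^n)}$, in the same way as Proposition \ref{H-alpha-Omega-IRn} (ii). The problem thus reduces to the unweighted spaces.

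\emph{Step 3 (local approximation).} In an interior ball, mollify: $\chi_j u*\rho_\epsilon\in C^\infty_0(\omega)$ for small $\epsilon$, and it converges to $\chi_ju$ in $H^{s,q}$. In a boundary ball, use the graph structure: translate $\chi_j u$ by a small vector $\tau e$, where $e$ is a fixed direction pointing uniformly into $\omega$. By the $C^2$ regularity of the boundary (a uniform cone condition), the translate is supported at positive distance from $\partial\omega$ inside the ball. Then mollify with radius smaller than that distance. Translation is continuous on $H^{s,q}(\IR^n)$, so the translate tends to $\chi_ju$ as $\tau\to0$, and mollification converges for fixed $\tau$. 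This gives approximants in $C^\infty_0(\omega)$. Alternatively, one can flatten the boundary by a $C^2$ diffeomorphism, which preserves $H^{s,q}$ for $s\le2$, and translate in the normal direction of $\IR^n_+$.

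The main obstacle I expect is justifying that the operations I use (multiplication by cut-offs, translation, mollification, $C^2$ changes of variables) are bounded on $H^{s,q}_\ell(\IR^n)$ for fractional $s$, uniformly in the parameters. My first approach is to verify each for integer $s=0,1,2$ directly, using $\langle x\rangle^{q\ell}\in\mathscr A_q$ for the global operations (boundedness of the maximal function controls mollification), and then pass to $s\in(0,2)$ by complex interpolation. The convergence statements then follow from the uniform bounds together with density of $H^{2,q}_\ell$ in $H^{s,q}_\ell$.
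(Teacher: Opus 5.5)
Your proposal is correct and follows essentially the same route as the paper, which adapts Step 2 of the proof of \cite[Theorem 4.3.2]{Triebel}: localize with compactly supported cut-offs, shift each boundary piece into $\omega$ using the cone (graph) property of $\partial\omega$, and use continuity of translations in $H^{s,q}_\ell$. Your version is more explicit than the paper's sketch on four points. You handle the cut-off at infinity with uniform bounds for $\eta_R$. You mollify the shifted pieces, which the paper leaves implicit. You treat the range $1<s\le 2$ by interpolating between $s=0$ and $s=2$, whereas the paper only writes out $0<s\le1$. Finally, you reduce to unweighted norms on a fixed compact set, where the paper instead uses the moment inequality together with the translation stability $w(x)\le c\,w(x+th)$ of the weight.
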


\begin{proof}
We give a sketch of the proof.
Following the proof of \cite[Theorem 4.3.2 (b)]{Triebel}  
it is enough to consider \cite[Step 2, p. 318]{Triebel}:   
Let a smooth cut off function $\psi$ with compact support and $f\in \widetilde H^{s,q}_\ell(\omega)$ be given. 
Then we may assume that $\Psi := \psi f \in \widetilde H^{s,q}_\ell(\omega) \hookrightarrow  H^{s,q}_\ell(\IR^n)$. Since $C^\infty_0(\IR^n)$ is dense in $H^{s,q}_\ell(\IR^n)$, we restrict the analysis to $\Psi\in C^\infty_0(\IR^n)$. 
If $s=1$, then obviously $\Psi(\cdot+h)\to \Psi$ in $H^{s,q}_\ell(\IR^n)$ as $h\to 0$ due to $L^q$-continuity in the mean; here it is  helpful that $\Psi$ has compact support so that the weight $w(x) = \langle x\rangle^{q\ell}$ does not disturb the convergence. 
If $0<s<1$, the moment inequality implies that 
\begin{equation}\label{Psi+h}
\|\Psi(\cdot+h) - \Psi\|_{H^{s,q}_\ell} \leq \|\Psi(\cdot+h) - \Psi\|_{H^{1,q}_\ell}^s \|\Psi(\cdot+h) - \Psi\|_{L^q_\ell}^{1-s} \end{equation} 
converges to $0$ as $h\to 0$.  Indeed, the property of compact support of $\Psi$ is not necessary since the weight $w$ admits the smoothness estimate $w(x)\leq c w(x+th)$ for all $x\in\IR^n$, $h\in\IR^n$ with $|h|=1$ and all $0<t<t_0$ with a constant $c=c(t_0)$. 

Using a partition of unity on $\omega$ we have to consider the convergence \eqref{Psi+h} for functions $\Psi = \psi f$ for finitely many functions $\psi\in C^\infty_0(\omega)$ with support in a neighborhood of $\partial\omega$ covering $\partial\omega$; to be more precise, $\supp \psi f\subset \bar\omega$. 
Since $\partial\omega$ is of class $C^1$ at least and has the cone property, \eqref{Psi+h} holds for certain $h$ such that the compact support of $\Psi(\cdot+h)$ is contained in $\omega$. 
Summarizing, we conclude that $C^\infty_0(\omega)$ is dense in $\widetilde H^{s,q}_\ell(\omega)$ and hence $\widetilde H^{s,q}_\ell(\omega) \subset \overline{C_0^\infty(\omega)}^{\|\cdot\|_{H^{s,q}_{\ell}(\IR^n)}}$.
This completes the proof of \cite[Step 2, p. 318]{Triebel}.  

By definition of the norms of $H^{s,q}_{\ell}(\IR^n)$ and $H^{s,q}_{\ell}(\omega)$ the embedding $\overline{C_0^\infty(\omega)}^{\|\cdot\|_{H^{s,q}_{\ell}(\IR^n)}} \hookrightarrow \overline{C_0^\infty(\omega)}^{\|\cdot\|_{H^{s,q}_{\ell}(\omega)}}$ is obvious. Finally, by definition, the latter space equals $H^{s,q}_{\ell,0}(\omega)$. \BLACK
\end{proof}

\begin{prop}\label{char-fractD(Delta)}
Let $\Omega\subset\mathbb R^n$ be a $C^2$ exterior domain and assume $ 1<q<\infty$, $-\frac{n}{q} < \ell < \frac{n}{q'}$.
Let $-\Delta=-\Delta_{q,\ell}$ be the Dirichlet Laplacian on $L^q_\ell(\Omega)$  with domain 
$\mathcal D(-\Delta) = H^{2,q}_\ell(\Omega) \cap H^{1,q}_{\ell,0}(\Omega)$.
If $0< 2\alpha < 1/q$, then
\begin{equation}\label{H2thata}
\D(-\Delta^\alpha) = [L^q_\ell(\Omega),\mathcal D(-\Delta)]_\alpha = H^{2\alpha,q}_\ell(\Omega) = H^{2\alpha,q}_{\ell,0}(\Omega)
\end{equation}
with equivalent norms.
\end{prop}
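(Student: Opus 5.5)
The identity \eqref{H2thata} splits into three equalities, and I would prove them in this order.

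\emph{First equality.} Here I need $\D((-\Delta)^\alpha) = [L^q_\ell(\Omega),\D(-\Delta)]_\alpha$. By the standard theorem (Triebel, Theorem 1.15.3) this holds whenever $-\Delta_{q,\ell}$ has bounded imaginary powers. I would obtain BIP, or even a bounded $\mathscr H^\infty$-calculus, for the Dirichlet Laplacian on $L^q_\ell(\Omega)$ in the same way as Theorem \ref{theorem-H-infty-all} is obtained for the Stokes operator, but more simply because there is no pressure. Concretely, I would use the whole space (weighted Calder\'on--Zygmund and Mikhlin multiplier theory for $\langle x\rangle^{q\ell}\in\mathscr A_q$) together with a localization near the compact boundary.

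A simpler alternative needs only invertibility of $1-\Delta$, which is available because $\Omega$ is an exterior domain with Muckenhoupt weight. Shifting to $1-\Delta$ does not change the domains, and it suffices to have BIP for $1-\Delta$. That follows from resolvent estimates in the weighted setting combined with the known result for the bounded part $\Omega\cap B_{2R}$.

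\emph{Second equality.} I would use a retraction--coretraction argument. $\D(-\Delta)=H^{2,q}_\ell(\Omega)\cap H^{1,q}_{\ell,0}(\Omega)$ is a closed subspace of $H^{2,q}_\ell(\Omega)$. Since $2\alpha<1/q$, no boundary trace survives in $[L^q_\ell,\D(-\Delta)]_\alpha$. The plan is to construct a bounded projection $\Pi$ from $H^{2,q}_\ell(\Omega)$ onto $\D(-\Delta)$ that extends boundedly to $L^q_\ell(\Omega)$, using a partition of unity near $\partial\Omega$. Locally, after flattening the boundary, I would set $\Pi u = u - \chi E(\gamma u)$, where $E$ is a lifting of the Dirichlet trace. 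The catch is that the trace is not defined on $L^q$, so this naive $\Pi$ does not extend.

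I would therefore follow the scheme of Seeley/Grisvard/Triebel (Theorem 4.3.3) and reduce to the unweighted bounded-domain result. Cut $u$ by $\eta_R$ as in Lemma \ref{commutator estimate}. Away from the boundary, $(1-\eta_{R/2})u$ lives in a region with no boundary conditions, and its interpolation is just $H^{2\alpha,q}_\ell$ via Proposition \ref{H-alpha-Omega-IRn}(i). Near the boundary, $\eta_R u$ is supported in $\Omega_{2R}$, where the weight is irrelevant by Proposition \ref{H-alpha-Omega-IRn}(ii). There Seeley's classical theorem gives $[L^q,H^{2,q}\cap H^{1,q}_0]_\alpha=H^{2\alpha,q}$ for $2\alpha<1/q$.

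To glue the two pieces, I would use the cutoff multiplication operators $u\mapsto(\eta_R u,(1-\eta_R)u)$ and a reassembly map as a retraction/coretraction pair between $L^q_\ell(\Omega)$ and a product of a bounded-domain space and a boundaryless space. The commutators $[\Delta,\eta_R]$ are lower order. They have to be absorbed by handling them via $(1-\Delta)^{-1}$ on the respective pieces, so that the coretraction maps into the domains of the corresponding Dirichlet operators.

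\emph{Third equality.} $H^{2\alpha,q}_\ell(\Omega)=H^{2\alpha,q}_{\ell,0}(\Omega)$ is Proposition \ref{H-alpha-0-0} applied with $2\alpha$ in place of $\alpha$, which is valid because $2\alpha<1/q<1/(2q)\cdot 2$. Strictly, that proposition requires the exponent to be below $1/(2q)$. Since that bound reflects only the Triebel density threshold $s<1/q$, the same proof works for $2\alpha<1/q$ using $H^{s,q}_0(\Omega_{2R})=H^{s,q}(\Omega_{2R})$ for $s<1/q$.

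The main obstacle is the second step, specifically making the localized coretraction respect the Dirichlet domain while commutator terms appear. If that fails, I would instead prove the bounded $\mathscr H^\infty$-calculus directly and characterize $\D((-\Delta)^\alpha)$ through the complex interpolation of $L^q_\ell$ with $\widetilde H^{2,q}_\ell$-type spaces as in \eqref{Htilde-interpol}, combined with Lemma \ref{2.23}.
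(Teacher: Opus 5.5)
Your proposal is sound in outline, but for the central (second) equality it takes a heavier route than the paper. The paper never localizes $\D(-\Delta)$. It uses only the sandwich $H^{2,q}_{\ell,0}(\Omega)\subset \D(-\Delta)\subset H^{2,q}_\ell(\Omega)$.
\begin{itemize}
\item Interpolating the right inclusion gives $[L^q_\ell,\D(-\Delta)]_\alpha\hookrightarrow H^{2\alpha,q}_\ell(\Omega)$ by Proposition \ref{H-alpha-Omega-IRn}(i).
\item For the left inclusion, zero extension and restriction identify $[L^q_\ell(\Omega),H^{2,q}_{\ell,0}(\Omega)]_\alpha$ with $[\widetilde L^q_\ell,\widetilde H^{2,q}_\ell]_\alpha=\widetilde H^{2\alpha,q}_\ell(\Omega)$ via \eqref{Htilde-interpol}. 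By Lemma \ref{2.23}, the restriction of this space to $\Omega$ is $H^{2\alpha,q}_{\ell,0}(\Omega)$.
\item Since $2\alpha<1/q$, density gives $H^{2\alpha,q}_{\ell,0}(\Omega)=H^{2\alpha,q}_\ell(\Omega)$, so both ends coincide.
\end{itemize}
This is essentially the fallback you mention at the end. The observation you were missing is that the two trivial inclusions already pin down the interpolation space. So neither a projection onto $\D(-\Delta)$ nor Seeley's theorem is needed.

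Your localization route also works. It uses Seeley on $\Omega\cap B_{4R}$, weighted whole-space interpolation far out, and a retraction/coretraction gluing. This reduces everything to classical unweighted results, but it imports Seeley's theorem, which for $2\alpha<1/q$ is essentially the same density statement the paper proves directly.

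The obstacle you anticipated is not real. Interpolation sees only the spaces, not the operator, so $[\Delta,\eta_R]$ never enters and no $(1-\Delta)^{-1}$ correction is needed. Take $Su=(\eta_R u,(1-\eta_R)u)$ and $R(v,w)=\tilde\eta v+(1-\hat\eta)w$, where:
\begin{itemize}
\item $\tilde\eta=1$ on $\supp\eta_R$ and $\supp\tilde\eta\subset B_{3R}$;
\item $\hat\eta=1$ near $\Omega^c$ and $\hat\eta=0$ for $|x|\geq R$.
\end{itemize}
Then $RS=I$, and multiplication by such smooth cutoffs preserves $H^{2,q}\cap H^{1,q}_0$, both on $\Omega\cap B_{4R}$ and on $\Omega$.

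Two smaller points.

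First, your remark on Proposition \ref{H-alpha-0-0} is correct. The paper itself applies it at level $s=2\alpha<1/q$, although it is stated only for exponents below $\frac{1}{2q}$. Its proof, which rests on $H^{s,q}_0=H^{s,q}$ for $s<1/q$ on bounded domains, indeed covers all $s<1/q$. However, your chain ``$2\alpha<1/q<1/(2q)\cdot 2$'' is garbled and should be deleted.

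Second, the paper's proof does not argue the first equality at all; it is taken by analogy with Theorem \ref{theorem-H-infty-all}(iii). Your first option, a bounded $\mathscr H^\infty$-calculus for the Dirichlet Laplacian, is the right one. Your ``simpler alternative'' fails as stated: resolvent estimates (sectoriality) alone do not imply bounded imaginary powers.
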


\begin{proof}
We put $s := 2\alpha$.
First, since $\mathcal D(-\Delta)\hookrightarrow H^{2,q}_\ell(\Omega),$
complex interpolation implies that
$$
[L^q_\ell(\Omega),\mathcal D(-\Delta)]_\alpha
\hookrightarrow
[L^q_\ell(\Omega),H^{2,q}_\ell(\Omega)]_\alpha = H^{s,q}_\ell(\Omega) 
$$
Moreover, by Propositions \ref{H-alpha-Omega-IRn} and \ref{H-alpha-0-0},
$$
[L^q_\ell(\Omega),H^{2,q}_\ell(\Omega)]_\alpha
=
H^{s,q}_\ell(\Omega) = H^{2\theta,q}_{\ell,0}(\Omega).
$$
Hence
\begin{equation}\label{L,DtoHs}
[L^q_\ell(\Omega),\mathcal D(-\Delta)]_\alpha
\hookrightarrow  H^{s,q}_\ell(\Omega).\;  
\end{equation}
Conversely, since $H^{2,q}_{\ell,0}(\Omega)
\subset
H^{2,q}_\ell(\Omega)\cap H^{1,q}_{\ell,0}(\Omega)
= \mathcal D(-\Delta),$
we have
$$
[L^q_\ell(\Omega),H^{2,q}_{\ell,0}(\Omega)]_\alpha
\hookrightarrow
[L^q_\ell(\Omega),\mathcal D(-\Delta)]_\alpha.
$$

We now identify the left-hand side. To this aim we consider for $0\leq s\leq2$ the spaces
$\widetilde H^{s,q}_\ell(\Omega)$, see \eqref{def.tildeH}
Although elements $U\in \widetilde H^{s,q}_\ell(\Omega)$ are considered as functions on $\IR^n$, they can be identified by a restriction operator $R$, see \eqref{R-op} below, with  functions on $\Omega$. In the special cases $s=k=0,1,2$ where the - locally defined - norm  $\|u\|_{W^{k,q}_\ell(\IR^n)} = \sum_{|\alpha|\leq k}\|\nabla^\alpha u\|_{L^q_\ell(\IR^n)}$ is equivalent to the norm of $H^{k,q}_\ell(\IR^n)$ we obviously get that
\begin{align*}
\widetilde H^{k,q}_\ell(\Omega) = \overline{C^\infty_0(\Omega)}^{\,H^{k,q}_\ell(\mathbb R^n)} & = \overline{C^\infty_0(\Omega)}^{\,H^{k,q}_\ell(\Omega)} = H^{k,q}_{\ell,0}(\Omega),\\
 \|U\|_{\widetilde H^{k,q}_\ell(\Omega)} & = \|U\|_{H^{k,q}_\ell(\IR^n)} \;\;\textrm{ for }\; U\in \widetilde H^{k,q}_\ell(\Omega).
\end{align*}
In particular, for $k=0$ we get that
$$
\widetilde L^q_\ell(\Omega) = \widetilde H^{0,q}_\ell(\Omega)
:=
\{U\in L^q_\ell(\mathbb R^n): \operatorname{supp} U \subset\overline\Omega\},
$$
and $ \|U\|_{\widetilde L^q_\ell(\Omega)} = \|U\|_{L^q_\ell(\mathbb R^n)}.$

Next we will show that
\begin{equation}\label{tH=clC}
\widetilde H^{s,q}_\ell(\Omega) = \overline{C^\infty_0(\Omega)}^{\,H^{s,q}_\ell(\mathbb R^n)},\quad 0\leq s\leq 2; 
\end{equation}
note that in \eqref{tH=clC} the norm $\|\cdot\|_{H^{s,q}_\ell(\mathbb R^n)}$ rather than $\|\cdot\|_{H^{s,q}_\ell(\Omega)}$ has been used. 
Obviously, $C^\infty_0(\Omega) \subset \widetilde H^{s,q}_\ell(\Omega)$ and thus  
$$
\overline{C^\infty_0(\Omega)}^{\,H^{s,q}_\ell(\mathbb R^n)}\subset  \overline{\widetilde H^{s,q}_\ell(\Omega)}^{\,H^{s,q}_\ell(\mathbb R^n)} = \widetilde H^{s,q}_\ell(\Omega),  \quad  s\geq 0.
$$
On the other hand, by Lemma \ref{2.23}, 
$\widetilde H^{s,q}_\ell(\Omega) \subset \overline{C^\infty_0(\Omega)}^{\,H^{s,q}_\ell(\mathbb R^n)} $.  Hence we proved \eqref{tH=clC}.  

The zero extension operator
$$
E_0u(x):=
\begin{cases}
u(x),& x\in\Omega,\\  
0,& x\notin\Omega,
\end{cases}
$$
defines an isomorphism
\begin{align*}
E_0:L^q_\ell(\Omega) & \longrightarrow \widetilde L^q_\ell( \Omega),\\
E_0:H^{2,q}_{\ell,0}(\Omega) & \longrightarrow
\widetilde H^{2,q}_\ell(\Omega).
\end{align*}
Indeed, on $C^\infty_0(\Omega)$,  zero extension preserves the norm, and both spaces are defined as the corresponding completions. 
The inverse map is the restriction operator 
$$ R: U\mapsto U|_\Omega $$
which satisfies  
\begin{align}\label{R-op}
	\begin{aligned}
	R:\widetilde L^q_\ell( \Omega)  & \longrightarrow L^q_\ell( \Omega),\\
	R: \widetilde H^{2,q}_{\ell}(\Omega)
&\longrightarrow H^{2,q}_{\ell,0}(\Omega).
\end{aligned}
\end{align}
Thus $R E_0=I.$

By the exactness of complex interpolation, for each $0<\alpha<1$,
$$
[L^q_\ell(\Omega),H^{2,q}_{\ell,0}(\Omega)]_\alpha
\cong
[\widetilde L^q_\ell(\Omega),\widetilde H^{2,q}_\ell(\Omega)]_\alpha.
$$
Furthermore, we see from \eqref{Htilde-interpol} that
\begin{equation}\label{LtHt=Ht}
[\widetilde L^q_\ell(\Omega),\widetilde H^{2,q}_\ell(\Omega)]_\alpha
=
\widetilde H^{s,q}_\ell(\Omega),
\end{equation}

Next we will prove that for $0<s<1/q$ 
\begin{equation}\label{RtH=Hs}
R(\widetilde H^{s,q}_\ell(\Omega))=  H^{s,q}_{\ell,0}(\Omega).
\end{equation}
 Indeed, for $u \in \widetilde H^{s,q}_\ell(\Omega)$, there exists due to \eqref{tH=clC} a sequence $(\phi_j)_j \subset C^\infty_0 (\Omega) $ such that 
$\phi_j \rightarrow u \mbox{ in } H^{s,q}_\ell(\mathbb{R}^n)$
as $j \rightarrow \infty$. Hence, by restriction on $\Omega$ 
and the definition of the norm of $H^{s,q}_\ell(\Omega)$ as an infimum of norms in $H^{s,q}_\ell(\mathbb{R}^n)$, also
 $\phi_j \rightarrow u|_{\Omega} \mbox{ in } H^{s,q}_\ell(\Omega)$,
thus $u|_{\Omega} \in H^{s,q}_{\ell,0}(\Omega)$. 
This implies that 
$$
R(\widetilde H^{s,q}_\ell(\Omega)) \subset H^{s,q}_{\ell,0}(\Omega). 
$$
Conversely, for $u \in H^{s,q}_{\ell,0}(\Omega)$, there  exists a sequence $(\phi_j)_j \subset C^\infty_0 (\Omega) $ such that 
$\phi_j \rightarrow u$ in $H^{s,q}_\ell(\Omega)$ as $j\to\infty$.
Note that the zero extension $E_0$ yields 
$ E_0 \phi_j = \phi_j \in C^\infty_0 (\Omega)$
and hence $E_0 \phi_j \in \widetilde H^{s,q}_w(\Omega)$. In addition, since $RE_0 =I$, also $R(E_0 u)=u$ for $u$ as well as for $\phi_j$. Therefore, as $j\to\infty$, the inclusion \BLACK
\begin{equation*}  
H^{s,q}_{\ell,0}(\Omega) \subset R(\widetilde H^{s,q}_\ell(\Omega))
\end{equation*}
proves \eqref{RtH=Hs}.

Hence, applying the isomorphism $R$ to \eqref{LtHt=Ht}, we conclude from \eqref{RtH=Hs} and \eqref{R-op} that
\begin{align*}
H^{s,q}_{\ell,0}(\Omega) & = R(\widetilde H^{s,q}_\ell(\Omega)) = R[\widetilde L^q_\ell(\Omega),\widetilde H^{2,q}_\ell(\Omega)]_\alpha\\[1ex]
& =[R(\widetilde L^q_\ell(\Omega)), R(\widetilde H^{2,q}_\ell(\Omega))]_\alpha  = [L^q_\ell(\Omega),H^{2,q}_{\ell,0}(\Omega)]_\alpha,
\end{align*}
{\em i.e.,}
$$
H^{s,q}_{\ell,0}(\Omega) = [L^q_\ell(\Omega),H^{2,q}_{\ell,0}(\Omega)]_\alpha
$$

Since $0<s<1/q$, we already showed in Proposition \ref{H-alpha-0-0}  that
$$
H^{s,q}_{\ell,0}(\Omega)=H^{s,q}_\ell(\Omega).
$$
Therefore
$$
H^{s,q}_\ell(\Omega)
=
[L^q_\ell(\Omega),H^{2,q}_{\ell,0}(\Omega)]_\alpha
\hookrightarrow
[L^q_\ell(\Omega),\mathcal D(-\Delta)]_\alpha.
$$

Combining this with the first embedding \eqref{L,DtoHs}, we obtain
$$
[L^q_\ell(\Omega),\mathcal D(-\Delta)]_\alpha
=
H^{s,q}_\ell(\Omega).
$$
Since $s=2\alpha$, this proves \eqref{H2thata}.
\end{proof}

\BLACK
\begin{prop}\label{fractional-power-without-boundary-condition}
Let $1<q<\infty$,  $-\frac{n}{q}< \ell < \frac{n}{q'}$ \BLACK and $0< \alpha < \frac{1}{2q}$. Then the Stokes operator $A_{q,\ell}$ on $L^q_{\sigma,\ell}(\Omega)$ satisfies 
\begin{align}\label{D(Aalpha)-small}
\D(A_{q,\ell}^\alpha) = H^{2\alpha,q}_{\sigma, \ell}(\Omega), 
\end{align}
where the inhomogeneous space $H^{2\alpha,q}_{\sigma, \ell}(\Omega)$ is defined by \eqref{inhomogeneous space}. 

\end{prop}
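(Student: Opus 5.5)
By Theorem \ref{theorem-H-infty-all} (iii) we know $\D(A^\alpha_{q,\ell}) = [L^q_{\sigma,\ell}(\Omega),\D(A_{q,\ell})]_\alpha$, so the task is to identify this complex interpolation space with $H^{2\alpha,q}_{\ell}(\Omega)\cap L^q_{\sigma,\ell}(\Omega)$. The plan is to transfer Proposition \ref{char-fractD(Delta)} from the full spaces to the solenoidal subspaces by a retraction–coretraction argument based on the Helmholtz projection $\mathbb P=\mathbb P_{q,\ell}$ of Lemma \ref{Helmh}.

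For the easy embedding we use $\D(A_{q,\ell})\hookrightarrow \D(-\Delta_{q,\ell}) = H^{2,q}_\ell\cap H^{1,q}_{\ell,0}$ and $L^q_{\sigma,\ell}\hookrightarrow L^q_\ell$. By the interpolation property and Proposition \ref{char-fractD(Delta)} (with $2\alpha<1/q$) we obtain
\[
[L^q_{\sigma,\ell},\D(A_{q,\ell})]_\alpha\hookrightarrow [L^q_\ell,\D(-\Delta)]_\alpha = H^{2\alpha,q}_\ell(\Omega).
\]
Since every element also lies in $L^q_{\sigma,\ell}$, this shows $\D(A^\alpha)\hookrightarrow H^{2\alpha,q}_{\sigma,\ell}(\Omega)$.

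For the converse we need a bounded map $P$ with $P:L^q_\ell\to L^q_{\sigma,\ell}$ and $P:\D(-\Delta)\to\D(A_{q,\ell})$ such that $P$ acts as the identity on $L^q_{\sigma,\ell}\cap H^{2\alpha,q}_\ell$. The Helmholtz projection $\mathbb P$ does not work directly, because $\mathbb P u$ for $u\in H^{2,q}_\ell\cap H^{1,q}_{\ell,0}$ need not vanish on $\partial\Omega$. We therefore correct it by a Bogovskii-type operator. For $u\in\D(-\Delta)$, set $\mathbb P u = u-\nabla p$ with $\Delta p=\div u$ and $\partial_\textsl{n} p = 0$. The tangential trace of $\nabla p$ on $\partial\Omega$ is then nonzero, and we subtract a lifting $w=\mathcal L(\nabla p|_{\partial\Omega})$ supported near $\partial\Omega$, with $\div w=0$, $w|_{\partial\Omega}=\nabla p|_{\partial\Omega}$, and $\|w\|_{H^{2,q}}\le C\|\nabla p\|_{H^{2,q}(\Omega_{R+1})}$. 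The lifting is built from a cut-off and a Bogovskii correction on $\Omega_{R+1}$, which is possible because the flux condition holds: $\nabla p\cdot\textsl n=0$. We then define $P u:=\mathbb P u - w$. The delicate point is that this $P$ must also be bounded $L^q_\ell\to L^q_{\sigma,\ell}$. We plan to handle this by estimating the correction $w$ only in a weaker norm, $\|w\|_{L^q}\le C\|u\|_{L^q_\ell}$, using local regularity of the Neumann problem in negative-order spaces. If that fails, the fallback is to interpolate directly: we interpolate the pair $(\mathbb P:L^q_\ell\to L^q_{\sigma,\ell},\ \mathbb P:H^{s,q}_\ell\to H^{s,q}_{\sigma,\ell})$ and use that for $s=2\alpha<1/q$ no trace is involved.

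A cleaner route avoids traces entirely. Take $u\in H^{2\alpha,q}_{\sigma,\ell}$. By Proposition \ref{char-fractD(Delta)}, $u\in[L^q_\ell,\D(-\Delta)]_\alpha$, so there is an analytic family $F(z)$ with $F(\alpha)=u$, $F(it)\in L^q_\ell$ and $F(1+it)\in\D(-\Delta)$. We then show that $G(z):=(\lambda+A)^{-1}\mathbb P(\lambda-\Delta)F(z)$ is an admissible family for the solenoidal pair with $G(\alpha)=u$; this reduces the question to boundedness of $(\lambda+A)^{-1}\mathbb P(\lambda-\Delta)$ on $L^q_\ell$ and on $\D(-\Delta)$. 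The main obstacle is the $L^q_\ell$-boundedness, i.e.\ bounding $\|(\lambda+A)^{-1}\mathbb P\Delta v\|_{L^q_\ell}\le C\|v\|_{L^q_\ell}$. We intend to prove it by duality, moving to $L^{q'}_{-\ell}$ together with the resolvent estimate of Theorem \ref{res-weighted}; this step is where the restriction $2\alpha<1/q$ (no boundary condition) must really enter.
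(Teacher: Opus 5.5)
Your final ``cleaner'' route is the paper's proof. The paper uses exactly the retraction $R=(I+A)^{-1}\mathbb P(I-\Delta)$. It is bounded $\D(-\Delta)\to\D(A)$ by the resolvent estimate, and bounded $L^q_\ell\to L^q_{\sigma,\ell}$ by duality with the weighted $\nabla^2$-resolvent estimate in $L^{q'}_{-\ell}$. The paper then invokes the interpolation theorem for complemented subspaces (Triebel, Theorem 1.17.1.1). Your family $G(z)=RF(z)$ does this by hand; you should state explicitly that $R=I$ on $L^q_{\sigma,\ell}$, by density of $\D(A)$, to get $G(\alpha)=u$.

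The Bogovski\u\i-lifting detour is unnecessary. Also, the restriction $2\alpha<1/q$ does not enter the duality step at all. It is used only through Proposition \ref{char-fractD(Delta)}, to identify $[L^q_\ell,\D(-\Delta)]_\alpha$ with $H^{2\alpha,q}_\ell(\Omega)$.
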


\begin{proof}
By virtue of Theorem \ref{theorem-H-infty-all} (iii) there holds for $0<\alpha<1$ and $-\frac{n}{q}< \ell < \frac{n}{q'}$, $1<q<\infty$,
$$
\D(A^\alpha)= [L^q_{\sigma, \ell}(\Omega), \D(A)]_\alpha;
$$
here $A=A_{q,\ell}$. By analogy, the Dirichlet-Laplacian $-\Delta=-\Delta_{q,\ell}$ with domain $\D(-\Delta_{\ell,q}) = H^{2,q}_{\ell}(\Omega) \cap H^{1,q}_{\ell,0}(\Omega) \supset H^{2,q}_{\ell,0}(\Omega)$ satisfies
$$\D((-\Delta)^\alpha)= [L^q_{\ell}(\Omega), \D((-\Delta))]_\alpha.
$$

In order to conclude \eqref{D(Aalpha)-small}
we construct
a projection (retraction) $R:L^q_\ell\to L^q_{\sigma,\ell}$  
such that $RL^q_\ell(\Omega) = L^q_{\sigma,\ell}(\Omega)$ and $R\D(-\Delta)=\D(A)= \D(-\Delta) \cap L^q_{\sigma,\ell}(\Omega)$. 
In this situation, \cite[Theorem 1.17.1.1]{Triebel}, Theorem \ref{theorem-H-infty-all} (iii), Propositions \ref{H-alpha-0-0} and  \ref{char-fractD(Delta)} 
\BLACK
and \eqref{inhomogeneous space}, {\em i.e.} $H^{2\alpha,q}_{\sigma,\ell} = H^{2\alpha,q}_{\ell} \cap L^q_{\sigma,\ell}$, 
imply that
\begin{align*}
\D(A^\alpha) & = [L^q_{\sigma,\ell},\D(A)]_\alpha = [L^q_\ell,\D(-\Delta)]_\alpha \cap L^q_{\sigma, \ell}\\
&= H^{2\alpha, q}_{\ell}(\Omega) \cap  L^q_{\sigma,\ell} = H^{2\alpha,q}_{\sigma, \ell}(\Omega).
\end{align*}

Actually, modifying \cite[Sect.~7]{Giga} and choosing $R = (I+A)^{-1} P(I-\Delta)$ we  have to prove that $R:\D(-\Delta) \to \D(A)$ and $R:L^q_\ell\to L^q_{\sigma,\ell}$ are bounded.
The first assertion is obvious by resolvent estimates of $A$  with resolvent parameter $\lambda=1$, \BLACK see Theorem \ref{res-weighted}. 
For the second statement we refer to a duality argument, write for simplicity $A^*$ for $A_{q',-\ell}= (A_{q,\ell})^*$ {\em etc.} and consider $R^*= (I-\Delta^*)(I+A^*)^{-1}$ as map from $L^{q'}_{\sigma,-\ell}$ to $L^{q'}_{-\ell}$. 
Then for any \BLACK  $f\in C^\infty_{0,\sigma}$  
let $u = (I+A^*)^{-1}f$ and $(u,p)$ be a solution to the resolvent problem \begin{align*}
   u  -\Delta  u + \nabla p =f,  \ \ \div u = 0\mbox{ in } \Omega, \quad u=0 \mbox{ on } \partial\Omega.
\end{align*}  
Now Theorem \ref{res-weighted} 
implies that for $-\frac{n}{q'}< -\ell < \frac{n}{q}$ 
$$
\| u\|_{L^{q'}_{-\ell}(\Omega)}+\| \nabla^2 u\|_{L^{q'}_{-\ell}(\Omega)} \leq  C\|f\|_{L^{q'}_{-\ell}(\Omega)}, 
$$
which derives the boundedness of $R^*$ from  $L^{q'}_{\sigma,-\ell}(\Omega)$ to $L^{q'}_{-\ell}(\Omega)$. 
\end{proof}
\BLACK


\subsection{The modified Navier-Stokes system} \label{S2.4}

System \eqref{equ:ns} will be reformulated as a problem in the cylindrical space-time domain $\Omega_0\times J$ where $J=\IR/(T\IZ)$ and $\Omega_0$ is the exterior reference domain. Here we follow the procedure described in \cite{Farwig-Kozono-Wegmann, FKTW, Saal}.

Let $v\colon \Omega(t)\to \IR^n$ be a function with parameter $t$. Given $\phi$ as in Assumption \ref{ass} define the map $\Phi=\Phi(\cdot)$ for fixed $t\in J$ by
\begin{align}\label{equ:trafo}
	\Phi(t) \colon v \mapsto (\Phi(t) v)(\cdot,t) := u(\cdot,t) := ((\nabla \phi)^{-1}(\cdot,t))\, v(\phi(\cdot,t),t),
\end{align}
which maps solenoidal vector fields $v$ on $\Omega(t)$ to solenoidal vector fields $u$ on $\Omega_0$.
Note that for each $1<q<\infty$ and $t\in J$ 
$$\Phi(t): L^q(\Omega(t)) \to L^q(\Omega_0)$$
is an isomorphism; the same holds for the spaces $W^{k,q}$, $k=1, 2$, $W^{1,q}_0$, and $L^q_{\sigma}$ by \cite{Saal}.

We see  from \cite[(13),(14)]{Saal} that
\begin{align}\label{equ:trafodgl}
\begin{aligned}
	(\Phi(t) \Delta_x \Phi(t)^{-1} u )(\xi,t) & =  \sum_{i,j,k,l,m=1}^3 \big(\partial_{x_k} \phi^{-1}\big)\big(\partial_{x_j}\phi^{-1}\big)_i\big(\partial_{x_j}\phi^{-1}\big)_l(\phi(\xi,t),t)\\
 &\qquad\times \Big [  \big(\partial_{\xi_l}\partial_{\xi_i}\partial_{\xi_m}\phi_k\big)u_m
  	+ \big(\partial_{\xi_i}\partial_{\xi_m}\phi_k\big) \partial_{\xi_l}u_m\\
 &\qquad\quad + \big(\partial_{\xi_l}\partial_{\xi_m}\phi_k\big) \partial_{\xi_i}u_m + \big(\partial_{\xi_m}\phi_k\big)
	\partial_{\xi_l}\partial_{\xi_i}u_m\Big]\\
	& =: \Delta_\xi u + \sum_{|\alpha|\leq 2} a_{\alpha}(\xi,t) \partial^{\alpha} u,
\end{aligned}
\end{align}
where the $a_{\alpha}$ denote matrix-valued coefficients, and
\begin{align}\label{equ:trafobeta}
	\Phi(t) \partial_t \Phi(t)^{-1}u & = \partial_t u + (\nabla\phi)^{-1} \!\Bigg(\!\sum_{i,j} (\partial_t\phi^{-1})_j[\partial_{\xi_i}\partial_{\xi_j}\phi\,u_i + \partial_{\xi_i}\phi\,\partial_{\xi_j}u_i] + \sum_i (\partial_{\xi_i}\partial_t\phi)u_i  \!\Bigg)\nonumber\\
& =: \partial_t u + \sum_{|\beta|\leq 1} b_{\beta}(\cdot,t)\partial^{\beta}u
\end{align}
with matrix-valued coefficients $b_\beta(\xi,t)$.
Then the following inequalities follow from Assumption \ref{ass} (iv):
\begin{align}\label{equ:alpha-beta=0}
	\|a_{\alpha}\|_{ L^\infty_{n-1} \cap L^{q_1,1}_n\BLACK} + \|b_{\beta}\|_{ L^\infty_{n-1} \cap L^{q_1,1}_n\BLACK}  & \leq c  \mathfrak{C}_\phi,\\[1ex] 
\|a_{\alpha}(\cdot,t) - a_{\alpha}(\cdot,\tau)\|_{\infty} +  \|b_{\beta}(\cdot,t) -b_{\beta}(\cdot,\tau)\|_{\infty} & \lesssim |\phi(t)-\phi(\tau)|_{\widehat C^{3,1}},
\label{equ:alpha-beta}
\end{align}
where in \eqref{equ:alpha-beta=0} $\mathfrak{C}_\phi = \mathcal O(c_\phi)$,
see {\em e.g.} \cite[(2.14), (2.15)]{FT-lin-half} for details,
and \eqref{equ:alpha-beta} is mainly due to 
\cite{Saal}. Since by an inspection of the proof in \eqref{equ:alpha-beta} only derivatives of $\phi$ are needed, we may take the seminorm $\widehat C^{3,1}$ for the proof. We also note that $a_\alpha(\cdot,0)\equiv 0$, 
but in general $b_\beta(\cdot,0)\neq 0$. 
\BLACK

Now \eqref{equ:ns} is reformulated as a problem
on $\Omega_0 \BLACK \times J$ by using the transformation $\Phi(t)$. To be more precise, we are led to the nonlinear system 
\begin{align}\label{equ:ns2}
\begin{aligned}
	u_t + \sum_{|\beta|\leq 1 } b_{\beta}\partial^{\beta} u  - \Delta u - \sum_{|\alpha|\leq 2 } a_{\alpha} \partial^{\alpha} u + \nabla^{\phi(t)} \tilde p + u\cdot \nabla^{\phi(t)} u & = \Phi f\BLACK,\\
	\div u = 0,\quad u_{|\partial\Omega_0} & =0, 
\end{aligned} 
\end{align}
where $\tilde p= p\circ \psi$ and 
$$ \nabla^{\phi(t)} \tilde p = (\nabla \phi(t))^{-1}  \big((\nabla \phi(t))^{-1}\big)^{\top} \nabla\tilde p; $$
moreover, $(u,\tilde p)$ solves \eqref{equ:ns2} iff $(v,p)$ solves \eqref{equ:ns}.  For simplicity, we will write $p=p_s+p_\perp$ for $\tilde p$ and $f=f_s+f_\perp$ for $\Phi f$.

We decompose \eqref{equ:ns2} by the projections $P_s$ and $P_\perp$  into the equations \BLACK 
\begin{equation}\label{u-stat-m} 
 -\Delta u_s + \nabla p_s = K(u,p), \quad \div u_s = 0\; \textrm{in }\;\Omega_0,\; u_s\big|_{\partial\Omega_0} = 0, 
\end{equation}
 and 
\begin{equation}\label{u-perp-m} 
\del_t u_\perp- \Delta u_\perp + \nabla p_\perp = N(u, p), \quad \div u_\perp = 0 \; \textrm{in }\;\Omega_0,\; u_\perp\big|_{\partial\Omega_0} = 0, 
\end{equation}
 where 
\begin{align}\label{KNM}
\begin{aligned}
K(u, p) & = -(u_s\cdot\nabla u_s)  -
P_s \{u_s \cdot (\nabla^{\phi(t)}-\nabla^{\phi(0)})u_s\}
+ P_s (\nabla^{\phi(0)}-\nabla^{\phi(t)}) p\\
& \quad -  P_s( u_s \cdot\nabla^{\phi(t)} u_\perp) - P_s ( u_\perp\cdot\nabla^{\phi(t)} u_s)-  P_s(u_\perp\cdot\nabla^{\phi(t)} u_\perp) + f_s+P_s M(u), \\[1ex]
N(u, p) & =  P_\perp(\nabla^{\phi(0)}-\nabla^{\phi(t)}) p + f_\perp(t)-P_\perp (u_s\cdot\nabla^{\phi(t)} u_\perp) -P_\perp( u_\perp(t)\cdot\nabla^{\phi(t)} u_s) \\
& \quad - P_\perp (u_\perp\cdot\nabla^{\phi(t)} u_\perp) + P_\perp M(u) - P_\perp (u_s\cdot\nabla^{\phi(t)} u_s), \\[1ex]
M(u) & = \sum_{|\alpha|\leq 2 } a_{\alpha} \partial^{\alpha} u- \sum_{|\beta|\leq 1 } b_{\beta}\partial^{\beta} u 
\end{aligned}
\end{align}
for $u=u_s +u_\perp$, $p_s=P_s p$ and 
$p_\perp=P_\perp p$. Note that $K(u,p)$ and $N(u,p)$ contain the external force term $f$. \BLACK Applying  $(I-\mathbb{P})$ to \eqref{u-perp-m} we derive that 
\begin{equation}\label{u-perp-eq-p} 
\nabla p_\perp = (I- \mathbb{P})\Delta u_\perp + (I-\mathbb{P})N(u,p\BLACK).  
\end{equation}

By the formula of Kozono-Nakao \cite{Kozono-Nakao}, we obtain the following formula from \eqref{u-perp-m}  to get a time periodic solution $u_\perp$: 
\begin{equation}\label{u-perp-int}
u_\perp (t) = H[u,p](t):= \displaystyle\int_{-\infty}^t e^{-(t-\tau)A}  \mathbb P \BLACK N(u,p)(\tau) \dtau.  
\end{equation}
Therefore,  we will find a $T$-periodic solution $u=u_s+u_\perp$, $p= p_s+ p_\perp$ as a fixed point of a coupled nonlinear system containing the stationary Stokes problem \eqref{u-stat-m} in $(u_s,p_s)$ and the instationary system \eqref{u-perp-m} in $u_\perp$ - with associated pressure $p_\perp$ - solved by \eqref{u-perp-int}; then $\nabla p_\perp$ is defined via \eqref{u-perp-eq-p}.
\BLACK

\BLACK

\BLACK 

\vspace{1ex}

\subsection{Formulation as Integral Equation}\label{S2.5}

We look for a fixed point $(u_s, u_\perp, p_s, p_\perp)$ satisfying the coupled nonlinear system  
\begin{align} \label{equ:ns-us-per}
\left\{
\begin{array}{ll}
-\Delta u_s + \nabla p_s = K(u,p), \quad \div u_s = 0\; \textrm{in }\;\Omega_0,\; u_s\big|_{\partial\Omega_0} = 0, \\
	u_\perp (t) = H[u,p](t)= \displaystyle\int_{-\infty}^t e^{-(t-\tau)A}  \mathbb P\BLACK N(u,p)(\tau) \dtau, \\
\hspace*{3pt} \nabla p_\perp = (I- \mathbb{P})\Delta u_\perp + (I-\mathbb{P}) N(u,p\BLACK),  
    \end{array}
\right. 
\end{align}
where $A = -\mathbb P\Delta$ denotes the Stokes operator in a weighted space $L^q_{\sigma,n-1}(\Omega)$.
Note that the integral mean of $N(u,p)$ on $(0,T)$ vanishes, {\em i.e.,} $P_s N(u,p)=0$. 

Decomposing the integral on $(0,\infty)$ into consecutive parts on $(kT,(k+1)T)$, $k\in\IN_0,$ we get that
\begin{align}\label{equ:ns-per2}
\begin{aligned}
u_\perp(t) = & \sum_{k=0}^\infty \int_{kT}^{(k+1)T} e^{-\tau A} \mathbb P N(u, p)(t-\tau)\dtau\\
= & \sum_{k=0}^\infty \int_0^T e^{-(\tau+kT) A} \mathbb P N(u, p)(t-\tau-kT) \dtau\\
= & \sum_{k=0}^\infty e^{-kT A} \int_0^T e^{-\tau A}\mathbb P N(u, p)(t-\tau-kT) \dtau.
\end{aligned}
\end{align} 
Assuming that $f, \phi$ and $u,p$ are $T$-periodic, we simplify 
\eqref{equ:ns-per2} to 
\begin{align}\label{u(t)} 
u_\perp(t) = \Big(\sum_{k=0}^\infty e^{-kT A}\Big) \int_0^T e^{-\tau A} \mathbb P N(u, p)(t-\tau) \dtau. 
\end{align}
Since the above integrand $ \mathbb P \BLACK N(u, p)$ has vanishing integral mean, we may replace the Stokes semigroup $e^{-\tau A}$ by 
$$e^{-\tau A}-I = \int_0^\tau -Ae^{-sA} \ds = - A\int_0^\tau e^{-sA} \ds $$
so that $u_\perp$ in \eqref{u(t)} satisfies
\begin{align}\label{u_perp2}
\begin{aligned}
u_\perp(t)  & = \Big(\sum_{k=1}^\infty (-A) e^{-kT A}\Big) \int_0^T \Big(\int_0^\tau e^{-sA} \ds\Big)\mathbb P N(u, p) (t-\tau)\dtau\\
& \quad\; + \int_0^T e^{-\tau A} N(u, p)(t-\tau) \dtau. \end{aligned}
\end{align} 
In view of \eqref{u_perp2}  and the assumption $P_s N(u, p)=0$ we decompose $H$, see \eqref{equ:ns-us-per}, for $k\geq 1$ and $k=0$ \BLACK into $H_1$ and $H_2$, respectively,  with  
\begin{align}\label{H_1}
   \begin{aligned}
H_1[u,p](t) 
& = \Big(\sum_{k=1}^\infty (-A) e^{-kT A}\Big) \int_0^T \Big(\int_0^\tau e^{-sA} \ds\Big) \mathbb P N(u, p)(t-\tau) \dtau,\\
H_2[u ,p\BLACK](t) & =  \int_0^T e^{-\tau A} N(u, p)(t-\tau) \dtau. 
\end{aligned}
\end{align}

\section{Analysis of the Stationary Stokes Equations}

We consider on a smooth exterior domain $\Omega_0\subset \IR^n$ the linear stationary problem 
\begin{align}\label{stationary-problem}
-\Delta u +\nabla p= \div F +g,  \ \ \div u=0\; \mbox{ in }\Omega_0,\; \ u=0 \mbox{ on }\partial\Omega_0. 
\end{align}
In this section, our aim is to prove the following estimate. 

\vspace{1ex}

\begin{prop}\label{result-linear-stationary} 
Let $n< q_0 <q$ and $n\leq \ell$. Assume that 
$$
F\in L^{\infty}_{n-1}(\Omega_0), \ \ \div F \in L^\infty_{n}(\Omega_0)
$$
and
$$
g=g_1+g_2,  \ \ g_1 \in L^{q}_{\ell}(\Omega_0),  \ \ g_2 \in L^{q_0}_\ell (\Omega_0) \cap L^{q,\infty}(\Omega_0).
$$ 
 Set 
\begin{align}\label{norm-F-g}
\begin{aligned}
\mathscr{L}(F) & = \|F\|_{L^\infty_{n-1}}+ \|\div F\|_{L^\infty_{n}},\\
\mathscr{L}'(g) & = \inf_{g=g_1+g_2} \big(\|g_1\|_{L^q_\ell} + \|g_2\|_{L^{q_0}_\ell \cap L^{q,\infty}}\big),
\end{aligned}
\end{align} 
where the infimum runs over all decompositions 
$g=g_1+g_2,  \ \ g_1 \in L^{q}_{\ell},  \ \ g_2 \in L^{q_0}_\ell \cap L^{q,\infty}.$ 
Then there exists a solution $(u,p) \in H^{1.q} (\Omega_0) \times L^q (\Omega_0)$ with 
 $\nabla^2 u \in L^{q,\infty}$ and $\nabla p \in L^{q,\infty}$ to \eqref{stationary-problem} satisfying that 
\begin{equation}\label{u-p-est}
\|\nabla^2 u\|_{L^{q,\infty}}  + \|u\|_{L^\infty_{n-2}\cap H^{1,q}} + \|\nabla u\|_{L^\infty_{n-1}} + \|\nabla p\|_{L^{q,\infty}}+\|p\|_{L^\infty_{n-1} \cap L^q} \leq 
C(\mathscr{L}(F)+\mathscr{L}'(g)). 
\end{equation}
\end{prop}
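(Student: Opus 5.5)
We construct the solution by a cut-off procedure that splits the problem into a whole-space problem and a bounded-domain problem. The weighted pointwise bounds in \eqref{u-p-est} will come from potential-theoretic estimates of the Stokes fundamental solution $(E,e)$ on $\IR^n$. The $H^{1,q}$ and Lorentz bounds on $\nabla^2 u$, $\nabla p$ will come from Calder\'on--Zygmund theory, combined with real interpolation \eqref{Lps-reiterate}.

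\emph{Step 1: whole-space problem.} We extend $F$ and $g$ by zero to $\IR^n$. Note that the zero extension of $\div F$ differs from $\div(E_0F)$ by a boundary term; to avoid this we treat $\div F$ as part of the right-hand side of $L^\infty_n$-type. We then set
\[ U=E*(\div F+g),\qquad P=e\cdot(\div F+g), \]
with $|E(x)|\lesssim |x|^{2-n}$, $|\nabla E|\lesssim|x|^{1-n}$ and $|e|\lesssim|x|^{1-n}$. For a right-hand side $h$ with $|h(y)|\lesssim\langle y\rangle^{-n}$ or $h\in L^{q}_\ell$ with $\ell\ge n$ and $q>n$, we split each convolution integral into the regions $|y|<|x|/2$, $|x-y|<|x|/2$ and the remainder. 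H\"older's inequality applies to the local part since $q>n$ (so $|x|^{1-n}\in L^{q'}_{\rm loc}$). This gives
\[ |U(x)|\lesssim\langle x\rangle^{2-n}, \qquad |\nabla U(x)|+|P(x)|\lesssim\langle x\rangle^{1-n}. \]
Because $\int\langle y\rangle^{-n}\dy$ diverges only logarithmically, the $L^\infty_n$ part of $\div F$ would produce a logarithmic loss. To avoid it we write $\div F$ in divergence form and use $\nabla E*F$, with $|F|\lesssim\langle y\rangle^{1-n}$; the kernel then has one less power of decay but acts on $F$ directly. The combination of $\|F\|_{L^\infty_{n-1}}$ and $\|\div F\|_{L^\infty_n}$ is exactly what makes this convolution land in $L^\infty_{n-2}$ for $U$ and $L^\infty_{n-1}$ for $\nabla U$. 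For $g_2\in L^{q_0}_\ell\cap L^{q,\infty}$ we use $q_0>n$ in the same H\"older argument. The $L^{q,\infty}$ bounds for $\nabla^2U$ and $\nabla P$ follow from Calder\'on--Zygmund estimates on $L^{p}$ for $p$ near $q$, real interpolation to $L^{q,\infty}$, and the embeddings $L^q_\ell\subset L^q$ and $L^\infty_n\subset L^q$. Finally, the decay bounds give $U,\nabla U,P\in L^q$, since $(n-2)q>n$ for $q>n$ and $n\ge3$.

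\emph{Step 2: localization near the boundary.} We take a cut-off $\zeta$ equal to $1$ outside $B_{R+1}$ and vanishing near $\Omega_0^c$. The ansatz is
\[ u=\zeta U-\mathbb B[\nabla\zeta\cdot U]+w,\qquad p=\zeta P+\pi, \]
where $\mathbb B$ is the Bogovski\u{\i} operator on the annulus $\Omega_{R+2}\setminus B_R$; the required compatibility $\int\nabla\zeta\cdot U=0$ holds because $\div U=0$. The pair $(w,\pi)$ then solves a Stokes problem in $\Omega_0$ whose right-hand side is compactly supported and lies in $L^q$. This right-hand side consists of the original data restricted near the boundary together with commutator terms involving $\nabla\zeta$, $U$, $\nabla U$ and $P$. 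We solve this problem by the classical $L^r$ theory of the exterior Stokes problem, using $n/2<r<q$ and $r$ close to $q$, and obtain local $W^{2,q}$ regularity. For the decay of $(w,\pi)$ we use the representation by the Stokes fundamental solution outside a large ball. A compactly supported force with mean-zero-type structure yields $w=O(|x|^{2-n})$, $\nabla w,\pi=O(|x|^{1-n})$. We do not need vanishing total force, since $|x|^{2-n}$ decay of $w$ is all that is required.

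\emph{Main obstacle.} The delicate step is the sharp pointwise estimate of Step 1 at the critical rate. A right-hand side decaying like $\langle y\rangle^{-n}$ convolved with $|x-y|^{1-n}$ gives $\langle x\rangle^{1-n}\log\langle x\rangle$ unless the divergence structure of $F$ is exploited. I will therefore handle $\div F$ by integrating by parts onto the kernel whenever $|y|$ is comparable to $|x|$ or larger, and estimate the region $|y|<|x|/2$ using $\nabla^2E$ on $F$. Lemma-type bounds of the form
\[ \int\langle y\rangle^{-a}|x-y|^{-b}\dy\lesssim\langle x\rangle^{n-a-b} \qquad (a,b<n<a+b) \]
will be invoked repeatedly. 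Uniqueness and the identification $p\in L^q$ follow since all constructed pieces decay.
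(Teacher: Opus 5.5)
Your overall architecture matches the paper's final assembly. You use whole-space potentials, a cut-off $\zeta$ (the paper's $1-\varphi$) with the Bogovski\u{\i} correction $-\mathbb B[\nabla\zeta\cdot U]$, and an exterior Stokes problem with compactly supported remainder.

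Step 1 is sound. Using the divergence form of $F$ only away from the obstacle is an acceptable substitute for the paper's splitting $\div F=\div((1-\varphi)F)+\varphi\div F+\nabla\varphi\cdot F$. Your last paragraph mixes up the regions, though. In the kernel variable the paper keeps $\div F$ on $|y|\le|x|/2$ and integrates by parts only on $|y|\ge|x|/2$. Putting $\nabla^2E$ on $F$ near the kernel's singularity would produce the non-integrable $|y|^{-n}$.

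The real difference is how you solve the compactly supported exterior problem. You cite the classical exterior $L^r$-theory plus a representation formula outside a large ball. The paper instead builds $(\mathcal U,\mathcal Q)$ itself, by gluing whole-space and bounded-domain solutions and inverting $I+\mathcal R_1$ via the Fredholm alternative. Citing is shorter; the paper's route is self-contained and gives Lebesgue and Lorentz estimates in one framework. For the citation to yield decay, take $r<n/2$; this is allowed because the remainder lies in every $L^r$ with $r\le q_0$. Do not take $r$ close to $q>n$: there the $D^{2,r}$-theory alone does not single out the decaying solution.

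\textbf{The genuine gap} is your assertion that the remainder lies in $L^q$ and that $w$ has local $W^{2,q}$ regularity. The remainder contains $(1-\zeta)g_2$, and $g_2\in L^{q_0}_\ell\cap L^{q,\infty}$ need not be locally in $L^q$. For example, a cut-off of $|x-x_0|^{-n/q}$ at a boundary point $x_0$ is admissible data. For such data only $w\in W^{2,q_0}_{\rm loc}$ is available. So the estimate $\|\nabla^2 w\|_{L^{q,\infty}}+\|\nabla\pi\|_{L^{q,\infty}}\le C\|g_2\|_{L^{q,\infty}}+\dots$ required by \eqref{u-p-est} is not covered by your argument.

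This is precisely the case that matters later. The terms $a_2:\nabla^2u$ and $(\nabla^{\phi(t)}-\nabla^{\phi(0)})p$ are put into the $g_2$-slot, and they are only in $L^{q,\infty}$.

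To close the gap you need two things:
\begin{enumerate}
\item Control the weighted $L^\infty$ bounds of $(w,\pi)$ by the $L^{q_0}$-norm of the remainder. This works because $W^{2,q_0}\subset C^1$ locally for $q_0>n$, which is what your representation formula needs on the sphere.
\item Show that the solution operator for data supported in a fixed ball is a single linear operator. It must be bounded from $L^r$ into $W^{2,r}\times W^{1,r}$ near the obstacle for all $r$ in an interval around $q$. Consistency across different $r$ follows from uniqueness in the decaying class.
\end{enumerate}
Reiteration \eqref{Lps-reiterate} then yields the $L^{q,\infty}$-bounds for $\nabla^2 w$ and $\nabla\pi$. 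This is exactly what the paper secures through the Lorentz parts (ii) of Corollary \ref{result-stationary-problem-A-B}, Lemma \ref{R1estimate} and Lemma \ref{U-Q-estimates}.
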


\vspace{2ex}


We note that $\mathscr{L}(F)$ and $\mathscr{L}'(g)$
stand for expressions like norms, but 
$\mathscr{L}(F) + \mathscr{L}'(g)$ is neither for a pair $(F,g)$ nor $(\div F,g)$ a norm in \eqref{stationary-problem}.

To prove Proposition \ref{result-linear-stationary}, 
we first state the following $L^\infty$ estimates.  We note that the proof of Lemma \ref{convolution2} (ii) follows the ideas of \cite[Lemma 5.4]{Eiter-Kyed-Shibata23}  adapted to the $n$-dimensional case. We also exploit H\"older's inequality to get the embedding \begin{equation}\label{Lq-Linfty_s}
  L^\infty_\ell(\IR^n) \hookrightarrow L^q(\IR^n),\quad \ell>\frac{n}{q}.
\end{equation} 
In particular, when $3\leq n<q<\infty$, then $\ell=n,n-1$ and $\ell=n-2$ are admitted.\BLACK

\vspace{2ex}

\begin{lem}\label{convolution2}
	{\rm (i)}
	Let $E(x)$ be a scalar function on $\IR^n$ satisfying 
	\begin{eqnarray}\label{spatial decay}
		|\nabla^{\alpha}_{x}E(x)|\leq \frac{C}{|x|^{|\alpha|+n-2}}, \ \ |\alpha|=0,1,2. 
	\end{eqnarray}
	Assume that $f$ is a scalar function satisfying $\|f\|_{L^{q}_\ell }<\infty$, where 
    $q>n$ and $\ell > \frac{n}{q'}$. Then
	\begin{eqnarray*}
		\|\nabla^{\alpha}_{x}E \ast f\|_{L^\infty_{ n-2+|\alpha| }} \leq C\|f\|_{L^{q}_\ell }, \quad |\alpha|=0,1.
	\end{eqnarray*}
	
	{\rm (ii)} 
	Let $E(x)$ be a scalar function satisfying \eqref{spatial decay} and assume that $f$ is a scalar function of the form $f=\partial_{x_j}F$ for some $1 \leq j \leq n$ satisfying $\|\partial_{x_j}F\|_{L^{\infty}_n} + \|F\|_{L^{\infty}_{n-1}}<\infty$. 
	Then there hold the following estimates for $|\alpha|=0,1$:
	\begin{eqnarray}\label{E*f-infty}
		\|\nabla^{\alpha}_{x}E \ast f\|_{L^\infty_{n-2+|\alpha|  }} \leq 
        C\big(\|\partial_{x_j}F\|_{L^{\infty}_n} + \|F\|_{L^{\infty}_{n-1}}\big).
	\end{eqnarray}
\end{lem}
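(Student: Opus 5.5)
Both parts reduce to pointwise estimates of convolution integrals. For each $x$ we split $\IR^n$ into the far region $D_1=\{y: |y|\geq 2|x|\}$, the comparable region $D_2=\{y: |x|/2\leq |y|\leq 2|x|\}$ and the near region $D_3=\{y: |y|\leq |x|/2\}$. Since $\langle x\rangle^{n-2+|\alpha|}$ is bounded for $|x|\leq 1$, we may assume $|x|\geq 1$ whenever we need a decay rate.

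\textbf{Part (i).} We take $|\alpha|=0,1$ and use $|\nabla^\alpha E(x-y)|\leq C|x-y|^{-(n-2+|\alpha|)}$.

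For local boundedness we apply Hölder's inequality with exponents $q'$ and $q$. The singularity $|x-y|^{-(n-1)}$ lies in $L^{q'}_{\rm loc}$ because $q'<\frac{n}{n-1}$, which is exactly where $q>n$ is used. The tail is controlled by $\langle y\rangle^{-\ell}\in L^{q'}$, valid since $\ell q'>n$, i.e. $\ell>\frac{n}{q'}$.

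For the decay we argue region by region:
\begin{itemize}
\item On $D_3$ we have $|x-y|\sim|x|$. Hence the contribution is at most $C|x|^{-(n-2+|\alpha|)}\int|f|\,dy$, and $\int|f|\leq \|\langle\cdot\rangle^{-\ell}\|_{L^{q'}}\|f\|_{L^q_\ell}$.
\item On $D_1$ we have $|x-y|\gtrsim|y|\geq 2|x|$, so the kernel is again bounded by $C|x|^{-(n-2+|\alpha|)}$ and the same $L^1$ bound applies.
\item On $D_2$ we use $\langle y\rangle^{-\ell}\sim|x|^{-\ell}$ and Hölder's inequality on the ball $B_{3|x|}(x)$. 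This gives
$$\|f\|_{L^q_\ell}\,|x|^{-\ell}\Big(\int_{|z|\leq 3|x|}|z|^{-(n-2+|\alpha|)q'}dz\Big)^{1/q'}\lesssim |x|^{-\ell+(2-|\alpha|)+n/q'-n}.$$
The exponent satisfies $\leq -(n-2+|\alpha|)$ exactly when $\ell\geq n/q'$, which holds.
\end{itemize}

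\textbf{Part (ii).} The danger is that for $|\alpha|=1$ the kernel $\nabla E$ applied to $f=\partial_jF$ with $|f|\lesssim\langle y\rangle^{-n}$ produces a logarithm in the near region $D_3$. We therefore treat the regions differently.

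On $D_2$ and on the region near $x$ we keep $f=\partial_jF$ and use $|f|\leq \|\partial_jF\|_{L^\infty_n}|x|^{-n}$. Since $|x-y|^{-(n-2+|\alpha|)}$ is locally integrable and the region has diameter $\sim|x|$, the integral over $B_{3|x|}(x)$ is $\lesssim|x|^{2-|\alpha|}$. This yields $|x|^{2-|\alpha|-n}$, which is the required rate.

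On $D_1\cup D_3$, away from $x$, we integrate by parts with a smooth cutoff $\chi$ equal to $1$ on $D_2$ and adapted to the scale $|x|$. This moves the derivative onto the kernel and the cutoff, giving terms $\nabla^{\alpha}\partial_jE(x-y)F(y)(1-\chi)$ plus cutoff terms $\nabla^\alpha E\,F\,\partial_j\chi$, where $|\partial_j\chi|\lesssim|x|^{-1}$ is supported in $D_2$. The boundary terms vanish because $F$ decays.

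The estimates for these terms are as follows:
\begin{itemize}
\item In $D_3$ the kernel $|\nabla^{\alpha}\partial_jE(x-y)|\sim|x|^{-(n-1+|\alpha|)}$, while $\int_{|y|<|x|/2}\langle y\rangle^{-(n-1)}dy\lesssim|x|$. This gives $|x|^{-(n-2+|\alpha|)}$.
\item In $D_1$ the kernel is bounded by $|y|^{-(n-1+|\alpha|)}$ and $|F|\lesssim|y|^{-(n-1)}$. The integral over $|y|>2|x|$ is then $\lesssim |x|^{-(n-2+|\alpha|)}$.
\item The cutoff terms are bounded by $|x|^{-1}|x|^{-(n-1)}\int_{B_{3|x|}(x)}|x-y|^{-(n-2+|\alpha|)}dy\lesssim |x|^{-n}|x|^{2-|\alpha|}$, which is again the right rate.
\end{itemize}
If the integration by parts near the origin is awkward for $|x|$ small, we cover $|x|\leq 1$ directly by local boundedness, using the first argument of (ii) on a unit ball.

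The main obstacle is the $|\alpha|=1$ case in (ii), where avoiding the logarithm in $D_3$ requires transferring the derivative via the localized integration by parts.
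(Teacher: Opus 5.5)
Your proposal is correct and follows essentially the paper's argument. In (i) you use the same three-region splitting with H\"older's inequality; in (ii) you keep $\partial_jF$ near the singularity of the kernel and move the derivative onto $E$ elsewhere, which removes the logarithm, and you estimate directly for $|x|\le 1$. The only difference is cosmetic: the paper cuts sharply along the sphere $\{|x-y|=|x|/2\}$ and picks up a surface integral via Gauss' theorem, whereas your smooth cutoff produces the equivalent volume term $\nabla^\alpha E\,F\,\partial_j\chi$. One small fix: $\chi$ should equal $1$ on $D_2$ and be supported in a slightly larger annulus, so that $\partial_j\chi$ lives where $|y|\sim|x|$, not inside $D_2$ as you wrote.
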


\begin{proof} \BLACK
(i) We differ between the cases $|x|<1$ and $|x|\geq 1$, and use for $u(x) = E*f(x)$, $x\in\IR^n$, the estimate 
\begin{align}\label{Ef-ptw}
    |u(x)|\leq c \Bigg( \int_{|y|<\frac{|x|}{2}} + \int_{|y|>2|x|} + \int_{\frac{|x|}{2}\leq y\leq 2|x|}\Bigg) \frac{|f(y)| (1+|y|)^\ell}{|x-y|^{n-2} (1+|y|)^\ell} \dy .
\end{align}
If $|x|\geq 1$, \BLACK we refer to H\"older's inequality to get the term $\|f\|_{L^q_\ell}$, exploit $\ell>\frac{n}{q'}$ and $q>n\geq 3$, and estimate the three remaining $f$-independent integrals by elementary techniques. However, if $|x|<1$, we use \eqref{Ef-ptw}, but take special care  when $y\sim x$ in the third integral. By analogy, the convolution integral $\nabla u$ is estimated.

    (ii)
For simplicity, set $\gamma_F = \|\partial_j F\|_{L^\infty_n} + \|F\|_{L^\infty_{n-1}}$. 
By Gauss' divergence theorem, we write 
\begin{align*}
u(x) & = \int_{|y| \leq \frac{|x|}{2}} E(y)(\partial_j F)(x-y)\dy\, 
 + \int_{|y|=\frac{|x|}{2}} E(y)\frac{y_j}{|y|}\cdot F(x-y)\domega \\[1ex]
& \qquad + \int_{\frac{|x|}{2} \leq |y| \leq 2|x|}
\partial_j E(y)F(x-y)\dy + \int_{|y| \geq 2|x|}\partial_j E(y)F(x-y)\dy.
\end{align*}
Noting that $|x-y| \geq |x|/2$ for $|y| \leq |x|/2$, $|x-y| \leq 3|x|$ 
for $|x|/2 \leq |y|
\leq 2|x|$, and $|x-y| \geq |y|/2$ for $|y| \geq 2|x|$, 
by \eqref{spatial decay} we have
\begin{align*}
|u(x)|  \leq C\gamma_F\Bigl\{ & (1+|x|)^{-n}
\int_{|y| \leq \frac{|x|}{2}} |y|^{-(n-2)}\dy 
+ |x|^{-(n-2)}(1+|x|)^{-(n-1)} \int_{|y| = |\frac{x|}{2}}\domega  \\[1ex]
&
+|x|^{-(n-1)} \int_{|z| \leq 3|x|}|z|^{-(n-1)}\dz + \int_{|y| \geq 2|x|} |y|^{-2(n-1)}\dy\Bigr\}
\;\leq\; \frac{C\gamma_F}{|x|^{n-2}}
\end{align*}
for $x\neq0$.
When $|x| \leq 1$, we see that $|(\partial_j F)(x-y)| \leq \gamma_F$ for $|y| \leq 2$ and 
$|(\partial_j F)(x-y)| \leq C\gamma_F |y|^{-n}$ for $|y| \geq 2$, and get additionally that 
\begin{align*}
|u(x)| & \leq \int_{|y| \leq 2}|E(y)(\partial_j F)(x-y)|\dy + \int_{|y| \geq 2}
|E(y)(\partial_j F)(x-y)|\dy \\[1ex]
& \leq C\gamma_F\Bigl\{ \int_{|y| \leq 2}|y|^{-(n-2)}\dy 
+ \int_{|y| \geq 2} |y|^{-n-(n-2)}\dy\Bigr\}\,\leq\, C\gamma_F.
\end{align*}
In total, we obtain that $\|u\|_{L^\infty_{n-2}} \leq C\gamma_F$. 

Similarly, we proceed with the estimate of $\nabla u$ and write 
\begin{align*}
\nabla u(x) & = \int_{|y| \leq \frac{|x|}{2}} \nabla E(y)(\partial_j F)(x-y)\dy\, 
 + \BLACK \int_{|y|=\frac{|x|}{2}} \nabla E(y)\frac{y_j}{|y|}\cdot F(x-y)\domega
\\[1ex]
& + \int_{\frac{|x|}{2} \leq |y| \leq 2|x|}
\nabla\partial_j E(y)\BLACK F(x-y)\dy + \int_{|y| \geq 2|x|} \nabla\partial_j E(y)\BLACK F(x-y)\dy.
\end{align*}
Then, we have
\begin{align*}
|\nabla u(x)|  \leq C\gamma_F & \Bigl\{(1+|x|)^{-n}\!
\int_{|y| \leq \frac{|x|}{2}} |y|^{-(n-1)}\dy
+ |x|^{-(n-1)}(1+|x|)^{-(n-1)}\! \int_{|y|=\frac{|x|}{2}}\! \domega \\
& + |x|^{-n} \int_{|z| \leq 3|x|} |z|^{-(n-1)}\dz 
+ \int_{|y| \geq 2|x|}|y|^{-n-(n-1)}\dy\Bigr\}
\leq \frac{C\gamma_F}{|x|^{n-1}}
\end{align*}
for all $x\neq 0$. When $|x| \leq 1$, arguing as above, we have 
\begin{align*}
|\nabla u(x)| & \leq \int_{|y| \leq 2}|\nabla E(y)(\partial_j F)(x-y)|\dy 
+ \int_{|y| \geq 2}
|\nabla E(y)(\partial_j F)(x-y)|\dy \\[1ex]
& \leq C\gamma_F\Bigl\{ \int_{|y| \leq 2}|y|^{-(n-1)}\dy 
+ \int_{|y| \geq 2} |y|^{-(2n-1)}\dy\Bigr\}\leq C\gamma_F.
\end{align*}
Summing up, we have $\|\nabla u\|_{L^\infty_{n-1}} \leq C\gamma_F$.
\end{proof}

\vspace{1ex}


Next we consider the Stokes equation in $\mathbb{R}^n$,  $n\geq 3$, 
\begin{align}\label{stationary-problem-whole-space-n}
-\Delta u +\nabla p= f,  \ \ \div u=0.  
\end{align}
We set 
$$
u(x)= U\ast f(x),  \ \ p(x)=Q \ast f (x), 
$$
where $(U,\,Q)$ is the fundamental solution to \eqref{stationary-problem-whole-space-n}, given by  
\begin{equation} \label{UQ} 
U_{ij}(x) = \frac{1}{2\omega_n}\Big(\frac{\delta_{ij}}{(n-2)|x|^{2-n}}+\frac{x_i x_j}{|x|^n}\Big), \ \ Q_j(x) = \frac{x_j}{\omega_n|x|^n};
\end{equation}\BLACK
here $\omega_n$ denotes the surface area of the $(n-1)$-dimensional unit sphere in $\mathbb{R}^n$; {\em e.g.,} see \cite[IV.2]{Galdi-steady}. 
Obviously,  for any multi-index $\alpha\in\IN_0$, \BLACK $U$ and $Q$ satisfy the estimates
\begin{eqnarray}\label{spationa-decay-n}
		|\partial^{\alpha}_{x}U(x)|\leq \frac{C}{|x|^{|\alpha|+n-2}},  \ \ 
        |\partial^{\alpha}_{x} Q(x)|\leq \frac{C}{|x|^{|\alpha|+n-1}}. 
	\end{eqnarray}

\vspace{1ex}

\begin{lem}\label{result-stationary-problem-whole-space}
{\rm (i)} Let $u= U\ast f$ and $p=Q\ast f$ for $f =\div F$ with $\div F \in L^\infty_{ n }(\IR^n)$ and $F \in L^\infty_{n-1}(\IR^n)$. Then for each $n<q<\infty$ we have 
$$
\|u\|_{H^{2,q}} + \|u\|_{L^\infty_{n-2}}+\|\nabla u\|_{L^\infty_{n-1}}+ \|p\|_{H^{1,q}}+\|p\|_{L^\infty_{n-1}} \leq 
C\mathscr{L}(F).
$$

{\rm (ii)} Let $u= U\ast g$ and $p=Q\ast g$ for  $g \in L^q_\ell(\IR^n)$   with $n<q$, $n \leq \ell$.  Then we have  
\begin{align}
\begin{aligned}\label{potential-est1}
\|u\|_{H^{2,q}} + \|u\|_{L^\infty_{n-2}}+\|\nabla u\|_{L^\infty_{n-1}}+ \|p\|_{H^{1,q}}+\|p\|_{L^\infty_{n-1}} \leq 
C  \|g\|_{L^q_\ell}. 
\end{aligned}
\end{align}

{\rm (iii)} Let $u= U\ast g$ and $ p=Q\ast g$ for  $g \in L^{q_0}_\ell \cap L^{q,\infty}$ with $n< q_0 <q$ and  $n \leq \ell$. Then  
\begin{align}\label{potential-est2}
\begin{aligned}
\|u\|_{H^{1,q}} + \|u\|_{L^\infty_{n-2}}+\|\nabla u\|_{L^\infty_{n-1}}+ \|p\|_{L^q}+\|p\|_{L^\infty_{n-1}} & \leq 
C \|g\|_{L^{q_0}_\ell} \\[1ex]
\|\nabla^2 u\|_{L^{q,\infty}} + \|\nabla p\|_{L^{q,\infty}} & \leq 
C  \|g\|_{L^{q,\infty}}.
\end{aligned}\end{align} 

{\rm (iv)} 
Let $q>n$, $u= U\ast g$ and $p=Q\ast g$ where $g \in L^q(\IR^n)$ is a vector field with compact support. \BLACK
Then we have 
\begin{align}
\begin{aligned}\label{potential-est3}
\|u\|_{H^{2,q}} + \|u\|_{L^\infty_{n-2}}+\|\nabla u\|_{L^\infty_{n-1}}+ \|p\|_{H^{1,q}}  + \|p\|_{L^\infty_{n-1}} \BLACK\leq 
C  \|g\|_{L^q}. 
\end{aligned}
\end{align}

{\rm (v)} 
Let $u= U\ast g$ and $p=Q\ast g$ for  $g \in L^{q,\infty}(\IR^n)$, $q>n$,  with compact support. \BLACK
Then 
\begin{align}
\begin{aligned}\label{potential-est11}
\|u\|_{H^{2;q,\infty}} + \|u\|_{L^\infty_{n-2}}+\|\nabla u\|_{L^\infty_{n-1}}+ \|p\|_{H^{1;q,\infty}}+ \|p\|_{L^\infty_{n-1}} \BLACK\leq 
C  \|g\|_{L^{q,\infty}}.  
\end{aligned}
\end{align}
\end{lem}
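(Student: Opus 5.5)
We treat the five parts separately, splitting each estimate into a weighted $L^\infty$ part, controlled pointwise by Lemma \ref{convolution2} via the kernel bounds \eqref{spationa-decay-n}, and an $L^q$ (or $L^{q,\infty}$) Sobolev part, controlled by Calder\'on--Zygmund theory. Since $\nabla^2 U$ and $\nabla Q$ are homogeneous of degree $-n$ singular kernels, the maps $g\mapsto \nabla^2 U*g$ and $g\mapsto \nabla Q*g$ are bounded on $L^r(\IR^n)$, $1<r<\infty$. By real interpolation \eqref{Lps-reiterate} with $\ell=0$ they are also bounded on $L^{q,\infty}$. This already gives the second-order bounds in every part: $\|\nabla^2u\|_{L^{q(,\infty)}}+\|\nabla p\|_{L^{q(,\infty)}}\le C\|g\|_{L^{q(,\infty)}}$. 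In (ii) we use $L^q_\ell\hookrightarrow L^q$ for $\ell\ge0$. In (i) we use $\div F\in L^\infty_n\hookrightarrow L^q$ by \eqref{Lq-Linfty_s}, since $n>n/q$.

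For the weighted sup-norms of $u$ and $\nabla u$, part (i) follows directly from Lemma \ref{convolution2}(ii), applied componentwise with $E=U_{ij}$. The analogous bound $\|p\|_{L^\infty_{n-1}}$ follows from the same divergence-theorem splitting with $E=Q_j$: $Q$ has one more order of decay than $U$, so the kernel $|x|^{-(n-1)}$ produces $|x|^{-(n-1)}$, which is exactly the computation done for $\nabla u$ in the proof of Lemma \ref{convolution2}(ii). Part (ii) follows from Lemma \ref{convolution2}(i), because $\ell\ge n>n/q'$. For $p$ we repeat the three-region estimate \eqref{Ef-ptw} with kernel $|x-y|^{-(n-1)}$. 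In (iii) we apply the same argument with $q_0$ in place of $q$, since $n<q_0$ and $\ell\ge n>n/q_0'$ still hold. In (iv) and (v), $g$ has compact support, say in $B_\rho$. Then $g\in L^{q_0}_\ell$ for every $\ell$ and every $q_0<q$, with $\|g\|_{L^{q_0}_\ell}\le C_\rho\|g\|_{L^{q,\infty}}$ by H\"older's inequality in Lorentz spaces on a bounded set. So (iv) and (v) reduce to (ii) and (iii).

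The lower-order $L^q$ norms, $\|u\|_{L^q}$, $\|\nabla u\|_{L^q}$ and $\|p\|_{L^q}$, follow from the weighted sup-bounds through the embedding \eqref{Lq-Linfty_s}: $L^\infty_{n-2}\hookrightarrow L^q$ because $n-2>n/q$ when $q>n\ge3$, and likewise $L^\infty_{n-1}\hookrightarrow L^q$. Combined with the second-order estimates, this yields the full $H^{2,q}$ norm of $u$ and the $H^{1,q}$ norm of $p$ (respectively $H^{2;q,\infty}$ and $H^{1;q,\infty}$ in (v)). In (iii) we only claim $H^{1,q}$ for $u$ and $L^q$ for $p$, and these come from the same embedding.

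The main obstacle is the near-singularity region $|y|\sim|x|$ in the pressure estimates and in Lemma \ref{convolution2}(i) for $\nabla u$, where the kernel $|x-y|^{-(n-1)}$ must be paired with $f\in L^q$. Local integrability there needs $(n-1)q'<n$, that is $q>n$, which is exactly the hypothesis. For large $|x|$ this region must still produce the decay rate $|x|^{-(n-1)}$; we obtain it from the weight $(1+|y|)^{-\ell}\sim|x|^{-\ell}$ with $\ell\ge n$, after H\"older's inequality over the annulus. In (i), the pressure inherits the extra factor from $\div F$ only through the boundary-sphere term. That term gives $|x|^{-(n-1)}(1+|x|)^{-(n-1)}|x|^{n-1}$, which is still acceptable, so no further loss occurs.
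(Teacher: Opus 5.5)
Your proposal is correct and follows essentially the paper's proof: weighted sup-bounds from Lemma \ref{convolution2} (part (ii) for $\div F$, part (i) with $q$ or $q_0$ for $g$, and the same splitting with $Q$ for the pressure), Calder\'on--Zygmund theory plus real interpolation for $\nabla^2 u$ and $\nabla p$, the embedding \eqref{Lq-Linfty_s} for the lower-order $L^q$ norms, and (iv) reduced to (ii) via compact support. Your only deviations are minor and equally valid. You obtain (v) from (iii) through the embedding $L^{q,\infty}(B_\rho)\hookrightarrow L^{q_0}_\ell(B_\rho)$ instead of interpolating (iv). In (i) you bound $\|\nabla u\|_{L^q}$ and $\|p\|_{L^q}$ via the weighted sup-bounds rather than by singular integrals applied to $F$.
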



\begin{proof} {\rm (i)} The weighted $L^\infty$-estimates of $u$ are proved by Lemma \ref{convolution2} {\rm (ii)} which is easily extended to the vector-valued case when $F$ is matrix-valued. 
For $n< q <\infty$ the theory of singular integrals on $\IR^n$ and \eqref{Lq-Linfty_s} imply that 
\begin{align*}
\begin{aligned}
\|\nabla^2 u\|_{L^q} + \|\nabla p\|_{L^q}
& \leq C\|\dv F\|_{L^q}
\leq C_q\|\dv F\|_{L^\infty_n},  \\[1ex]
\|\nabla u\|_{L^q } + \|p\|_{L^q}
& \leq C\|F\|_{L^q } 
\leq C_q\|F\|_{L^\infty_{n-1}}.
\end{aligned}
\end{align*}
As for $\|u\|_{L^q}$ we use that  $\|u\|_{L^q} \leq C\|u\|_{L^\infty_{n-2}}$, see \eqref{Lq-Linfty_s}, and apply \eqref{E*f-infty}. The estimate of $p\in L^\infty_{n-1}$ follows from the decay properties of $Q$, see \eqref{spationa-decay-n}, and the ideas used for $\nabla u$. 
\BLACK

{\rm (ii)} 
For the weighted $L^\infty$ estimates we note that $\ell\geq n>\frac{n}{q'}$ and refer to Lemma \ref{convolution2} {\rm (i)}. The $L^q$ estimates are based on the theory of singular integral operators which implies that 
\begin{equation}\label{proof:3.1a}
\|\nabla^2 u\|_{L^q}
+ \|\nabla p\|_{L^q}  \leq C\|g\|_{L^q_\ell}. 
\end{equation}
From the $L^\infty$ estimates of $u,\nabla u,p$ and \eqref{Lq-Linfty_s} we conclude that 
$$\|\nabla u\|_{L^q} + \|u\|_{L^q} + \|p\|_{L^q} \leq C\|g\|_{L^q_\ell}. $$
This estimate together with \eqref{proof:3.1a} completes the proof of (ii). 
\BLACK

{\rm (iii)} The weighted $L^\infty$ estimates are derived similarly  by Lemma \ref{convolution2} {\rm (i)} for $n< q_0$. 
Moreover, from the $L^q$ estimates \eqref{proof:3.1a} and real interpolation  we derive that 
\begin{equation}
\|\nabla^2 u\|_{L^{q,\infty}}
+ \|\nabla p\|_{L^{q,\infty}}  \leq C\|g\|_{L^{q,\infty}}.  
\end{equation}
Taking in the pointwise estimate $|u(x)| \leq C\langle x\rangle^{-(n-2)} \|g\|_{L^{q_0}_\ell}$, see Lemma \ref{convolution2} (i), the $q$th power and integrating on $\IR^n$, we obtain that $\|u\|_{L^q}$ is bounded by $\|g\|_{L^{q_0}_\ell}$. 
By analogy, we estimate $\nabla u,p$ in $L^q$.  

{\rm (iv)}  If $g\in L^q$  has compact support, \BLACK 
then $g\in L^q_\ell$ for any $\ell\geq n$. Thus \eqref{potential-est1} yields \eqref{potential-est3}. 

{\rm (v)} follows directly from {\rm (iv)} by real interpolation theory. 
\end{proof}

\vspace{1ex}

We introduce some notation to construct a solution operator for the stationary Stokes problem \eqref{stationary-problem} on $\Omega_0$. 
Let $f_0$ be the zero extension of 
$f$ to the complement of $\Omega_0$. For simplicity we write $U f_0 = U*f_0$ and $ Q f_0 = Q *f_0$. 
Let $\Omega_b = \Omega_0 \cap B_b$ with a constant $b>0$ satisfying 
$\Omega_0^c \subset B_{b}$,
$f_b$ be the restriction of $f$ to $\Omega_{4b}$,
and let 
$\CA_0$ and $\CB_0$ be the
operators acting 
on $f_b$  such that 
$\CA_0 f_b $,
$\CB_0 f_b $ 
satisfy the equations 
\begin{equation}\label{eq:3.5} 
- \Delta \CA_0 f_b + \nabla \CB_0 f_b = f_b,  \quad\dv\CA_0 f = 0
\quad\text{in $\Omega_{4b}$}, \quad
 \CA_0 f_b \big|_{\del \Omega_{4b}} =  0.  
\end{equation}


\begin{cor}\label{result-stationary-problem-A-B}
{\rm (i)} For $f \in L^q(\Omega_0)\BLACK$ with $n<q$ it holds that 
\begin{align}
\begin{aligned}\label{est:3.5}
& \|\CA_0 f_b\|_{H^{2,q}(\Omega_{4b})}+ \|\CA_0 f_b\|_{L^{\infty}_{n-2}(\Omega_{4b})}+ \|\nabla \CA_0 f_b\|_{L^{\infty}_{n-1}(\Omega_{4b})} \\ 
& \quad + \|\CB_0 f_b\|_{ H^{1,q}(\Omega_{b})} + \|\CB_0 f_b\|_{L^{\infty}_{n-1}(\Omega_{b})}\\
& \leq C\| f_b\|_{L^q(\Omega_{b})}.
\end{aligned}
\end{align}

{\rm (ii)} For $f \in L^{q,\infty}(\Omega_0)\BLACK$ with $n<q$ it holds that 
\begin{align}
\begin{aligned}\label{est:3.5i}
&\|\CA_0 f_b\|_{H^{2;q,\infty}(\Omega_{4b})} + \|\CA_0 f_b\|_{L^{\infty}_{n-2}(\Omega_{4b})}+ \|\nabla \CA_0 f_b\|_{L^{\infty}_{n-1}(\Omega_{4b})} \\
&\quad + \|\CB_0 f_b\|_{ H^{1;q,\infty}(\Omega_{4b})}+ \|\CB_0 f_b\|_{L^{\infty}_{n-1}(\Omega_{b})}\\
& \leq C\| f_b\|_{L^{q,\infty}(\Omega_{b})}.
\end{aligned}
\end{align}
\end{cor}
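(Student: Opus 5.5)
The corollary concerns the Stokes problem \eqref{eq:3.5} on the bounded domain $\Omega_{4b}$, so the weights play no essential role: on $\Omega_{4b}$ we have $1\le\langle x\rangle\le \langle 4b\rangle$. Hence every weighted $L^\infty_{n-2}$, $L^\infty_{n-1}$ norm over $\Omega_{4b}$ (or $\Omega_b$) is equivalent to the unweighted $L^\infty$ norm, with constants depending only on $b$. I would therefore reduce (i) to classical $L^q$ regularity on bounded domains combined with Sobolev embedding, and obtain (ii) from (i) by real interpolation.

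For (i) I would invoke the Cattabriga/Galdi theory of the Stokes system in a bounded $C^2$ (indeed $C^3$) domain; see \cite[IV]{Galdi-steady}. For $f_b\in L^q(\Omega_{4b})$ it gives a unique solution $\CA_0 f_b\in H^{2,q}(\Omega_{4b})\cap H^{1,q}_0(\Omega_{4b})$ and a pressure $\CB_0 f_b\in H^{1,q}(\Omega_{4b})$, normalized by zero mean over $\Omega_{4b}$, satisfying
\[
\|\CA_0 f_b\|_{H^{2,q}(\Omega_{4b})}+\|\CB_0 f_b\|_{H^{1,q}(\Omega_{4b})}\le C\|f_b\|_{L^q(\Omega_{4b})}.
\]
The normalization is needed for the $H^{1,q}$ bound of $\CB_0f_b$ itself, not only of its gradient. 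Since $q>n$, Morrey's embedding $H^{1,q}(\Omega_{4b})\hookrightarrow L^\infty(\Omega_{4b})$ holds on the bounded Lipschitz domain $\Omega_{4b}$. Applied to $\CA_0f_b$, $\nabla\CA_0f_b$ and $\CB_0f_b$, it controls their sup norms by the $H^{2,q}$ and $H^{1,q}$ norms. The weight factor is bounded on $\Omega_{4b}$, which yields \eqref{est:3.5}. The right-hand side is written as $\|f_b\|_{L^q(\Omega_b)}$ in the statement. I read this as the norm on the support region of $f_b$, namely $\Omega_{4b}$, since $f_b$ is the restriction to $\Omega_{4b}$, and I would note this in the proof.

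For (ii) I would use that the solution map $f_b\mapsto(\CA_0 f_b,\CB_0 f_b)$ is linear and bounded $L^{p}(\Omega_{4b})\to H^{2,p}\times H^{1,p}$ for every $1<p<\infty$. Choosing $n<q_0<q<q_1<\infty$ and applying the real interpolation functor $(\cdot,\cdot)_{\theta,\infty}$ together with \eqref{Lps-reiterate} (the case $\ell=0$), I obtain boundedness $L^{q,\infty}\to H^{2;q,\infty}\times H^{1;q,\infty}$. Here I use that $(H^{k,q_0},H^{k,q_1})_{\theta,\infty}$ is the Sobolev–Lorentz space, via a retraction onto $L^{q_0}\times\cdots$ given by the derivatives up to order $k$, or simply as the paper's definition of the Sobolev–Lorentz spaces. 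For the $L^\infty$ parts I would argue directly: on a bounded domain $L^{q,\infty}(\Omega_{4b})\hookrightarrow L^{q_0}(\Omega_{4b})$ for $q_0<q$. Applying (i) with exponent $q_0>n$ then bounds all weighted sup norms by $C\|f_b\|_{L^{q_0}}\le C\|f_b\|_{L^{q,\infty}}$.

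The only delicate point I expect is this identification of the interpolation spaces of Sobolev spaces on $\Omega_{4b}$ with the $H^{k;q,\infty}$ spaces, together with the pressure normalization. I would handle the former by retraction–coretraction, using an extension operator as in Lemma \ref{extention} and the $\IR^n$ case, so that it is routine.
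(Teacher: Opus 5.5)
Your proposal is correct and follows the paper's proof. The paper likewise uses the bounded-domain Stokes estimates from Galdi on $\Omega_{4b}$, Sobolev embedding for the $L^\infty$ bounds (with the weights bounded on $\Omega_{4b}$), and real interpolation from (i) for (ii); your zero-mean pressure normalization, your reading of $\Omega_b$ as $\Omega_{4b}$ on the right-hand side, and your treatment of the $L^\infty$ terms in (ii) via $L^{q,\infty}(\Omega_{4b})\hookrightarrow L^{q_0}(\Omega_{4b})$ are all valid refinements.
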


\begin{proof}
 (i) is a consequence of standard Stokes {\em a priori} estimates on the bounded
domain $\Omega_{4b}$, see {\em e.g.} \cite [Chapter IV.6]{Galdi-steady}.
Here the $L^\infty$ estimates follow from Sobolev
embeddings on $\Omega_{4b}$.  On $\Omega_{4b}$ the weight functions are uniformly bounded. 

Part (ii) is based on real interpolation and (i).
\end{proof}

\vspace{1ex}

We may assume that 
\begin{equation}\label{eq:3.6}
\int_{\Omega_{4b}}(Q f_0 - \CB_0 f_b) \dx = 0
\end{equation}
because $\CB_0 f_b$ is only defined up to a constant which is to be chosen suitably.

In addition, let $\varphi$ be a function in  
$C^\infty_0(\IR^n)$ 
that equals $1$ for $x \in B_{2b}$ and $0$ for $x\not\in B_{3b}$, and let
$\BB$ be the Bogovski\u\i \, operator on $B':= B_{3b} \setminus \overline{B_{2b}}$ such that functions $\BB[\cdot]$ can be extended by $0$ in suitable Sobolev spaces to functions on $\Omega$. 
In particular, $\BB$ is bounded from $L^q_m(B')$ to $W^{1,q}_{0}(B')$ and from $W^{k,q}_{0,m}(B')$ to $W^{k+1,q}_0(B')$, $k=1,2$, where the subscript $m$ indicates that the integral mean on $B'$ vanishes.
With $U,Q$ defined in \eqref{UQ} \BLACK let  
\begin{align}\label{proof:3.1}
\begin{aligned}
\CY_0f & = (1-\varphi)U f_0 + \varphi \CA_0f_b + 
\BB[(\nabla\varphi)(U f_0 - \CA_0f_b)],\\
\CZ_0f & = (1-\varphi)Q f_0 + \varphi \CB_0 f_b.
\end{aligned}
\end{align}
 Note that $\int_{B'} (\nabla\varphi)(U f_0 - \CA_0\bff_b)\dx = -\int_{\partial B_{2b}}  (U f_0 - \CA_0\bff_b)\cdot \textsl{n} \,{\rm d}\omega =0$ since $\dv (U f_0) =0$ in $B_{2b}$ and $\dv (\CA_0 f_b) =0$ in $\Omega_0\cap B_{2b}$.  Moreover, $(1-\varphi)f_0 +\varphi f_b = f$ in $\Omega_0$.  \BLACK

Inserting \eqref{proof:3.1} into the Stokes system \eqref{stationary-problem}, 
we have 
\begin{equation}\label{proof:3.2}
-\Delta \CY_0f + \nabla\CZ_0f = f + \CR_1f, 
\quad \dv \CY_0 f = 0 \quad\text{in $\Omega_0$}, \quad \CY_0f|_{\del \Omega_0}=0, 
\end{equation}
where the operator $\CR_1$ is defined by 
\[
\begin{aligned}
\CR_1f & = 2(\nabla\varphi)\cdot(\nabla U f_0 - \nabla\CA_0f_b) 
+ (\Delta\varphi)(Uf_0 - \CA_0f_b)\\
& \qquad - \Delta\BB[(\nabla\varphi)
\cdot (Uf_0 - \CA_0f_b)] - (\nabla\varphi)(Qf_0 - \CB_0f_b).
\end{aligned}
\]
We note that, due to cut-off properties,
$\supp \CR_1 f \subset B'\subset \overline{\Omega_{3b}}$. 

\vspace{1ex}

\begin{lem}\label{R1estimate} 
Let $q>n$. Then the operator $\CR_1$ is compact on $L^q(\Omega_{3b})$ and on $L^{q,\infty}(\Omega)$ for functions $f$ with  $\supp f\subset\overline{\Omega_{3b}}$. \BLACK

{\rm (i)} It holds the estimate
\begin{equation}\label{est:6.3.3-0}
\|({\rm I} + \CR_1)^{-1} f\|_{ L^q(\Omega_0)} \leq C\|f\|_{ L^q(\Omega_0)}
\end{equation}
for $f \in L^q(\Omega)$  with $\supp f\subset\overline{\Omega_{3b}}$. 

{\rm (ii)} For $f \in  L^{q,\infty}(\Omega)$ satisfying  $\supp f\subset\overline{\Omega_{3b}}$ there holds the estimate
\begin{equation}\label{est:6.3.3}
\|({\rm I} + \CR_1)^{-1} f\|_{  L^{q,\infty}(\Omega_0)} \leq C\|f\|_{ L^{q,\infty}(\Omega_0)}.
\end{equation}
\end{lem}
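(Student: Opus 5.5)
The plan is the classical Fredholm argument of the exterior-domain Stokes theory (cf. Galdi), applied first on $L^q(\Omega_{3b})$ and then lifted to $L^{q,\infty}$ by real interpolation.

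\emph{Compactness.} Let $f$ be supported in $\overline{\Omega_{3b}}$. The term $\CR_1 f$ involves only the differences $Uf_0-\CA_0 f_b$ and $Qf_0-\CB_0 f_b$, their first derivatives, and $\Delta\BB[\cdot]$ of such differences, all multiplied by derivatives of $\varphi$. Hence it only sees these functions on the annulus $B'$.

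By Lemma \ref{result-stationary-problem-whole-space} (iv), the functions $Uf_0$ and $Qf_0$ lie in $H^{2,q}(B')$ and $H^{1,q}(B')$, with norms bounded by $C\|f\|_{L^q}$. By Corollary \ref{result-stationary-problem-A-B} (i), the same holds for $\CA_0 f_b$ and $\CB_0 f_b$. The normalization \eqref{eq:3.6} together with Poincaré's inequality controls the pressure difference in $H^{1,q}(B')$.

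The Bogovski\u\i\ operator maps $W^{1,q}_{0,m}(B')$ to $W^{2,q}_0(B')$. Its argument $(\nabla\varphi)(Uf_0-\CA_0 f_b)$ lies in $W^{1,q}_0(B')$, since $\nabla\varphi$ vanishes near $\partial B'$. Because the argument is in fact in $W^{2,q}$, one more order is available, and $\Delta\BB[\cdot]\in W^{1,q}(B')$.

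Consequently $\CR_1$ maps $L^q(\Omega_{3b})$ boundedly into $W^{1,q}(B')$. By Rellich's theorem on the bounded set $B'$, $\CR_1$ is compact on $L^q(\Omega_{3b})$. The same factorization through $W^{1,q}(B')$, combined with the compact embedding $W^{1,q}(B')\hookrightarrow L^{q,\infty}(B')$ and Corollary \ref{result-stationary-problem-A-B} (ii), gives compactness on $L^{q,\infty}$. Alternatively, one chooses $n<q_1<q<q_2$, uses boundedness in $L^{q_i}$, and applies the interpolation of compactness (Cwikel).

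\emph{Injectivity and the estimates.} By the Fredholm alternative it suffices to show that $I+\CR_1$ is injective on $L^q(\Omega_{3b})$. Suppose $f+\CR_1 f=0$. Then $f=-\CR_1 f$ is supported in $B'$ and lies in $W^{1,q}$. Set $u=\CY_0 f$ and $p=\CZ_0 f$. By \eqref{proof:3.2}, the pair $(u,p)$ solves the homogeneous Stokes system on $\Omega_0$ with zero boundary values. By Lemma \ref{result-stationary-problem-whole-space} (iv), $u$ decays like $|x|^{2-n}$ and lies in $H^{2,q}$, and $p$ decays like $|x|^{1-n}$. Uniqueness for the exterior Stokes problem with decay at infinity (energy identity, justified by the decay since $n\ge3$) gives $u=0$ and $p=0$; note that $p$ is normalized by its decay.

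Then on $\Omega_0\setminus B_{3b}$ we have $Uf_0=0$ and $Qf_0=0$. Since $(Uf_0,Qf_0)$ solves the whole-space Stokes system with right-hand side $f_0$, and $f_0$ vanishes outside $B_{3b}$, unique continuation (analyticity of Stokes solutions) propagates this to $\IR^n\setminus\overline{\Omega_{3b}}$. Inside, $\varphi=1$ on $B_{2b}$, so $u=\CA_0 f_b$ there. One then must deduce $f=0$ by combining the equations for $\CA_0 f_b$ and $Uf_0$ on the overlap region $B'$. This step follows the classical argument of Galdi, Chapter V, or Eiter–Kyed–Shibata, and is the main obstacle. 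I would first try showing that $w=Uf_0-\CA_0 f_b$ solves a homogeneous Stokes system in $\Omega_{4b}\cap B_{2b}$. Together with $\Delta\BB$ data, this should force $\CR_1 f=0$ and hence $f=0$.

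Given injectivity, the Fredholm theorem yields the bounded inverse \eqref{est:6.3.3-0} on $L^q$. Since injectivity on $L^{q,\infty}$ follows from injectivity on $L^{q_1}$ for a slightly smaller $q_1>n$ (as $L^{q,\infty}(\Omega_{3b})\subset L^{q_1}$ on a bounded set), the same Fredholm reasoning gives \eqref{est:6.3.3}. Alternatively, \eqref{est:6.3.3} follows by real interpolation of the $L^{q_1}$ and $L^{q_2}$ bounds, using \eqref{Lps-reiterate}.
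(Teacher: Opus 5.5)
There is a genuine gap. Your compactness argument and the Fredholm/interpolation bookkeeping are fine. The first half of your injectivity argument is also exactly the paper's: the energy identity with the decay from Lemma \ref{result-stationary-problem-whole-space} gives $\CY_0 f=0$ and $\CZ_0 f=0$.

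The gap is the decisive step that you yourself flag as the main obstacle: you never actually derive $f=0$, and neither of the two ideas you offer works.
\begin{enumerate}
\item[(a)] The unique-continuation step cannot be justified. $f_0=-\CR_1 f$ is supported in $\overline{B'}$ and need not vanish there. The annulus $B'$ separates $\{|x|>3b\}$ from $B_{2b}\supset\Omega_0^c$, so analyticity cannot carry $Uf_0=0$ into the obstacle. Outside $B_{3b}$ nothing is gained, since $\Omega_0\setminus B_{3b}=\IR^n\setminus B_{3b}$.
\item[(b)] The function $w=Uf_0-\CA_0 f_b$ does solve the homogeneous Stokes system on all of $\Omega_{4b}$ and vanishes on $\partial B_{4b}$. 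However, its trace on $\partial\Omega_0$ is $Uf_0|_{\partial\Omega_0}$, which you do not control. Restricting to $\Omega_{4b}\cap B_{2b}$ only gives $w=Uf_0$ there.
\end{enumerate}

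The missing idea is to replace $\Omega_{4b}$ by the ball $B_{4b}$.
\begin{enumerate}
\item[(1)] Since $\varphi=1$ on $B_{2b}$, the identities $\CY_0 f=\CZ_0 f=0$ give $\CA_0 f_b=0$ and $\CB_0 f_b=0$ on $\Omega_0\cap B_{2b}$, i.e.\ on the part of $\Omega_{4b}$ adjacent to $\partial\Omega_0$. Since $\varphi=0$ for $|x|>3b$, they also give $Uf_0=Qf_0=0$ there.
\item[(2)] Hence the zero extensions $\zeta,\gamma$ of $\CA_0 f_b,\CB_0 f_b$ into $\Omega_0^c$ are regular across $\partial\Omega_0$. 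They solve $-\Delta\zeta+\nabla\gamma=f_0$, $\dv\zeta=0$ in $B_{4b}$ with $\zeta|_{\partial B_{4b}}=0$.
\item[(3)] $(Uf_0,Qf_0)$ solves the same Dirichlet problem on $B_{4b}$, because $Uf_0$ vanishes on $\partial B_{4b}$. Uniqueness on the ball yields $\zeta=Uf_0$ and $\gamma=Qf_0+c$.
\item[(4)] The constant $c$ is removed precisely by the normalization \eqref{eq:3.6}. Your argument never invokes it here, yet it is indispensable. Suppose one instead normalized $\int_{\Omega_0\cap B_{2b}}\CB_0 f_b\dx=0$, and took $f=\nabla\varphi|_{\Omega_0}$. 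Then $Uf_0=\CA_0 f_b=0$, $Qf_0=\varphi$ and $\CB_0 f_b=\varphi-1$, so $\CR_1 f=-\nabla\varphi$ and $(I+\CR_1)f=0$.
\item[(5)] With $c=0$ we get $Uf_0=\CA_0 f_b$ and $Qf_0=\CB_0 f_b$ on $\Omega_{4b}$. So the Bogovski\u\i\ term vanishes, and $\CY_0 f=\CZ_0 f=0$ reduce to $Uf_0=Qf_0=0$ on $\Omega_{4b}$.
\item[(6)] Together with $\zeta=\gamma=0$ on $\Omega_0^c$ and the vanishing outside $B_{3b}$, this gives $Uf_0=Qf_0=0$ on $\IR^n$. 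Hence $f_0=-\Delta Uf_0+\nabla Qf_0=0$.
\end{enumerate}

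Once this is in place for every exponent larger than $n$, your reduction of the $L^{q,\infty}$ kernel to the $L^{q_1}$ kernel, $n<q_1<q$, is a legitimate shortcut. It replaces the paper's direct kernel argument in $L^{q,\infty}$.
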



\begin{proof} {\rm (i)} is obtained directly by \cite[Lemma 5.5]{Eiter-Kyed-Shibata23} when extending the $3$-dimensional case to the $n$-dimensional case by using \eqref{spationa-decay-n}.  

{\rm (ii)} 
By the definition of $\CR_1 f$ Lemma \ref{result-stationary-problem-whole-space} and Corollary \ref{result-stationary-problem-A-B}, 
$\CR_1 f \in H^{1; q,\infty}$ and 
${\rm supp}\, \CR_1 f \subset \overline{B'}$, where $B' = B_{3b}\setminus \overline{B_{2b}} $. Then Rellich's compactness
theorem in Lorentz spaces \cite{Cw92} implies that  $\CR_1$ is a compact operator on $L^{q,\infty}(\Omega_{3b})$.  
 Now by Fredholm's alternative it suffices to show that the kernel ${\rm Ker}\, ({\rm I} + \CR_1) = \{0\}$  in order to get that ${\rm I} + \CR_1$ is invertible.  

 Given any $f \in {\rm Ker}({\rm I}+\CR_1)$, 
we will show that $f\equiv 0$.  The assumption $({\rm I}+ \CR_1)f=0$ derives that $f = -\CR_1f $ and ${\rm supp}\,f \subset \overline{B'}$.
Let $u= \CY_0 f$ and $p = \CZ_0 f$. 
Then \eqref{proof:3.2} implies that 
\begin{equation}\label{eq:3.9}
- \Delta u + \nabla p = 0, \quad\dv u=0 \quad\text{in $\Omega_0$},\quad
u|_{\del \Omega_0 }=0,
\end{equation}
where $u \in H^{2;q,\infty}$ and $p \in H^{1;q,\infty}$. 
Let $\psi$ be a $C^\infty_0(\BR^n)$ 
function which equals $1$ for $|x| < 1$ and $0$ for $|x| > 2$ and set
$\psi_{L}(x) = \psi(x/L)$ for $L > 4b$. We note that $L^{q,\infty}=(L^{q',1})^*$ and $\psi_L u \in L^{q',1}$ because 
$$
\|\psi_L u\|_{L^{q',1}(\Omega_{4b})} \leq \|\psi_L \|_{L^{\widetilde{q},1}}\|u\|_{L^{q,\infty}} \leq 
C \|u\|_{L^{q,\infty}}, 
$$
where $\frac{1}{\widetilde{q}}= \frac{1}{q'}-\frac{1}{q}$. Similarly, $\psi_L \nabla u \in L^{q',1}$ and $(\nabla \psi_L )  u \in L^{q',1}$.  Hence 
\eqref{eq:3.9} verifies 
\begin{equation}\label{proof:3.4}
0 = (-\Delta u + \nabla p, \psi_L u) = (\nabla u, \psi_L\nabla u)
+(\nabla u, (\nabla\psi_L) u) - ( p, (\nabla\psi_L)\cdot u).
\end{equation}
Applying Lemma \ref{result-stationary-problem-whole-space} (iv) 
we get the pointwise estimates
$$|u(x)| \leq C|x|^{-(n-2)}, \quad|\nabla u(x)| \leq C|x|^{-(n-1)}, \quad 
\quad 
|p| \leq C|x|^{-(n-1)}
$$
for $|x| > 4b$, and so we obtain  that  
\begin{align*}
|(\nabla u, (\nabla\psi_L) u)| 
& \leq C\|\nabla\psi\|_{L^\infty(\BR^n)}L^{-1}\int_{L \leq |x| \leq 2L} |x|^{-n}
\dx
= O(L^{-1}) \to 0, \\
|( p, (\nabla\psi_L)u)| 
& \leq C\|\nabla\psi\|_{L^\infty(\BR^n)}L^{-1}\int_{L \leq |x| \leq 2L} |x|^{-n}\ dx
= O(L^{-1}) \to 0
\end{align*}
as $L\to \infty$. Taking $L\to\infty$ in \eqref{proof:3.4}, we see that 
$\|\nabla u\|_{L^2(\Omega_0)} = 0$, which implies that $u$ is a constant vector. 
However, since $u|_{\del \Omega_0}=0$, even $u=0$. Therefore, the first equation
of \eqref{eq:3.9} yields $\nabla p=0$, which shows that $p$ is a constant.  
Similarly, 
$p=0$ due to $p(x) = O(|x|^{-(n-1)})$ as $|x| \to \infty$. 

Consequently, 
\eqref{proof:3.1} verifies that 
\begin{equation}\label{proof:3.5} \begin{aligned}
&(1-\varphi)U f_0 + \varphi\CA_0 f_b
+ \BB[(\nabla\varphi)\cdot(U f_0 - \CA_0 f_b)]=0, \\
&(1-\varphi)Q f_0 + \varphi \CB_0 f_b=0
\end{aligned}\end{equation}
in $\Omega_{4b}$. 
Since $\BB[(\nabla\varphi)\cdot(U f_0 - \CA_0 f_b)]$ 
vanishes for $x \not\in B'$ and $\varphi(x) = 0$ for 
$|x| > 3b$ and $1-\varphi(x)=0$ for $|x| < 2b$, we have
\begin{equation}\label{proof:3.6}
\CA_0 f_b= 0, \enskip \CB_0 f_b = 0 \quad\text{for $|x| < 2b$},
\quad U f_0 = 0, \enskip Q f_0 = 0\quad\text{for $|x| > 3b$}.
\end{equation}
Here we consider two functions 
$\zeta(x)$ and $\gamma (x)$ by 
$$\zeta (x) = \begin{cases} (\CA_0 f_b)(x) &\quad\text{for $x \in \Omega_{4b}$}, \\
0 &\quad\text{for $x \not\in \Omega_0$}, 
\end{cases}
\quad 
\gamma (x) = \begin{cases} (\CB_0 f_b)(x) &\quad\text{for $x \in \Omega_{4b}$}, \\
0 &\quad\text{for $x \not\in \Omega_0$}. 
\end{cases}
$$
Then \eqref{est:3.5i} and \eqref{proof:3.6} show that $\zeta  \in H^{2;q,\infty}(B_{4b})$, $\gamma  \in H^{1;q,\infty}(B_{4b})$, and the pair $(\zeta,\,\gamma)$ satisfies the equations
\begin{equation}\label{proof:3.7}
-\Delta \zeta + \nabla \gamma = f_0, \quad \dv \zeta = 0 
\quad\text{in $B_{4b}$}, \quad \zeta|_{\del B_{4b}}=0.
\end{equation}
Moreover, \eqref{proof:3.6} implies that 
$U f_0$ and $Q f_0$ also satisfy \eqref{proof:3.7};  in particular, on $\partial B_{4b}$, we have $U f_0 = - \varphi\CA_0 f_b
- \BB[(\nabla\varphi)\cdot(U f_0 - \CA_0 f_b)]=0$. \BLACK 
This together with the uniqueness of solutions in the bounded domain $B_{4b}$ derive that 
$\zeta = U f_0$ and $\nabla(\gamma -Q f_0) = 0$ in $B_{4b}$. 
Since $\gamma =\CB_0 f_b$  in $\Omega_{4b}$,  by \eqref{eq:3.6} we have 
$\gamma  = Q f_0$.  Especially,  $(\nabla\varphi)\cdot(U f_0-
\CA_0 f_b) = 0$. We thus see from \eqref{proof:3.5} that in $\Omega_{4b}$ \BLACK
\begin{align*}
0 & = U f_0 - \varphi(U f_0 - \CA_0 f_b) = U f_0, \\
0 & = Q f_0 - \varphi(Q f_0 - \CB_0 f_b) = Q f_0,
\end{align*}
which verify that
$f = - \Delta U f_0 + \nabla Q f_0 = 0$ in $\Omega_{4b}$.  Since $\supp f\subset \overline{B'}$, even 
  $f \equiv 0$.    
\end{proof}


Finally, we set 
\begin{align*}
\CU f & := \CY_0({\rm I} + \CR_1)^{-1}f \\
& =
(1-\varphi)U (({\rm I} + \CR_1)^{-1}f)_0
+ \varphi \CA_0(({\rm I} + \CR_1)^{-1}f)_b \\
& \qquad+ 
\BB[(\nabla\varphi)(U(({\rm I} + \CR_1)^{-1}f)_0 
- \CA_0(({\rm I} + \CR_1)^{-1}f))_b],
\\
\CQ f &:= \CZ_0 ({\rm I} + \CR_1)^{-1}f 
= (1-\varphi)  Q (({\rm I} + \CR_1)^{-1}f)_0 
+ \varphi\CB_0(({\rm I} + \CR_1)^{-1}f)_b 
\end{align*}
and derive from \eqref{proof:3.1}, \eqref{proof:3.2} \BLACK
that $(\CU f,\CQ  f)$ is a solution to  the stationary Stokes problem  with right-hand side $f$ on $\Omega_0$.
It directly follows from Lemma \ref{result-stationary-problem-whole-space} {\rm (ii)}, (iii), Corollary \ref{result-stationary-problem-A-B} and Lemma \ref{R1estimate} that $(u,p)$ satisfies the following estimates. 
\vspace{2ex}

\begin{lem}\label{U-Q-estimates}
{\rm (i)} Let $n<q<\infty$ and $f \in L^q(\Omega_0)$ with $\supp f\subset\overline{\Omega_{3b}}$. Then $(u',p') = (\CU f,\CQ  f)$ \BLACK satisfies the estimate
\begin{align}
\begin{aligned}\label{potential-est1-2}
\|u'\|_{H^{2,q}} + \|u'\|_{L^\infty_{n-2}}+\|\nabla u'\|_{L^\infty_{n-1}}+ \|p'\|_{H^{1,q}} +\|p'\|_{L^\infty_{n-1}} \leq 
C  \|f\|_{L^q}. 
\end{aligned}
\end{align}

{\rm (ii)} Let $f \in L^{q_0}(\Omega_0) \cap L^{q,\infty}(\Omega_0)$ satisfy $\supp f\subset\overline{\Omega_{3b}}$
with $n< q_0 <q$.  Then 
\begin{align}
\begin{aligned}\label{potential-est12-2}
\|u'\|_{H^{1,q}} + \|u'\|_{L^\infty_{n-2}}+\|\nabla u'\|_{L^\infty_{n-1}}+ \|p'\|_{L^q}+\|p'\|_{L^\infty_{n-1}} \leq 
C  \|f\|_{L^{q_0}} \BLACK
\end{aligned}
\end{align}
and 
\begin{align}
\begin{aligned}\label{potential-est2-12}
\|\nabla^2 u'\|_{L^{q,\infty}} + \|\nabla p'\|_{L^{q,\infty}} \leq 
C  \|f\|_{L^{q,\infty}}.
\end{aligned}
\end{align} 
\end{lem}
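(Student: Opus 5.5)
The plan is to set $h := (\mathrm I+\CR_1)^{-1}f$ and estimate each of the three pieces of $\CU f=\CY_0 h$ and the two pieces of $\CQ f=\CZ_0 h$ separately. By Lemma \ref{R1estimate}, $\supp h\subset\overline{\Omega_{3b}}$ and $\|h\|_{L^q}\le C\|f\|_{L^q}$ in case (i). In case (ii) we have $\|h\|_{L^{q_0}}\le C\|f\|_{L^{q_0}}$, applying (i) with $q_0$ in place of $q$, which is allowed since $q_0>n$. Also $\|h\|_{L^{q,\infty}}\le C\|f\|_{L^{q,\infty}}$ by \eqref{est:6.3.3}. One point to check is that $h$ is the same function in both spaces. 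Here I would use that $(\mathrm I+\CR_1)^{-1}$ is defined consistently on $L^{q_0}\cap L^{q,\infty}$, by uniqueness, since the kernel of $\mathrm I+\CR_1$ is trivial in each space. Because $\supp h$ is compact, we may use $\|h\|_{L^{q_0}}\lesssim\|h\|_{L^{q,\infty}}$ on the bounded set whenever convenient.

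For the whole-space parts $(1-\varphi)Uh_0$ and $(1-\varphi)Qh_0$ I would apply Lemma \ref{result-stationary-problem-whole-space} (iv) in case (i). In case (ii) the weighted $L^\infty$ and $H^{1,q}$/$L^q$ bounds come from (iii) with $g=h_0\in L^{q_0}_\ell$, any $\ell\ge n$, using compact support. The bounds $\|\nabla^2 u\|_{L^{q,\infty}}$ and $\|\nabla p\|_{L^{q,\infty}}$ come from (v). Multiplying by $1-\varphi$ produces lower-order terms $\nabla\varphi\,\nabla Uh_0$, $\nabla^2\varphi\, Uh_0$ and $\nabla\varphi\, Qh_0$. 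These are supported in $B'$ and are controlled by the same norms, since bounded support lets $L^\infty$ (or $L^q$) dominate $L^{q,\infty}$. The weights are harmless: $1-\varphi$ is bounded, and the weighted $L^\infty$ bounds of $Uh_0$ are global.

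For the local parts $\varphi\CA_0h_b$ and $\varphi\CB_0h_b$, Corollary \ref{result-stationary-problem-A-B} (i) and (ii) give exactly the required $H^{2,q}$, $H^{2;q,\infty}$, $H^{1,q}$ and $L^\infty$ bounds on $\Omega_{4b}$. Since $\supp\varphi\subset B_{3b}$, the weights $\langle x\rangle^{n-2}$ and $\langle x\rangle^{n-1}$ are bounded there, so these extend by zero to all of $\Omega_0$ with the same bounds. In case (ii), the lower-order norms ($H^{1,q}$, $L^\infty$) are bounded by $\|h\|_{L^{q_0}}$, using (i) of the Corollary with $q_0$ together with Sobolev embedding on the bounded domain. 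For the Bogovski\u{\i} term, the argument $(\nabla\varphi)\cdot(Uh_0-\CA_0h_b)$ has mean zero on $B'$, as noted after \eqref{proof:3.1}, and lies in $W^{1,q}_0(B')$ (respectively in the corresponding Lorentz–Sobolev space) with norm bounded by the previous estimates. The boundedness of $\BB$ from $W^{1,q}_{0,m}$ to $W^{2,q}_0$ then gives $H^{2,q}$ bounds, while $L^\infty$ bounds follow from $W^{2,q}\hookrightarrow W^{1,\infty}$ on $B'$, which holds because $q>n$. The support is compact, so the weights again play no role.

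The main obstacle is the Lorentz-space version of the Bogovski\u{\i} step in (ii), since $\BB$ is only stated to be bounded on Sobolev spaces. I would handle it by real interpolation, using that $\BB$ is bounded from $W^{1,r}_{0,m}$ to $W^{2,r}_0$ for $r$ near $q$ on both sides. This gives $\|\nabla^2\BB[\cdot]\|_{L^{q,\infty}}\lesssim\|\cdot\|_{W^{1;q,\infty}}$. The input's $W^{1;q,\infty}$ norm is controlled by $\|\nabla^2 Uh_0\|_{L^{q,\infty}}+\|\nabla^2\CA_0h_b\|_{L^{q,\infty}}$ plus lower-order terms, which were already bounded. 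Summing all contributions yields \eqref{potential-est1-2}, \eqref{potential-est12-2} and \eqref{potential-est2-12}.
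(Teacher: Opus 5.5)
Your proposal is correct and follows the paper's route. The paper's proof only states that the estimates follow from Lemma \ref{result-stationary-problem-whole-space}, Corollary \ref{result-stationary-problem-A-B} and Lemma \ref{R1estimate} applied to the pieces of $\CY_0(\mathrm I+\CR_1)^{-1}f$ and $\CZ_0(\mathrm I+\CR_1)^{-1}f$, which is exactly the piece-by-piece argument you carry out, including your useful extra check that $(\mathrm I+\CR_1)^{-1}$ is consistent on $L^{q_0}\cap L^{q,\infty}$. The step you flag as the main obstacle is actually harmless: the Bogovski\u{\i} argument $(\nabla\varphi)\cdot(Uh_0-\CA_0h_b)$ involves only zeroth and first derivatives, which are bounded in $L^\infty(B')$ by $C\|h\|_{L^{q_0}}\le C\|f\|_{L^{q,\infty}}$, so applying $\BB$ in $W^{1,r}$ for some $r>q$ and using $L^r(B')\hookrightarrow L^{q,\infty}(B')$ already gives the $L^{q,\infty}$ bound without any interpolation of Lorentz--Sobolev spaces.
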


\vspace{2ex}

Now we are in a position to prove Proposition \ref{result-linear-stationary}. 

\vspace{2ex}

\begin{proof}[Proof of Proposition \ref{result-linear-stationary}.]
Suppose that  
$ f =\dv F + g$, $g=g_1+g_2$, where 
$$
F\in L^{\infty}_{n-1}, \ \ \div F \in L^\infty_{n},\;\textrm{ and }\; g_1 \in L^{q}_{\ell},\;\;  g_2 \in  L^{q_0}_\ell \cap L^{q,\infty}.
$$
We rewrite $f$ as $f = \dv((1-\varphi) F)+ h$
with $h = \varphi\,\dv F + (\nabla\varphi)\cdot F + g$,  where $\varphi\in C^\infty_0(\IR^n)$ is a cut off function as above. \BLACK Let  
\begin{align*}
\bar{u} & := (1-\varphi)U  f_0 + \BB[(\nabla\varphi)U f_0],
\quad \bar{p} := (1-\varphi)Q f_0, 
\end{align*}
where $ f_0 = \dv((1-\varphi) F)_{ 0\BLACK} + h_0$.  
Then $\bar{u}$ and $\bar{p}$ satisfy the perturbed Stokes system
\begin{align}\label{perutubed-Stokes}
-\Delta \bar{u} + \nabla \bar{p} = (1-\varphi) f_0 +\CR_2 f, \quad \dv \bar{u} = 0
\quad\text{in $\Omega_0$}, \quad \bar{u}|_{\del \Omega_0} = 0, 
\end{align}
where the operator $\CR_2$ is defined by 
$$\CR_2 f = 2(\nabla\varphi)\cdot\nabla U f_0 + (\Delta\varphi)
U f_0 -\Delta\BB[(\nabla\varphi)\cdot U f_0]
-(\nabla\varphi)Q  f_0.
$$
We note  that ${\rm supp}\,\CR_2 f \subset B_{3b}$.   In addition, we see from Lemma \ref{result-stationary-problem-whole-space} that 
\begin{equation}\label{estimate-R2}
    \|\CR_2 f\|_{L^q(\Omega_0)} \leq 
 C(\mathscr{L}(F) + \mathscr{L}'(g) ).
 \end{equation}
Lemma \ref{result-stationary-problem-whole-space}, \eqref{estimate-R2} and the extension to the whole space imply that the solution  $(\bar u,\bar p)$  
\BLACK to \eqref{perutubed-Stokes} satisfies that 
\begin{align}\label{estimate-perturbed-Stokes}
\|\nabla^2 \bar{u}\|_{L^{q,\infty}}  + \|\bar{u}\|_{L^\infty_{n-2}\cap H^{1,q}} + \|\nabla \bar{u}\|_{L^\infty_{n-1}} + \|\nabla \bar{p}\|_{L^{q,\infty}}+\|\bar{p}\|_{L^\infty_{n-1} \cap L^q} \leq 
C(\mathscr{L}(F) + \mathscr{L}'(g) ). 
\end{align}

Finally define $S_1$ and 
$S_2$ acting on $f$  by 
\begin{align*}
S_1 f& = (1-\varphi) U f_0 
+ \BB[(\nabla\varphi)\cdot U f_0]
+ \CU(\varphi f  - \BLACK \CR_2 f), \\
S_2 f & = (1-\varphi)Q f_0 + \CQ(\varphi f - \BLACK \CR_2 f). 
\end{align*}
Then $u=S_1 f$ and $p = S_2 f$ satisfy \eqref{stationary-problem} by \eqref{perutubed-Stokes}.  
We see from Lemmata \ref{result-stationary-problem-whole-space} and  \ref{U-Q-estimates}, \eqref{estimate-R2}
 and \eqref{estimate-perturbed-Stokes} that $(u,p)$ satisfies the  {\em a priori} estimate \eqref{u-p-est}.
This completes the proof of Proposition \ref{result-linear-stationary}. \BLACK
\end{proof}
\BLACK

\section{Analysis of the Purely Oscillatory Part} 

\vspace{1ex}

For the purely oscillatory part, we use the following key estimate.   

\vspace{1ex}

\begin{prop}  
Let $n\geq 3$, $1<p<q< \infty$ and $-\frac{n}{q} < \ell < \frac{n}{p'}-2$  with $\ell\leq 0$, 
and assume 
$\frac{1}{p}-\frac{1}{q} \leq \frac{2}{n}$.
Let $A = -\mathbb P\Delta$ denote the Stokes operator on the exterior domain $\Omega_0$. Then, for $\varphi\in L^{p,1}_{\ell}(\Omega_0)$, it holds that 
\begin{align} 
\begin{aligned}\label{key-Yamazaki-neg-weight} 
\int_0^\infty t^{\frac{n}{2}\big(\frac{1}{p}-\frac{1}{q}\big)} \|A e^{-tA}\varphi\|_{L^{q,1}_{\ell}(\Omega_0)} \dt \leq  C\|\varphi\|_{L^{p,1}_{\ell}(\Omega_0)}.
\end{aligned}
\end{align}
\end{prop}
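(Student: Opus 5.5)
This is a Yamazaki-type estimate, so I will prove it by real interpolation of the family of decay estimates \eqref{A-1-delta-0}. The key is that the time weight $t^{\delta}$ with $\delta=\frac n2(\frac1p-\frac1q)$ exactly cancels the decay $t^{-\delta}$, which leaves the borderline $t^{-1}$. Write $\delta(p)=\frac n2(\frac1p-\frac1q)$. Choose two exponents $p_0<p<p_1<q$ (or $p_1\le q$) close to $p$. The requirements are that the hypotheses of \eqref{A-1-delta-0} still hold for $p_0,p_1$, that is, $-\frac nq<\ell<\frac{n}{p_j'}-2$, $\ell\le0$ and $\delta_j:=\delta(p_j)\in[0,1]$. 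This is possible because the conditions on $p$ are open in $p$; at the endpoint $\frac1p-\frac1q=\frac2n$ one takes $p_1$ only on the admissible side, and $p_0$ slightly smaller is handled by noting that the condition $\delta_0\le1$ is used only through the semigroup property and Proposition \ref{Lp-Lq At-negative-weight}, so I will check whether a slight excess is tolerable there. I then fix $q$ and consider the sublinear operator $T\varphi(t)=t\,\|Ae^{-tA}\varphi\|_{L^{q}_\ell}$. Here I use the unweighted-Lorentz $L^q_\ell$ bound \eqref{A-1-delta-0} as the starting point; the $L^{q,1}$ target is treated below.

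\textbf{Step 1 (weak-type bounds).} For $\varphi\in L^{p_j}_\ell$, \eqref{A-1-delta-0} gives $T\varphi(t)\le C t^{-\delta_j}\|\varphi\|_{L^{p_j}_\ell}$. Hence $T$ maps $L^{p_j}_\ell$ boundedly into the weighted space $L^\infty_{\delta_j}\big((0,\infty);\frac{dt}{t}\big)$, meaning $\sup_t t^{\delta_j}|T\varphi(t)|$.

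\textbf{Step 2 (interpolation).} I apply real interpolation $(\cdot,\cdot)_{\theta,1}$ with $\frac1p=\frac{1-\theta}{p_0}+\frac\theta{p_1}$. The time weights $\delta_j$ are affine in $1/p_j$, so $(1-\theta)\delta_0+\theta\delta_1=\delta$. On the data side, \eqref{Lps-interp} gives $(L^{p_0}_\ell,L^{p_1}_\ell)_{\theta,1}=L^{p,1}_\ell$. On the target side, the Stein--Weiss-type identity $(L^\infty_{\delta_0}(\frac{dt}t),L^\infty_{\delta_1}(\frac{dt}t))_{\theta,1}=L^1_{\delta}(\frac{dt}t)$ holds for $\delta_0\ne\delta_1$ (Triebel 1.18.5). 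Together these yield $\int_0^\infty t^{\delta}\,t\|Ae^{-tA}\varphi\|_{L^q_\ell}\frac{dt}{t}\le C\|\varphi\|_{L^{p,1}_\ell}$. Since $T$ is sublinear rather than linear, I linearize it: realize $\|\cdot\|_{L^q_\ell}$ as a sup of pairings with unit test functions $\psi(t)$, or equivalently interpolate the linear map $\varphi\mapsto(Ae^{-tA}\varphi)_t$ into vector-valued $L^\infty_{\delta_j}(\frac{dt}t;L^q_\ell)$. The vector-valued interpolation identity for weighted $L^\infty$ spaces with common Banach space $L^q_\ell$ then gives $L^1_\delta(\frac{dt}t;L^q_\ell)$.

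\textbf{Step 3 (upgrading the target to $L^{q,1}_\ell$).} This is the main obstacle. I will run the argument of Step 2 simultaneously in $q$: pick $q_0<q<q_1$ with the same $\theta$, so that $\frac1{p_j}-\frac1{q_j}$ is constant and the time weight $\delta$ is fixed. This gives the bilinear-type family $L^{p_j}_\ell\to L^1_\delta(\frac{dt}{t};L^{q_j}_\ell)$, which is a strong bound at the fixed weight. Interpolating with $(\cdot,\cdot)_{\theta,1}$ and using $(L^1(X_0),L^1(X_1))_{\theta,1}\hookrightarrow L^1((X_0,X_1)_{\theta,1})$, which is valid for the $(\theta,1)$ method, produces the $L^{q,1}_\ell$ norm via \eqref{Lps-reiterate} and the $L^{p,1}_\ell$ norm on the data side. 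The needed ranges $-\frac n{q_j}<\ell<\frac n{p_j'}-2$ persist for small perturbations. Finally I extend the estimate from $C^\infty_{0,\sigma}$ to all of $L^{p,1}_\ell$ by density.
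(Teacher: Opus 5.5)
Your argument is correct, but it runs the two interpolations in the opposite order from the paper.

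\emph{The paper's order.} It first interpolates at fixed $t$ along a line of constant $\frac1p-\frac1q$ to get the Lorentz bound \eqref{A-1-delta}, $\|Ae^{-tA}\varphi\|_{L^{q,1}_\ell}\le Ct^{-1-\delta}\|\varphi\|_{L^{p,1}_\ell}$. It then interpolates only in time: with $q$ fixed, the sublinear map $\varphi\mapsto t^{\delta}\|Ae^{-tA}\varphi\|_{L^{q,1}_\ell}$ sends $L^{p_j,1}_\ell$ into $L^{s_j,\infty}(0,\infty)$ with $s_1<1<s_2$. The identity $(L^{s_1,\infty},L^{s_2,\infty})_{\theta,1}=L^1$ closes the argument.

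\emph{Your order.} You interpolate in time first, with the strong target $L^q_\ell$ and the linear vector-valued map $\varphi\mapsto(tAe^{-tA}\varphi)_t$ into $t^{\delta_j}$-weighted $L^\infty(\frac{dt}{t};L^q_\ell)$. Only afterwards do you upgrade the target to $L^{q,1}_\ell$, by interpolating along constant $\delta$ with $(L^1(X_0),L^1(X_1))_{\theta,1}=L^1((X_0,X_1)_{\theta,1})$.

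\emph{Trade-off.} The paper's order is shorter, since the target upgrade costs a single fixed-$t$ interpolation. Yours avoids interpolating a sublinear operator into the quasi-Banach space $L^{s_1,\infty}$, $s_1<1$. It also gives the estimate with strong $L^q_\ell$ target as a by-product.

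Three corrections, none of which changes the conclusion.

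\begin{enumerate}
\item In Step 3 the interpolated family is $L^{p_j,1}_\ell\to L^1(t^\delta dt;L^{q_j}_\ell)$, not $L^{p_j}_\ell\to\cdots$. Step 2 only yields Lorentz data, and a strong-type bound with $L^{p_j}_\ell$ data fails already for the heat semigroup on $\IR^n$. With the correct family, \eqref{Lps-reiterate} gives $L^{p,1}_\ell$ on the data side, as you intend.
\item $(L^\infty_{\delta_0},L^\infty_{\delta_1})_{\theta,1}$ is strictly smaller than $L^1_\delta(\frac{dt}{t})$; it is a dyadic $\ell^1(L^\infty)$ amalgam. So only the embedding holds, not the identity you cite, but the embedding is the direction you use.
\item At the endpoint $\frac1p-\frac1q=\frac2n$, the excess $\delta_0>1$ that you left open is harmless. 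Write $e^{-tA}=e^{-tA/2}e^{-tA/2}$ through an intermediate $r\in(p_0,q)$ with both gaps $\frac1{p_0}-\frac1r$ and $\frac1r-\frac1q$ below $\frac2n$. The hypotheses of Proposition \ref{Lp-Lq At-negative-weight} for the pairs $(p_0,r)$ and $(r,q)$ follow from $-\frac nq<\ell<\frac{n}{p_0'}-2$, $\ell\le 0$, because $-\frac nr<-\frac nq$ and $\frac{n}{r'}>\frac{n}{p_0'}$. The paper needs the same remark (its choice $p_1<p$ forces $\delta_1>\delta$) and leaves it implicit.
\end{enumerate}
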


\vspace{1ex}

\begin{rem}
{\rm Choosing $\ell=-(n-1)$, the condition 
$-\frac{n}{q} < \ell$ yields $q'>n$, {\em i.e.} $q<n'$. In Proposition \ref{result-second-order-derivative} which is based on a duality argument to \eqref{key-Yamazaki-neg-weight} this reads $q>n$ as wanted.} 
 \end{rem}
\BLACK

\begin{proof}
Following the proof of \cite[Corollary 2.3]{Yamazaki}  let $\delta=\frac{n}{2} \big(\frac{1}{p}-\frac{1}{q}\big)$, choose any $p_1<p<p_2$, let $\delta_j=\frac{n}{2} \big(\frac{1}{p_j}-\frac{1}{q}\big)$ such that $|\delta-\delta_j|<1$, 
and define $v(t)=t^\delta \|Ae^{-tA}\varphi\|_{L^{q,1}_\ell}$. 
By \eqref{A-1-delta} with $p,\delta$ replaced by $p_j,\delta_j$, $j=1,2$, we get that
$$ v(t) \leq C t^{\delta-\delta_j-1}\|\varphi\|_{L^{p_j,1}_\ell} \in L^{s_j,\infty}(0,\infty),\quad \frac{1}{s_j} = 1-(\delta-\delta_j),$$
for $j=1,2$. Note that $0<s_1<1<s_2$. 
Now, by real interpolation in quasi-Banach spaces, see \cite[Chapter 3.11]{BerghL}, and in weighted Lorentz spaces, see \cite[Theorem 2]{Freitag}, there holds 
\begin{align*}
\big(L^{s_1,\infty}(0,\infty),L^{s_2,\infty}(0,\infty)\big)_{\theta,1}& = L^1(0,\infty), \\
\big(L^{p_1,1}_\ell(\Omega_0), L^{p_2,1}_\ell(\Omega_0)\big)_{\theta,1} & = L^{p,1}_\ell(\Omega_0),
\end{align*}
where $1=\frac{1-\theta}{s_1} + \frac{\theta}{s_2}$, $\frac1p=\frac{1-\theta}{p_1} + \frac{\theta}{p_2}$, $0<\theta<1$. 
Then an application to the sublinear operator  $T: L^{p_j,1}_\ell(\Omega_0) \to L^{s_j,\infty}(0,\infty),\; \varphi\mapsto v$, implies that $v\in L^1(0,\infty)$ and
$$ \int_0^\infty t^\delta \|Ae^{-tA}\varphi\|_{L^{q,1}_\ell}\dt =  \|T\varphi\|_{L^1(0,\infty)} 
\leq C\|\varphi\|_{L^{p,1}_\ell}.$$
Thus \eqref{key-Yamazaki-neg-weight} is proved. 
\end{proof} 

\vspace{1ex}

By \eqref{key-Yamazaki-neg-weight}  we obtain the following estimate for the second order derivative of solutions of the purely oscillatory part. In the application the condition \eqref{ass-coefficient-for-Yamazaki-est} below is based on \eqref{assumpt-phi} in Assumption \ref{ass} with $\delta=\mu\in(0,1]$. The crucial part of \eqref{ass-coefficient-for-Yamazaki-est} is the behavior of $a(\cdot,t)$ as $t\to 0+$;  recall \eqref{equ:alpha-beta} and that $a(\cdot,0) =0$. \BLACK

\vspace{1ex}

\begin{prop}\label{result-second-order-derivative} 
Let $F =\mathbb{P} (a f)  \in L^\infty(0,\infty;L^{q_0,\infty}_{\sigma,\ell}(\Omega_0))$,  where  
$f \in L^\infty(0,\infty;L^{q,\infty}_{\ell}(\Omega_0))$ and $a \in L^\infty(0,\infty;L^{q_1,1}_{\ell}(\Omega_0))$ with 
$\frac{1}{q_0} = \frac{1}{q_1} + \frac{1}{q}$ where $\frac{nq}{q-n}< q_1$, $n <q_0 < q<\infty$ 
and $\frac{1}{q_0} - \frac{1}{q} \leq \frac{2}{n}$ 
with $\max(0,2-\frac{n}{q}) \leq \ell< \frac{n}{ q'_0}$.  
Moreover, assume that $a=a(t, x)$ satisfies the decay estimate 
\begin{align}
\begin{aligned}\label{ass-coefficient-for-Yamazaki-est}
\|a(t,\cdot) \|_{L^{q_1,1}_{\ell}}\leq ct^{\delta},\quad t>0, 
\end{aligned}
\end{align}
where $\delta=\frac{n}{2}\Big(\frac{1}{q'}-\frac{1}{q_0'}\Big)  = \frac{n}{2}\Big(\frac{1}{q_0}-\frac{1}{q}\Big)\in (0,1]$. 
 \BLACK  Then, for any $T \in (0, \infty]$,  
\begin{align}\label{int_eAf}
\Big\|\nabla^2 \int_0^T  e^{-\tau A} F(\tau) d \tau \Big\|_{ L^{q,\infty}_{\ell}(\Omega_0)} \leq C\|f\|_{L^\infty(0,\infty; L^{q,\infty}_{\ell}(\Omega_0))}
\end{align} 
with a constant $C>0$ independent of $T>0$.
\end{prop}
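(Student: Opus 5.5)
We argue by duality, as announced in the introduction and in the remark preceding the statement. Since $L^{q,\infty}_\ell(\Omega_0)=\big(L^{q',1}_{-\ell}(\Omega_0)\big)^*$, it suffices to estimate
\[
\Big(\nabla^2 \int_0^T e^{-\tau A}F(\tau)\dtau,\;\psi\Big)
\]
for $\psi\in C^\infty_0(\Omega_0)$ with $\|\psi\|_{L^{q',1}_{-\ell}}\le 1$. The first step is to replace $\nabla^2$ by the Stokes operator $A$. We use the weighted resolvent/Stokes regularity estimate $\|\nabla^2 v\|_{L^{q}_\ell}\le C\|Av\|_{L^q_\ell}$, valid for $v\in\D(A_{q,\ell})$ by Theorem \ref{res-weighted}(iii); the hypothesis $\ell\ge 2-\frac nq$ makes the weight admissible there. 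Real interpolation transfers it to $\|\nabla^2 v\|_{L^{q,\infty}_\ell}\le C\|Av\|_{L^{q,\infty}_\ell}$. Hence it suffices to bound $\|A\int_0^T e^{-\tau A}F(\tau)\dtau\|_{L^{q,\infty}_\ell}$, and by duality this reduces to
\[
\int_0^T \big|\big(F(\tau), A^* e^{-\tau A^*}\varphi\big)\big|\dtau,\qquad \varphi\in C^\infty_{0,\sigma}(\Omega_0),\ \|\varphi\|_{L^{q',1}_{-\ell}}\le 1 .
\]
Here $A^*=A_{q',-\ell}$, and we use that $A$ commutes with the semigroup and that $\mathbb P$ is self-adjoint between the dual weighted spaces (Lemma \ref{Helmh}).

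The second step uses H\"older's inequality in Lorentz spaces to estimate each integrand. Since $F=\mathbb P(af)$ and $\mathbb P A^*e^{-\tau A^*}\varphi=A^*e^{-\tau A^*}\varphi$, we have
\[
\big|\big(af, A^*e^{-\tau A^*}\varphi\big)\big|\le \|f(\tau)\|_{L^{q,\infty}_\ell}\,\big\| a(\tau)\,A^*e^{-\tau A^*}\varphi\big\|_{L^{q',1}_{-\ell}} .
\]
With $\frac1{q'}=\frac1{q_1}+\frac1{q_0'}$ we split the product as
\[
\|a\,g\|_{L^{q',1}_{-\ell}}\le C\,\|a\|_{L^{q_1,1}_{\ell}}\;\|g\|_{L^{q_0',1}_{-2\ell}} .
\]
This version assigns weight $\ell$ to $a$ and $-2\ell$ to $g$; an alternative is to keep weight $0$ on $a$ and weight $-\ell$ on $g$ and use that $a$ decays fast. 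Either way the bookkeeping is that the weights add up, which is consistent with the assumption \eqref{assumpt-phi} giving $a$ extra decay. I would choose whichever split lets the semigroup factor sit in $L^{q_0',1}_{-\ell}$, so that the negative exponent $-\ell\le 0$ is admissible. Then \eqref{ass-coefficient-for-Yamazaki-est} bounds the coefficient factor by $c\tau^\delta$.

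The third step is the time integral. We apply the Yamazaki-type estimate \eqref{key-Yamazaki-neg-weight} to $A^*$ with $p=q'$, target exponent $q_0'$ and weight $-\ell$:
\[
\int_0^\infty \tau^{\frac n2(\frac1{q'}-\frac1{q_0'})}\,\|A^*e^{-\tau A^*}\varphi\|_{L^{q_0',1}_{-\ell}}\dtau\le C\|\varphi\|_{L^{q',1}_{-\ell}} .
\]
The exponent $\frac n2(\frac1{q'}-\frac1{q_0'})$ is exactly $\delta$, so the loss $\tau^\delta$ is absorbed by the decay of $a$. The hypotheses of \eqref{key-Yamazaki-neg-weight} have to be checked: $q'<q_0'$ holds, $\frac1{q'}-\frac1{q_0'}\le\frac2n$ holds, and the weight condition becomes $-\frac n{q_0'}<-\ell<\frac n{q}-2$, i.e. $2-\frac nq<\ell<\frac n{q_0'}$ together with $-\ell\le 0$, which matches the stated range of $\ell$. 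Combining the three steps gives the bound $C\|f\|_{L^\infty L^{q,\infty}_\ell}$ uniformly in $T$, since the integral extends to $(0,\infty)$.

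I expect the main obstacle to be the first step. One must justify that $v=\int_0^Te^{-\tau A}F\dtau$ lies in the domain, so that $\nabla^2$ can be controlled by $A$ in $L^{q,\infty}_\ell$; this can be done via an approximation with $\int_\epsilon^T$ and a density argument. One must also make sure that the weighted Lorentz H\"older split is compatible with the weights available in \eqref{key-Yamazaki-neg-weight}. If the weight bookkeeping in the second step fails, I would put all of the weight on $a$, using $\langle x\rangle^{\ell}a\in L^{q_1,1}$ at the cost of a stronger weight on $a$, which \eqref{assumpt-phi} supplies with the exponent $n$.
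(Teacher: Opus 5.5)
Your proposal is correct and is essentially the paper's proof. You trade $\nabla^2$ for $\Delta$ (equivalently $A$) via the $\lambda\to0$ limit of \eqref{equ:rse-w2} and real interpolation, dualize against $\varphi\in C^\infty_{0,\sigma}(\Omega_0)$, bound the coefficient by $c\tau^\delta$, and absorb this loss with \eqref{key-Yamazaki-neg-weight} for $p=q'$, target exponent $q_0'$ and weight $-\ell$. The H\"older split that works is the paper's: weight $0$ on $a$ (so $\|a\|_{L^{q_1,1}}\le\|a\|_{L^{q_1,1}_\ell}$ since $\ell\ge0$) and weight $-\ell$ on $A^*e^{-\tau A^*}\varphi$. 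Your first split, and your fallback of loading more weight onto $a$, push weight $-2\ell$ or worse onto the semigroup factor; that would require $\ell<\frac{n}{2q_0'}$, which is not assumed.
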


\begin{proof} 
For a solenoidal smooth vector field $v$ on $\Omega_0$ we note that 
\begin{align}\label{Deltau}
\begin{aligned}
\|\nabla^2 v\|_{L^q_{\ell} } 
\leq C\|\Delta v\|_{L^q_{\ell} }.
\end{aligned}
\end{align} 
which, {\em e.g.}, follows by the resolvent estimate \eqref{equ:rse-w2}  in the limit $\lambda\to 0$ \BLACK with $n<q$ satisfying the condition
$2-\frac{n}{q} < \ell < \frac{n}{q_0'}$.  Then real interpolation verifies that 
\begin{align}\label{Deltau2}
\begin{aligned}
\|\nabla^2 v\|_{L^{q,\infty}_{\ell}} 
\leq C\|\Delta v\|_{L^{q,\infty}_{\ell}},
\end{aligned}
\end{align} 
and we thus see that 
\begin{align}
\begin{aligned}
\Big\|\nabla^2 \int_0^T  e^{-\tau A} F(\tau)  \dtau \Big\|_{ L^{q,\infty}_{\ell}} & \leq 
C\Big\|\Delta \int_0^T  e^{-\tau A} F(\tau)  \dtau \Big\|_{ L^{q,\infty}_{\ell}},
\end{aligned}
\end{align} 
provided the integral term is well-defined under $\Delta$. To this aim \BLACK  we use a typical duality argument. 
For  $\varphi \in C^\infty_{0,\sigma}(\Omega_0)\subset L^{q',1}_{\sigma, -\ell}(\Omega_0) $ \eqref{key-Yamazaki-neg-weight} with $q$ replaced by $q'$ verifies
\begin{align}\label{int-Delta-phi2}
\begin{aligned}
\Big|\int_0^T \langle \Delta e^{-\tau A}F(\tau) , \varphi\rangle  \dtau \Big| 
& = \Big|\int_0^T \langle  af(\tau),  
e^{-\tau A} \mathbb P_{q'} \Delta \varphi\rangle  \dtau \Big|\\
& \leq C\int_0^T \|a(\tau)f (\tau)\|_{L^{q_0,\infty}_{\ell}} \| Ae^{-\tau A}  \varphi\|_{L^{q_0',1}_{-\ell}}\dtau \\
& \leq C\int_0^T \tau^\delta \|f(\tau)\|_{L^{q,\infty}_{\ell}} \| Ae^{-\tau A}  \varphi\|_{L^{q_0',1}_{-\ell}}\dtau \\
& \leq C\|f\|_{L^\infty(0,\infty;L^{q,\infty}_\ell} \int_0^T \tau^\delta\| Ae^{-\tau A}  \varphi\|_{L^{q_0',1}_{-\ell}}\dtau\\[1ex]
&  \leq C \|f\|_{L^\infty(0,\infty;L^{q,\infty}_{\ell})}
\|\varphi\|_{L^{q',1}_{-\ell}};\BLACK
\end{aligned}
\end{align}
\BLACK for the last step based on \eqref{key-Yamazaki-neg-weight} note that $q>n$ satisfies the inequalities $-\frac{n}{q_0'} < -\ell < \frac{n}{(q')'}  -2 = \frac{n}{q} -2$. 
 Since $C^\infty_{0,\sigma}(\Omega_0) \subset  L^{q,1}_{\sigma, -(n-1)}(\Omega_0)\BLACK$ is dense (\cite{FS}), a duality argument in \eqref{int-Delta-phi2} implies \eqref{int_eAf}. 
\BLACK
\end{proof}

\vspace{2ex}

We recall the mapping $H[u,p]=H_1[u,p]+H_2[u,p]$, see $\eqref{equ:ns-us-per}_2$,  \eqref{H_1}, \BLACK defined by  
\BLACK
\begin{align}\label{def-H_1}
    \begin{aligned}
H_1[u,p](t) & = \Big(\sum_{k=1}^\infty e^{-kT A}\Big) \int_0^T e^{-\tau A} \mathbb P N(u, p)(t-\tau) \dtau \\ 
& = \Big(\sum_{k=1}^\infty (-A) e^{-kT A}\Big) \int_0^T \Big(\int_0^\tau e^{-sA} \ds\Big) \mathbb P N(u, p)(t-\tau) \dtau 
\end{aligned}
\end{align}
and when $k=0$, 
$$
H_2[u,p](t)=  \int_0^T e^{-\tau A} \mathbb P N(u, p)(t-\tau) \dtau. 
$$ 
For simplicity, we write $H=H_1+H_2$ 
where we will consider instead of $\mathbb P N(u, p)(t-\tau)$ a function $f(\cdot) \in  L^\infty(0,\infty;L^{q,\infty}_{\sigma,n-1}(\Omega_0))$ and $H(f)$ instead of $H[u,p]$. Note that the condition $P_s N(u,p)=0$ is not needed when considering $H$ in the form of \eqref{u-perp-int}.

\vspace{2ex}

By virtue of Proposition \ref{result-second-order-derivative} we will estimate second order derivatives $\nabla^2 H(f)$ as follows:\BLACK 

\begin{prop}\label{result-oscillatory-part-2nd-derivative} 
\begin{enumerate}
    \item[{\rm (i)}]
Under the assumptions of Proposition \ref{result-second-order-derivative} on $a$ and $f$ there holds for $F=\mathbb P(af)$, $0<t<T$ and finite $T>0$ \BLACK
%
%
\begin{align}\label{nabla2_H}
\big\|\nabla^2 HF (t)\BLACK \big\|_{ L^{q,\infty}_{n-1}(\Omega_0)} & \leq C \|f\|_{L^\infty(0,\infty;L^{q,\infty}_{n-1}(\Omega_0))} 
\end{align} 
uniformly in $T$. 
\item[{\rm (ii)}] 
Let $f \in L^\infty(0,\infty;H^{1;q_2, \infty}_{ \sigma \BLACK,n-1}(\Omega_0))$ where $n< q_2 <q< \frac{n+1}{n} q_2$. Then there exists a constant $C_T>0$ such that for $0<t<T<\infty$\BLACK   
\begin{equation}\label{nabla2_H2}
\big\|\nabla^2 Hf(t) \BLACK\big\|_{ L^{q,\infty}_{n-1}(\Omega_0)} \leq C_T \|f\|_{L^\infty(0,\infty;H^{1;q_2, \infty}_{n-1}(\Omega_0))}. 
\end{equation}  
\end{enumerate}
\end{prop}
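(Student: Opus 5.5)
Both parts use the splitting $H=H_1+H_2$ from \eqref{H_1}, \eqref{def-H_1}. We treat the $k=0$ piece $H_2$ and the tail $H_1=\sum_{k\ge1}$ separately, then add the bounds.

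\textbf{Part (i).} Write $H_2F(t)=\int_0^T e^{-\tau A}F(t-\tau)\dtau$ with $F(t-\tau)=\mathbb P(a f)(t-\tau)$. We want Proposition \ref{result-second-order-derivative} with $\ell=n-1$. This is admissible, since $\max(0,2-\frac nq)\le n-1<\frac{n}{q_0'}$ for $q_0>n$. The only issue is that \eqref{ass-coefficient-for-Yamazaki-est} needs $\|a(\cdot,t-\tau)\|_{L^{q_1,1}_{n-1}}\lesssim\tau^\delta$ near $\tau=0$, not near the origin of time.
\begin{itemize}
\item For the coefficients $a=a_\alpha$ coming from $\phi$, use the Hölder assumption \eqref{assumpt-phi}, \eqref{equ:alpha-beta} relative to the base time $t$. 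Replace $a(\cdot,t-\tau)$ by $a(\cdot,t-\tau)-a(\cdot,t)$, which is $O(\tau^\mu)$ in $L^{q_1,1}_{n}\hookrightarrow L^{q_1,1}_{n-1}$.
\item The frozen term $\int_0^T e^{-\tau A}\mathbb P(a(t)f(t-\tau))\dtau$ is handled by the same duality argument as \eqref{int-Delta-phi2}. There $\|a(t)\|$ is a fixed small constant, and we need the integrability of $\|Ae^{-\tau A}\varphi\|_{L^{q_0',1}_{-\ell}}$ without a weight $\tau^\delta$. On $(0,T)$ we get this from \eqref{key-Yamazaki-neg-weight} with $p=q$ (no loss). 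For uniformity in $T$ we instead absorb it via $\tau^{\delta}$ for $\tau\ge1$.
\item In practice we apply Proposition \ref{result-second-order-derivative} with $\delta\le\mu$, which can be arranged by choosing $q_0$ close to $q$. The difference term then satisfies \eqref{ass-coefficient-for-Yamazaki-est} for $\tau\le T$.
\end{itemize}
For $H_1$ we use the factor $A e^{-kTA}$, which is smoothing and decays. We have $\sum_{k\ge1}\|Ae^{-kTA}\|\lesssim\sum_k (kT)^{-1-\delta}$ by \eqref{A-1-delta-0} (dualized), and this is summable. Combining it with \eqref{Deltau2} reduces the $\nabla^2$ estimate of $H_1$ to an $L^{q_0,\infty}_{n-1}$ bound of $\int_0^T\int_0^\tau e^{-sA}\ds\,F\dtau$, which is bounded by $T^2\|af\|$. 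The $T$-dependence is compensated by the decay $(kT)^{-1-\delta}$ summed over $k$. Done by duality as in \eqref{int-Delta-phi2}, this gives a constant uniform in $T$ after pairing $T^{2}$ with $\sum_k (kT)^{-2}$, using \eqref{A-1-delta-0} once more for an extra $A$.

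\textbf{Part (ii).} Here there is no small coefficient, so we exploit the $BIP$ and fractional-domain results.
\begin{itemize}
\item Write $\nabla^2 e^{-\tau A}f\sim Ae^{-\tau A}f=A^{1-\theta}e^{-\tau A}A^{\theta}f$, with $0<\theta<\frac1{2q_2}$.
\item By Proposition \ref{fractional-power-without-boundary-condition}, $\D(A^\theta)=H^{2\theta,q_2}_{\sigma,n-1}$ needs no Dirichlet condition. Hence $\|A^\theta f\|_{L^{q_2,\infty}_{n-1}}\lesssim\|f\|_{H^{1;q_2,\infty}_{n-1}}$, by real interpolation of the $L^{q_2}$ statement.
\item Use the analytic-semigroup bound $\|A^{1-\theta}e^{-\tau A}\|_{L^{q_2}_{n-1}\to L^{q}_{n-1}}\lesssim \tau^{-(1-\theta)-\frac n2(\frac1{q_2}-\frac1q)}$ for $\tau<T$. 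It follows from Theorem \ref{theorem-H-infty-all} (moment inequality via $BIP$) and weighted $L^{q_2}$–$L^q$ smoothing (Theorem \ref{weightedLpLq^decay-estimates}, Remark \ref{constant}); the Lorentz form comes by real interpolation.
\item The resulting exponent is integrable on $(0,T)$ precisely when $\frac n2(\frac1{q_2}-\frac1q)<\theta$. Since $q<\frac{n+1}nq_2$ gives $\frac1{q_2}-\frac1q<\frac1{(n+1)q_2}$, a $\theta<\frac1{2q_2}$ satisfying this exists, with $\frac n{2(n+1)q_2}<\frac1{2q_2}$.
\item $H_1$ is again harmless thanks to $Ae^{-kTA}$ and the bound $\sum_k$ on $[T,\infty)$, with constants depending on $T$.
\end{itemize}

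\textbf{Main obstacle.} The delicate step is in (i): transferring the Hölder vanishing of $a$ from $t=0$ to an arbitrary base time $t-\tau$. It must be done so that Proposition \ref{result-second-order-derivative} applies, while the frozen coefficient part is still controlled uniformly in $T$.
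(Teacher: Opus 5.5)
\textbf{Part (ii)} follows the paper's argument. You write $Ae^{-\tau A}f=A^{1-\theta}e^{-\tau A}A^{\theta}f$ and use Proposition \ref{fractional-power-without-boundary-condition} to bound $A^\theta f$ by $\|f\|_{H^{1;q_2,\infty}_{n-1}}$ without a Dirichlet condition. You then combine $\|A^{1-\theta}e^{-\tau A/2}\|\lesssim\tau^{-(1-\theta)}$ with the weighted $L^{q_2}$--$L^q$ smoothing of Theorem \ref{weightedLpLq^decay-estimates}. Your threshold $\theta<\frac{1}{2q_2}$ is the natural one, since $A^\theta f$ is measured in $L^{q_2}$. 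The paper uses the stricter $\theta<\frac{1}{2q}$, for which $q<\frac{n+1}{n}q_2$ is exactly the needed condition. \textbf{Part (i)}, however, contains a step that fails.

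\textbf{The gap: the frozen term} $\nabla^2\int_0^T e^{-\tau A}\mathbb P\big(a(\cdot,t)f(t-\tau)\big)\dtau$. In the duality argument of \eqref{int-Delta-phi2} you need $\int_0^T\|Ae^{-\tau A}\varphi\|_{L^{q',1}_{-\ell}}\dtau\le C\|\varphi\|_{L^{q',1}_{-\ell}}$, i.e.\ \eqref{key-Yamazaki-neg-weight} with $p=q$.
\begin{itemize}
\item That estimate is stated only for $p<q$, and its proof cannot reach $p=q$: the interpolation needs exponents $p_1<p<p_2$ with $p_2\le q$ in \eqref{A-1-delta}.
\item Worse, the no-loss inequality is false. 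Dually it amounts to the $L^\infty$-in-time maximal regularity bound $\|A\int_0^Te^{-\tau A}g(\tau)\dtau\|_{L^{q,\infty}_\ell}\le C\|g\|_{L^\infty(0,T;L^{q,\infty}_\ell)}$. This fails for the unbounded operator $A$: near $\tau=0$ one only has $\|Ae^{-\tau A}\|\sim\tau^{-1}$, and already for the Laplacian, test functions with lacunary frequencies show the logarithmic divergence is genuine.
\item Smallness of $\|a(\cdot,t)\|$ does not help, since the bound fails for any nontrivial frozen coefficient. Splitting off $\tau\ge 1$ only treats the harmless end, not $\tau\to0$.
\end{itemize}
The point of Proposition \ref{result-second-order-derivative} is that the loss $\tau^{\delta}$ is paid by the vanishing of the coefficient itself at $\tau=0$. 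Freezing at a time where $a(\cdot,t)\neq0$ discards exactly that mechanism. A side remark: $\delta=\frac n2(\frac1{q_0}-\frac1q)=\frac{n}{2q_1}$ is fixed by $q_1$ through $\frac1{q_0}=\frac1{q_1}+\frac1q$. So you cannot arrange $\delta\le\mu$ by moving $q_0$; the paper simply takes $\mu=\delta$.

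\textbf{What the paper does.} Its proof of (i) uses neither freezing nor an $H_1/H_2$ split. The hypothesis of (i) is that $a$ satisfies \eqref{ass-coefficient-for-Yamazaki-est} along the integration variable. The paper writes $HF=\int_0^Te^{-\tau A}F(T-\tau)\dtau$; for the periodic coefficients this corresponds to the base time $t\equiv 0 \bmod T$, where $a(\cdot,0)=0$ and the H\"older bound give $\|a(\cdot,T-\tau)\|\lesssim\tau^{\mu}$. Proposition \ref{result-second-order-derivative} with $\ell=n-1$ then yields \eqref{nabla2_H} at once. Since its constant is independent of the upper limit of integration, no separate tail estimate is needed; your $H_1$ argument is harmless for fixed $T$ but superfluous.

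You are right that for $t\not\equiv 0$ the coefficient $a(\cdot,t-\tau)$ need not vanish at $\tau=0$, and the paper does not discuss this. But it cannot be settled by freezing at time $t$ and estimating with the semigroup $e^{-\tau A}$ alone.
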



\begin{proof}
{\rm (i)} With $HF = \int_0^T e^{-\tau A} F(T-\tau) \dtau$ and $\ell=n-1$ Proposition \ref{result-second-order-derivative} directly yields (i).

{\rm (ii)} 
Let $0< \theta < \frac{1}{2q}$. Then for any $f\in L^\infty(0,\infty;H^{1;q_2, \infty}_{\sigma,n-1}(\Omega_0))$, 
the classical decay estimate $\|A^{1-\theta} e^{-t A}g\|_{L^q_\ell} \leq Ct^{-(1-\theta)} \|e^{-t A/2}g\|_{L^q_\ell}$ and Theorem \ref{weightedLpLq^decay-estimates} (i), $\alpha=0$, 
imply with $q_3$ defined by $\frac{1}{q_3} = \frac{1}{q_2}- \frac{1}{q}$ and $\ell=n-1$ satisfying $2-\frac{n}{q}< \ell < \frac{n}{q'}$
\begin{align*}
\|\nabla^2 e^{-t A}f\|_{L^q_\ell} & \leq C
\|A  e^{-t A}f\|_{L^q_\ell} = C 
\|A^{1-\theta} e^{-t A}A^{\theta}f\|_{L^q_\ell} \\
& \leq C t^{-(1-\theta)} \|e^{-tA/2} A^{\theta}f\|_{L^q_\ell} \\
& \leq C t^{-(1-\theta)-\frac{n}{2q_3}} \| A^{\theta}f\|_{L^{q_2}_\ell}.
\end{align*}
%
%
We note that by Proposition \ref{fractional-power-without-boundary-condition},  for $0< \theta < \frac{1}{2q}$  without Dirichlet boundary condition,  
$$
\D(A^\theta)=[L^q_{\sigma,\ell}, \D(A)]_{\theta} = [L^q_\ell, \D(-\Delta)]_{\theta} \cap L^q_\sigma = H^{2\theta,q}_{\sigma,\ell}(\Omega_0)
$$
and thus 
$\|A^{\theta}f\|_{L^{q_2}_\ell} \leq C\|f\|_{H^{2\theta,q_2}_{\ell}(\Omega_0)} \leq C\|f\|_{H^{1,q_2}_{\ell}(\Omega_0)}.$
Then by real interpolation the above estimate can be extended to 
$$ \|\nabla^2 e^{-t A}f\|_{L^{q,\infty}_\ell} \leq C t^{-(1-\theta)-\frac{n}{2q_3}} \| f\|_{L^\infty(0, \infty; {H}^{1;q_2,\infty}_{\ell}(\Omega_0))} \quad \textrm{for a.e. } t\in J. $$

Since $n< q_2 <q< \frac{n+1}{n} q_2$, we define $q_3\in (q_2,\infty)\BLACK$ by $\frac{1}{q_2}=\frac{1}{q_3}+ \frac{1}{q} $ and see that $\frac{n}{2 q_3} +\big(1-\frac{1}{2 q}\big)<1$. 
Hence there exists $0<\theta < \frac{1}{2q}$  such that  
$$
\frac{n}{2q_3} +(1-\theta)<1 $$
and consequently $C_T' := \int_0^T t^{-(1-\theta)-\frac{n}{2q_3}} \dt <\infty$. 
Thus the time integral on $(0,T)$ of the above estimate implies with $\ell=n-1$  
\begin{align}\label{Ae2-consequence}
\begin{aligned}
\|\nabla^2 H f(t)\|_{ L^{q,\infty}_{n-1}} & \leq C \int_0^T \tau^{-\frac{n}{2q_3}-(1-\theta)} \|A^{\theta} f(\tau) \|_{ L^{q_2,\infty}_{\ell} } \dtau \\[1ex]
& \leq C_T \| f  \|_{L^\infty(0, \infty; {H}^{1;q_2,\infty}_{\ell}(\Omega_0))}. 
\end{aligned}
\end{align}
\end{proof}

\BLACK

For $H$ and $\nabla H$ 
we get the following estimates. 

\vspace{1ex}

\begin{prop}\label{result-oscillatory-part-0-1-order} {\rm (i)} 
For $f \in L^\infty(0,\infty;L^{q}_{\sigma,n-1+\delta}(\Omega_0))$  with $0<\delta<1-\frac{n}{q}$ there holds for any $0<t<T<\infty$ the estimate \BLACK
$$
\|Hf\|_{ L^q_{n-1}(\Omega_0)\BLACK } \leq 
C_T\|f\|_{L^\infty(0,\infty; L^{ q}_{n-1+\delta}(\Omega_0))} 
$$
with a constant $C_T$ blowing up as $T\to\infty$.

{\rm (ii)} 
For $f \in L^\infty(0,\infty;L^{q}_{\sigma,n-1}(\Omega_0))$  and $q>n$
we have that 
$$
\|\nabla Hf(t)\|_{ L^q_{n-1}(\Omega_0)\BLACK } \leq 
C_T\|f\|_{L^\infty(0,\infty; L^{q}_{n-1}(\Omega_0))}.
$$

{\rm (iii)} 
For  $f \in L^\infty(0,\infty;L^{q_0 }_{\sigma,n-1}(\Omega_0))$, 
where $n < q_0 <q$, the slightly different estimates  
\begin{align*}
\|Hf(t)\|_{ L^q_{n-1}(\Omega_0)} + \|\nabla Hf(t)\|_{ L^q_{n-1}(\Omega_0)} 
& \leq 
C_T\|f\|_{L^\infty(0,\infty; L^{ q_0}_{n-1}(\Omega_0))}
\end{align*}
hold for all $0<t<T<\infty$. \BLACK
\end{prop}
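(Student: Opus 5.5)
The plan is to estimate the integral representation
$$Hf(t)=\int_0^T e^{-\tau A} f(t-\tau)\dtau$$
directly, pulling the norm inside with Minkowski's inequality for Bochner integrals. Each part then reduces to a pointwise-in-$\tau$ weighted $L^p$-$L^q$ estimate of the Stokes semigroup on $(0,T)$ that is integrable near $\tau=0$. Since $T<\infty$ is fixed, no decay for large $\tau$ is needed; by Remark~\ref{constant} we may use the short-time estimates \eqref{etA<22} and \eqref{etA<} on all of $(0,T)$, at the price of a constant $C_T$. Here $f$ is understood as extended periodically (or simply as an element of $L^\infty(0,\infty;\cdot)$ evaluated at $t-\tau$), so $\|f(t-\tau)\|$ is bounded by the $L^\infty$-in-time norm for a.e.\ $\tau$.

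\emph{Part (i).} We apply Theorem~\ref{weightedLpLq^decay-estimates}(ii), estimate \eqref{etA<}, with $p=q$, $\alpha=0$, $\ell=n-1+\delta$ and $\ell_0=n-1$. The condition $0\le\ell_0\le\ell<\frac{n}{q'}$ is equivalent to $n-1+\delta<n-\frac nq$, which is exactly the assumption $\delta<1-\frac nq$. This step is the only place where the restriction on $\delta$ enters. We obtain $\|e^{-\tau A}f(t-\tau)\|_{L^q_{n-1}}\le C_T\|f(t-\tau)\|_{L^q_{n-1+\delta}}$ for $0<\tau<T$, and integration over $(0,T)$ gives the claim with a constant of order $T\,C_T$. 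An alternative route is to use the embedding $L^q_{n-1+\delta}\hookrightarrow L^q_{n-1}$ together with the uniform boundedness of $e^{-\tau A}$ on $L^q_{\sigma,n-1}$ from \eqref{equ:rse-w}. This is legitimate because $\langle x\rangle^{(n-1)q}\in\mathscr A_q$ iff $q>n$, which holds here since $q>n$ by the standing assumption of Theorem~\ref{main}.

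\emph{Parts (ii) and (iii).} We use Theorem~\ref{weightedLpLq^decay-estimates}(i) with $\ell=n-1$. The weight condition $2-\frac np<n-1<\frac n{p'}$ holds for $n\ge3$ whenever $p>n$, so it is satisfied both for $p=q$ and for $p=q_0$.

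For (ii), take $p=q$ and $|\alpha|=1$; the exponent condition $\frac12<1$ is trivial. This gives $\|\nabla e^{-\tau A}f\|_{L^q_{n-1}}\le C\tau^{-1/2}\|f\|_{L^q_{n-1}}$, which is integrable on $(0,T)$ with $\int_0^T\tau^{-1/2}\dtau=2T^{1/2}$.

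For (iii), take $p=q_0<q$ with $|\alpha|=0,1$. The required condition is $\frac n2\big(\frac1{q_0}-\frac1q\big)+\frac12<1$, i.e.\ $\frac1{q_0}-\frac1q<\frac1n$, which holds since $\frac1{q_0}<\frac1n$. Hence
$$\|\nabla^\alpha e^{-\tau A}f\|_{L^q_{n-1}}\le C\,\tau^{-\frac n2(\frac1{q_0}-\frac1q)-\frac{|\alpha|}2}\,\|f\|_{L^{q_0}_{n-1}},$$
and the exponent is $>-1$, so integration over $(0,T)$ gives $C_T$.

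The expected main obstacle is the bookkeeping of the admissible weight ranges, in particular the sharp Muckenhoupt upper bound $\ell<\frac n{p'}$ in (i). One must also justify pulling $\nabla$ through the Bochner integral. For that step the plan is to argue first for $f$ with values in $C^\infty_{0,\sigma}$ and then pass to the limit by density of $C^\infty_{0,\sigma}(\Omega_0)$ in $L^p_{\sigma,\ell}(\Omega_0)$ together with closedness of $\nabla$.
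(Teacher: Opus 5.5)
\textbf{Gap: the proposal only estimates the $k=0$ piece of $H$.} Your estimates are correct as far as they go, but they cover only $H_2f(t)=\int_0^T e^{-\tau A}f(t-\tau)\dtau$. In the paper $H$ is the full periodic solution operator from \eqref{u-perp-int}:
$Hf(t)=\int_{-\infty}^t e^{-(t-s)A}f(s)\ds=\sum_{k\ge 0}e^{-kTA}\int_0^T e^{-\tau A}f(t-\tau)\dtau$.
Thus $H=H_1+H_2$, where the tail over all earlier periods is
\[
H_1f(t)=\Big(\sum_{k=1}^\infty (-A)e^{-kTA}\Big)\int_0^T\Big(\int_0^\tau e^{-sA}\ds\Big)f(t-\tau)\dtau ,
\]
see \eqref{H_1} and \eqref{def-H_1}. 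Your claim that ``no decay for large $\tau$ is needed'' is exactly where the proof breaks: $H_1$ contains $e^{-kTA}$ for every $k\ge1$, and this series must be summed. A warning sign already appears in your part (i). You note there that the embedding $L^q_{n-1+\delta}\hookrightarrow L^q_{n-1}$ alone suffices, which would make the hypothesis $\delta>0$ superfluous.

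\textbf{What is missing.} Replacing $e^{-\tau A}$ by $e^{-\tau A}-I=-A\int_0^\tau e^{-sA}\ds$ is legitimate because $f=\mathbb P N(u,p)$ has vanishing period mean, and it produces the extra factor $A$. The inner double integral is bounded by $CT^2$ times the $L^\infty$-in-time norm of $f$ in the source space, by uniform boundedness of the semigroup there. One then needs the large-time decay of $Ae^{-kTA}$, resp.\ $\nabla Ae^{-kTA}$, into $L^q_{n-1}$ to be faster than $(kT)^{-1}$.

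In (i) this comes from \eqref{etA>1} with $p=q$, $\ell=n-1+\delta$, $\ell_0=n-1$. Its weight gain $t^{-\delta/2}$, combined with $\|Ae^{-tA}\|\le Ct^{-1}$ on $L^q_{\sigma,n-1+\delta}$, gives the summable rate $(kT)^{-1-\delta/2}$. This, not a short-time estimate, is where both $\delta>0$ and $\delta<1-\frac nq$ are actually used.

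In (ii) one uses \eqref{etA>1nabla} with $\ell=\ell_0=n-1$, which gives $(kT)^{-3/2}+(kT)^{-1-\frac{n}{2q}-\frac{n-1}{2}}$. In (iii) one uses Corollary~\ref{Lp-Lq At-all} with $\ell=\ell'=n-1$ and $\kappa=\frac{n}{q_0}-\frac nq$, together with Theorem~\ref{A1/2-nabla_A1/2-all}. This gives $(kT)^{-1-\frac n2(\frac1{q_0}-\frac1q)}$ and $(kT)^{-\frac32-\frac n2(\frac1{q_0}-\frac1q)}$.

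Without the factor $A$ the series would diverge. The rates available for $e^{-tA}$ and $\nabla e^{-tA}$ alone ($t^{-\delta/2}$, $t^{-1/2}$, $t^{-\frac n2(\frac1{q_0}-\frac1q)}$) are not summable. Your estimates of the $H_2$ part agree with the paper's and can be kept as they are.
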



\begin{proof} 
We apply weighted $L^q$-$L^p$ decay estimates of the Stokes semigroup. 

(i) Theorem \ref{weightedLpLq^decay-estimates} (ii) for $H_1$ and $H_2$ with taking $t_0=T>0$ in Remark \ref{constant} imply that 
\begin{align}\label{u_perp-est0-H1H2}
\begin{aligned}
\|H_1f\|_{ L^q_{n-1}(\Omega_0)} 
& = \Big\|\Big(\sum_{k=1}^\infty (-A) e^{-kT A}\Big) \int_0^T \Big(\int_0^\tau e^{-sA} \ds\Big) f(\tau) \dtau \Big\|_{ L^q_{n-1}(\Omega_0)}\\ 
& \leq C_T\sum_{k=1}^\infty (kT)^{-1-\frac{\delta}{2}} \, T^2\BLACK \,\|f\|_{ L^\infty(0,\infty; L^{q}_{n-1+\delta}(\Omega_0))} \\
& \leq 
C_T \|f\|_{L^\infty(0,\infty;L^{ q}_{n-1+\delta}(\Omega_0))}, \\
\|H_2f\|_{ L^q_{n-1}(\Omega_0)} 
&  = \Big\| \int_0^T e^{-\tau A} f(\tau) \dtau \Big\|_{ L^q_{n-1}(\Omega_0)}\\ 
& \leq 
C_T \|f\|_{L^\infty(0,\infty;L^{ q}_{n-1}(\Omega_0))}. 
\end{aligned}
\end{align}

(ii) If $f \in L^\infty(0,\infty; L^{q}_{\sigma,n-1}(\Omega_0))$, then by Theorem \ref{A1/2-nabla_A1/2-all} (ii)   
\begin{align}\label{u_perp-est1-H1H2-case1}
\|\nabla H_1f\|_{ L^q_{n-1}(\Omega_0)} 
 &  = \BLACK \Big\|\Big(\sum_{k=1}^\infty \nabla  e^{-kT A} (-A)\Big) \int_0^T \Big(\int_0^\tau e^{-sA} \ds\Big) f(\tau) \dtau \Big\|_{ L^q_{n-1}(\Omega_0) }\nonumber\\ 
& \leq C_T \sum_{k=1}^\infty  \big\{(kT)^{-1-\frac{1}{2}}+(kT)^{-1-\frac{n}{2q}-\frac{n-1}{2}} \big\}  \,T^2\,\BLACK \|f\|_{ L^\infty(0,\infty; L^{q}_{n-1}(\Omega_0)) } \nonumber\\
& \leq 
C_T \|f\|_{L^\infty(0,\infty; L^{q}_{n-1}(\Omega_0))},  \\
\|\nabla H_2f\|_{ L^q_{n-1}(\Omega_0)} 
& \leq C_T \int_0^T  \tau^{-\frac{1}{2}}\|f\|_{ L^\infty(0,\infty; L^{q}_{n-1}(\Omega_0)) } \dtau\nonumber\\
& \leq 
C_T \|f\|_{L^\infty(0,\infty; L^{q}_{n-1}(\Omega_0))}.\nonumber 
\end{align}

(iii) On the other hand, if $f \in L^\infty(0,\infty; L^{q_0}_{ \sigma,n-1}(\Omega_0))$, we use Theorem \ref{A1/2-nabla_A1/2-all} (ii) and Corollary \ref{Lp-Lq At-all}  with $\ell=\ell'=n-1$ and $\kappa=\frac{n}{q_0} - \frac{n}{q}$ which imply that  
\begin{align}\label{u_perp-est1-H1H2}
\begin{aligned}
\|H_1f\|_{L^q_{n-1}(\Omega_0)} 
&  = \BLACK \Big\|\Big(\sum_{k=1}^\infty (-A) e^{-kT A}\Big) \int_0^T \Big(\int_0^\tau e^{-sA} \ds\Big) f(\tau) \dtau \Big\|_{ L^q_{n-1}(\Omega_0) }\\ 
& \leq C_T \sum_{k=1}^\infty (kT)^{-1-\frac{n}{2}\big(\frac{1}{q_0}-\frac{1}{q}\big) }  \,T^2\,\BLACK \|f\|_{ L^\infty(0,\infty; L^{q_0 }_{n-1}(\Omega_0))} \\
& \leq 
C_T\|f\|_{L^\infty(0,\infty; L^{ q_0 }_{n-1}(\Omega_0))},  \\
\|H_2f\|_{ L^q_{n-1}(\Omega_0)} 
& \leq C_T \int_0^T  \tau^{-\frac{n}{2}\big(\frac{1}{q_0}-\frac{1}{q}\big)}\|f\|_{ L^\infty(0,\infty; L^{ q_0}_{n-1}(\Omega_0)) } \dtau\\
& \leq 
C_T\|f\|_{L^\infty(0,\infty; L^{ q_0 }_{n-1}(\Omega_0))}.
\end{aligned}
\end{align}
%
%
Similarly, 
\begin{align}\label{u_perp-est1-H1H2-case2}
\|\nabla H_1f\|_{ L^q_{n-1}(\Omega_0)} 
 &  = \Big\|\Big(\sum_{k=1}^\infty \nabla  e^{-kT A} (-A)\Big) \int_0^T \Big(\int_0^\tau e^{-sA} \ds\Big) f(\tau) \dtau \Big\|_{ L^q_{n-1}(\Omega_0) }\nonumber \\ 
& \leq C_T\sum_{k=1}^\infty  (kT)^{-\frac32-\frac{n}{2}\big(\frac{1}{q_0}-\frac{1}{q}\big)} 
 \, T^2\, \|f\|_{ L^\infty(0,\infty; L^{q_0}_{n-1}(\Omega_0)) } \nonumber\\ 
& \leq 
C_T\|f\|_{L^\infty(0,\infty; L^{q_0}_{n-1}(\Omega_0))},  
\\[1ex]
\|\nabla H_2f\|_{ L^q_{n-1}(\Omega_0)\BLACK } 
& \leq C_T \int_0^T \tau^{-\frac{n}{2}\big(\frac{1}{q_0}-\frac{1}{q}\big)-\frac{1}{2}}\|f\|_{ L^\infty(0,\infty; L^{q_0}_{n-1}(\Omega_0)) } \dtau \ \nonumber \\
& \leq 
C_T\|f\|_{L^\infty(0,\infty; L^{q_0}_{n-1}(\Omega_0))}. \nonumber
\end{align}\vspace*{-8mm}
\end{proof}

\section{Nonlinear Estimates}

In this section we present estimates of nonlinear terms and finally prove the existence of time periodic solutions by Banach's fixed point argument. 

\vspace{1ex}

Recalling the nonlinear term $K(u,p)$, see $\eqref{KNM}_1$, we also define 
\begin{align*}
 M_0(u) & =  a_0 :u + a_1 :\nabla u +  b_0:u + b_1:\nabla u,\\
M_1(u,p) & = a_2 :\nabla^2  u +  (\nabla^{\phi(0)}-\nabla^{\phi(t)})p,\\
K_1 (u) & = - P_s (u_s \cdot \nabla u_s) = - u_s \cdot \nabla u_s,\\
K_2(u,p) & = K(u,p) - K_1 (u)  - P_s (M_0(u))\BLACK - P_s (M_1(u,p)),  
\end{align*} 
where $a_2:\nabla ^2 u$ is a short notation for all terms $a_\alpha\partial^\alpha u$ with $|\alpha|=2$. Similarly, we define $a_1:\nabla u$, $a_0:u$, $b_1:\nabla u$ and $b_0:u$.   

With the power $q_1$ given in Assumption \ref{ass} {\rm (iv)} we find $n<q_0<q$ satisfying  
$$
\frac{1}{q_0}=\frac{1}{q_1} + \frac{1}{q}.
$$ 
Set 
\begin{align*}
 \mathscr{L}_\mathscr{K} (K(u,p)) & = \|\div (u_s \otimes u_s) \|_{L^\infty_{n}} +
\|u_s \otimes\BLACK  u_s \|_{L^\infty_{n-1}}+ \|K_2(u)\|_{L^q_n}\\[1ex] 
& \quad +  \|P_s M_0(u)\|_{L^{q_0}_{n}\cap L^{q, \infty}_{n-1}} \BLACK + 
\|P_s  M_1(u, p)\|_{L^{q_0}_{n}\cap L^{q, \infty}_{n-1}}  + \|f_s\|_{L^q_n}. \BLACK
\end{align*}
Similarly, let 
\BLACK
$$ N_1(u,p) = N(u, p) - P_\perp M(u) -  P_\perp\big(\nabla^{\phi(0)}-\nabla^{\phi(t)}\big) p$$ 
and set 
\begin{align*}
 \mathscr{L}_\mathscr{N} (N(u, p))\BLACK & = \|N_1(u,p) \|_{L^\infty(\IR;L^{q}_{n-1+\delta} \cap L^q_{n-1} \BLACK)} + \|P_\perp  M(u)\|_{ L^\infty (\IR;L^{q_0}_{ n} \cap L^{q,\infty}_{n-1})} \\
& \quad + \big\| P_\perp \big(\nabla^{\phi(t)}-\nabla^{\phi(0)}\big)p\big\|_{ L^\infty(\IR;L^{q_0}_{n} \cap L^{q,\infty}_{n-1})} +  \|f_\perp\|_{L^\infty(\IR:\L^q_{n-1+\delta})}. \BLACK
\end{align*}
Note that the control terms $\mathscr{L}_\mathscr{K}$ and $\mathscr{L}_\mathscr{N}$ do not define norms since the decomposition of $K(u,p)$ and $N(u,p)$ into a sum of terms as in \eqref{KNM} is by far not unique; see also Remark \ref{rem-norms-f} (i) below. 

Concerning the external force field $f=f_s+f_\perp$ we assume for simplicity that $f_s(=g)\in L^q_n(\Omega_0)$
and $f_\perp \in L^\infty_{per}(\IR;L^q_{n-1+\delta}(\Omega_0))$ 
so that Proposition \ref{result-oscillatory-part-0-1-order} (i), (ii) and Proposition \ref{result-oscillatory-part-2nd-derivative} for the instationary part and Proposition \ref{result-linear-stationary} for $f_s$ can be applied. For further results see Remark \ref{rem-norms-f} (i).
\vspace{1ex}

\begin{prop}\label{est-K-N}
For any period $T>0$ there exists a constant $C_T>0$ \BLACK such that 
\begin{align*}
 \mathscr{L}_\mathscr{K} & (K(u,p))  + \mathscr{L}_\mathscr{N} (N(u,p)) \\
 & \leq C_T\big(\|(u_s, p_s)\|_{X_s}^2 +  \|(u_\perp, p_\perp)\|_{X_\perp}^2 \big) + \|f_s\|_{L^q_{n}} + \|f_\perp\|_{L^\infty (\IR;L^q_{n-1+\delta}) 
 }\\
  &  + \epsilon C_T\big(\|u\|_{L^\infty (\IR;H^{1,q})} + \|\nabla^2 u\|_{L^\infty (\IR;L^{q,\infty})} + \|\nabla p\|_{L^\infty (\IR;L^{q,\infty})} \big). \BLACK
\end{align*}
\end{prop}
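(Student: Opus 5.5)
The plan is to estimate each term of $\mathscr{L}_\mathscr{K}$ and $\mathscr{L}_\mathscr{N}$ separately by Hölder's inequality in weighted Lebesgue/Lorentz spaces. The bilinear (convective) terms are to be bounded by the quadratic expression in the $X_s$, $X_\perp$ norms. The linear terms with the coefficients $a_\alpha,b_\beta$ and the pressure difference $(\nabla^{\phi(t)}-\nabla^{\phi(0)})p$ are to be bounded by the smallness $\mathfrak C_\phi=\mathcal O(c_\phi)$ from \eqref{equ:alpha-beta=0}, \eqref{equ:alpha-beta}; this smallness plays the role of $\epsilon$. The force terms appear unchanged.

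\emph{Stationary convective terms.} Since $\div u_s=0$, we have $u_s\cdot\nabla u_s=\div(u_s\otimes u_s)$. The pointwise bound $|u_s\otimes u_s|\le \langle x\rangle^{-2(n-2)}\|u_s\|_{L^\infty_{n-2}}^2$ and $2(n-2)\ge n-1$ for $n\ge3$ give the $L^\infty_{n-1}$ bound. Similarly, $|u_s||\nabla u_s|\le\langle x\rangle^{-(2n-3)}\|u_s\|_{L^\infty_{n-2}}\|\nabla u_s\|_{L^\infty_{n-1}}$ with $2n-3\ge n$ gives the $L^\infty_n$ bound.

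\emph{The remaining products in $K_2$ and $N_1$.} These are the terms $u_s\cdot(\nabla^{\phi(t)}-\nabla^{\phi(0)})u_s$, $u_s\cdot\nabla^{\phi(t)}u_\perp$, $u_\perp\cdot\nabla^{\phi(t)}u_s$, $u_\perp\cdot\nabla^{\phi(t)}u_\perp$ and $P_\perp(u_s\cdot\nabla^{\phi(t)}u_s)$. For each, one factor goes into a weighted $L^\infty$ norm and the other into $L^q_{n-1}$. The factor $(\nabla\phi)^{-1}$ is bounded, and $P_s,P_\perp$ are bounded on $L^\infty(\IR;X)$.
\begin{itemize}
\item $u_s\cdot\nabla^{\phi(t)}u_\perp$ is bounded in $L^q_{n-1+\delta}$, and also in $L^q_n$, by $\|u_s\|_{L^\infty_{n-2}}\|\nabla u_\perp\|_{L^q_{n-1}}$, since $n-2\ge1>\delta$.
\item $u_\perp\cdot\nabla^{\phi(t)}u_s$ is handled by $\|\nabla u_s\|_{L^\infty_{n-1}}\|u_\perp\|_{L^q_{n-1}}$.
\item $u_\perp\cdot\nabla u_\perp$ is the delicate product: both factors lie only in $L^q_{n-1}$. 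Here I would use $W^{1,q}_{n-1}\hookrightarrow L^\infty_{n-1}$ for $q>n$ (local Sobolev embedding on unit cubes, where the weight is essentially constant). This gives $|u_\perp||\nabla u_\perp|$ in $L^q_{2(n-1)}\subset L^q_n$, with all weights needed.
\end{itemize}

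\emph{Linear terms.} For $P_sM_0(u)$ and $P_\perp M_0(u)$, and likewise the $b_\beta$ terms, write $a\,\partial^\alpha u$ with $a\in L^{q_1,1}_n\cap L^\infty_{n-1}$ by \eqref{equ:alpha-beta=0}. Hölder's inequality in Lorentz spaces with $\frac1{q_0}=\frac1{q_1}+\frac1q$ gives $\|a\,\partial^\alpha u\|_{L^{q_0}_n}\le C\|a\|_{L^{q_1,1}_n}\|\partial^\alpha u\|_{L^{q,\infty}}$. For the $L^{q,\infty}_{n-1}$ part, use $\|a\|_{L^\infty_{n-1}}\|\partial^\alpha u\|_{L^{q,\infty}}$. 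Both bounds are $\le C\mathfrak C_\phi\big(\|u\|_{H^{1,q}}+\|\nabla^2u\|_{L^{q,\infty}}\big)$.

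For the second-order term $a_2:\nabla^2u$ the same Hölder estimate uses only $\|\nabla^2u\|_{L^{q,\infty}}$, which is controlled since $\|\nabla^2u_\perp\|_{L^{q,\infty}}\le\|\nabla^2 u_\perp\|_{L^{q,\infty}_{n-1}}$.

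For the pressure difference, $\nabla^{\phi(t)}-\nabla^{\phi(0)}=\big((\nabla\phi)^{-1}(\nabla\phi)^{-\top}-I\big)\nabla$. The coefficient has the decay \eqref{assumpt-phi}, with $L^\infty_{n-1}$ and $L^{q_1,1}_n$ size $\le c_\phi$, since $\phi(0)=\id$ and $\phi$ is periodic. Hence it is bounded by $\epsilon\|\nabla p\|_{L^{q,\infty}}$ with $\epsilon\sim c_\phi$.

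\emph{Main obstacle.} I expect the obstacle to be the weight and integrability bookkeeping for the quadratic oscillatory term, together with confirming that every term of $K_2$ really lands in $L^q_n$. Two points need care. First, the weighted embedding $W^{1,q}_{n-1}\hookrightarrow L^\infty_{n-1}$ has to be justified on the exterior domain; I would use the extension operator of Lemma \ref{extention} plus a local Sobolev embedding. Second, the $\epsilon$-term must involve only the unweighted norms listed, so for the linear coefficient terms I must ensure that the weights of $a_\alpha,b_\beta$ alone supply the required $\langle x\rangle^n$ decay. The constant $C_T$ enters only through the period-averaging in $P_s,P_\perp$.
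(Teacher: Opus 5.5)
Your plan follows the paper's proof. It uses term-by-term H\"older estimates in weighted Lebesgue/Lorentz spaces and the divergence form $u_s\cdot\nabla u_s=\div(u_s\otimes u_s)$ with weighted $L^\infty$ bounds for $K_1$. It uses the embedding $H^{1,q}_{n-1}\hookrightarrow L^\infty_{n-1}$ for the $u_\perp$--$u_\perp$ product. For $M_0(u)$, $a_2:\nabla^2u$ and $(\nabla^{\phi(t)}-\nabla^{\phi(0)})p$ it uses $L^{q_1,1}_n\cdot L^{q,\infty}\subset L^{q_0}_n$ and $L^\infty_{n-1}\cdot L^{q,\infty}\subset L^{q,\infty}_{n-1}$. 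One step, however, does not work as you describe it.

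\textbf{The $u_s$--$u_s$ terms.} These are $u_s\cdot(\nabla^{\phi(t)}-\nabla^{\phi(0)})u_s$ in $K_2$ and $P_\perp(u_s\cdot\nabla^{\phi(t)}u_s)$ in $N_1$.
\begin{itemize}
\item Neither factor is controlled in $L^q_{n-1}$ by $\|(u_s,p_s)\|_{X_s}$. So your rule ``one factor in a weighted $L^\infty$ norm, the other in $L^q_{n-1}$'' does not apply.
\item If you only use boundedness of the coefficient, the required $L^q_n$ bound of the first term is false for $n=3$. Take Stokeslet-type decay $|u_s|\sim|x|^{-1}$, $|\nabla u_s|\sim|x|^{-2}$, which is admissible in $X_s$. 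Then $\langle x\rangle^{3}|u_s||\nabla u_s|$ is only bounded, not in $L^q$. This is precisely why $K_1$ has to be handled in divergence form.
\item The missing ingredient is the spatial decay of the coefficient: $(\nabla\phi)^{-1}(\nabla\phi)^{-\top}-I\in L^\infty_{n-1}$ with size $O(c_\phi)$. You invoke this for the pressure but not here.
\item With it, as in the paper, $\|u_s\cdot(\nabla^{\phi(t)}-\nabla^{\phi(0)})u_s\|_{L^q_n}\le C\epsilon\|u_s\|_{L^q}\|\nabla u_s\|_{L^\infty_{n-1}}$.
\item For the oscillatory term, $u_s$ is time independent, so $P_\perp(u_s\cdot\nabla u_s)=0$. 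Hence $P_\perp(u_s\cdot\nabla^{\phi(t)}u_s)=P_\perp\big(u_s\cdot(\nabla^{\phi(t)}-\nabla^{\phi(0)})u_s\big)$. This is bounded in $L^q_{n-1+\delta}\cap L^q_{n-1}$ by $C\|u_s\|_{L^\infty_{n-2}}\|\nabla u_s\|_{L^q}$.
\item For this second term your pointwise bound $\langle x\rangle^{-(2n-3)}$ would also suffice. For $n=3$, however, it works only because $\delta<1-\frac{n}{q}$.
\end{itemize}

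\textbf{Minor points.}
\begin{itemize}
\item $P_s$ and $P_\perp$ have norms $1$ and $2$ on $L^\infty(\IR;X)$, independently of $T$. The $T$-dependence of $C_T$ and $\epsilon$ comes from $c_\phi|t|^\mu\le c_\phi T^\mu$ in \eqref{assumpt-phi}.
\item The paper's proof goes beyond the displayed inequality in two ways. It bounds $u_s\cdot\nabla^{\phi(t)}u_\perp$, $u_\perp\cdot\nabla^{\phi(t)}u_s$ and $u_\perp\cdot\nabla^{\phi(t)}u_\perp$ in $H^{1;q_2,\infty}_{n-1}$, via Gagliardo--Nirenberg and real interpolation. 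It also records $\|a_2(t)\|_{L^{q_1,1}_n}\le c\,t^{\delta}$ from $a_2(\cdot,0)=0$.
\item Both are needed later for $\nabla^2H$ in Propositions~\ref{result-second-order-derivative} and \ref{result-oscillatory-part-2nd-derivative}. Neither enters $\mathscr{L}_\mathscr{K}+\mathscr{L}_\mathscr{N}$ as defined, so your plan need not include them for the statement itself.
\end{itemize}
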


\begin{proof}
By Assumption \ref{ass} (iv), \eqref{equ:alpha-beta=0}, and the H\"older inequality in Lorentz spaces we see that for $M_1(u, p)$, omitting $P_s$,   
\begin{align}
\begin{aligned}\label{pressure-est}
\big\|\big(\nabla^{\phi(t)}-\nabla^{\phi(0)}\big)p\big\|_{L^{q_0}_{n}} + \big\|\big(\nabla^{\phi(t)}-\nabla^{\phi(0)}\big)p\big\|_{L^{q,\infty}_{n-1}}
& \leq \epsilon C_T \BLACK \|\nabla p\|_{L^{q,\infty}}, \\
\|a_2 \nabla^2  u\|_{L^{q_0}_{n}}+ \|a_2 \nabla^2  u\|_{L^{q, \infty}_{n-1}}
& \leq  \epsilon C_T \|\nabla^2  u\|_{L^{q,\infty}}.  
 \end{aligned}
\end{align} 
where $\mu=\delta=\frac{n}{2}\big(\frac{1}{q_0}-\frac{1}{q}\big)$ and we used the fact $a_2=0$ at $t=0$. Here $\epsilon>0$ is a constant depending on $c_\phi$ in \eqref{phi-mu}, \eqref{assumpt-phi} and on $T$, and can be chosen suitably small if $c_\phi$ is sufficiently small.   \BLACK 
Concerning similar estimates for $a_0 : u$, $a_1 : \nabla u$, $b_1:\nabla u$ and $b_0:u$, we refer to \eqref{a0a1} below.
These estimates imply that 
\begin{align}\label{M_1stat}
\|M_1(u,p)(t)\|_{L^{q,\infty}_{n-1} \cap L^{q_0}_n} & \leq \epsilon C_T \|\nabla^2 u,\nabla p\|_{L^\infty_{per}(\IR;L^{q,\infty})}\quad \textrm{for a.a. } t\in J,\\
\label{M_1-final}
\|M_1(u,p)\|_{L^\infty_{per}(\IR;L^{q,\infty}_{n-1} \cap L^{q_0}_n)} & \leq \epsilon C_T \|\nabla^2 u,\nabla p\|_{L^\infty_{per}(\IR;L^{q,\infty})}. 
\end{align}  
Later, \eqref{M_1stat} is used in Proposition \ref{result-linear-stationary} for a term of type $g_2$, and \eqref{M_1-final} is exploited in \eqref{nabla2_H} to estimate $\nabla^2 H$ in $X_\perp$. 

Concerning $a_\alpha\partial^\alpha u$ with $|\alpha|\leq 1$ and $b_\beta\partial^\beta$ we get from \eqref{assumpt-phi}, \eqref{equ:alpha-beta=0} that
\begin{align}\label{a0a1}
\|M_0(u)(t)\|_{L^{q_0}_n \cap L^{q,\infty}_{n-1}} & \leq C\epsilon \big(\|u(t)\|_{L^q} + \|\nabla u(t)\|_{L^q}\big) \quad t-a.e.
\end{align}
Applying $P_s$ or $P_\perp$, the right hand side must be replaced $C\epsilon \|u(t)\|_{L^\infty_{per}(H^{1,q})}$. 
This estimate is used together with \eqref{M_1-final} in Proposition \ref{result-oscillatory-part-0-1-order} (iii) and  Proposition \ref{result-oscillatory-part-2nd-derivative}.\BLACK

Further terms are estimated directly. In particular, for the stationary part, omitting the bounded projection $P_s$ and taking into account that $n\geq 3$, 
\begin{align}
\begin{aligned}
\|u_s \cdot \nabla u_s \|_{L^\infty_{n}} & = \|\div (u_s \otimes u_s) \|_{L^\infty_{n}} \leq C\|u_s\|_{L^\infty_{n-2}}\|\nabla u_s\|_{L^\infty_{n-1}}, \\
\|u_s \otimes  u_s \|_{L^\infty_{n-1}} & \leq C\|u_s\|_{L^\infty_{n-2}}\| u_s\|_{L^\infty_{n-2}}, \\
\big\|u_\perp \cdot \nabla^{\phi(t)} u_\perp \big\|_{L^q_{n}} & \leq C \|u_\perp \|_{L^\infty_{n-2}} \|\nabla u_\perp \|_{L^q_{n-1}}\leq C \|u_\perp \|_{H^{1,q}_{n-2}} \BLACK \|\nabla u_\perp \|_{L^q_{n-1}} , \\
\big\|u_\perp \cdot \nabla^{\phi(t)} u_s \big\|_{L^q_{n}} & \leq C\|u_\perp \|_{L^q_{n-2}}\|\nabla u_s \|_{L^\infty_{n-1}}, \\
\big\|u_s \cdot \nabla^{\phi(t)} u_\perp \big\|_{L^q_{n}} & \leq C\|u_s \|_{L^\infty_{n-2}} \|\nabla u_\perp \|_{L^q_{n-}},\\ 
\big\|u_s \cdot \big(\nabla^{\phi(t)} - \nabla^{\phi(0)}\big) u_s \big\|_{L^q_{n}} & \leq C\epsilon\|u_s \|_{L^q}\|\nabla u_s \|_{L^\infty_{n-1}}. 
\end{aligned}
\end{align}
In the third line we applied the embedding $H^{1,q}(\IR^n)\subset L^\infty$, $n<q$,  which is easily adapted to the weighted setting. 
These estimates are helpful in Proposition 3.1 on the stationary Stokes system with right-hand sides $\div F$, where $F=u_s\otimes u_s$, and $g_1$, $\ell=n$. 

For the instationary equation we will exploit the following estimates where in most cases we omitted $P_\perp$ for simplicity: 
We note that, since $P_\perp (u_s \cdot \nabla u_s)=0$,   
\begin{align}\label{P_perp-terms-est}
\big\|P_\perp \big(u_s  \cdot \nabla^{\phi(t)}u_s\big)\big\|_{L^{{q}}_{n-1}} 
& \leq   2\big\|u_s  \cdot\big(\nabla^{\phi(t)}-\nabla^{\phi(0)}\big) u_s\big\|_{ L^\infty_{per}(\IR;\BLACK L^{{q}}_{n-1})} \leq C \|u_s\|_{L^\infty}\|\nabla u_s\|_{L^{q}},\nonumber \\
\BLACK\big\|P_\perp \big(u_s  \cdot \nabla^{\phi(t)}u_s\big)\big\|_{L^{q}_{n-1+\delta}} 
& \BLACK\leq  2\big\|u_s  \cdot \big(\nabla^{\phi(t)}-\nabla^{\phi(0)}\big) u_s\big\|_{ L^\infty_{per}(\IR;\BLACK L^{q}_{n-1+\delta})} \leq C \|u_s\|_{L^\infty_{n-2}}\|\nabla u_s\|_{L^{q}}, \nonumber \\
\BLACK\big\|u_s  \cdot \nabla^{\phi(t)}  u_\perp\big\|_{L^{q}_{n-1+\delta}} & \BLACK\leq C \|\nabla u_\perp\|_{L^q_{n-1}}\| u_s\|_{L^{\infty}_{n-2}},\nonumber \\ 
\BLACK\|u_\perp  \nabla^{\phi(t)} \cdot u_s\|_{L^{q}_{n-1+\delta}} & \BLACK\leq C \|u_\perp\|_{L^q_{n-1}} \|\nabla u_s\|_{L^{\infty}_{n-1}},\\  
\BLACK\big\|u_\perp  \cdot \nabla^{\phi(t)}u_{\perp}\big\|_{L^{q}_{n-1+\delta}} & \BLACK\leq C  \|u_\perp\|_{L^\infty_{n-1}}  \|\nabla u_\perp\|_{L^{q}_{n-1}}\leq C\|u_{\perp}\|_{H^{1,q}_{n-1}}^2. \nonumber
\end{align}
Obviously, analogous terms in the norm of $L^q_{n-1}$ admit the same estimates. 

Norms of product terms estimated in $H^{1;q_2,\infty}_{n-1}(\Omega_0)$ need a more careful analysis. 
Given $n<q_2<q < \frac{n+1}{n}q_2$, as in the proof of Proposition \ref{result-oscillatory-part-2nd-derivative} (ii), we write $\frac{1}{q_2} = \frac{1}{q_3} + \frac{1}{q}$ where $n<q< q_3$ and $\frac{1}{q} - \frac{1}{q_3} < \frac{1}{n}$. 
Taking into account that partial derivatives of $\phi(\xi,t)$ are uniformly bounded by Assumption \ref{ass} (ii), \BLACK we obtain that
$$\big\|u_s  \cdot \nabla^{\phi(t)}u_{\perp}\big\|_{H^{1;q_2,\infty}_{n-1}} \leq C\big(\|u_s\|_{L^{q_3}}\|\nabla u_{\perp}\|_{H^{1;q,\infty}_{n-1}} + \|\nabla u_s\|_{L^{q_3,\infty}} \|\nabla u_{\perp}\|_{L^{q}_{n-1}}\big),$$ 
where the term $\|u_s\|_{L^{q_3}}$ is estimated by the Gagliardo-Nirenberg inequality 
on $\Omega_0$, {\em i.e.,} with $\delta=n\big(\frac{1}{q}-\frac{1}{q_3}\big)\in (0,1)$ there holds
\begin{equation}\label{Lq_3}  
\|u_s\|_{L^{q_3}} \leq C\big(\|\nabla u_s\|_{L^q}^\delta \|u_s\|_{L^q}^{1-\delta} + \|u_s\|_{L^q} 
\big) \leq C\|u_s\|_{H^{1,q}}. 
\end{equation}
By analogy, real interpolation and the embedding $L^q\subset L^{q,\infty}$ imply that 
\begin{equation}\label{Lq_3-infty}
\|\nabla u_s\|_{L^{q_3,\infty}} \leq C 
\|\nabla u_s\|_{H^{1; q,\infty\BLACK}} \leq C\big( \|\nabla^2 u_s\|_{L^{q, \infty\BLACK}} + \|\nabla u_s\|_{L^{q}}\big) . 
\end{equation}
Therefore, we obtain that
\begin{align}\label{u_su_p-H}
\big\|u_s & \cdot \nabla^{\phi(t)}u_{\perp}\big\|_{H^{1;q_2, \infty}_{n-1}} \nonumber\\ 
& \leq C  
\big(\|u_s\|_{H^{1,q}} \|\nabla^2 u_{\perp}\|_{L^{q, \infty}_{n-1}} 
  + \big(\|u_s\|_{H^{1,q}} + \|\nabla^2 u_s\|_{L^{q,\infty}}\big) \|\nabla u_{\perp}\|_{L^{q}_{n-1}}\big).
\end{align}

%
As for the product term $\|u_\perp  \cdot \nabla^{\phi(t)}u_{s}\big\|_{H^{1;q_2, \infty}_{n-1}}$, Proposition \ref{DS}
is exploited for $u_\perp$  yielding the upper bound $\|u_\perp\|_{H^{1,q}_{n-1}}$, {\em cf.}  \eqref{Lq_3} without weights. Then, together with \eqref{Lq_3-infty} applied to $\nabla u_s$, we get that
\begin{align}\label{u_pu_s-H}
\big\|u_\perp  \cdot \nabla^{\phi(t)}u_{s}\big\|_{H^{1;q_2, \infty}_{n-1}} 
& \leq C\big(\|u_\perp\|_{L^{q_3}_{n-1}} \|\nabla u_s\|_{H^{1;q,\infty}} + \|\nabla u_s\|_{L^{q_3,\infty}} \|\nabla u_\perp\|_{L^{q}_{n-1}}\big) \nonumber \\ 
& \leq C \|u_\perp\|_{H^{1,q}_{n-1}}
\big(\|\nabla^2 u_s\|_{L^{q, \infty}} + \|\nabla u_s\|_{L^{q}} \big) .\BLACK  
\end{align}
%
By analogy, 
there holds
\begin{align}\label{u_pu_p-H}
\big\|u_\perp  \cdot \nabla^{\phi(t)}u_{\perp}\big\|_{H^{1;q_2, \infty}_{n-1}}  
& \leq C  \big(\|u_\perp\|_{L^{q_3}_{n-1}} \|\nabla u_\perp\|_{H^{1;q,\infty}} + \|\nabla u_\perp\|_{L^{ q_3,\infty\BLACK}} \|\nabla u_\perp\|_{L^{q}_{n-1}}\big)\; 
\nonumber\\
& \leq C \|\nabla u_\perp\|_{H^{1;q,\infty}} \| u_\perp\|_{H^{1;q,}_{n-1}}. 
\end{align}
%
The estimates \eqref{P_perp-terms-est} and \eqref{u_su_p-H}, \eqref{u_pu_s-H}, \eqref{u_pu_p-H} are used to control $N_1(u,p)(t)$ in the space $L^\infty(\IR; L^{q}_{n-1+\delta} \cap H^{1,q_2,\infty}_{ n-1} )$
and hereby the solution operator $H=H(N)$. 
\end{proof}

\vspace{1ex}

\begin{proof}[Proof of Theorem \ref{main}]
Let the solution operator of the stationary Stokes equation \eqref{stationary-problem} with right-hand side $F, g_1, g_2$ be called $G=G(F,g_1,g_2)$, {\em i.e.,} $(u_s,p_s) = G(F,g_1,g_2)$ satisfying the estimates of Proposition \ref{result-linear-stationary}. 
In the present case where
\begin{align*} 
\begin{aligned}
F & = -u_s\otimes u_s,\\
g_1 & = -P_s \big(u_s \cdot (\nabla^{\phi(t)} - \nabla^{\phi(0)}) u_s + u_s\cdot \nabla^{\phi(t)} u_\perp + u_\perp\cdot \nabla^{\phi(t)} u_\perp + u_\perp\cdot \nabla^{\phi(t)} u_s\big)+f_s,\\
g_2 & = -P_s \big( (\nabla^{\phi(t)}-\nabla^{\phi(0)})p - M_0(u)  -M_1(u,p)\BLACK\big)
\end{aligned}
\end{align*}
we get from the proof of Proposition \ref{est-K-N} the estimate
\begin{align*}
\|G & (F, g_1,g_2)\|_{X_s} \\
& \leq C\big(\|u_s,p_s\|_{X_s}^2 + \|u_\perp, p_\perp\|_{X_\perp}^2\big) + C\epsilon (\|\nabla^2u,\nabla p\|_{L^{q,\infty}} + \|u\|_{H^{1,q}}) + C\|f_s\|_{L^q_n}.
\end{align*}

Moreover,  $H[u,p]=H(\mathbb P N)$ \BLACK satisfies by Propositions \ref{result-oscillatory-part-2nd-derivative}, \ref{result-oscillatory-part-0-1-order} (iii) and   \ref{est-K-N} the estimate
\begin{align*} 
\|H(\mathbb P N)\|_{X_\perp} 
& \leq C\big(\|u_s,p_s\|_{X_s}^2  + \|u_\perp, p_\perp\|_{X_\perp}^2\big) + C\epsilon (\|\nabla^2u,\nabla p\|_{L^{q,\infty}} + \|u\|_{H^{1,q}})\\
& \quad + C\|f_\perp\|_{L^\infty (\IR;L^q_{n-1+\delta}
)}.
\end{align*}
Finally, by \eqref{u-perp-eq-p}, 
\begin{equation}
\nabla p_\perp = (I- \mathbb{P})\Delta u_\perp + (I-\mathbb{P}) N(u,p),   
\end{equation}
we see that 
$$
\|\nabla p_\perp\|_{L^{q,\infty}} 
\leq C_1\|\nabla^2 u_\perp\|_{L^{q,\infty}} +  C_1\mathscr{L}_\mathscr{N} (N(u,p)).
$$
Adding this estimate times $1/2C_1$ to the above estimates and taking $\epsilon_f$ in the smallness assumption \eqref{f-small} \BLACK  and the ball $\mathcal B$ in the fixed point argument sufficiently small, the contraction mapping principle can be applied.
 
To apply Banach's fixed point theorem we use the iteration in 
$$ W= (u,p)= (u_s, p_s) + (u_\perp, p_\perp)  = W_s+W_\perp \cong (W_s,W_\perp) \in X_s \times X_\perp $$ 
defined by 
$W^{(1)}\mapsto W^{(2)} = (G,H)[W^{(1)}] $ for any $W^{(1)} \in 
X_s \times X_\perp$, where the notation $[W]$ stresses the point that the right-hand side is mainly controlled by $W=(u,p)$, but may depend on $(f_s,f_\perp)$ as well. Actually, there holds the estimate
\begin{align}\label{W1W2}
\begin{aligned}
\|W^{(2)}\|_{X_s \times X_\perp} & \leq C\|W^{(1)}\|^2_{X_s \times X_\perp } + C\epsilon \|W^{(1)}\|_{X_s \times X_\perp} \\[1ex]
& \quad + C\big( \|f_s\|_{L^q_n} + \|f_\perp\|_{L^\infty (\IR;L^q_{n-1+\delta} )}
\big
).
\end{aligned}
\end{align}
Moreover, given any $V^{(1)}, W^{(1)} \in X_s \times X_\perp$ with solution  $V^{(2)}, W^{(2)} \in X_s \times X_\perp$  of \eqref{stationary-problem} and \eqref{u-perp-m}, respectively, we get the estimate
\begin{align}\label{W1-W2}
\|V^{(2)} - W^{(2)} \|_{X_s \times X_\perp}  
& \leq C\big(\|V^{(1)}\|_{X_s \times X_\perp}  + \|W^{(1)}\|_{X_s \times X_\perp} + \epsilon \big)
   \|V^{(1)} - W^{(1)} \|_{X_s \times X_\perp} .
\end{align}

By \eqref{W1W2}, \eqref{W1-W2} there exists a small closed ball $\mathcal B$ in the Banach space $X_s \times X_\perp$ on which the map $(G,H)[\cdot]$ is a strict contraction. Hence there exists a locally unique fixed point $W=(u,p)$ of $(G,H)$ in $\mathcal B \subset X_s \times X_\perp$. 
Then $(u, p)$ is a (locally unique) $T$-periodic solution of the modified Navier-Stokes system \eqref{equ:ns2}. 
\end{proof}
\BLACK

\vspace{1ex}

\begin{rem} \label{rem-norms-f}
{\rm 

(i) For the right-hand side in Proposition \ref{result-linear-stationary} and for $K(u,p)$ and $N(u,p)$ we did not consider norms, but a control term consisting of different norms for different types of functions. However, in the final application of the fixed point iteration, the right-hand side and $K(u,p),\,N(u,p)$ depend linearly and quadratically on $(u,p)$ to be estimated by norms in $X^1,\,X^2$ and terms defined by the external force which is constant in the iteration, but without control by a norm. Nevertheless, Banach's fixed point theorem is applicable.

(ii) There are several choices for function spaces of  external forces. Besides $f_s=g\in L^q_n(\Omega_0)$ and $f_\perp \in L^\infty (\IR;L^q_{n-1+\delta}(\Omega_0))$, see Theorem \ref{main},  we may allow for terms like $f_s=\div F$ where $F\in L^\infty_{n-1}$, $\div F\in L^\infty_n$ or $f=g_2\in L^{q_0}_n\cap L^{q,\infty}$, {\em cf.} Proposition \ref{result-linear-stationary}; 
moreover, concerning $F_\perp$, we can consider 
$f_\perp \in L^\infty_{per}(\IR;L^{q_0}_{n-1})  \cap L^\infty_{per}(\IR; L^{q,\infty}_{n-1})$, {\em cf.} Proposition \ref{result-oscillatory-part-2nd-derivative}, Proposition \ref{result-oscillatory-part-0-1-order} (iii).
}
\end{rem}

\vspace{2ex}

\noindent {\bf Acknowledgements.}
The second author is supported by JSPS grant whose number is 22K13946.\vspace{1ex} 

\noindent {\bf Statements and Declarations}\vspace{1ex}
 
\noindent {\bf Conflicts of interest statement.}  There is no conflict of interest. \vspace{1ex} 

\noindent {\bf Data Availability statement.} No datasets were generated or analysed during the current study.


\begin{thebibliography}{99}



\bibitem{BM-ALE} 
A. M. Benselama and J. Monnier, Navier-Stokes ALE free surface flow with generalized Navier slip conditions. Droplet impact
 and attempt using Comsol Multiphysics 3.2., INRIA $n^\circ$ 6175 (2007)

\bibitem{BerghL} J. Bergh and J. L\"ofstr\"om, Interpolation Spaces: An Introduction. Springer-Verlag,
Berlin Heidelberg New York 1976

\bibitem{BoMi95} W. Borchers and T. Miyakawa,
On stability of exterior stationary Navier-Stokes flows,
Acta Math. {\bf 174} (1995), 311--382

\bibitem{Celik-K} A. Celik and M. Kyed, Nonlinear acoustics: Blackstock-Crighton equations with a periodic forcing term, J. Math. Fluid Mech. {\bf 21:45} (2019) 

\bibitem{Chua} S. Chua, 
Extension theorems on weighted Sobolev spaces, Indiana Univ. Math. J. {\bf 41} (1992), 1027--1076


\bibitem{Cw92} M. Cwikel,
Real and complex interpolation and extrapolation of compact operators, Duke Math. J. {\bf 65}, no. 2, (1992), 333--343

\bibitem{DHMT}  R. Danchin, M. Hieber, P. Mucha, P. Tolksdorf, 
Free boundary problems via
Da Prato–Grisvard theory, arXiv:2011.07918bv1, to appear in: Memoirs of the AMS. \BLACK

\bibitem{DHP} W. Desch, M. Hieber and J. Pr\"u\ss,
$L^p$-Theory of the Stokes equation in a half space, J. Evol. Equ. {\bf 1} (2001), 115--142

\bibitem{DHP2} R. Denk, M. Hieber and J.  Pr\"uss, 
    $\mathcal R$-Boundedness, Fourier Multipliers and Problems of Elliptic and Parabolic Type, Mem. Amer. Math. Soc. {\bf 166} (2003), no. 788

\bibitem{DGN-ALE} 
F. Duarte, R. Gormaz and S. Natesan, Arbitrary Lagrangian–Eulerian method for Navier–Stokes equations with moving boundaries, Comput. Methods. Appl. Mech. Engrg. {\bf 193} (2004), 4819--4836 

\bibitem{Du-Si} 
R. Duarte and J.D. Silva, 
Weighted Gagliardo-Nirenberg interpolation
inequalities, 
J. Funct. Anal. {\bf 285} (2023),  110009

\bibitem{Eiter-Shibata}
T. Eiter and Y. Shibata, 
Viscous flow past a translating body with oscillating boundary, J. Math. Soc. Japan {\bf 77} (2024), 103--134 
 
\bibitem{Eiter-Kyed-Shibata21} T. Eiter, M. Kyed and Y. Shibata,
On periodic solutions for one-phase and two-phase problems of the Navier–Stokes equations, J. Evol. Equ. {\bf 21} (2021), 2955–3014 

\bibitem{Eiter-Kyed-Shibata23}
T. Eiter, M. Kyed and Y. Shibata, Periodic $L_p$ 
 estimates by ${\mathcal R}$-boundedness: Applications to the Navier-Stokes equations,  Acta Appl. Math. {\bf 188:1} (2023) 

\bibitem{Farwig-Kozono-Wegmann} R. Farwig, H. Kozono and D. Wegmann, Maximal regularity of the Stokes operator in an exterior
domain with moving boundary and application to the
Navier-Stokes equations, Math. Ann. {\bf 375} (2019), 949--972

\bibitem{FKTW}
R. Farwig, H. Kozono, K. Tsuda and D. Wegmann, The time periodic problem of the Navier-Stokes equations in a bounded domain with moving boundary,
Nonlinear Anal. Real World Appl. {\bf 61} (2021), 103339

\bibitem{Farwig-Sohr-1994} 
R. Farwig and H. Sohr, Generalized resolvent estimates for the Stokes
 system in bounded and unbounded domains. J. Math. Soc. Japan
 {\bf 46} 1994, 607--643

\bibitem{FS} R. Farwig and H. Sohr, 
Weighted $L^q$-theory for the Stokes resolvent in exterior domains, J. Math. Soc. Japan {\bf 49} (1997), 251--288
	
\bibitem{FT-unif} R. Farwig and K. Tsuda, Uniform estimates for fractional operators, Partial Differ. Equ. Appl. {\bf 2}, 27 (2021)

\bibitem{FT-initial-value-prob}  R. Farwig and  K. Tsuda, The Fujita-Kato approach for the Navier-Stokes equations with moving boundary and its application,  J. Math. Fluid Mech. {\bf 24:77} (2022).
	
\bibitem{FT-lin-half} R. Farwig and K. Tsuda, The time periodic problem for the Navier-Stokes equations on half spaces with moving boundary: Linear theory,  J. Differential Equations {\bf 411} (2024), 531--603

\bibitem{FT-lin-half2} R. Farwig and K. Tsuda, 
The time periodic problem for the Navier-Stokes equations on half spaces with moving boundary:~Nonlinear theory, Math. Ann. {\bf 39} (2025), 4791--4846

\bibitem{Farwig-Tsuda-exterior} R. Farwig and K. Tsuda, 
The time periodic problem for the Navier-Stokes equations
in exterior domains in weighted spaces, arXiv:2409.17590
    
\bibitem{Farwig-Tsuda-stab} R. Farwig and K. Tsuda, 
Critical decay rate of stability for stationary solutions
to the Navier-Stokes equations in exterior domains, submitted (2025)


\bibitem{Farwig-Tsuda-Hinfty} R. Farwig and K. Tsuda,
The Stokes operator on exterior domains in homogeneous weighted function spaces: From weak theory to $\mathscr H^\infty$-calculus to fractional domains, submitted 2025
\BLACK

\bibitem{Freitag} D. Freitag,
Real interpolation  of  weighted $L_p$-spaces,
Math. Nachr. {\bf 86} (1978), 15--18

\bibitem{FroehII} A. Fr\"ohlich, 
The Stokes operator in weighted $L^p$-spaces II: Weighted resolvent estimates and maximal $L^p$-regularity, Math. Ann. {\bf 339} (2007), 287--316

\bibitem{Galdi-steady} G.P. Galdi,
An Introduction to the Mathematical Theory of the Navier-Stokes Equations.
Steady-State Problems.
Springer New York, 2nd ed. 2011  


\bibitem{Giga} Y. Giga, 
Domains of fractional powers of the Stokes operator in $L_r$ spaces, Arch. Rational Mech. Anal. {\bf 89} (1985), 251–265

 
\bibitem{Giga-Sohr} Y. Giga and H. Sohr, 
On the Stokes operator on exterior domains, J. Fac. Sci. Univ. Tokyo. Sect. IA. Math. {\bf 36} (1989), 103--130

\bibitem{Hishida}
T. Hishida, Spatial pointwise behavior of time-periodic Navier–Stokes flow induced by oscillation of a moving obstacle, J. Math. Fluid Mech. {\bf 24} (2022) 

\bibitem{HYZ}
H.  Hajaiej, X. Yu and Z. Zhai, Fractional Gagliardo–Nirenberg and Hardy inequalities under Lorentz norms, J. Math. Anal. Appl. {\bf 396} (2012), 569--577 

\bibitem{Iwashita} H. Iwashita,
$L_q-L_r$, estimates for solutions of the nonstationary Stokes equations in an exterior domain and the Navier-Stokes initial value problems in $L_q$ spaces,
Math. Ann. {\bf 285}  (1989), 265--288

\bibitem{Kalton} N. Kalton, P.  Kunstmann and L. Weis, Perturbation and interpolation theorems for the $H^\infty$-calculus with applications to differential
	operators, Math. Ann. {\bf 336} (2006), 747--801

\bibitem{Kaniel-Shinbrot}
S. Kaniel and M. Shinbrot, A reproductive property of the Navier-Stokes equations, 
Arch. Rational Mech. Anal. {\bf 24} (1967),  363--369    
		
\bibitem{Kobayashi-Kubo-half} 
T. Kobayashi and T. Kubo, 
Weighted $L^p-L^q$ estimates of Stokes semigroup in half-space and its application to the Navier-Stokes equations, Adv. Math. Fluid Mech., Recent developments of mathematical fluid mechanics (2016),  337--349

\bibitem{Kozono-Nakao} H. Kozono and M. Nakao, Periodic solutions of the Navier-Stokes equations in unbounded domains, Tohoku Math. J. {\bf 48} (1996), 33--50

\bibitem{KuWe04} P. Kunstmann and L. Weis, Maximal $L_p$-regularity for parabolic equations,
    Fourier multiplier theorems and
    $H^\infty$-functional calculus,  Functional analytic methods for evolution equations,
    M. Iannelli et al. (eds.), Lecture Notes in Math. {\bf 1855} (2004), 65--311

\bibitem{KurtzWheeden} D.S. Kurtz and R.L. Wheeden, Results on weighted norm inequalities for multi-
pliers, Trans. Amer. Math. Soc. {\bf 255} (1979), 343--362

\bibitem{Miyakawa-Teramoto} T. Miyakawa and Y. Teramoto, Existence and periodicity of weak solutions of the Navier-Stokes equations in a time dependent domain,
	Hiroshima Math. J. {\bf 12} (1982), 513--528   
\bibitem{Mohsenipour-Sadeghi} M. Mohsenipour and G. Sadeghi,  Interpolation between weighted Lorentz spaces
 with respect to a vector measure,  Hacet. J. Math. Stat.
 {\bf 48} (2019), 1590--1600 

\bibitem{Morimoto}
H. Morimoto, 
On existence of periodic weak solutions of the Navier-Stokes equations in regions with periodically moving boundaries, 
J. Fac. Sci. Univ. Tokyo Sect. IA Math. {\bf 18} (1971/72), 499–-524

\bibitem{NollSaal} A. Noll and J. Saal, $H^\infty$-calculus for the Stokes operator on $L_q$-spaces, Math. Z. {\bf 244} (2003), 651--688  

\bibitem{OgawaShimizu}  T. Ogawa, S. Shimizu, Maximal $L^1$-regularity and free boundary problems for the incompressible Navier–Stokes equations in critical spaces, J. Math. Soc. Japan {\bf 76(2)} (2024), 593--672 
 
\bibitem{Oishi-Shibata} K. Oishi and Y. Shibata, On the global well-posedness and decay of a free boundary problem of the Navier–Stokes equation in unbounded domains, Mathematics {\bf} (2022), 10, 774

\bibitem{OkabeTsutsui} T. Okabe and Y. Tsutsui,  Time periodic strong solutions to the incompressible Navier–Stokes equations with external forces of non-divergence form, J. Differential Equations
{\bf 263} (2017), 8229--8263

\bibitem{Saal} J. Saal, Maximal regularity for the Stokes system on noncylindrical
space-time domains, J. Math. Soc. Japan {\bf 58} (2006),  617--641
	
\bibitem{Saal6thMiss} J. Saal, Strong solutions for the Navier-Stokes equations on bounded and unbounded domains with a moving
boundary, 
Electron. J. Differ. Equ. Conf.  {\bf 15} (2007), 365--375

\bibitem{Serrin} 
J. Serrin, 
A note on the existence of periodic solutions of the Navier-Stokes equations,  
Arch. Rational Mech. Anal. {\bf 3} (1959),  120--122

\bibitem{Shibata-lecturer} 
Y. Shibata, Boundedness, maximal regularity and free boundary problems for the Navier-Stokes equations, Lecture Notes in Math. 
{\bf 2254} (2020), 193--462

\bibitem{Teramoto} Y. Teramoto, On the stability of periodic solutions of the Navier-Stokes equations in a noncylindrical domain, Hiroshima Math. J. {\bf 13} (1983), 607--625



\bibitem{Triebel}
H. Triebel, Interpolation theory, function spaces, differential operators, Verlag der Wissenschaften,
Berlin 1978 


\bibitem{Triebel2010} H. Triebel, 
Theory of function spaces, Modern Birkh\"auser Classics, Springer, Basel 2010, reprint of Akademische Verlagsgesellschaft Geest \& Portig, Leipzig 1983


\bibitem{Tsuda2016}
K. Tsuda, On the existence and stability of time periodic solution to the compressible Navier-Stokes equation on the whole space, Arch. Ration. Mech. Anal. {\bf 219} (2016), 637--678


\bibitem{Yamazaki} M. Yamazaki, 
The Navier-Stokes equations in the weak-$L^n$ space with time-dependent external force, 
Math. Ann. {\bf 317}  (2000), 635--675

\end{thebibliography}
\end{document}